\documentclass[a4paper]{amsart}

\RequirePackage{amsmath} 
\RequirePackage{amssymb}
\usepackage{amscd,latexsym,amsthm,amsfonts,amssymb,amsmath,amsxtra}
\usepackage[colorlinks=true,urlcolor=blue,citecolor=blue]{hyperref}
\usepackage{color}
\usepackage[all]{xy}
\usepackage{bm}
\usepackage{mathtools}
\usepackage{mathrsfs}
\usepackage{xcolor}
\usepackage{comment}
\usepackage{marginnote}
\usepackage{enumitem}
\usepackage{thmtools, thm-restate}

\let\Re\undefined

\DeclareMathOperator{\Re}{Re}

\DeclareMathOperator{\Tr}{Tr}

\DeclareMathOperator{\GL}{GL}

\DeclareMathOperator{\Spec}{Spec}
\DeclareMathOperator{\Hom}{Hom}

\newcommand{\Res}{\operatorname{Res}}

\newcommand{\fin}{\operatorname{fin}}

\newcommand{\diag}{\operatorname{diag}}

\newcommand{\Vol}{\operatorname{Vol}}

\newcommand{\Ind}{\operatorname{Ind}}

\newcommand{\RNum}[1]{\uppercase\expandafter{\romannumeral #1\relax}}

\begin{document}
\theoremstyle{plain}
\newtheorem{thm}{Theorem}[section]
	
\newtheorem{cor}[thm]{Corollary}

\newtheorem{thmx}{Theorem}
\renewcommand{\thethmx}{\Alph{thmx}} 

\newtheorem{hy}[thm]{Hypothesis}
\newtheorem*{thma}{Theorem A}
\newtheorem*{corb}{Corollary B}
\newtheorem*{thmc}{Theorem C}
\newtheorem{lemma}[thm]{Lemma}  
\newtheorem{prop}[thm]{Proposition}
\newtheorem{conj}[thm]{Conjecture}
\newtheorem{fact}[thm]{Fact}
\newtheorem{claim}[thm]{Claim}
	
\theoremstyle{definition}
\newtheorem{defn}[thm]{Definition}
\newtheorem{example}[thm]{Example}
\theoremstyle{remark}
	
\newtheorem{remark}[thm]{Remark}	
\numberwithin{equation}{section}

\title[]{Weyl Subconvexity for $\mathrm{GL}_2$ with Simple Supercuspidal Ramification} 
\author{Liyang Yang}
\address{Department of Mathematics, Texas A\&M University, College Station,  TX 77843, USA} 
\email{liyangy@tamu.edu}

\begin{abstract}
We establish Weyl-type subconvexity bounds in the level aspect for cuspidal automorphic representations of $\mathrm{GL}_2/F$ with simple supercuspidal ramification at a prime ideal $\mathfrak{q}$. More precisely, for the family
$\mathcal{F}_t^\zeta(\mathfrak{q}^3;\omega)$
consisting of representations of conductor $\mathfrak{q}^3$, central character $\omega$, and prescribed simple supercuspidal local component, we prove the fourth moment estimate
\begin{align*}
\sum_{\substack{\pi \in \mathcal{F}_{t}^{\zeta}(\mathfrak{q}^3;\omega) \\
C_v(\pi) \leq \mathbf{C}_v,\ v \mid \infty}}
|L(1/2,\pi)|^4
\ll_{F,\varepsilon}
\mathbf C_\infty^{1+\varepsilon}
N_F(\mathfrak{q})^{2+\varepsilon}.
\end{align*}
As a consequence, we deduce the Weyl-type bound
\begin{align*}
L(1/2,\pi)
\ll_{F,\varepsilon}
C_\infty(\pi)^{1/4+\varepsilon}
C_{\fin}(\pi)^{1/6+\varepsilon}.
\end{align*}
In particular, this bound applies to a genuinely \emph{non-self-dual} family of odd conductor exponent, beyond the reach of the cubic moment method.
\end{abstract}
	
\date{\today}%
\maketitle
\tableofcontents

\section{Introduction}

Let $F$ be a number field.
For a unitary cuspidal automorphic representation $\pi$ of
$\mathrm{GL}_n/F$ with analytic conductor $C(\pi)$, the
Phragm\'en--Lindel\"of principle yields the convexity bound
\begin{align*}
L(1/2,\pi) \ll C(\pi)^{1/4+\varepsilon}.
\end{align*}
The subconvexity problem asks for any improvement over the exponent
$1/4$. It is a central problem in the analytic theory of $L$-functions
and a major testing ground for modern analytic techniques.

The first subconvexity bound was obtained by Hardy--Littlewood and
Weyl, who established the exponent $1/6$ for the Riemann zeta function.
Estimates of the form
\begin{align*}
L(1/2,\pi) \ll C(\pi)^{1/6+\varepsilon}
\end{align*}
are therefore called \emph{Weyl-type bounds}. Such bounds represent a
natural barrier for many analytic approaches and are notoriously
difficult to achieve. At present they are known only in a limited
number of cases, notably for certain self-dual $\mathrm{GL}_2$
automorphic representations; see for instance
\cite{CI00, Ivi01, Jut01, You17, Nel19, PY20, PY23, BJN23, WX23, HPY25}.

In the level aspect, obtaining the Weyl bound is particularly delicate. All currently known results ultimately derive from cubic moment formulas for $\mathrm{GL}_2$ $L$-functions and rely on positivity arguments. In particular, these methods require the arithmetic conductor to be a perfect square and the central character to be trivial, ensuring the nonnegativity $L(1/2,\pi)\ge 0$. Consequently, Weyl-type bounds in the level aspect for conductors of odd exponent or for representations with nontrivial central character fall outside the scope of the cubic moment method.

In this paper we establish Weyl-type bounds in the level aspect for cuspidal automorphic representations of $\mathrm{GL}_2/F$ with \emph{simple supercuspidal ramification} (conductor exponent $3$), corresponding to minimal wild ramification, while allowing arbitrary central characters. This places the Weyl exponent in a regime beyond the reach of the cubic moment method.

Our approach substantially refines the symmetric spectral reciprocity framework
of \cite{Yan25} in a form specifically adapted to representations with simple
supercuspidal ramification. This refinement reveals a conductor-lowering
phenomenon: the fourth moment over representations of $\mathrm{GL}_2/F$ of
conductor $\mathfrak q^3$ whose local component at $\mathfrak q$ is simple
supercuspidal is related to fourth moments over automorphic representations of
$\mathrm{PGL}_2/F$ of conductor $\mathfrak q^2$ \textit{without}
supercuspidal ramification. This structure bypasses the positivity barrier
inherent in the cubic moment method.

A central ingredient is the analysis of the ramification weight
$\mathcal Q_{\mathfrak q,\omega}(\sigma)$ arising in the reciprocity
formula. On the type-II family of ramified principal series of conductor
$\mathfrak q^2$, this weight admits an explicit arithmetic description
in terms of Gauss and Jacobi sums, leading to square-root cancellation. Larger values occur only on
thinner spectral strata, where they are compensated for by the sparsity
or additional structure of the corresponding families. This
\textit{cancellation--sparsity dichotomy} ultimately yields the desired
Weyl-type bound; see \textsection\ref{sec1.2} for further discussion
and a comparison with the work of Petrow--Young \cite{PY20, PY23}.

\subsection{Weyl Bound via a Fourth-Moment Estimate}
Let $\mathfrak q\subset \mathcal O_F$ be a prime ideal and
$\omega$ a unitary character of $F^\times\backslash\mathbb A_F^\times$.
Let $\mathcal{F}_t^\zeta(\mathfrak q^3;\omega)$ denote the family of
cuspidal automorphic representations
$\pi=\otimes_v' \pi_v$ of $\mathrm{GL}_2/F$ with central character
$\omega$ and conductor $\mathfrak q^3$ such that
$\pi_v\simeq\sigma_t^\zeta$ for $v\mid\mathfrak q$, where
$\sigma_t^\zeta$ is the simple supercuspidal representation
parametrized by $t\in\mathcal O_v^\times$ and
$\zeta\in\mathbb C^\times$; see \textsection\ref{sec3.1}.
Such representations are said to have
\emph{simple supercuspidal ramification}.

The main analytic input is the following fourth moment estimate.

\begin{thmx}\label{thmA}
For each Archimedean place $v\mid\infty$, let $\mathbf C_v>0$, and set
$\mathbf C_\infty:=\prod_{v\mid\infty}\mathbf C_v$. Then
\begin{equation}\label{1.1}
\sum_{\substack{\pi\in\mathcal{F}_t^\zeta(\mathfrak q^3;\omega)\\
C_v(\pi)\le\mathbf C_v,\ v\mid\infty}}
|L(1/2,\pi)|^4
\ll_{F,\varepsilon}
\mathbf C_\infty^{1+\varepsilon}
N_F(\mathfrak q)^{2+\varepsilon}.
\end{equation}
\end{thmx}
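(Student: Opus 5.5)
The plan is to run the symmetric spectral reciprocity of \cite{Yan25}, with test vectors adapted to $\sigma_t^\zeta$. The goal is to turn the fourth moment over $\mathcal F_t^\zeta(\mathfrak q^3;\omega)$ into a fourth moment over $\mathrm{PGL}_2$ representations of conductor dividing $\mathfrak q^2$, weighted by the ramification weight $\mathcal Q_{\mathfrak q,\omega}(\sigma)$. Then bound the dual side stratum by stratum.

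\textbf{Step 1: spectral side.} I would choose a local test function $f_{\mathfrak q}$ on $\mathrm{GL}_2(F_{\mathfrak q})$ for the period $\int |E(g,s)|^2\,\phi(g)$-type integral. Concretely, take a matrix coefficient of $\sigma_t^\zeta$, supported on the compact-mod-center subgroup $ZK_1\langle\varpi_t\rangle$ from which $\sigma_t^\zeta$ is induced. Then the spectral expansion picks out exactly $\pi\in\mathcal F_t^\zeta(\mathfrak q^3;\omega)$, each with weight $\asymp N_F(\mathfrak q)^{-1}$; this is the formal degree of $\sigma_t^\zeta$ up to normalization, computed in \textsection\ref{sec3.1}. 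At archimedean places I take standard nonnegative test functions localizing $C_v(\pi)\le\mathbf C_v$.

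Positivity of $|L(1/2,\pi)|^4$ then lets me drop the other terms, giving
\[\sum_{\pi}|L(1/2,\pi)|^4\ll N_F(\mathfrak q)\,\mathbf C_\infty^{\varepsilon}\cdot(\text{spectral side})+(\text{Eisenstein/degenerate terms}).\]

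\textbf{Step 2: reciprocity.} Applying the symmetric reciprocity formula, the right side becomes a main term plus
\[\sum_{\sigma:\ \mathrm{cond}(\sigma)\mid\mathfrak q^2}\mathcal Q_{\mathfrak q,\omega}(\sigma)\,h_\infty(\sigma)\,|L(1/2,\sigma)|^4,\]
together with its continuous-spectrum analogue. The main term should be of size $\mathbf C_\infty^{1+\varepsilon}N_F(\mathfrak q)^{1+\varepsilon}$ before the factor $N_F(\mathfrak q)$, which is consistent with the target \eqref{1.1}. The local weight $\mathcal Q_{\mathfrak q,\omega}(\sigma)$ is a local integral over $\sigma_{\mathfrak q}$ of the transformed test vector. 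I would compute it case by case on the following strata:
\begin{enumerate}[label=(\roman*)]
\item unramified $\sigma_{\mathfrak q}$;
\item conductor $\mathfrak q$ (Steinberg twists);
\item type-II ramified principal series $\chi|\cdot|^{it}\boxplus\chi^{-1}|\cdot|^{-it}$ with $c(\chi)=1$.
\end{enumerate}
On stratum (iii), opening the Kirillov/Whittaker model should express $\mathcal Q$ as a Gauss sum of $\chi$ times a Jacobi-type sum involving $\chi$, $\omega_{\mathfrak q}$, $t$ and $\zeta$. Weil's bound then gives $\mathcal Q\ll N_F(\mathfrak q)^{-1/2}\cdot(\text{normalization})$, i.e.\ square-root cancellation.

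\textbf{Step 3: dual side.} On stratum (iii) I would insert the square-root saving and bound the remaining fourth moment of $\mathrm{PGL}_2$ forms of level $\mathfrak q^2$ by the spectral large sieve or known fourth-moment bounds (Motohashi/Blomer type, already in \cite{Yan25}). These give $\ll(\mathbf C_\infty N_F(\mathfrak q)^2)^{1+\varepsilon}$, so the product is $\ll\mathbf C_\infty^{1+\varepsilon}N_F(\mathfrak q)^{3/2+\varepsilon}\cdot N_F(\mathfrak q)^{-1/2}$ after normalization. Strata (i) and (ii) are thin families, of size $O(N_F(\mathfrak q))$ and $O(N_F(\mathfrak q)\mathbf C_\infty)$. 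There $\mathcal Q$ may be as large as $O(1)$, but the fourth moment of such a family is $\ll \mathbf C_\infty^{1+\varepsilon}N_F(\mathfrak q)^{1+\varepsilon}$. That is the sparsity side of the dichotomy. Eisenstein contributions are handled with the same weights via fourth moments of Hecke $L$-functions.

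\textbf{Main obstacle.} I expect the hardest part to be the explicit evaluation of $\mathcal Q_{\mathfrak q,\omega}(\sigma)$ on type II as a genuine Gauss–Jacobi sum with uniform square-root bound. In particular, one must rule out degenerate $\chi$, e.g.\ $\chi^2=\omega_{\mathfrak q}|_{\mathcal O^\times}$-related cases, where the Jacobi sum fails to cancel. For these exceptional $\chi$ I would first try to show there are only $O(1)$ of them and treat them as a thin stratum together with (i)–(ii).
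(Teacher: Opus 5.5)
There is a genuine gap, and it is located in Steps~1--2. Step~2 asserts that the dual side lives on $\mathrm{cond}(\sigma)\mid\mathfrak q^2$, but with your Step~1 test function this fails.

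You take a matrix coefficient of $\sigma_t^{\zeta}$ supported on $H_v=Z(F_v)I_v(1)\cup g_tZ(F_v)I_v(1)$, so as to isolate $\mathcal F_t^{\zeta}(\mathfrak q^3;\omega)$ exactly. The paper instead takes $f_v^t=\Vol(I_v(1))^{-1}\overline{\theta_t}\,\mathbf 1_{I_v(1)}$ from \eqref{2.3}, supported on $I_v(1)$ only. It projects onto $\Ind_{Z(F_v)I_v(1)}^{G(F_v)}\widetilde\theta_t\simeq\sigma_t^{\zeta}\oplus\sigma_t^{-\zeta}$, so the spectral side runs over $\mathcal F_t^{\zeta}\cup\mathcal F_t^{-\zeta}$. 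This costs nothing, because every spectral term is nonnegative.

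The difference is decisive on the dual side. For $g\in I_v(1)$ with coordinate $\gamma$ as in \eqref{eq5.1}, one has $R(g)W_2=\psi_v(\varpi_v^{-1}t\gamma)W_2$, and $\overline{h_4}$ transforms under $I_v(1)$ by the conjugate character. Hence the local period $\widetilde\Psi_v(W_v^*,R(g)W_2,\overline{h_4})$ vanishes unless $\sigma_v^{I_v(1)}\neq 0$. This is what confines the dual side to conductor exponent $\le 2$ and yields the factorization \eqref{e8.5}.

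On the extra coset $g_tZ(F_v)I_v(1)$ that your matrix coefficient charges, the picture changes. One computes $g_t^{-1}kg_t=\begin{pmatrix} d & t\varpi_v^{-1}c\\ t^{-1}\varpi_v b & a\end{pmatrix}$, so conjugation by $g_t$ swaps the two affine root groups of $I_v(1)$. Consequently $R(g_t)W_2$ transforms by $\psi_v(\varpi_v^{-1}\beta)$ instead. Its product with $\overline{h_4}$ then transforms by the affine generic character $\psi_v(\varpi_v^{-1}(\beta-t\gamma))$. By invariance of the trilinear form, that part of the dual side is carried by $\sigma_v$ containing such a character, i.e.\ by simple supercuspidal $\sigma_v$ of conductor $\mathfrak q^3$.

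This is exactly the loss of conductor lowering that the paper's remark on the choice of test functions warns about (for the matrix coefficients of \cite{Kni25}). Reciprocity hands you back a family with the same conductor and sparsity as the one you started from, and nothing in your Step~3 controls it.

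Even granting the drop to $\mathfrak q^2$, Step~3 does not close. The dual moment is $\mathcal Q_{\mathfrak q,\omega}(\sigma)L(1/2,\sigma)^3L(1/2,\sigma\times\overline{\omega})$, not $|L(1/2,\sigma)|^4$. Its support is the five strata of Definition~\ref{def1.3}, and your list omits types \RNum{3} and \RNum{5}.

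Type \RNum{5}, where $\sigma_v\simeq\mathrm{St}\otimes\chi_v$ with $\chi_v$ ramified quadratic, is precisely where square-root cancellation fails. Proposition~\ref{prop8.5} gives only $\mathcal Q_{\mathfrak q,\omega}(\sigma)\ll N_F(\mathfrak q)^{3/2}$ there, against $N_F(\mathfrak q)$ on type \RNum{2}. This stratum has about $N_F(\mathfrak q)$ members of conductor $\mathfrak q^2$. Your sparsity argument would need its fourth moment to be $\ll N_F(\mathfrak q)^{1+\varepsilon}$, which would by itself give $L(1/2,\sigma)\ll N_F(\mathfrak q)^{1/4+\varepsilon}$. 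In terms of the conductor $C\asymp N_F(\mathfrak q)^2$ that is $C^{1/8+\varepsilon}$, beyond Weyl, and is not available. H\"older with the fourth-moment bound of Theorem~\ref{thm8.7} misses the target by a power of $N_F(\mathfrak q)$.

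The paper closes this stratum with the Conrey--Iwaniec/Nelson cubic moment $\sum_\sigma L(1/2,\sigma)^3\ll N_F(\mathfrak q)^{1+\varepsilon}$ (Theorem~\ref{thme8.7}, which exploits $L(1/2,\sigma)\ge 0$), together with a pointwise bound for $L(1/2,\sigma\times\overline{\omega})$. Note also that having only $O(1)$ exceptional local characters does not make a stratum negligible. A single local type such as $\mathrm{St}\otimes\chi_v$ indexes a global family of size about $N_F(\mathfrak q)$.
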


\begin{remark}
The proof of the fourth moment estimate \eqref{1.1} also makes use of the cubic moment for certain $\mathrm{PGL}_ 2$ $L$-functions established by Conrey--Iwaniec \cite{CI00} and extended to number fields by Nelson \cite{Nel19}; see \textsection\ref{sec8.4}.	
\end{remark}

\begin{remark}
Heuristically, the moment estimate \eqref{1.1} may be viewed as a $p$-adic analogue of Jutila's spectral-aspect result \cite{Jut01}. In both cases, one studies the $4$-th moment of $L$-functions of conductor $C$ over a family $\mathcal{F}$ satisfying
\begin{equation}\label{eq1.4}
\log |\mathcal{F}|/\log C=2/3+o(1),
\end{equation}
although the methods are quite different. See Remark \ref{rmk1.7} in \textsection\ref{sec1.3} for further discussion.
\end{remark}

As an immediate consequence we obtain the following Weyl-type bound.

\begin{cor}\label{cor1.1}
Let $\pi$ be a cuspidal automorphic representation of $\mathrm{GL}_2/F$
with simple supercuspidal ramification. Then
\begin{equation}\label{e1.1}
L(1/2,\pi)
\ll_{F,\varepsilon}
C_\infty(\pi)^{\frac14+\varepsilon}
C_{\fin}(\pi)^{\frac16+\varepsilon},
\end{equation}
where $C_\infty(\pi)$ and $C_{\fin}(\pi)$ denote the Archimedean and
arithmetic conductors, respectively. In particular, if $\mathfrak{q}$ is a prime ideal, then every unitary cuspidal automorphic representation $\pi$ of $\mathrm{PGL}_2/F$ with arithmetic conductor $\mathfrak{q}^3$ satisfies the Weyl bound \eqref{e1.1}.
\end{cor}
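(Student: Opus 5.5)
The plan is to deduce Corollary \ref{cor1.1} directly from Theorem \ref{thmA} by dropping all but one term. Let $\pi$ have simple supercuspidal ramification. By definition there is a prime ideal $\mathfrak q$ with $C_{\fin}(\pi)=N_F(\mathfrak q)^3$, and $\pi_v\simeq\sigma_t^\zeta$ at $v\mid\mathfrak q$ for some $t\in\mathcal O_v^\times$ and $\zeta\in\mathbb C^\times$. Let $\omega$ be the central character of $\pi$. Then $\pi\in\mathcal F_t^\zeta(\mathfrak q^3;\omega)$.

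First, choose $\mathbf C_v:=C_v(\pi)$ for each $v\mid\infty$, so $\mathbf C_\infty=C_\infty(\pi)$. Every term on the left of \eqref{1.1} is nonnegative, since we sum $|L(1/2,\pi')|^4$. Keeping only the term $\pi'=\pi$ gives
\begin{align*}
|L(1/2,\pi)|^4\ll_{F,\varepsilon} C_\infty(\pi)^{1+\varepsilon}N_F(\mathfrak q)^{2+\varepsilon}.
\end{align*}
Taking fourth roots and using $N_F(\mathfrak q)=C_{\fin}(\pi)^{1/3}$ yields
\begin{align*}
L(1/2,\pi)\ll C_\infty(\pi)^{1/4+\varepsilon}C_{\fin}(\pi)^{1/6+\varepsilon},
\end{align*}
after renaming $\varepsilon$. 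The implied constant in Theorem \ref{thmA} depends only on $F$ and $\varepsilon$, and not on $t$, $\zeta$, $\omega$ or $\mathfrak q$; this uniformity is what allows the bound to apply to each individual $\pi$.

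For the final assertion, let $\pi$ be a cuspidal representation of $\mathrm{PGL}_2/F$ with conductor $\mathfrak q^3$, $\mathfrak q$ prime. We must show that $\pi_v$ is simple supercuspidal at $v\mid\mathfrak q$. This is a local classification statement. An irreducible generic representation of $\mathrm{GL}_2(F_v)$ with trivial central character and conductor exponent $3$ cannot be a principal series: for $\chi_1\boxplus\chi_2$ with $\chi_1\chi_2=1$ the conductor is $2a(\chi_1)$, which is even. It also cannot be a twisted Steinberg representation, whose conductor exponent is $1$ or $2a(\chi)$ with $\chi^2=1$, again not $3$. So $\pi_v$ is supercuspidal with conductor exponent $3$, i.e.\ of depth $1/2$. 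By the standard classification (Knightly--Li, Gross--Reeder), such representations are exactly the simple supercuspidals $\sigma_t^\zeta$ of \textsection\ref{sec3.1}.

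The main point needing care is this last identification: matching minimal-conductor supercuspidals with the parametrization $\sigma_t^\zeta$ used in the paper, including the case of residue characteristic $2$. I would cite the classification of conductor-$3$ supercuspidals as being induced from the pro-$p$ Iwahori subgroup extended by a uniformizer-type element, which is exactly the construction of \textsection\ref{sec3.1}. Once that is in place, the rest is the positivity argument above.
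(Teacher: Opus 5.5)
Your proposal is correct and is essentially the paper's argument: the paper presents Corollary~\ref{cor1.1} as an immediate consequence of Theorem~\ref{thmA}, obtained by taking $\mathbf C_v = C_v(\pi)$, using positivity to keep the single term $\pi$, taking fourth roots, and substituting $N_F(\mathfrak q)=C_{\fin}(\pi)^{1/3}$, all relying on the uniformity of the implied constant in $t$, $\zeta$, $\omega$ and $\mathfrak q$. For the $\mathrm{PGL}_2$ statement, your local conductor count (principal series and twisted Steinberg representations with trivial central character never have conductor exponent $3$, so $\pi_v$ is a depth-$1/2$ supercuspidal, i.e.\ simple supercuspidal) is exactly the standard fact the paper takes for granted when it equates simple supercuspidal ramification with conductor exponent $3$.
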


\begin{remark}
To the best of our knowledge, the bound \eqref{e1.1} is the first
Weyl-type bound in the level aspect for $\mathrm{GL}_2$ $L$-functions in a family of
odd conductor exponent allowing arbitrary central character. In
particular, this lies beyond the reach of the cubic moment method.
\end{remark}

\begin{remark}
Twisting by $|\cdot|^{it}$ affects only the Archimedean conductor.
Applying \eqref{e1.1} to $\pi\otimes |\cdot|^{it}$ therefore gives
\begin{align*}
L(1/2+it,\pi)
=
L(1/2,\pi\otimes |\cdot|^{it})
\ll_{F,\varepsilon}
C_{\infty}(\pi\otimes |\cdot|^{it})^{\frac14+\varepsilon}
C_{\fin}(\pi)^{\frac16+\varepsilon}.
\end{align*}
\end{remark}

As mentioned above, the Weyl-type bound represents the natural limit of
present-day approaches to subconvexity problems. The results obtained
here demonstrate that the symmetric spectral reciprocity method provides
a new mechanism for reaching this barrier in settings where the cubic
moment method is unavailable, such as for conductors of odd exponent or
with nontrivial central character.

\subsection{A Refined Symmetric Spectral Reciprocity}\label{sec1.2}

A principal structural result of this paper is a refined symmetric spectral
reciprocity formula, which provides the key input for the proof of
Theorem \ref{thmA} and is schematically of the shape 
\begin{align*}
\sum_{\substack{\pi\in \mathcal{F}_{t}^{\zeta}(\mathfrak{q}^3;\omega)
\cup \mathcal{F}_{t}^{-\zeta}(\mathfrak{q}^3;\omega)\\
C_v(\pi)\ll \mathbf{C}_v,\ v\mid\infty}}
|L(1/2,\pi)|^4
\rightsquigarrow
\frac{\mathbf{C}_{\infty}^{1/2}}{N_F(\mathfrak{q})}
\int_{\sigma}
\mathcal{Q}_{\mathfrak{q},\omega}(\sigma)
L(1/2,\sigma)^3
|L(1/2,\sigma\times\overline{\omega})|
\,d\mu_{\sigma},
\end{align*}
where $\sigma$ runs over the automorphic spectrum of $\mathrm{PGL}_2/F$
and $\mathcal{Q}_{\mathfrak{q},\omega}(\sigma)$ is the ramification weight
defined in \eqref{e8.6} of \textsection\ref{sec8.1}.

The main analytic task is to determine the support and size of
$\mathcal{Q}_{\mathfrak{q},\omega}(\sigma)$ as $\mathfrak q$ varies.
To describe its spectral support, we introduce the following classification.

\begin{defn}\label{def1.3}
Let $v\mid\mathfrak{q}$, and let $\sigma$ be a unitary generic
automorphic representation of $\mathrm{PGL}_2/F$. Let $\mathrm{St}$ denote the Steinberg representation.  
We classify $\sigma$ according to the local type of $\sigma_v$ as follows:
\begin{itemize}
\item $\sigma$ is of type \RNum{1} if $\sigma_v$ is an unramified principal series;
\item of type \RNum{2} if $\sigma_v$ is a ramified principal series of conductor exponent $2$;
\item of type \RNum{3} if $\sigma_v$ is a complementary series of conductor exponent $2$;
\item of type \RNum{4} if $\sigma_v\simeq \mathrm{St}\otimes\chi_v$ with $\chi_v$ an unramified quadratic character;
\item of type \RNum{5} if $\sigma_v\simeq \mathrm{St}\otimes\chi_v$ with $\chi_v$ a quadratic character of conductor exponent $1$.
\end{itemize}

For $*\in\{\RNum{1},\RNum{2},\RNum{3},\RNum{4},\RNum{5}\}$, we denote by
$\mathcal{F}_*(\mathfrak{q}^2;\mathbf{1})$ the set of unitary generic
automorphic representations of $\mathrm{PGL}_2/F$ of type $*$ that are
unramified at all places $v'\nmid\mathfrak{q}$.
\end{defn}

Once the support of $\mathcal{Q}_{\mathfrak{q},\omega}(\sigma)$ is understood,
we obtain the estimate
\begin{equation}\label{1.2}
\sum_{\substack{\pi\in \mathcal{F}_{t}^{\zeta}(\mathfrak{q}^3;\omega)
\cup \mathcal{F}_{t}^{-\zeta}(\mathfrak{q}^3;\omega)\\
C_v(\pi)\ll \mathbf{C}_v,\ v\mid\infty}}
|L(1/2,\pi)|^4
\ll
\mathbf{C}_{\infty}N_F(\mathfrak{q})^{2}
+
\frac{\mathbf{C}_{\infty}^{1/2}}{N_F(\mathfrak{q})}
\sum_{*\in\{\RNum{1},\RNum{2},\RNum{3},\RNum{4},\RNum{5}\}} S_*,
\end{equation}
where
\begin{align*}
S_* :=
\int_{\substack{\sigma\in \mathcal{F}_*(\mathfrak{q}^2;\mathbf{1})\\
C_v(\sigma)\le
\mathbf{C}_v^{\varepsilon}N_F(\mathfrak{q})^{\varepsilon},\
v\mid\infty}}
\mathcal{Q}_{\mathfrak{q},\omega}(\sigma)
L(1/2,\sigma)^3
|L(1/2,\sigma\times\overline{\omega})|
\,d\mu_{\sigma}.
\end{align*}

This reveals a conductor-lowering phenomenon: the fourth moment over
representations of conductor $\mathfrak q^3$ is controlled by 
moments over automorphic representations of $\mathrm{PGL}_2/F$
of conductor $\mathfrak q^2$. In particular, the reciprocity formula
transfers the sparse family
$\mathcal{F}_{t}^{\zeta}(\mathfrak{q}^3;\omega)\cup
\mathcal{F}_{t}^{-\zeta}(\mathfrak{q}^3;\omega)$
to a denser family of $\mathrm{PGL}_2$ representations of conductor
$\mathfrak q^2$.

The weight $\mathcal{Q}_{\mathfrak{q},\omega}(\sigma)$ plays a role
analogous to the hybrid character sum $g(\chi,\psi)$ in
\cite[equation (11.10)]{CI00}. In the essential case, it admits an
explicit description in terms of $\varepsilon$-factors, Gauss and Jacobi sums,
leading to square-root cancellation. This estimate is not uniform over the
entire dual spectrum: larger values may occur on thinner spectral strata, where
they are compensated for by the sparsity or additional structure of the
corresponding families. This cancellation--sparsity dichotomy is reminiscent
of the work of Petrow--Young \cite{PY20,PY23}: for cube-free conductor, their
character sum $g(\chi,\psi)$ exhibits square-root cancellation uniformly in
$\psi$, whereas at prime-cube conductor it can be larger for certain singular
characters, which are confined to a small number of cosets of a character
subgroup of lower conductor. Combined with suitable moment bounds for
$L$-functions, this structure yields the Weyl-type estimate.

The required moment estimates on the $\mathrm{PGL}_2$ side rely on the
fourth moment of $\mathrm{GL}_2$ $L$-functions established in
\cite{Yan25} and the cubic moment of $\mathrm{PGL}_2$ $L$-functions
proved by Conrey--Iwaniec \cite{CI00} and extended to number fields by
Nelson \cite{Nel19}.

\subsection{Conceptual remarks}\label{sec1.3}

We record several remarks explaining the role of the refined spectral
reciprocity formula \eqref{1.2}, its relation to existing approaches,
and the structural features underlying the conductor-lowering
mechanism. We also briefly discuss the Rankin--Selberg analogue and
the obstacles that prevent the present method from yielding a Weyl
bound in that setting.

\begin{remark}[Why a refined reciprocity formula is needed]
The explicit symmetric spectral reciprocity formula of
\cite[equation (1.3)]{Yan25} applies to fourth moments over the
\emph{full} family $\mathcal{F}(\mathfrak q^3;\omega)$ of unitary
generic representations of conductor $\mathfrak q^3$ and central
character $\omega$. In that setting the reciprocity formula produces
a main term of the same order of magnitude as the size of the family
(roughly $N_F(\mathfrak q)^3$), together with a dual moment of the form
\begin{align*}
\mathbf{C}_{\infty}^{\frac12}N_F(\mathfrak q)^{\frac32}
\int_{\substack{\sigma\in \mathcal{F}(\mathcal{O}_F;\mathbf{1})\\
C_v(\sigma)\le
\mathbf{C}_v^{\varepsilon}N_F(\mathfrak q)^{\varepsilon},\
v\mid\infty}}
|\lambda_{\sigma}(\mathfrak q^3)|
L(1/2,\sigma)^3
|L(1/2,\sigma\times\overline{\omega})|
\,d\mu_{\sigma},
\end{align*}
where $\lambda_{\sigma}(\mathfrak q^3)$ denotes the Hecke eigenvalue.
In principle, subconvexity could be obtained from nontrivial bounds for
$|\lambda_{\sigma}(\mathfrak q^3)|$ together with amplification.
However, since the main term is of size comparable to the family
itself, this framework is insufficient to reach the Weyl exponent.

The refined reciprocity formula \eqref{1.2}, however, restricts the
spectral sum to the sparse family with simple supercuspidal
ramification, whose size is roughly $N_F(\mathfrak q)^2$. Moreover,
on the dual side the level-$\mathcal{O}_F$ spectrum is replaced by
representations of conductor $\mathfrak q^2$, and the Hecke eigenvalue
$\lambda_{\sigma}(\mathfrak q^3)$ is replaced by the more delicate
arithmetic weight $\mathcal Q_{\mathfrak q,\omega}(\sigma)$. In this
setting the conductor-lowering mechanism remains effective and,
combined with a precise analysis of
$\mathcal Q_{\mathfrak q,\omega}(\sigma)$, makes the Weyl bound
accessible. This explains in what sense \eqref{1.2} refines the
explicit formula of \cite{Yan25}.
\end{remark}

\begin{remark}[Comparison with Jutila's method]\label{rmk1.7}
Jutila's argument starts from an approximate functional equation and applies Kuznetsov and Voronoi transformations to reduce the problem to binary additive divisor sums, which are then treated using the spectral theory of automorphic forms. By contrast, our approach proceeds through symmetric spectral reciprocity. After localizing the representations to a suitable sparse family $\mathcal{F}$ satisfying \eqref{eq1.4}, an explicit analysis of the dual side reveals a level-lowering phenomenon. The main arithmetic input is therefore the analysis of the ramification weight $\mathcal{Q}_{\mathfrak{q},\omega}(\sigma)$, rather than that of binary additive divisor sums. We expect that an Archimedean analogue of refined symmetric spectral reciprocity could provide a new proof of Jutila's theorem and extend it to number fields.

A related analytic approach was recently developed by Miao--Zhang \cite{MZ25} for a specific family of triple product $L$-functions associated with $\mathrm{PGL}_2/\mathbb{Q}$ in the cubic level aspect. They prove a subconvexity bound that attains Weyl-type strength under the Ramanujan conjecture.
\end{remark}

\begin{remark}[Comparison with Blomer--Jana--Nelson's method]
It is instructive to compare our approach with that of Blomer--Jana--Nelson \cite{BJN23}. Their method combines the Ichino--Watson formula with the Kuznetsov trace formula to study triple product periods, and hence blends integral representations with analytic number theory. In contrast, the symmetric spectral reciprocity formula may be viewed as an Eisenstein-series analogue of this framework. Our refined reciprocity formula \eqref{1.2}, which is a special case of Theorem \ref{thm2.1}, is obtained from the spectral theory of period integrals together with automorphic projection in the spirit of the trace formula. This makes it possible to treat substantially narrower spectral windows without using the Kuznetsov formula.

A key feature of our method is that the relevant dual families and moments can be localized explicitly. The dual side contains a ramification weight
$\mathcal Q_{\mathfrak{q},\omega}(\sigma)$ whose dependence on
$\mathfrak{q}$ is amenable to direct analysis. In the essential case, this weight is expressed in terms of Gauss and Jacobi sums, which exhibit square-root cancellation. This explicit structure allows us to work uniformly over general number fields
$F$ while keeping the arithmetic mechanism of the reciprocity formula transparent. To the best of our knowledge, this is the first use of a refined spectral reciprocity framework to analyze such period integrals. We expect this method to have further applications.
\end{remark}

\begin{remark}[Choice of test functions]
If the test function in Theorem \ref{thm2.1} is chosen so that the left-hand side of
\eqref{1.2} detects only representations
$\pi\in\mathcal F_t^\zeta(\mathfrak{q}^3;\omega)$---for instance by using
the matrix coefficients in \cite[(5.18)]{Kni25}---then the weight
$\mathcal Q_{\mathfrak{q},\omega}(\sigma)$ need no longer be supported
only on representations of conductor $\mathfrak{q}^2$ and may also
receive contributions from conductor $\mathfrak{q}^3$. In such a
situation the conductor-lowering phenomenon disappears. This reflects a
kind of \textit{uncertainty principle} inherent in the reciprocity formula: a
test function that sharply isolates the family
$\mathcal F_t^\zeta(\mathfrak{q}^3;\omega)$ may destroy the interference
responsible for the conductor drop.

Here $\zeta$ denotes the local root number at $\mathfrak{q}$. When the 
central character $\omega=\mathbf{1}$, however, the situation is
different. In this case the global root number is $\pm1$, and
representations with root number $-1$ do not contribute since
$L(1/2,\pi)=0$. Thus restricting to
$\mathcal F_t^\zeta(\mathfrak{q}^3;\omega)$ effectively amounts to the
pair $\mathcal F_t^{\zeta}(\mathfrak{q}^3;\omega)\cup
\mathcal F_t^{-\zeta}(\mathfrak{q}^3;\omega)$, and the conductor-lowering
phenomenon persists. When the central character is nontrivial, however,
the parity constraint from the functional equation no longer forces one
of the two root-number families to vanish. Nevertheless, since the
spectral side of the reciprocity formula is nonnegative---reflecting the
symmetry of the spectral reciprocity---we may enlarge the family
$\mathcal F_t^\zeta(\mathfrak{q}^3;\omega)$ to
$\mathcal F_t^{\zeta}(\mathfrak{q}^3;\omega)\cup
\mathcal F_t^{-\zeta}(\mathfrak{q}^3;\omega)$
without essential loss and thereby recover the conductor-lowering
phenomenon.
\end{remark}

\begin{remark}[The Rankin--Selberg case]
One may ask whether Theorem~\ref{thmA} extends to the Rankin--Selberg
case. The corresponding reciprocity formula was established in
\cite{HY25}, together with the required local analysis at all
$v\nmid\mathfrak q$. Moreover, the calculations at $v\mid\mathfrak q$
carried out in the present paper remain valid in that setting.
Consequently a similar conductor-lowering phenomenon arises for
Rankin--Selberg $L$-functions, together with an explicit description of
the weight $\mathcal Q_{\mathfrak q,\omega}(\sigma)$.

For general $\omega$, the argument would require a sharp bound for the
first moment of $\mathrm{GL}_3\times\mathrm{GL}_2$ $L$-functions with the
$\mathrm{GL}_3$ form fixed and the $\mathrm{GL}_2$ form varying over a
sparse family, analogous to the cubic moment in \cite{CI00}. Such an
estimate is presently unavailable, and therefore the Weyl bound does
not follow in the Rankin--Selberg case in general.

When $F=\mathbb{Q}$ and $\omega=\mathbf{1}$, the required input can be
supplied by the weaker moment bound of \cite[Theorem 1.2]{GHLN24}
together with the fifth moment estimates of \cite[Theorem 3]{BK19}, after
refining the dependence on the spectral parameters from exponential to
polynomial. Since this requires only a different use of existing moment
estimates, we omit the details.
\end{remark}

\begin{remark}[Sub-Weyl Expectation]
Suppose $F=\mathbb{Q}$ and $\omega=\mathbf{1}$. The essential part of the dual side is roughly a sum of the form
\begin{equation}\label{e1.4}
q^{-\frac{1}{2}}\sum_{\text{$\chi\, (\mathrm{mod}\, q)$: primitive}}\sum_{\substack{\sigma\in \mathcal{F}(q^2;\mathbf{1})\\
\sigma\otimes\chi\in \mathcal{F}(q;\chi^2)}}\varepsilon(1/2,\chi)J(\chi,\chi)|L(1/2,\sigma)|^4,
\end{equation}
where $\varepsilon(1/2,\chi)$ is the root number and $J(\chi,\chi)$ denotes the Jacobi sum. Using the triangle inequality, the sharp bound for $J(\chi,\chi)$, and the fourth moment of $\mathrm{GL}_2$ $L$-functions, in a manner parallel to \cite{CI00}, one obtains the Weyl bound. To go beyond the Weyl exponent, one would need to exploit additional cancellation in \eqref{e1.4}, which seems quite challenging but also of considerable interest. With this goal in mind, we give a detailed explicit calculation in \textsection\ref{sec5}, from which one can recover the precise form of \eqref{e1.4}. Although the resulting expression is more involved, it makes the support of the weight $\mathcal{Q}_{\mathfrak{q},\omega}(\sigma)$ and its size more transparent.
\end{remark}

\subsection{Outline of the Paper}

We briefly outline the structure of the paper.

\begin{itemize}

\item In \textsection\ref{sec2} we review the symmetric spectral reciprocity formula of \cite{Yan25}:
\begin{align*}
\mathcal{J}_{\mathrm{Spec}}^{\heartsuit}(\mathbf{0},R(f_{\mathfrak{q}})\boldsymbol{h})
=
\mathcal{I}_{\Spec}^{\heartsuit}(\mathbf{0},R(f_{\mathfrak{q}})\boldsymbol{h})
+\text{Degenerate},
\end{align*}
where ``\text{Degenerate}'' consists of $24$ additional terms; see Theorem \ref{thm2.1}.

\item In \textsection\ref{sec3} we review the construction of simple supercuspidal representations, construct the test function, and compute the relevant local period integrals on the spectral side.

\item In \textsection\ref{sec4} we construct an explicit orthonormal basis for the space of $I_v(1)$-invariant vectors in the Whittaker model.

\item In \textsection\ref{sec5} we compute certain ramified local period integrals. These calculations play a crucial role in the analysis of the ramification weight $\mathcal{Q}_{\mathfrak{q},\omega}(\sigma)$.

\item In \textsection\ref{sec6} we choose suitable automorphic data for the reciprocity formula tailored to our subconvexity application.

\item In \textsection\ref{sec7} we establish a lower bound for
$\mathcal{J}_{\mathrm{Spec}}^{\heartsuit}(\mathbf{0},R(f_{\mathfrak{q}})\boldsymbol{h})$
in terms of the left-hand side of \eqref{1.2}.

\item In \textsection\ref{sec8} we use the calculations of \textsection\ref{sec5} to derive a sharp upper bound for
$\mathcal{J}_{\mathrm{Spec}}^{\heartsuit}(\mathbf{0},R(f_{\mathfrak{q}})\boldsymbol{h})$
in terms of the second term on the right-hand side of \eqref{1.2}.

\item In \textsection\ref{sec9} and \textsection\ref{sec10} we bound the degenerate terms in the reciprocity formula.

\item In \textsection\ref{sec11} we combine the preceding estimates to complete the proof of Theorem~\ref{thmA}.

\end{itemize}

\subsection{Notation}

\subsubsection{Number Fields and Measures}\label{1.1.1}
Let $F$ be a number field with ring of integers $\mathcal{O}_F.$ Let $N_F$ be the absolute norm. Let $\mathfrak{O}_F$ be the different of $F.$ Let $\mathbb{A}_F$ be the adele group of $F.$ Let $\Sigma_F$ be the set of places of $F.$ Denote by $\Sigma_{F,\fin}$ (resp. $\Sigma_{F,\infty}$) the set of non-Archimedean (resp. Archimedean) places. For $v\in \Sigma_F,$ we denote by $F_v$ the corresponding local field. For a non-Archimedean place $v,$ let $\mathcal{O}_v$ be the ring of integers of $F_v$, and $\mathfrak{p}_v$ be the maximal prime ideal in $\mathcal{O}_v$. Given an integral ideal $\mathcal{I},$ we say $v\mid \mathcal{I}$ if $\mathcal{I}\subseteq \mathfrak{p}_v.$ Fix a uniformizer $\varpi_{v}\in\mathfrak{p}_v.$ Denote by $e_v(\cdot)$ the evaluation relative to $\varpi_v$ normalized as $e_v(\varpi_v)=1.$ Let $d_v=e_v(\mathfrak{D}_{F})$ be the ramification index and $q_v$ be the cardinality of $\mathcal{O}_v/\mathfrak{p}_v.$ We use $v\mid\infty$ to indicate an Archimedean place $v$ and write $v<\infty$ if $v$ is non-Archimedean. Let $|\cdot|_v$ be the norm in $F_v.$ Put $|\cdot|_{\infty}=\prod_{v\mid\infty}|\cdot|_v$ and $|\cdot|_{\fin}=\prod_{v<\infty}|\cdot|_v.$ Let $|\cdot|_{\mathbb{A}_F}=|\cdot|_{\infty}\otimes|\cdot|_{\fin}$. We will simply write $|\cdot|$ for $|\cdot|_{\mathbb{A}_F}$ in calculation over $\mathbb{A}_F^{\times}$ or its quotient by $F^{\times}$.   

Let $\psi_{\mathbb{Q}}$ be the additive character on $\mathbb{Q}\backslash \mathbb{A}_{\mathbb{Q}}$ such that $\psi_{\mathbb{Q}}(t_{\infty})=\exp(2\pi it_{\infty}),$ for $t_{\infty}\in \mathbb{R}\hookrightarrow\mathbb{A}_{\mathbb{Q}}.$ Let $\psi=\psi_F=\psi_{\mathbb{Q}}\circ \Tr_F,$ where $\Tr_F$ is the trace map. Then $\psi(t)=\prod_{v\in\Sigma_F}\psi_v(t_v)$ for $t=(t_v)_v\in\mathbb{A}_F.$ For $v\in \Sigma_F,$ let $dt_v$ be the additive Haar measure on $F_v,$ self-dual relative to $\psi_v.$ Then $dt=\prod_{v\in\Sigma_F}dt_v$ is the standard Tamagawa measure on $\mathbb{A}_F$. Let $d^{\times}t_v=\zeta_{F_v}(1)dt_v/|t_v|_v,$ where $\zeta_{F_v}(\cdot)$ is the local Dedekind zeta factor. In particular, $\Vol(\mathcal{O}_v^{\times},d^{\times}t_v)=\Vol(\mathcal{O}_v,dt_v)=q_v^{-d_v/2}$ for all finite place $v.$ Moreover, $\Vol(F\backslash\mathbb{A}_F; dt_v)=1$ and $\Vol(F\backslash\mathbb{A}_F^{(1)},d^{\times}t)=\underset{s=1}{\Res}\ \zeta_F(s),$ where $\mathbb{A}_F^{(1)}$ is the subgroup of ideles $\mathbb{A}_F^{\times}$ with norm $1,$ and $\zeta_F(s)=\prod_{v<\infty}\zeta_{F_v}(s)$ is the finite Dedekind zeta function. Denote by $\widehat{F^{\times}\backslash\mathbb{A}_F^{(1)}}$  the Pontryagin dual of $F^{\times}\backslash\mathbb{A}_F^{(1)}.$

\subsubsection{Reductive Groups}\label{sec1.3.2}
For an algebraic group $H$ over $F$, we will denote by $[H]:=H(F)\backslash H(\mathbb{A}_F).$ We equip measures on $H(\mathbb{A}_F)$ as follows: for each unipotent group $U$ of $H,$ we equip $U(\mathbb{A}_F)$ with the Haar measure such that, $U(F)$ being equipped with the counting measure. The measure of $[U]$ is $1.$ We equip the maximal compact subgroup $K$ of $H(\mathbb{A}_F)$ with the Haar measure such that $K$ has total mass $1.$ When $H$ is split, we also equip the maximal split torus of $H$ with Tamagawa measure induced from that of $\mathbb{A}_F^{\times}.$

In this paper we set  $A=\diag(\mathrm{GL}(1),1),$ and $G=\mathrm{GL}_2.$ Let $B$ be the group of upper triangular matrices in $G$.  
Let $\overline{G}=Z\backslash G$ and $B_0=Z\backslash B,$ where $Z$ is the center of $G.$ Let $T_B$ be the diagonal subgroup of $B$. Then $A\simeq Z\backslash T_B.$ Let $N$ be the unipotent radical of $B$. Let $w=\begin{pmatrix}
	& 1\\
1	& 
\end{pmatrix}$ be the Weyl element. 

Let $K=\otimes_vK_v$ be a maximal compact subgroup of $G(\mathbb{A}_F),$ where $K_v=\mathrm{U}_2(\mathbb{C})$ if $v$ is complex, $K_v=\mathrm{O}_2(\mathbb{R})$ if $v$ is real, and $K_v=G(\mathcal{O}_v)$ if $v<\infty.$ For $v\in \Sigma_{F,\fin},$ $m\in\mathbb{Z}_{\geq 0},$ define 
\begin{equation}\label{2.1}
K_{0,v}[m]:=\Big\{\begin{pmatrix}
a&b\\
c&d
\end{pmatrix}\in G(\mathcal{O}_v):\ c\in \mathfrak{p}_v^{m}\Big\}.
\end{equation}
For an integral ideal $\mathfrak{q}\subseteq \mathcal{O}_F$, we define $K_0[\mathfrak{q}]:=\otimes_{v<\infty}K_{0,v}[e_v(\mathfrak{q})]$. 
\subsubsection{Whittaker functions}\label{sec1.3.3}
Let $\theta$ be the generic character induced by $\psi$:
\begin{equation}\label{eq1.12}
\theta\left(\begin{pmatrix}
1& u\\
&1
\end{pmatrix}\right)=\psi(u),\  \ u=(u_v)_v\in \mathbb{A}_F.
\end{equation}
Then $\theta=\otimes_v\theta_v$, where $\theta_v\left(\begin{pmatrix}
1& u_v\\
&1
\end{pmatrix}\right):=\psi_v(u_v)$, $v\in \Sigma_F$. 

Let $\pi$ be a generic automorphic representation of $G/F$. Let $\phi\in \pi$ be an automorphic form. Define the Whittaker function of $\phi$ (relative to $\theta$) by 
\begin{equation}\label{eq1.3}
W_{\phi}(g):=\int_{[N]}\phi(ug)\overline{\theta}(u)du.
\end{equation}
Let $W_{\phi}^*(g):=W_{\phi}(\diag(-1,1)
g)$ be the Whittaker function of $\phi$ relative to $\overline{\theta}$. 


\subsubsection{Gauss Sums}
Let $v<\infty$ and $\chi_v$ be a unitary character of $F_v^{\times}$. Let 
\begin{align*}
G(\chi_v,\psi_v):=\sum_{\alpha\in \mathbb{F}_v^{\times}}\chi_v(\alpha)\psi_v(\varpi_v^{-r_{\chi_v}}\alpha)
\end{align*}
be the Gauss sum. We have $|G(\chi_v,\psi_v)|=q_v^{r_{\chi_v}/2}$ if $r_{\xi_v}\geq 1$.

\subsubsection{Jacobi Sums}
Let $v<\infty$ and $\chi_{1,v}$ and $\chi_{2,v}$ be unitary character of $F_v^{\times}$. Let 
\begin{align*}
J(\chi_{1,v},\chi_{2,v}):=\sum_{\alpha\in \mathbb{F}_v-\{0,1\}}\chi_{1,v}(\alpha)\chi_{2,v}(1-\alpha)
\end{align*}
be the Jacobi sum. 

\subsubsection{Conventions on Local Factors}\label{sec1.2.7}
Throughout the paper, for any pure tensor or product over $\mathbb{A}_F$ (e.g., sections, Whittaker functions, or $L$-functions), we use a subscript $v$ to denote its $v$-th local component in the restricted tensor product decomposition. 

For a local representation $\sigma_v$, we write $L(s,\sigma_v)$ for its local $L$-factor. When $\sigma_v$ is the $v$-component of an automorphic representation $\sigma$, we instead write $L_v(s,\sigma_v)$ to indicate that it is the $v$-th Euler factor of the global $L$-function. We denote by $\Lambda(s,\sigma)$ the completed $L$-function (including the Archimedean factors) and by $L(s,\sigma)$ its finite part.

\subsubsection{The $\varepsilon$-conventions}\label{sec1.5.4}
Throughout this paper, we adopt the $\varepsilon$-convention, whereby
$\varepsilon$ denotes a positive quantity that may be taken arbitrarily
small, with its value allowed to vary from one occurrence to another.

\medskip

\textbf{Acknowledgements}
I would also like to express my sincere gratitude to Peter Humphries, Rizwanur Khan, Wenzhi Luo, Philippe Michel and Matthew Young for their precise comments and valuable suggestions.

\section{Symmetric Spectral Reciprocity}\label{sec2}

In this section, we briefly recall the symmetric spectral reciprocity formula established in \cite{Yan25} and develop the refinement needed to isolate a sparse family on the spectral side.

\subsection{Eisenstein Series}\label{sect2.1}
Let $\sigma=\chi|\cdot|^s\boxplus \chi'|\cdot|^{-s}$ be an induced representation of $G/F$. Let $h(\cdot,s)$ be a section in $\sigma$. Define 
\begin{align*}
E(x,s)=\mathrm{Eis}(h)(x,s):=\sum_{\delta\in B(F)\backslash G(F)}h(\delta x,s),\ \ \ x\in G(\mathbb{A}_F). 
\end{align*}

By Poisson summation, the series $E(x,s)$ converges absolutely when $\Re(s)\ggg 1$, and admits a meromorphic function to $s\in \mathbb{C}$ with a functional equation. 

\subsubsection{Fourier Expansions}
Define
\begin{align*}
h^{\Diamond}(x,s)
:=\int_{N(\mathbb{A}_F)} h(wux,s)\,du,
\end{align*}
which is the section obtained by applying the standard intertwining operator to $h$.  
Moreover, the associated Whittaker function (defined by \eqref{eq1.3}) is equal to 
\begin{equation}\label{f2.1}
W(x,s)=W_{E(\cdot,s)}(x)
=\int_{N(\mathbb{A}_F)} h(wux,s)\overline{\theta(u)}\,du. 
\end{equation}

We have the Fourier expansion 
\begin{equation}\label{eq2.6}
E(x,s)=\mathcal{C}(x,s)+\mathcal{G}(x,s),
\end{equation}
where 
\begin{align*}
\mathcal{C}(x,s)=&h(x,s)+h^{\Diamond}(x,s),\ \ \ \ \ 
\mathcal{G}(x,s)=\sum_{\alpha\in F^{\times}}W\left(\begin{pmatrix}
\alpha\\
& 1
\end{pmatrix}x,s\right).
\end{align*}

\subsubsection{Godement Sections}

For $\Re(s)\ggg 1$, the section $h(\cdot,s)$ can be constructed via the Tate integral (and extended to general $s$ by meromorphic continuation) as
\begin{align*}
h(x,\Phi,\chi,\chi',s)
:=\chi(\det x)|\det x|^{s+\frac{1}{2}}
\int_{\mathbb{A}_F^{\times}}
\Phi((0,t)x)\chi\chi'^{-1}(t)|t|^{2s+1}\,d^{\times}t,
\end{align*}
where $\Phi$ is a Schwartz--Bruhat function on $\mathbb{A}_F$.

\subsubsection{Eisenstein Series from  Products}
Let $\pi_{i}=\chi_i|\cdot|^{s_i}\boxplus \chi_i'|\cdot|^{-s_i}$ and let $h_i(\cdot,s_i)$ be a section in $\pi_i$, $1\leq i\leq 2$. Then the product $h_1(\cdot,s_1)h_2(\cdot,s_2)$ is a section in $\chi_1\chi_2|\cdot|^{1/2+s_1+s_2}\boxplus \chi_1'\chi_2'|\cdot|^{-1/2-s_1-s_2}$. The Whittaker function of the Eisenstein series associated with $h_1(\cdot,s_1)h_2(\cdot,s_2)$ is
\begin{equation}\label{e2.3}
W_{h_1,h_2}(x,s_1,s_2)
=\int_{N(\mathbb{A}_F)}
h_1(wux,s_1)h_2(wux,s_2)\overline{\theta}(u)\,du,
\end{equation}
which converges absolutely for all $(s_1,s_2)\in\mathbb{C}^2$.

\subsubsection{Eisenstein Series from the Continuous Spectrum}
Let $\mu_1, \mu_2\in \widehat{F^{\times}\backslash \mathbb{A}_F^{(1)}}$, and  
\begin{multline}\label{f2.3}
H(\mu_1,\mu_2):=\Big\{h\in L^2(K):\ h\left(\begin{pmatrix}
a& b\\
&d
\end{pmatrix}k\right)=\mu_1(a)\mu_2(d)h(k),\\ 
\forall\ k\in K,\ \begin{pmatrix}
a& b\\
&d
\end{pmatrix}\in B(\mathbb{A}_F)\cap K
\Big\}.
\end{multline}

Given $h\in H(\mu_1,\mu_2)$ and $\lambda\in \mathbb{C}$, we define 
\begin{align*}\label{eq2.2}
S(h)\left(\begin{pmatrix}
a& b\\
&d
\end{pmatrix}k,\lambda\right):=\mu_1(a)\mu_2(d)\Big|\frac{a}{d}\Big|^{\lambda+1/2}h(k),
\end{align*}
where $a,d\in \mathbb{A}_F^{\times}$, $b\in \mathbb{A}_F$, and $k\in K$. Define the associated Eisenstein series by
\begin{equation}\label{fc2.4}
E(x,h,\lambda):=\sum_{\gamma\in B(F)\backslash G(F)}S(h)(\gamma x,\lambda),\ \ \Re(\lambda)>1/2.
\end{equation}

These Eisenstein series are used to construct the continuous spectrum of $L^2([G],\omega)$.

\subsection{Rankin--Selberg Convolutions}\label{sec2.1.}

Let $\pi_1$ and $\pi_2$ be two generic representations of $[G]$. Let
$W_1=\otimes_v W_{1,v}$ and $W_2=\otimes_v W_{2,v}$ be pure tensors in the
Whittaker models of $\pi_1$ and $\pi_2$, respectively.
Let $\pi_3=\chi|\cdot|^{s}\boxplus \chi'|\cdot|^{-s}$ be an induced representation of $[G]$, where $\chi$ and $\chi'$ are unitary
characters, and let $h_3=\otimes_v h_{3,v}$ be a section of $\pi_3$.
Assume that the product of the central characters of $\pi_1$, $\pi_2$, and
$\pi_3$ is trivial.

For $\Re(s)\gg 1$, we define
\begin{align*}
\Psi(W_1,W_2,h_3)
:=
\int_{N(\mathbb{A}_F)\backslash \overline{G}(\mathbb{A}_F)}
W_1(x)W_2(x)h_3(x)\,dx
=
\prod_{v\leq \infty}
\Psi_v(W_{1,v},W_{2,v},h_{3,v}),
\end{align*}
where, for each place $v\le \infty$,
\begin{equation}\label{c2.6}
\Psi_v(W_{1,v},W_{2,v},h_{3,v})
:=
\int_{N(F_v)\backslash \overline{G}(F_v)}
W_{1,v}(x_v)W_{2,v}(x_v)h_{3,v}(x_v)\,dx_v.
\end{equation}

By Rankin--Selberg theory, $\Psi(W_1,W_2,h_3)$ admits a meromorphic continuation $\widetilde{\Psi}(W_1,W_2,h_3)$ to all $s\in\mathbb{C}$, satisfying
\begin{align*}
\widetilde{\Psi}(W_1,W_2,h_3)\propto \Lambda(1/2+s,\pi_1\times\pi_2\times\chi).
\end{align*}

\subsection{The Symmetric Spectral Reciprocity Formula}
\subsubsection{Automorphic Data} 
For $1\le j\le 4$, let $s_j\in\mathbb{C}$ and let $\chi_j$, $\omega_j$ be unitary Hecke characters over $F$. Suppose $\omega_1\omega_2=\omega_3\omega_4=\omega$.  Set
\begin{align*}
\pi_j:=\chi_j|\cdot|^{s_j}\boxplus \chi_j^{-1}\omega_j|\cdot|^{-s_j}.
\end{align*}
Let $h_j(\cdot,s_j)$ be a section of $\pi_j$, and let $W_j(\cdot,s_j)$ be the Whittaker function associated  with $h_j(\cdot,s_j)$ as defined in \eqref{f2.1}. Denote by 
\begin{align*}
\boldsymbol{h}:=(h_1(\cdot,s_1),h_2(\cdot,s_2),h_3(\cdot,s_3),h_4(\cdot,s_4)).
\end{align*}

For $g\in G(\mathbb{A}_F)$, define 
\begin{equation}\label{eq2.7}
R_{12}(g)\boldsymbol{h}:=(\pi_{1}(g)h_1(\cdot,s_1),\pi_{2}(g)h_2(\cdot,s_2),h_3(\cdot,s_3),h_4(\cdot,s_4)). 
\end{equation}

\subsubsection{Period Integrals: Cuspidal}
Let $\pi\in \mathcal{A}_0([G],\omega)$. We define 
\begin{align*}
\widetilde{\Psi}(\pi;\mathbf{s},\boldsymbol{h}):=\sum_{\phi\in\mathfrak{B}(\pi)}\widetilde{\Psi}(\overline{W_{\phi}},W_1(\cdot,s_1),h_2(\cdot,s_2))\widetilde{\Psi}(W_{\phi},\overline{W_3(\cdot,\overline{s_3})},\overline{h_4(\cdot,\overline{s_4})}),
\end{align*}
where $\mathfrak{B}(\pi)$ is an orthonormal basis of $\pi$. 

Let $\omega'=\omega_1\overline{\omega_3}$. For $\sigma\in \mathcal{A}_0([G],\omega')$, we define 
\begin{align*} 
\widetilde{\Psi}^*(\sigma;\mathbf{s},\boldsymbol{h}):=\sum_{\phi\in\mathfrak{B}(\sigma)}\widetilde{\Psi}(\overline{W_{\phi}},W_1(\cdot,s_1),\overline{h_3(\cdot,\overline{s_3})})\widetilde{\Psi}(W_{\phi}^*,W_2(\cdot,s_2),\overline{h_4(\cdot,\overline{s_4})}).
\end{align*}

\subsubsection{Period Integrals: Eisenstein Series}
Let $\mu\in \widehat{F^{\times}\backslash\mathbb{A}_F^{(1)}}$ and $\lambda\in i\mathbb{R}$.  
Let $\mathfrak{B}(\mu,\overline{\mu}\omega)$ be an orthonormal basis of $K$-finite vectors in the Hilbert space $H(\mu,\overline{\mu}\omega)$ defined in \eqref{f2.3}.  
Define  
\begin{multline}\label{c3.5}
\widetilde{\Psi}(\lambda,\mu;\mathbf{s},\boldsymbol{h}):=\frac{1}{2}\sum_{h\in \mathfrak{B}(\mu,\overline{\mu}\omega)}\widetilde{\Psi}(\overline{W_{E(\cdot,h,-\overline{\lambda})}},W_1(\cdot,s_1),h_2(\cdot,s_2))\\\widetilde{\Psi}(W_{E(\cdot,h,\lambda)},\overline{W_3(\cdot,\overline{s_3})},\overline{h_4(\cdot,\overline{s_4})}),
\end{multline}
and 
\begin{multline}\label{c3.5}
\widetilde{\Psi}^*(\lambda,\mu;\mathbf{s},\boldsymbol{h}):=\frac{1}{2}\sum_{h\in \mathfrak{B}(\mu,\overline{\mu}\omega')}\widetilde{\Psi}(\overline{W_{E(\cdot,h,-\overline{\lambda})}},W_1(\cdot,s_1),\overline{h_3(\cdot,\overline{s_3})})\\
\widetilde{\Psi}(W_{E(\cdot,h,\lambda)}^*,W_2(\cdot,s_2),\overline{h_4(\cdot,\overline{s_4})}),
\end{multline}
where $E(\cdot,h,\lambda)$ is as in \eqref{fc2.4}. Then $\widetilde{\Psi}(\lambda,\mu;\mathbf{s},\boldsymbol{h})$ and $\widetilde{\Psi}^*(\lambda,\mu;\mathbf{s},\boldsymbol{h})$ are meromorphic functions of $(\lambda,\mathbf{s})\in\mathbb{C}^5$.  

\subsubsection{The Spectral and the Dual Sides}\label{sec2.3.4}
Define 
\begin{align*}
&\mathcal{J}_{\mathrm{Spec}}^{\heartsuit}(\mathbf{s},\boldsymbol{h}):=\sum_{\pi\in \mathcal{A}_0([G],\omega)}\Psi(\pi;\mathbf{s},\boldsymbol{h})+\sum_{\substack{\mu\in \widehat{F^{\times}\backslash\mathbb{A}_F^{(1)}}}}\frac{1}{2\pi i}\int_{i\mathbb{R}}\Psi(\lambda,\mu;\mathbf{s},\boldsymbol{h})d\lambda,\\
&\mathcal{I}_{\Spec}^{\heartsuit}(\mathbf{s},\boldsymbol{h}):=\sum_{\sigma\in \mathcal{A}_0([G],\omega')}\widetilde{\Psi}^*(\sigma;\mathbf{s},\boldsymbol{h})+\sum_{\substack{\mu\in \widehat{F^{\times}\backslash\mathbb{A}_F^{(1)}}}}\frac{1}{2\pi i}\int_{i\mathbb{R}}\widetilde{\Psi}^*(\lambda,\mu;\mathbf{s},\boldsymbol{h})d\lambda.
\end{align*}

By \cite[\textsection 3]{Yan25}, the functions 
$\mathcal{J}_{\Spec}^{\heartsuit}(\mathbf{s},\boldsymbol{h})$ and 
$\mathcal{I}_{\Spec}^{\heartsuit}(\mathbf{s},\boldsymbol{h})$ admit meromorphic continuation to $(\lambda,\mathbf{s})\in\mathbb{C}^5$. 

\subsubsection{Degenerate Terms: the Geometric Side}
Let $\mathbf{s}\in \mathbb{C}^4$.  Define 
\begin{align*}
&\widetilde{\Psi}_{\mathrm{Geo}}^{(1)}(\mathbf{s},\boldsymbol{h}):=\widetilde{\Psi}(W_1(\cdot,s_1),\overline{W_3(\cdot,\overline{s_3})},h_2(\cdot,s_2)\overline{h_4(\cdot,\overline{s_4})}),\\
&\widetilde{\Psi}_{\mathrm{Geo}}^{(2)}(\mathbf{s},\boldsymbol{h}):=\widetilde{\Psi}(W_1(\cdot,s_1),\overline{W_3(\cdot,\overline{s_3})},h_2^{\Diamond}(\cdot,s_2)\overline{h_4(\cdot,\overline{s_4})}),\\
&\widetilde{\Psi}_{\mathrm{Geo}}^{(3)}(\mathbf{s},\boldsymbol{h}):=-\widetilde{\Psi}(W_1^*(\cdot,s_1),W_2(\cdot,s_2),\overline{h_3(\cdot,\overline{s_3})}\overline{h_4(\cdot,\overline{s_4})}),\\
&\widetilde{\Psi}_{\mathrm{Geo}}^{(4)}(\mathbf{s},\boldsymbol{h}):=-\widetilde{\Psi}(W_1^*(\cdot,s_1),W_2(\cdot,s_2),\overline{h_3^{\Diamond}(\cdot,\overline{s_3})}\overline{h_4(\cdot,\overline{s_4})}),\\
&\widetilde{\Psi}_{\mathrm{Geo}}^{(5)}(\mathbf{s},\boldsymbol{h}):=\widetilde{\Psi}(W_{h_1,\overline{h_3}}(\cdot,s_1,\overline{s_3}),W_2(\cdot,s_2),\overline{h_4(\cdot,\overline{s_4})}),\\
&\widetilde{\Psi}_{\mathrm{Geo}}^{(6)}(\mathbf{s},\boldsymbol{h}):=\widetilde{\Psi}(W_{h_1^{\Diamond},\overline{h_3}}(\cdot,s_1,\overline{s_3}),W_2(\cdot,s_2),\overline{h_4(\cdot,\overline{s_4})}),\\
&\widetilde{\Psi}_{\mathrm{Geo}}^{(7)}(\mathbf{s},\boldsymbol{h}):=-\widetilde{\Psi}(W_{h_1,h_2}(\cdot,s_1,s_2),\overline{W_3(\cdot,\overline{s_3})},\overline{h_4(\cdot,\overline{s_4})}),\\
&\widetilde{\Psi}_{\mathrm{Geo}}^{(8)}(\mathbf{s},\boldsymbol{h}):=-\widetilde{\Psi}(W_{h_1^{\Diamond},h_2}(\cdot,s_1,s_2),\overline{W_3(\cdot,\overline{s_3})},\overline{h_4(\cdot,\overline{s_4})}).
\end{align*}

\subsubsection{Degenerate Terms: the Spectral Side}

Let $\mathbf{s}\in \mathbb{C}^4$. We define 
\begin{align*}
&\Psi_{\mathrm{RS}}^{(1)}(\mathbf{s},\boldsymbol{h}):=\mathbf{1}_{\mu=\chi_1\chi_2}\underset{\lambda=s_2+s_1-\frac{1}{2}}{\Res}\ \widetilde{\Psi}(\lambda,\mu;\mathbf{s},\boldsymbol{h}),\\
&\Psi_{\mathrm{RS}}^{(2)}(\mathbf{s},\boldsymbol{h}):=-\mathbf{1}_{\mu=\omega\overline{\chi_1\chi_2}}\underset{\lambda=\frac{1}{2}-s_2-s_1}{\Res}\ \widetilde{\Psi}(\lambda,\mu;\mathbf{s},\boldsymbol{h}),\\
&\Psi_{\mathrm{RS}}^{(3)}(\mathbf{s},\boldsymbol{h}):=\mathbf{1}_{\mu=\overline{\chi_1}\omega_1\chi_2}\underset{\lambda=s_2-s_1-\frac{1}{2}}{\Res}\ \widetilde{\Psi}(\lambda,\mu;\mathbf{s},\boldsymbol{h}),\\
&\Psi_{\mathrm{RS}}^{(4)}(\mathbf{s},\boldsymbol{h}):=-\mathbf{1}_{\mu=\omega\chi_1\overline{\omega_1\chi_2}}\underset{\lambda=\frac{1}{2}+s_1-s_2}{\Res}\ \widetilde{\Psi}(\lambda,\mu;\mathbf{s},\boldsymbol{h}),\\
&\Psi_{\mathrm{RS}}^{(5)}(\mathbf{s},\boldsymbol{h}):=\mathbf{1}_{\mu=\omega\chi_3\omega_3^{-1}\chi_4^{-1}}\underset{\lambda=s_4-s_3-\frac{1}{2}}{\Res}\ \widetilde{\Psi}(\lambda,\mu;\mathbf{s},\boldsymbol{h}),\\
&\Psi_{\mathrm{RS}}^{(6)}(\mathbf{s},\boldsymbol{h}):=-\mathbf{1}_{\mu=\overline{\chi_3}\omega_3\chi_4}\underset{\lambda=\frac{1}{2}+s_3-s_4}{\Res}\ \widetilde{\Psi}(\lambda,\mu;\mathbf{s},\boldsymbol{h}),\\
&\Psi_{\mathrm{RS}}^{(7)}(\mathbf{s},\boldsymbol{h}):=\mathbf{1}_{\mu=\omega\chi_3^{-1}\chi_4^{-1}}\underset{\lambda=s_4+s_3-\frac{1}{2}}{\Res}\ \widetilde{\Psi}(\lambda,\mu;\mathbf{s},\boldsymbol{h}),\\
&\Psi_{\mathrm{RS}}^{(8)}(\mathbf{s},\boldsymbol{h}):=-\mathbf{1}_{\mu=\chi_3\chi_4}\underset{\lambda=\frac{1}{2}-s_3-s_4}{\Res}\ \widetilde{\Psi}(\lambda,\mu;\mathbf{s},\boldsymbol{h}).
\end{align*}

\subsubsection{Degenerate Terms: the Dual Side}

Let $\mathbf{s}\in \mathbb{C}^4$. We define 
\begin{align*}
&\Psi_{\mathrm{Dual}}^{(1)}(\mathbf{s},\boldsymbol{h}):=-\mathbf{1}_{\mu=\chi_1\chi_3^{-1}}\underset{\lambda=s_3+s_1-1/2}{\Res}\ \widetilde{\Psi}^*(\lambda,\mu;\mathbf{s},\boldsymbol{h}),\\
&\Psi_{\mathrm{Dual}}^{(2)}(\mathbf{s},\boldsymbol{h}):=\mathbf{1}_{\mu=\omega'\overline{\chi_1}\chi_3}\underset{\lambda=1/2-s_3-s_1}{\Res}\ \widetilde{\Psi}^*(\lambda,\mu;\mathbf{s},\boldsymbol{h}),\\
&\Psi_{\mathrm{Dual}}^{(3)}(\mathbf{s},\boldsymbol{h}):=-\mathbf{1}_{\mu=\overline{\chi_1}\omega_1\overline{\chi_3}}\underset{\lambda=s_3-s_1-1/2}{\Res}\ \widetilde{\Psi}^*(\lambda,\mu;\mathbf{s},\boldsymbol{h}),\\
&\Psi_{\mathrm{Dual}}^{(4)}(\mathbf{s},\boldsymbol{h}):=\mathbf{1}_{\mu=\omega'\chi_1\overline{\omega_1}\chi_3}\underset{\lambda=1/2+s_1-s_3}{\Res}\ \widetilde{\Psi}^*(\lambda,\mu;\mathbf{s},\boldsymbol{h}),\\
&\Psi_{\mathrm{Dual}}^{(5)}(\mathbf{s},\boldsymbol{h}):=-\mathbf{1}_{\mu=\omega'\chi_2\overline{\omega_2\chi_4}}\underset{\lambda=s_4-s_2-1/2}{\Res}\ \widetilde{\Psi}^*(\lambda,\mu;\mathbf{s},\boldsymbol{h}),\\
&\Psi_{\mathrm{Dual}}^{(6)}(\mathbf{s},\boldsymbol{h}):=\mathbf{1}_{\mu=\overline{\chi_2}\omega_2\chi_4}\underset{\lambda=1/2+s_2-s_4}{\Res}\ \widetilde{\Psi}^*(\lambda,\mu;\mathbf{s},\boldsymbol{h}),\\
&\Psi_{\mathrm{Dual}}^{(7)}(\mathbf{s},\boldsymbol{h}):=-\mathbf{1}_{\mu=\omega'\chi_2\overline{\chi_4}}\underset{\lambda=s_4+s_2-1/2}{\Res}\ \widetilde{\Psi}^*(\lambda,\mu;\mathbf{s},\boldsymbol{h}),\\
&\Psi_{\mathrm{Dual}}^{(8)}(\mathbf{s},\boldsymbol{h}):=\mathbf{1}_{\mu=\overline{\chi_2}\chi_4}\underset{\lambda=1/2-s_2-s_4}{\Res}\ \widetilde{\Psi}^*(\lambda,\mu;\mathbf{s},\boldsymbol{h}),
\end{align*}

\subsubsection{The Symmetric Reciprocity}
Define the region
\begin{multline*}
\mathcal{R}^{\heartsuit}:=\Big\{\mathbf{s}\in \mathbb{C}^4:\ \big|\Re(s_2)-|\Re(s_1)|\big|<1/2,\ \big|\Re(s_4)-|\Re(s_3)|\big|<1/2,\\
\big|\Re(s_3)-|\Re(s_1)|\big|<1/2,\ \big|\Re(s_4)-|\Re(s_2)|\big|<1/2\Big\}.
\end{multline*}

Let $\mathbf{s} \in \mathcal{R}^{\heartsuit}$. By \cite[Theorem A]{Yan25},
\begin{equation}\label{f2.8}
\mathcal{J}_{\mathrm{Spec}}^{\heartsuit}(\mathbf{s},\boldsymbol{h})=\ \mathcal{I}_{\Spec}^{\heartsuit}(\mathbf{s},\boldsymbol{h})+\sum_{*\in\{\mathrm{Geo},\mathrm{RS},\mathrm{Dual}\}}\sum_{i=1}^{8}\widetilde{\Psi}_{*}^{(i)}(\mathbf{s},\boldsymbol{h}).
\end{equation}

Although \eqref{f2.8} is sufficient for deriving uniform subconvexity for general $\GL_2$ $L$-functions (see \cite[\textsection 1.3]{Yan25}), it is not strong enough for Theorem \ref{thmA}. Our aim here is to refine this identity by inserting a projection onto the cuspidal spectrum with prescribed simple supercuspidal local components.

We also note that our notation differs slightly from that of loc. cit.: here we write $\boldsymbol{h}$ in place of $\mathfrak{X}$ in order to emphasize the dependence on the underlying automorphic data. This notation is more convenient for the purposes of the present paper, since we shall replace $\boldsymbol{h}$ by an average of $R_{12}(g)\boldsymbol{h}$, defined in \eqref{eq2.7}, so as to incorporate the desired spectral projection.

We now explain this construction. Let $\mathfrak{q}$ be a prime ideal, and let $v$ be the corresponding finite place of $F$. Let $f_{\mathfrak{q}}\in \mathcal{C}_c^{\infty}(\overline{G}(F_v))$.  
For any quantity $G(\mathbf{s},\boldsymbol{h})$, define
\begin{equation}\label{2.9}
G(\mathbf{s},R(f_{\mathfrak{q}})\boldsymbol{h})
:=\int_{\overline{G}(F_v)}
\overline{f_{\mathfrak{q}}(g)}
\,G(\mathbf{s},R_{12}(g^{-1})\boldsymbol{h})\,dg.
\end{equation}

Let $0<\varepsilon<10^{-3}$. Define 
\begin{align*}
\mathbf{B}_{\varepsilon}^4:=\big\{(s_1,s_2,s_3,s_4)\in \mathbb{C}^4:\ |s_1|\leq \varepsilon^2,\ |s_2|\leq 2\varepsilon^2,\ |s_3|\leq 5\varepsilon^2,\ |s_4|\leq 10\varepsilon^2\big\}.
\end{align*}
Let $\mathbf{S}_{\varepsilon}^4=\partial \mathbf{B}_{\varepsilon}^4$ be the boundary of $\mathbf{B}_{\varepsilon}^4$. Define
\begin{equation}\label{2.12}
\widetilde{\Psi}_{*}^{(i)}(\mathbf{0}\mid R(f_{\mathfrak{q}})\boldsymbol{h}):=\frac{1}{16\pi^4}\iiiint_{\mathbf{S}_{\varepsilon}^4}\frac{\widetilde{\Psi}_{*}^{(i)}(\mathbf{s},R(f_{\mathfrak{q}})\boldsymbol{h})}{s_1s_2s_3s_4}ds_1ds_2ds_3ds_4,
\end{equation}
where the subscripts $*\in \{\mathrm{Geo}, \mathrm{RS}, \mathrm{Dual}\}$. By \eqref{f2.8}, we obtain the following. 
\begin{restatable}[]{thm}{SP}\label{thm2.1}
Let notation be as above.  Then   
\begin{align*}
\mathcal{J}_{\mathrm{Spec}}^{\heartsuit}(\mathbf{0},R(f_{\mathfrak{q}})\boldsymbol{h})=\ \mathcal{I}_{\Spec}^{\heartsuit}(\mathbf{0},R(f_{\mathfrak{q}})\boldsymbol{h})+\sum_{*\in\{\mathrm{Geo},\mathrm{RS},\mathrm{Dual}\}}\sum_{i=1}^{8}\widetilde{\Psi}_{*}^{(i)}(\mathbf{0}\mid R(f_{\mathfrak{q}})\boldsymbol{h}).
\end{align*}
\end{restatable}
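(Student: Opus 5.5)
The plan is to deduce Theorem \ref{thm2.1} from \eqref{f2.8} in two averaging steps: first average over $g$ against $\overline{f_{\mathfrak q}(g)}$, then extract the value at $\mathbf s=\mathbf 0$ by a multivariable Cauchy integral over $\mathbf S_\varepsilon^4$.

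\textbf{Step 1: applying \eqref{f2.8} to translated data.} For each $g\in\overline{G}(F_v)$, the tuple $R_{12}(g^{-1})\boldsymbol h$ is again admissible data of the same type. Its entries are sections of the same induced representations $\pi_j$, since $\pi_1(g^{-1})h_1$ and $\pi_2(g^{-1})h_2$ stay in $\pi_1,\pi_2$. Its Whittaker functions are the corresponding translates, because the map $h\mapsto W$ in \eqref{f2.1} is $G(\mathbb A_F)$-equivariant. Hence \eqref{f2.8} holds for $R_{12}(g^{-1})\boldsymbol h$ for every $\mathbf s\in\mathcal R^\heartsuit$ away from the (at most countably many) polar hypersurfaces of the individual terms.

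\textbf{Step 2: integrating against $\overline{f_{\mathfrak q}}$.} Since $f_{\mathfrak q}$ has compact support, integrating over $g$ is a finite-measure average, so linearity gives \eqref{2.9} termwise. The delicate point is the spectral sides $\mathcal J^\heartsuit_{\rm Spec}$ and $\mathcal I^\heartsuit_{\rm Spec}$, which are infinite sums and integrals over the spectrum. To justify exchanging the $g$-integral with $\sum_\pi$ and $\int_{i\mathbb R}d\lambda$, I would invoke the absolute-convergence and meromorphic-continuation estimates of \cite[\textsection3]{Yan25}. These bound each spectral term polynomially in the conductor, uniformly for the $h_j$ ranging over a compact set of $K$-types and translates. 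In our situation $g$ ranges over a compact set and $h_{1,v},h_{2,v}$ are smooth, so $\pi_j(g^{-1})h_{j,v}$ stays in a fixed finite-dimensional space of $K_v$-finite vectors (for instance, those fixed by a fixed open compact subgroup). Therefore these bounds are uniform in $g$, and Fubini applies. The result is
\[
\mathcal J^\heartsuit_{\rm Spec}(\mathbf s,R(f_{\mathfrak q})\boldsymbol h)=\mathcal I^\heartsuit_{\rm Spec}(\mathbf s,R(f_{\mathfrak q})\boldsymbol h)+\sum_{*}\sum_{i=1}^8\widetilde\Psi^{(i)}_*(\mathbf s,R(f_{\mathfrak q})\boldsymbol h),
\]
valid as an identity of meromorphic functions in $\mathbf s$ near $\mathbf 0$.

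\textbf{Step 3: extracting the value at $\mathbf s=\mathbf 0$.} I expect this to be the main obstacle. The individual degenerate terms may have poles on the hyperplanes where the residue conditions in $\Psi_{\rm RS}^{(i)}$ and $\Psi_{\rm Dual}^{(i)}$ become singular, for example where $s_1+s_2=0$ or $s_3=s_1$. Only the total is regular at $\mathbf 0$. The radii $\varepsilon^2,2\varepsilon^2,5\varepsilon^2,10\varepsilon^2$ are chosen distinct so that on $\mathbf S_\varepsilon^4$ none of these linear forms, such as $s_1\pm s_2$, $s_3\pm s_1$ and $s_4\pm s_2$, vanishes. Each term is therefore holomorphic on a neighbourhood of the torus, and $\mathbf S_\varepsilon^4\subset\mathcal R^\heartsuit$. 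I would divide the averaged identity by $s_1s_2s_3s_4$ and integrate over $\mathbf S_\varepsilon^4$, which is legitimate term by term, again by the uniform bounds of Step 2. Both spectral sides are holomorphic on the polydisc $\mathbf B^4_\varepsilon$; this follows from \cite[\textsection3]{Yan25}, where the spectral sides are shown to be holomorphic near $\mathbf 0$. The iterated Cauchy formula then returns $(2\pi i)^4\cdot\mathcal J^\heartsuit_{\rm Spec}(\mathbf 0,\cdot)$, and likewise for $\mathcal I^\heartsuit_{\rm Spec}$. Since $(2\pi i)^4=16\pi^4$, the normalization $\frac1{16\pi^4}$ in \eqref{2.12} is exactly what matches the remaining terms to the definitions there.
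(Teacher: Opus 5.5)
Your proposal is correct and follows the paper's own route. The paper obtains Theorem \ref{thm2.1} in one line from \eqref{f2.8}, and you have supplied the implicit steps correctly: applying \eqref{f2.8} to $R_{12}(g^{-1})\boldsymbol{h}$, averaging against $\overline{f_{\mathfrak q}}$, and recovering the value at $\mathbf s=\mathbf 0$ by the iterated Cauchy formula on the torus, whose radii $\varepsilon^2,2\varepsilon^2,5\varepsilon^2,10\varepsilon^2$ keep every nonzero combination $\sum_j\epsilon_j s_j$ with $\epsilon_j\in\{0,\pm1\}$ away from $0$. One simplification: since \eqref{2.9} defines the averaged spectral sides directly as $g$-integrals, the identity itself needs no interchange of $\int dg$ with $\sum_\pi$ or $\int_{i\mathbb R}d\lambda$. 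It needs only holomorphy in $\mathbf s$ on the closed polydisc for the spectral sides, and near the torus for the degenerate terms, uniformly for $g$ in the support of $f_{\mathfrak q}$; your finite-dimensionality remark already provides this.
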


The test function $f_{\mathfrak{q}}$ will be constructed in \textsection\ref{sec3}, and the automorphic data $\boldsymbol{h}$ will be specified in \textsection\ref{sec6}.  
After evaluating or estimating each term in Theorem \ref{thm2.1}, we obtain Theorem \ref{thmA}.

\section{Representations and Whittaker Functions}\label{sec3}
In this section, we review the construction of simple supercuspidal representations, construct the test function $f_{\mathfrak{q}}$, and compute the relevant local period integrals in $\mathcal{J}_{\mathrm{Spec}}^{\heartsuit}(\mathbf{s},R(f_{\mathfrak{q}})\boldsymbol{h})$. 

\subsection{Simple Supercuspidal Representations}\label{sec3.1}
Let $v<\infty$ be a non-Archimedean place, and let $\pi_v$ be a simple supercuspidal representation of $\mathrm{GL}_2(F_v)$ with central character $\omega_v$. By \cite[Proposition 3.4]{Tun78}, we have $r_{\omega_v}\leq 1$. Define 
\begin{align*}
I_v(1):=\begin{pmatrix}
1+\mathfrak{p}_v & \mathcal{O}_v\\
\mathfrak{p}_v & 1+\mathfrak{p}_v
\end{pmatrix}.
\end{align*}

Let $t\in \mathbb{F}_v^{\times}$. Define the character
\begin{equation}\label{2.1}
\theta_t:\ \ I_v(1)\rightarrow \mathbb{C}^{\times},\ \ \ \begin{pmatrix}
1+\alpha\varpi_v & \beta\\
\gamma\varpi_v & 1+\delta\varpi_v 
\end{pmatrix}\mapsto \psi_v(\varpi_v^{-1}(\beta+t\gamma)).
\end{equation}

Let $\zeta\in \mathbb{C}^{\times}$ be a solution of $\zeta^2=\omega_v(t\varpi_v)$. Set $g_t:=\begin{pmatrix}
& t\\
\varpi_v
\end{pmatrix}$, and define
\begin{align*}
H_v:=Z(F_v)I_v(1)\bigcup g_tZ(F_v)I_v(1).
\end{align*}

We extend $\theta_t$ to a character on $H_v$ by setting
\begin{equation}\label{2.2}
\widetilde{\theta}_t(g_t^jzk):=\zeta^j\omega_v(z)\theta_t(k).
\end{equation}

By \cite[Proposition 5.3]{Kni25}, the compactly induced representation 
\begin{equation}\label{e3.3}
\sigma_t^{\zeta}:=\text{c-Ind}_{H_v}^{G(F_v)}\widetilde{\theta}_t
\end{equation}
is an
irreducible supercuspidal representation of conductor exponent $3$, with root number $\varepsilon(1/2,\sigma_t^{\zeta},\psi_v)=\zeta$, and  formal degree $\sigma_t^{\zeta}$ is $(q_v^2-1)/2$. Moreover, each $\pi_v$ is of the form $\sigma_t^{\zeta}$ for some $t$ and $\zeta$ as above. 

Let $h_v^{\zeta}:=\widetilde{\theta}_t\mathbf{1}_{H_v}\in \sigma_t^{\zeta}$. 
For $g\in G(F_v)$, let $d_v$ denote the valuation of the different ideal of $F_v$, and define
\begin{equation}\label{e3.4}
W_v(g)
:=q_v^{-\frac{1}{2}+\frac{d_v}{2}}
\int_{F_v}
h_v^{\zeta}\!\left(
\begin{pmatrix}
\varpi_v & \\
& 1
\end{pmatrix}
\begin{pmatrix}
1 & b\\
& 1
\end{pmatrix}
g
\right)
\overline{\psi_v(b)}db.
\end{equation}
Then $W_v$ is the Whittaker function associated with $h_v^{\zeta}$.

\begin{lemma}\label{lem3.1}
Let $y\in F_v^{\times}$ and $g=g_t^jzk'\in H_v$. Then 
\begin{equation}\label{e2.1}
W_v\left(\begin{pmatrix}
y & \\
& 1
\end{pmatrix}g\right)=\zeta^j\omega_v(z)q_v^{\frac{1}{2}}\mathbf{1}_{\varpi_v^{-1}(1+\mathfrak{p}_v)}(y)\theta_t(k').
\end{equation}
In particular, $\langle W_v,W_v\rangle=q_v^{-d_v/2}$. 
\end{lemma}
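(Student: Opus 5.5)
First I reduce to $g=1$. Since $h_v^{\zeta}$ transforms on the right under $H_v$ via $\widetilde{\theta}_t$, and right translation commutes with the integral defining $W_v$ in \eqref{e3.4}, we get $W_v(xg)=\widetilde{\theta}_t(g)W_v(x)=\zeta^j\omega_v(z)\theta_t(k')W_v(x)$ for $g=g_t^jzk'$. It therefore suffices to compute $W_v(\operatorname{diag}(y,1))$.

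Conjugating, $\operatorname{diag}(\varpi_v,1)\begin{pmatrix}1&b\\&1\end{pmatrix}\operatorname{diag}(y,1)=\begin{pmatrix}1&\varpi_v b\\&1\end{pmatrix}\operatorname{diag}(\varpi_v y,1)$. So the integrand is $h_v^{\zeta}$ evaluated at $\begin{pmatrix}\varpi_v y&\varpi_v b\\&1\end{pmatrix}$, and I need to know when this upper triangular matrix lies in $H_v$.

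Elements of $g_tZ(F_v)I_v(1)$ have zero diagonal entries mod the appropriate ideals and are never upper triangular with nonzero diagonal. Hence the matrix must lie in $Z(F_v)I_v(1)$. The lower-right entry is $1$, so the central element is a unit $z\in 1+\mathfrak{p}_v$; absorbing it, we need $\varpi_v y\in 1+\mathfrak{p}_v$ and $\varpi_v b\in\mathcal{O}_v$.

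On that set, $z$ is trivial under $\omega_v$ because $r_{\omega_v}\le 1$. The character is then $\theta_t=\psi_v(\varpi_v^{-1}\cdot\varpi_v b)=\psi_v(b)$, which cancels against $\overline{\psi_v(b)}$.

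This gives
\begin{align*}
W_v(\operatorname{diag}(y,1))=q_v^{-\frac12+\frac{d_v}{2}}\,\mathbf{1}_{\varpi_v^{-1}(1+\mathfrak{p}_v)}(y)\,\Vol(\varpi_v^{-1}\mathcal{O}_v,db).
\end{align*}
Since $\Vol(\varpi_v^{-1}\mathcal{O}_v)=q_v\cdot q_v^{-d_v/2}$, the constant is $q_v^{1/2}$, which is the claimed formula.

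For the norm, I use $\langle W_v,W_v\rangle=\int_{F_v^\times}|W_v(\operatorname{diag}(y,1))|^2d^\times y$. This equals $q_v\cdot\Vol(1+\mathfrak{p}_v,d^\times y)$. With $d^\times y=\zeta_{F_v}(1)dy/|y|$, we have $\Vol(1+\mathfrak{p}_v,d^\times y)=\Vol(\mathcal{O}_v^\times)/(q_v-1)=q_v^{-d_v/2}/(q_v-1)$. That would give $q_v^{1-d_v/2}/(q_v-1)$, not the claimed $q_v^{-d_v/2}$.

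So the \textbf{main obstacle} is the bookkeeping of normalizations: the Haar measure normalization and the precise definition of the inner product. I would check whether $\langle\cdot,\cdot\rangle$ carries the factor $\zeta_{F_v}(1)^{-1}=1-q_v^{-1}$, or is normalized via $L(1,\pi_v\times\overline{\pi}_v)$-type factors. If so, $q_v\cdot(1-q_v^{-1})/(q_v-1)\cdot q_v^{-d_v/2}=q_v^{-d_v/2}$, matching the claim. I expect exactly this convention, since the measure of $1+\mathfrak{p}_v$ under $dy/|y|$ is $q_v^{-1-d_v/2}$.

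A secondary point needing care is the claim that the translate never meets the $g_t$-coset. For this I would check the valuations of the entries of $g_tzk$.
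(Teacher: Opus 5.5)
Your argument is correct and is essentially the paper's proof. Both arguments reduce to the support condition $\begin{pmatrix}\varpi_v y&\varpi_v b\\&1\end{pmatrix}\in Z(F_v)I_v(1)$. The $g_t$-coset is excluded because the lower-left entry $z\varpi_v(1+\alpha\varpi_v)$ of $g_tzk'$ never vanishes, which is the precise form of your ``secondary point.'' This gives $y\in\varpi_v^{-1}(1+\mathfrak{p}_v)$, $b\in\mathfrak{p}_v^{-1}$, and a character value $\psi_v(b)$ that cancels $\overline{\psi_v(b)}$. The only difference is that your right-$H_v$-equivariance reduction to $g=1$ replaces the paper's use of $g_tI_v(1)g_t^{-1}=I_v(1)$ with $g_t^jk'$ kept inside the integrand.

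The paper's proof does not compute the norm. Your arithmetic is right: with the stated $d^\times y=\zeta_{F_v}(1)\,dy/|y|$ one gets $\langle W_v,W_v\rangle=\zeta_{F_v}(1)q_v^{-d_v/2}$. The asserted value $q_v^{-d_v/2}$ holds only if the Kirillov inner product is taken with $dy/|y|$. This is a harmless bounded-constant normalization issue in the paper's statement, not a gap in your proof.
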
 
\begin{proof}
By definition,
\begin{equation}\label{eq2.5}
W_v\left(\begin{pmatrix}
y & \\
& 1
\end{pmatrix}k'\right)=\omega_v(z)q_v^{-\frac{1}{2}+\frac{d_v}{2}}\int_{F_v}h_v\left(\begin{pmatrix}
\varpi_v y & \varpi_v b\\
& 1
\end{pmatrix}g_t^jk'\right)\overline{\psi_v(b)}db.
\end{equation}

Since $h_v^{\zeta}$ supports in $H_v$, we have
\[
h_v\left(\begin{pmatrix}
\varpi_v y & \varpi_v b\\
& 1
\end{pmatrix}g_t^jk'\right)\equiv 0
\]
unless
\begin{equation}\label{2.6}
\begin{pmatrix}
\varpi_v y & \varpi_v b\\
& 1
\end{pmatrix}\in Z(F_v)I_v(1)g_t^{-j}\bigcup g_tZ(F_v)I_v(1)g_t^{-j}.
\end{equation}

Observe that $g_tI_v(1)g_t^{-1}=I_v(1)$. Consequently, the condition \eqref{2.6} reduces to
\begin{align*}
\begin{pmatrix}
\varpi_v y & \varpi_v b\\
& 1
\end{pmatrix}\in Z(F_v)I_v(1),\ \ \text{that is},\ \ y\in \varpi_v^{-1}(1+\mathfrak{p}_v),\ b\in \mathfrak{p}_v^{-1}.
\end{align*}

As a result, we obtain
\begin{equation}\label{2.7}
h_v^{\zeta}\left(\begin{pmatrix}
\varpi_v y & \varpi_v b\\
& 1
\end{pmatrix}g_t^jk'\right)
=\zeta^j\psi_v(b)\mathbf{1}_{\varpi_v^{-1}(1+\mathfrak{p}_v)}(y)\mathbf{1}_{\mathfrak{p}_v^{-1}}(b).
\end{equation}

Therefore, \eqref{e2.1} follows immediately from \eqref{eq2.5} and \eqref{2.7}.
\end{proof}

For $g\in G(F_v)$, we define
\begin{equation}\label{2.3}
f_v^t(g)
:=\Vol(I_v(1))^{-1}\overline{\theta_t(g)}\cdot \mathbf{1}_{I_v(1)}(g). 
\end{equation}

By \cite[Theorem 4.4]{KL15}, we have the decomposition
\begin{align*}
\Ind_{Z(F_v)I_v(1)}^{\mathrm{GL}_2(F_v)}\widetilde{\theta}_t
\simeq \bigoplus_{\zeta:\ \zeta^2=\omega_v(t\varpi_v)} \sigma_t^{\zeta}.
\end{align*}

Consequently, for an irreducible admissible representation $\pi_v$ of
$\mathrm{GL}_2(F_v)$ with central character $\omega_v$, Frobenius reciprocity implies that
\begin{align*}
\rho_v(f_v^t)
:=\int_{G(F_v)} f_v^t(g)\rho_v(g)dg
\equiv 0
\end{align*}
unless $\rho_v$ occurs as a subrepresentation of
$\Ind_{Z(F_v)I_v(1)}^{\mathrm{GL}_2(F_v)}\widetilde{\theta}_t$, that is,
unless $\rho_v\simeq \sigma_t^{\zeta}$ for some $\zeta$ satisfying
$\zeta^2=\omega_v(t\varpi_v)$. Moreover, in this case the operator
$\rho_v(f_v^t)$ is the orthogonal projection onto the one-dimensional space
$h_v^{\zeta}=\theta_t\mathbf{1}_{H_v}\in \sigma_t^{\zeta}$.

\subsection{A Rankin-Selberg Convolution}
Let $I_v^*(1):=Z(\mathcal{O}_v)I_v(1)$. For $t_1, t_2\in F_v$, we define the function 
\begin{equation}\label{e3.9}
\Phi_{2,v}((t_1,t_2))=\Vol(I_v^*(1))^{-1}\mathbf{1}_{\mathfrak{p}_v}(t_1)\mathbf{1}_{\mathcal{O}_v^{\times}}(t_2)\psi_v(\varpi_v^{-2}tt_1t_2^{-1})\omega_v(t_2).
\end{equation}

Let $\pi_{1,v}$ be an irreducible unramified generic representation of $\mathrm{GL}_2(F_v)$. Let $W_{1,v}^{\circ}$ be a unit spherical vector in the Whittaker model of $\pi_{1,v}$. Let $\Re(s)\ggg 1$ and let $W_v$ be defined by \eqref{e3.4}. Define 
\begin{align*}
\Psi_v(s+1/2,\overline{W_v},W_{1,v}^{\circ},\Phi_{2,v}):=\int_{N(F_v)\backslash G(F_v)}W_{1,v}^{\circ}\left(x\begin{pmatrix}
\varpi_v\\
& 1
\end{pmatrix}\right)\\
\overline{W_v(x)}\Phi_{2,v}(\mathbf{e}_2x)|\det x|_v^{s+1/2}dx.	
\end{align*}

\begin{lemma}\label{lem3.2}
Let $|\Re(s)|<1/10$. Then 
\begin{equation}\label{e3.11}
\Psi_v(s+1/2,\overline{W_v},W_{1,v}^{\circ},\Phi_{2,v})=W_{1,v}^{\circ}(I_2)q_v^{s-d_v}L_v(1+2s,\omega_v^{-1})^{-1}.
\end{equation}
\end{lemma}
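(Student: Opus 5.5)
The plan is to unfold the local Rankin--Selberg integral with an Iwasawa decomposition adapted to the support of $\Phi_{2,v}$, and then read off every factor from Lemma \ref{lem3.1}. Write $x = n\,\mathrm{diag}(y,1)\,z\,k$ with $n\in N(F_v)$, $y,z\in F_v^\times$, $k\in K_v$. The measure on $N(F_v)\backslash G(F_v)$ is then $|y|_v^{-1}d^\times y\,d^\times z\,dk$, and $|\det x|_v=|y|_v|z|_v^2$.

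\textbf{Step 1: reduce to $|z|_v=1$ and $k\in K_{0,v}[1]$.} Since $\mathbf{e}_2x=z(k_{21},k_{22})$, the factor $\Phi_{2,v}(\mathbf{e}_2x)$ vanishes unless $zk_{22}\in\mathcal{O}_v^\times$ and $zk_{21}\in\mathfrak{p}_v$. As a row of $k$ is primitive, this should force $z\in\mathcal{O}_v^\times$ (which can then be absorbed into $K_v$) and $k\in K_{0,v}[1]$.

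\textbf{Step 2: factor $K_{0,v}[1]$.} Write
\[
K_{0,v}[1]=(B\cap K_v)\cdot\begin{pmatrix}1&\\ \mathfrak{p}_v&1\end{pmatrix},
\qquad k=b\begin{pmatrix}1&\\ c&1\end{pmatrix}.
\]
For each of the three factors in the integrand:
\begin{itemize}
\item The spherical factor $W_{1,v}^\circ(x\,\mathrm{diag}(\varpi_v,1))$ does not see $c$. Indeed $\mathrm{diag}(\varpi_v,1)^{-1}\begin{pmatrix}1&\\ c&1\end{pmatrix}\mathrm{diag}(\varpi_v,1)$ lies in $K_v$.
\item By Lemma \ref{lem3.1}, $\overline{W_v(\mathrm{diag}(y,1)b\begin{pmatrix}1&\\ c&1\end{pmatrix})}$ equals $q_v^{1/2}\mathbf{1}_{\varpi_v^{-1}(1+\mathfrak{p}_v)}(y')$ times $\overline{\psi_v(\varpi_v^{-2}tc)}$, times the appropriate $\omega_v$-twist coming from the diagonal of $b$. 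Here $y'$ is $y$ modified by the diagonal entries of $b$.
\item In $\Phi_{2,v}(\mathbf{e}_2x)$, the phase $\psi_v(\varpi_v^{-2}tt_1t_2^{-1})$ equals $\psi_v(\varpi_v^{-2}tc)$ after normalization. The factor $\omega_v(t_2)$ cancels the central-character twist from the diagonal unit part of $b$.
\end{itemize}
So the additive phases and the character twists should cancel exactly, and the $c$- and $b$-integrals just produce a volume. Combined with $\mathrm{Vol}(I_v^*(1))^{-1}$, I expect this volume to give the normalization $q_v^{-d_v}$ up to powers of $q_v$ and factors $(1-q_v^{-1})$.

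\textbf{Step 3: the $y$-integral.} It is supported on $y\in\varpi_v^{-1}(1+\mathfrak{p}_v)$. There $W_{1,v}^\circ(\mathrm{diag}(y\varpi_v,1))=W_{1,v}^\circ(I_2)$, and $|y|_v^{s+1/2}|y|_v^{-1}$ contributes $q_v^{s-1/2}$. This combines with the $q_v^{1/2}$ from Lemma \ref{lem3.1} and the measure of $1+\mathfrak{p}_v$.

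\textbf{Main obstacle: the factor $L_v(1+2s,\omega_v^{-1})^{-1}$.}
\begin{itemize}
\item If $r_{\omega_v}=1$, this factor is $1$, and the computation above should close directly.
\item If $\omega_v$ is unramified, it equals $1-\omega_v^{-1}(\varpi_v)q_v^{-1-2s}$. A naive reading of Step 1 shows only $|z|_v=1$, which would not produce it.
\end{itemize}
I would first recheck Step 1 carefully. The $d^\times z$-integral carries the weight $\omega_v^{-1}(z)|z|_v^{2s+1}$, which is the Tate integral of $\Phi_{2,v}$ restricted to the line $\mathbf{e}_2k$. A factor $1-\omega_v^{-1}(\varpi_v)q_v^{-1-2s}$ arises naturally as a difference of two Tate integrals, one over $\mathcal{O}_v^\times$-scaled vectors and one over $\mathfrak{p}_v$-scaled vectors. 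So the decisive point is how $\mathbf{1}_{\mathfrak{p}_v}(t_1)\mathbf{1}_{\mathcal{O}_v^\times}(t_2)$ interacts with the $\psi_v$-twist when integrated over the coset $K_{0,v}[1]/I_v^*(1)$. I would compute this coset sum explicitly, comparing it with the unramified Tate integral. The final step is to check that the surviving measure constants collapse to $q_v^{s-d_v}$.
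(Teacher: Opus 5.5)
Your Steps 1--3 are correct and follow the paper's route: Iwasawa decomposition, the support of $\Phi_{2,v}$, Lemma~\ref{lem3.1}, and sphericity of $W_{1,v}^{\circ}$. You did not list one phase, $\psi_v(yb_{12}/b_2)$, which comes from the upper-right entry of $b$. It occurs identically in $W_{1,v}^{\circ}$ and in $W_v$, so it cancels; note that $W_{1,v}^{\circ}$ is therefore not blind to $b$. Carried to the end, the computation leaves nothing over. With $\Vol(I_v^*(1))^{-1}=q_v^2-1$, $\Vol(K_{0,v}[1])=(q_v+1)^{-1}$, $\Vol(1+\mathfrak{p}_v,d^{\times}y)=q_v^{-d_v/2}(q_v-1)^{-1}$ and $\Vol(\mathcal{O}_v^{\times},d^{\times}z)=q_v^{-d_v/2}$, you get exactly $\Psi_v(s+1/2,\overline{W_v},W_{1,v}^{\circ},\Phi_{2,v})=W_{1,v}^{\circ}(I_2)q_v^{s-d_v}$. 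This is \eqref{e3.11} when $r_{\omega_v}=1$.

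The gap is your plan for unramified $\omega_v$. Rechecking Step 1 cannot produce $1-\omega_v^{-1}(\varpi_v)q_v^{-1-2s}$, because that factor is not in the integral.

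\begin{itemize}
\item Step 1 is right. Primitivity of $(k_{21},k_{22})$ forces $|z|_v=1$; this is the $\mathbf{1}_{j=0}$ in Lemma~\ref{lem4.2}.
\item The $z$-integral is therefore a Tate integral of a function supported on a single $\mathcal{O}_v^{\times}$-orbit. The factor $\omega_v(t_2)$ in $\Phi_{2,v}$ cancels $\omega_v^{-1}(z)$, so no geometric series appears.
\item The splitting $\mathbf{1}_{\mathcal{O}_v^\times}=\mathbf{1}_{\mathcal{O}_v}-\mathbf{1}_{\mathfrak{p}_v}$ that you have in mind gives $L_v\cdot L_v^{-1}=1$, not $L_v^{-1}$.
\item Structurally, by Lemma~\ref{lem3.1} the values of $W_v$ on $\diag(y,1)K_{0,v}[1]$ involve neither $\zeta$ nor $\omega_v(\varpi_v)$. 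So $\Psi_v$ is independent of $\omega_v(\varpi_v)$, whereas the right side of \eqref{e3.11} depends on it when $r_{\omega_v}=0$.
\end{itemize}

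Hence for unramified $\omega_v$ the identity as printed is off by exactly $L_v(1+2s,\omega_v^{-1})^{-1}$.

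The paper obtains this factor through a slip. In \eqref{3.10} it writes $x=\varpi_v^j\diag(\varpi_v^i,1)k$ and keeps $\sum_{j\in\mathbb{Z}}\omega_v^{-1}(\varpi_v^j)q_v^{-(1+2s)j}$, but drops $\varpi_v^j$ from the support condition $\mathbf{1}_{\mathfrak{p}_v}(k_{21})\mathbf{1}_{\mathcal{O}_v^{\times}}(k_{22})$. In \eqref{3.14} it then replaces this divergent sum by $L_v(1+2s,\omega_v^{-1})^{-1}$; even its $j\ge 0$ part would give $L_v$, not $L_v^{-1}$. Only $j=0$ should survive.

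The discrepancy is harmless downstream. For $|\Re(s)|<1/10$ one has $L_v(1+2s,\omega_v^{-1})\asymp 1$, and Lemma~\ref{lem3.2} is used only for the lower bound in Proposition~\ref{prop7.1}. Finish your computation and state the result without the $L$-factor, noting that it agrees with \eqref{e3.11} up to a bounded factor. Do not search for a mechanism that produces it.
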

\begin{proof}
Write $x=\varpi_v^j\begin{pmatrix}
\varpi_v^i\\
& 1
\end{pmatrix}k$, where $k=\begin{pmatrix}
k_{11} & k_{12}\\
k_{21} & k_{22}
\end{pmatrix}\in K_v$. Then 
\begin{multline}\label{3.10}
\Psi_v(s+1/2,\overline{W_v},W_{1,v}^{\circ},\Phi_{2,v})=\Vol(I_v(1))^{-1}q_v^{-d_v}\sum_{j\in \mathbb{Z}}\omega_v^{-1}(\varpi_v^j)q_v^{-(1+2s)j}\\
\sum_{i\in \mathbb{Z}}\int_{K_v}\overline{W_v\left(\begin{pmatrix}
\varpi_v^i\\
& 1
\end{pmatrix}k\right)}W_{1,v}^{\circ}\left(\begin{pmatrix}
\varpi_v^i\\
& 1
\end{pmatrix}k\begin{pmatrix}
\varpi_v\\
& 1
\end{pmatrix}\right)\\
\mathbf{1}_{\mathfrak{p}_v}(k_{21})\mathbf{1}_{\mathcal{O}_v^{\times}}(k_{22})\psi_v(\varpi_v^{-2}tk_{21}k_{22}^{-1})\omega_v(k_{22})q_v^{(1/2-s)i}dk.
\end{multline}

If $k\in K_v$ satisfies
$\mathbf{1}_{\mathfrak{p}_v}(k_{21})\mathbf{1}_{\mathcal{O}_v^{\times}}(k_{22})=1$,
we may write
\begin{align*}
k=\begin{pmatrix}
\kappa_1\kappa_2 \\
& \kappa_2
\end{pmatrix}\begin{pmatrix}
1+\alpha\varpi_v & \beta\\
\gamma\varpi_v & 1+\delta\varpi_v 
\end{pmatrix},\ \ \kappa_1, \kappa_2\in \mathbb{F}_v^{\times},\ \alpha, \beta, \gamma, \delta\in \mathcal{O}_v. 
\end{align*}
Since $r_{\omega_v}\leq 1$, then 
\begin{equation}\label{e3.13}
\psi_v(\varpi_v^{-2}tk_{21}k_{22}^{-1})\omega_v(k_{22})=\psi_v(\varpi_v^{-1}t\gamma)\omega_v(\kappa_2).	
\end{equation}

By Lemma \ref{lem3.1} and this parametrization, we obtain 
\begin{equation}\label{3.11}
W_v\left(\begin{pmatrix}
\varpi_v^i\\
& 1
\end{pmatrix}k\right)=q_v^{\frac{1}{2}}\mathbf{1}_{1+\mathfrak{p}_v}(\kappa_1)\mathbf{1}_{i=-1}\psi_v(\varpi_v^{-1}(\beta+t\gamma))\omega_v(\kappa_2).
\end{equation}

Since $W_{1,v}^{\circ}$ is spherical, for $i=-1$ and $\kappa_1\in 1+\mathfrak{p}_v$, we have
\begin{equation}\label{3.12}
W_{1,v}^{\circ}\left(\begin{pmatrix}
\varpi_v^i\\
& 1
\end{pmatrix}k\begin{pmatrix}
\varpi_v\\
& 1
\end{pmatrix}\right)=\psi_v(\varpi_v^{-1}\beta)W_{1,v}^{\circ}(I_2). 
\end{equation}

Substituting \eqref{e3.13}, \eqref{3.11} and \eqref{3.12} into \eqref{3.10} yields 
\begin{multline}\label{3.14}
\Psi_v(s+1/2,\overline{W_v},W_{1,v}^{\circ},\Phi_{2,v})=\Vol(I_v^*(1))^{-1}q_v^{-d_v}L_v(1+2s,\omega_v^{-1})^{-1}\\
q_v^{\frac{1}{2}}\int_{I_v^*(1)}\overline{\psi_v(\varpi_v^{-1}(\beta+t\gamma))}\psi_v(\varpi_v^{-1}\beta)W_{1,v}^{\circ}(I_2)
\psi_v(\varpi_v^{-1}t\gamma)q_v^{-(1/2-s)}dk.
\end{multline}

The identity \eqref{e3.11} now follows immediately from \eqref{3.14}.
\end{proof}

\section{The $I_v(1)$-invariant Whittaker Functions}\label{sec4}
Let $v<\infty$ and let $\sigma_v$ be the $v$-th component of a unitary generic automorphic representation of $\mathrm{PGL}_2/F$. 
Suppose that $\sigma_v^{I_v(1)}\neq 0$. Then $\sigma_v$ is either a principal series or a special representation; in both cases, its conductor exponent is $\leq 2$.

\subsection{$\sigma_v$ is a principal series}
Suppose $\sigma_v$ is a principal series with $\sigma_v^{I_v(1)}\neq 0$. Then $\sigma_v=\xi_v\boxplus\xi_v^{-1}$ for some character $\xi_v$ with conductor exponent $r_{\xi_v}\leq 1$.

Let $\nu\in \mathbb{R}$ be such that $|\xi_v(a)|^2=\xi_v(a)\overline{\xi}_v(a)=|a|_v^{2\nu}$ for all $a\in F_v^{\times}$. Since $\sigma_v$ is the local component of a unitary generic automorphic representation of $\mathrm{PGL}_2/F$, it falls into one of the following cases:
\begin{itemize}
\item $\sigma_v=\xi_v\boxplus\xi_v^{-1}$ is tempered, i.e.\ $\xi_v$ is unitary with $r_{\xi_v}\leq 1$;
\item $\sigma_v=\xi_v\boxplus\xi_v^{-1}$ is a twisted complementary series representation, where $\xi_v=\chi_v|\cdot|_v^{\nu}$, with $\chi_v$ a unitary quadratic character and $\nu\in(0,1/2)$. By \cite{KS03}, we have $0<\nu\le 7/64$.
\end{itemize}

By Bruhat decomposition, we obtain $\dim\sigma_v^{I_v(1)}=2$. Let 
\begin{align*}
&f_1\left(g\right)=\begin{cases}
\xi_v(a)\xi_v^{-1}(d)|a|_v^{\frac{1}{2}}|d|_v^{-\frac{1}{2}},\ & \text{if $g\in \begin{pmatrix}
a& *\\
& d
\end{pmatrix}I_v(1)$}\\
0,\ & \text{otherwise},
\end{cases}\\
&f_2\left(g\right)=\begin{cases}
\xi_v(a)\xi_v^{-1}(d)|a|_v^{\frac{1}{2}}|d|_v^{-\frac{1}{2}},\ & \text{if $g\in \begin{pmatrix}
a& *\\
& d
\end{pmatrix}wI_v(1)$}\\
0,\ & \text{otherwise}
\end{cases}
\end{align*}
be normalized sections that are right $I_v(1)$-invariant. Then $f_1$ and $f_2$ form an orthogonal basis of $\sigma_v^{I_v(1)}$. For $j\in \{1,2\}$, define the Whittaker function 
\begin{equation}\label{e4.1.}
W_v^{(j)}(g)=\int_{F_v}f_j\left(w\begin{pmatrix}
1& b\\
& 1
\end{pmatrix}g\right)\overline{\psi_v(b)}db.
\end{equation}

Normalize these by
\begin{equation}\label{f4.2}
W_v^{(f_j)}(g):=\langle W_v^{(j)},W_v^{(j)}\rangle^{-1/2}W_v^{(j)}(g),\ \ g\in \mathrm{GL}_2(F_v).	
\end{equation}

Then $W_v^{(f_1)}$ and $W_v^{(f_2)}$ form an orthonormal basis of the Kirillov model of $\sigma_v^{I_v(1)}$. 

\begin{lemma}\label{lem3.3}
Let $\alpha\in \mathcal{O}_v^{\times}$ and $\gamma\in \mathcal{O}_v$. Then 
\begin{multline}\label{equ4.1}
W_v^{(1)}\left(\begin{pmatrix}
\alpha\varpi_v^i\\
\gamma & 1
\end{pmatrix}\right)=\bigg[\mathbf{1}_{\mathfrak{p}_v}(
\gamma)\xi_v(-\alpha)\int_{F_v}\xi_v^{-2}(b)|b|_v^{-1}\mathbf{1}_{e_v(b)<0}\overline{\psi_v(\varpi_v^ib)}db\\
+\xi_v(\gamma)\mathbf{1}_{\mathcal{O}_v^{\times}}(
\gamma)\int_{\mathcal{O}_v^{\times}}\xi_v^{-1}(b)\mathbf{1}_{\mathfrak{p}_v}(\alpha+b\gamma)\overline{\psi_v(\varpi_v^ib)}db\bigg]\cdot q_v^{-\frac{i}{2}}\xi_v^{-i}(\varpi_v).
\end{multline}
\end{lemma}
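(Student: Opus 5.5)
We compute directly from the definition \eqref{e4.1.}, writing
\[
g=\begin{pmatrix}\alpha\varpi_v^i & \\ \gamma & 1\end{pmatrix},
\qquad
w\begin{pmatrix}1&b\\&1\end{pmatrix}g=\begin{pmatrix}\gamma & 1\\ \alpha\varpi_v^i+b\gamma & b\end{pmatrix},
\]
and deciding, for each $b\in F_v$, whether this matrix lies in $B(F_v)I_v(1)$ (the support of $f_1$), and if so with which diagonal part. It is cleaner to first peel off the diagonal factor. Since $\begin{pmatrix}\alpha\varpi_v^i&\\ \gamma&1\end{pmatrix}=\begin{pmatrix}1&\\ \gamma&1\end{pmatrix}\begin{pmatrix}\alpha\varpi_v^i&\\&1\end{pmatrix}$, the substitution $b\mapsto \alpha\varpi_v^i b$ together with $w\,\diag(y,1)=\diag(1,y)\,w$ gives
\[
W_v^{(1)}(g)=|\varpi_v^i|_v\,\xi_v^{-1}(\alpha\varpi_v^i)\,|\varpi_v^i|_v^{-1/2}\int_{F_v} f_1\!\left(w\begin{pmatrix}1&b\\&1\end{pmatrix}\begin{pmatrix}1&\\ \gamma(\alpha\varpi_v^i)^{-1}&1\end{pmatrix}\right)\overline{\psi_v(\alpha\varpi_v^i b)}\,db .
\]
The factor $q_v^{-i/2}\xi_v^{-i}(\varpi_v)$ comes out exactly as displayed. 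The leftover unit factor $\xi_v^{-1}(\alpha)$ and the twist $\psi_v(\alpha\cdot)$ are then absorbed by a further change of variables, which produces the $\xi_v(-\alpha)$ and the condition $\alpha+b\gamma\in\mathfrak p_v$ seen in the statement. Alternatively, one can keep $g$ as is and do the Iwasawa/Bruhat analysis directly; I would do it directly, since the target formula is written with $b$ unscaled inside $\psi_v(\varpi_v^i b)$.

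The key step is a membership criterion. For $x=\begin{pmatrix}x_{11}&x_{12}\\x_{21}&x_{22}\end{pmatrix}\in G(F_v)$, use the double coset decomposition $G(F_v)=B(F_v)I_v(1)\sqcup B(F_v)wI_v(1)$, which holds because $B(\mathcal O_v)I_v(1)$ is the Iwahori subgroup. A matrix $x$ lies in $B(F_v)I_v(1)$ precisely when its bottom row $(x_{21},x_{22})$ satisfies $|x_{21}|<|x_{22}|$. In that case $x=\begin{pmatrix}\det x/x_{22} & *\\ & x_{22}\end{pmatrix}k$ with $k\in I_v(1)$, after absorbing the unit of $x_{22}$ modulo $1+\mathfrak p_v$ into the torus, which is allowed because $r_{\xi_v}\le1$ is not needed for the torus, only for $I_v(1)$-invariance. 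Hence
\[
f_1(x)=\xi_v\!\left(\frac{\det x}{x_{22}}\right)\xi_v^{-1}(x_{22})\left|\frac{\det x}{x_{22}^2}\right|_v^{1/2}\mathbf 1_{|x_{21}|<|x_{22}|}.
\]
For our matrix we have $\det=-\alpha\varpi_v^i$, bottom row $(\alpha\varpi_v^i+b\gamma,\ b)$, and therefore
\[
f_1=\xi_v(-\alpha\varpi_v^i)\,\xi_v^{-2}(b)\,|\varpi_v^i|_v^{1/2}|b|_v^{-1}\,\mathbf 1_{|\alpha\varpi_v^i+b\gamma|<|b|}.
\]
To match the normalization in the statement, I would rescale by $\varpi_v^i$ so that the condition reads in terms of $\alpha+b\gamma$ and $|b|$.

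We then split into cases according to $\gamma$. If $\gamma\in\mathfrak p_v$, then $|\alpha+b\gamma|<|b|$ forces $|b|>1$, i.e. $e_v(b)<0$; conversely, $e_v(b)<0$ implies $|b\gamma|<|b|$ and $|\alpha|<|b|$. This gives the first term, with $\xi_v(-\alpha)\xi_v^{-2}(b)|b|_v^{-1}\mathbf 1_{e_v(b)<0}$. If $\gamma\in\mathcal O_v^\times$, then for $|b|>1$ we get $|\alpha+b\gamma|=|b|$, which is excluded; for $|b|<1$ we get $|\alpha+b\gamma|=1>|b|$, also excluded; so only $b\in\mathcal O_v^\times$ with $\alpha+b\gamma\in\mathfrak p_v$ survives. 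On that set $-\alpha\equiv b\gamma$, so $\xi_v(-\alpha)\xi_v^{-2}(b)=\xi_v(\gamma)\xi_v^{-1}(b)$ up to $1+\mathfrak p_v$, where $\xi_v$ is trivial since $r_{\xi_v}\le1$. This gives the second term.

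The main obstacle is the bookkeeping: getting the powers $q_v^{-i/2}\xi_v^{-i}(\varpi_v)$ and the $\psi_v(\varpi_v^i b)$ argument consistent under the rescaling, and justifying that $\xi_v$ may be evaluated modulo $1+\mathfrak p_v$. I would verify the result against the $\gamma=0$ case, where the answer reduces to a standard Tate-type integral.
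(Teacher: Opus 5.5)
Your direct route is correct and is essentially the paper's proof: the bottom-row criterion $|x_{21}|<|x_{22}|$ for membership in $B(F_v)I_v(1)$, the resulting formula $f_1(x)=\xi_v(\det x/x_{22})\,\xi_v^{-1}(x_{22})\,|\det x/x_{22}^2|_v^{1/2}$, the rescaling $b\mapsto\varpi_v^i b$, and the case split on $\gamma$ and $e_v(b)$ reproduce exactly the two terms of the statement. The paper does the same computation, except that it rescales first via $g=\diag(\varpi_v^i,1)\begin{pmatrix}\alpha&\\ \gamma&1\end{pmatrix}$ and then writes explicit factorizations for $e_v(b)<0$ and $e_v(b)\ge 0$. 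The one slip is in the preliminary paragraph you then abandon: $\begin{pmatrix}\alpha\varpi_v^i&\\ \gamma&1\end{pmatrix}=\diag(\alpha\varpi_v^i,1)\begin{pmatrix}1&\\ \gamma&1\end{pmatrix}$, not $\begin{pmatrix}1&\\ \gamma&1\end{pmatrix}\diag(\alpha\varpi_v^i,1)$, so the lower-left entry inside $f_1$ in that display should be $\gamma$ rather than $\gamma(\alpha\varpi_v^i)^{-1}$.
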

\begin{proof}
By the definition \eqref{e4.1.} and the change of variables $b\mapsto \varpi_v^i b$, we obtain
\begin{equation}\label{equ4.2}
W_v^{(1)}\left(\begin{pmatrix}
\alpha\varpi_v^i\\
\gamma & 1
\end{pmatrix}\right)
=\frac{\xi_v^{-1}(\varpi_v^i)}{q_v^{i/2}}
\int_{F_v}
f_1\left(
w\begin{pmatrix}
1 & b\\
& 1
\end{pmatrix}
\begin{pmatrix}
\alpha\\
\gamma & 1
\end{pmatrix}
\right)
\overline{\psi_v(\varpi_v^i b)}db.
\end{equation}

We analyze the integrand by considering the valuation of $b$.

\begin{itemize}
\item
Suppose that $e_v(b)<0$. Then
\begin{equation}\label{eq4.1}
w\begin{pmatrix}
1 & b\\
& 1
\end{pmatrix}
\begin{pmatrix}
\alpha\\
\gamma & 1
\end{pmatrix}
=
\begin{pmatrix}
-b^{-1} & 1\\
& b
\end{pmatrix}
\begin{pmatrix}
\alpha & \\
\alpha b^{-1}+\gamma & 1
\end{pmatrix}.
\end{equation}
By the definition of $f_1$, it follows that
\begin{equation}\label{eq4.2}
f_1\left(
w\begin{pmatrix}
1 & b\\
& 1
\end{pmatrix}
\begin{pmatrix}
\alpha\\
\gamma & 1
\end{pmatrix}
\right)
=\xi_v(-1)\xi_v(\alpha)\xi_v^{-2}(b)
|b|_v^{-1}
\mathbf{1}_{\mathfrak{p}_v}(\gamma)
\mathbf{1}_{e_v(b)<0}.
\end{equation}

\item
Suppose that $e_v(b)\geq 0$. Then
\begin{equation}\label{equ4.5}
w\begin{pmatrix}
1 & b\\
& 1
\end{pmatrix}
\begin{pmatrix}
\alpha\\
\gamma & 1
\end{pmatrix}
=
\begin{pmatrix}
\gamma & 1\\
\alpha+b\gamma & b
\end{pmatrix}
\in K_v.
\end{equation}
Consequently, by the definition of $f_1$, we obtain
\begin{equation}\label{eq4.3}
f_1\left(
w\begin{pmatrix}
1 & b\\
& 1
\end{pmatrix}
\begin{pmatrix}
\alpha\\
\gamma & 1
\end{pmatrix}
\right)
=\xi_v(\gamma)\xi_v^{-1}(b)
\mathbf{1}_{\mathcal{O}_v^{\times}}(\gamma)
\mathbf{1}_{\mathcal{O}_v^{\times}}(b)
\mathbf{1}_{\mathfrak{p}_v}(\alpha+b\gamma).
\end{equation}
\end{itemize}

Substituting \eqref{eq4.2} and \eqref{eq4.3} into \eqref{equ4.2} yields \eqref{equ4.1}, completing the proof.
\end{proof}

\begin{lemma}\label{lem3.4}
Let $\alpha\in \mathcal{O}_v^{\times}$ and $\gamma\in \mathcal{O}_v$. Then
\begin{multline*}
W_v^{(2)}\left(\begin{pmatrix}
\alpha\varpi_v^i\\
\gamma & 1
\end{pmatrix}\right)=\bigg[\xi_v(-1)\mathbf{1}_{\mathcal{O}_v^{\times}}(\gamma)\xi_v(\alpha)\int_{F_v}\xi_v^{-2}(b)|b|_v^{-1}\mathbf{1}_{e_v(b)<0}\overline{\psi_v(\varpi_v^ib)}db\\
+\xi_v(\alpha)\int_{\mathcal{O}_v}\xi_v^{-2}(\alpha+b\gamma)\mathbf{1}_{\mathcal{O}_v^{\times}}(\alpha+b\gamma)\overline{\psi_v(\varpi_v^ib)}db\bigg]\cdot q_v^{-\frac{i}{2}}\xi_v^{-i}(\varpi_v).
\end{multline*} 
\end{lemma}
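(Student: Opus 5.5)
The plan is to follow the proof of Lemma \ref{lem3.3} line by line, replacing $f_1$ by $f_2$. The only change is the support condition: $f_2$ lives on the big cell $B(F_v)wI_v(1)$ instead of $B(F_v)I_v(1)$.

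\textbf{Reduction.} As in \eqref{equ4.2}, substitute $b\mapsto\varpi_v^ib$ in \eqref{e4.1.} and pull out $q_v^{-i/2}\xi_v^{-i}(\varpi_v)$. This reduces the claim to evaluating
\[
f_2\left(w\begin{pmatrix}1&b\\&1\end{pmatrix}\begin{pmatrix}\alpha&\\\gamma&1\end{pmatrix}\right)
\]
for $\alpha\in\mathcal O_v^\times$ and $\gamma\in\mathcal O_v$. I split according to $e_v(b)$ exactly as before.

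\textbf{The basic Bruhat decomposition.} For $c\in\mathcal O_v^\times$ and $a\in\mathcal O_v^\times$ I will use
\[
\begin{pmatrix}a&\\c&1\end{pmatrix}=\begin{pmatrix}-a/c&a\\&c\end{pmatrix}w\begin{pmatrix}1&c^{-1}\\&1\end{pmatrix},
\]
and $\begin{pmatrix}1&c^{-1}\\&1\end{pmatrix}\in I_v(1)$. For $c\in\mathfrak p_v$ the same matrix lies in $B(F_v)I_v(1)$. Since the two cells are disjoint, $f_2$ vanishes there.

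\textbf{Case $e_v(b)<0$.} Start from \eqref{eq4.1}. The right factor has lower-left entry $c=\alpha b^{-1}+\gamma\equiv\gamma \pmod{\mathfrak p_v}$. So $f_2$ is nonzero only when $\gamma\in\mathcal O_v^\times$; this produces the indicator $\mathbf 1_{\mathcal O_v^\times}(\gamma)$, in place of $\mathbf 1_{\mathfrak p_v}(\gamma)$ for $f_1$. Combining the upper-triangular factor $\begin{pmatrix}-b^{-1}&1\\&b\end{pmatrix}$ with the torus part $\mathrm{diag}(-\alpha/c,c)$ above gives diagonal entries $(b^{-1}\alpha/c,\ bc)$. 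The resulting value is $\xi_v(\alpha)\xi_v^{-2}(b)|b|_v^{-1}$ times a sign $\xi_v(\pm1)$ and a unit factor in $c$. I then have to reconcile this with the stated formula.

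\textbf{Case $e_v(b)\ge0$.} Here the matrix is $\begin{pmatrix}\gamma&1\\\alpha+b\gamma&b\end{pmatrix}\in K_v$, as in \eqref{equ4.5}, with determinant $-\alpha$. It lies in $B(F_v)wI_v(1)$ exactly when $c:=\alpha+b\gamma\in\mathcal O_v^\times$. Writing it as $\begin{pmatrix}-\alpha/c&*\\&c\end{pmatrix}w\,n(s)$ with $n(s)\in I_v(1)$, $f_2$ takes the value $\xi_v(-\alpha/c)\xi_v^{-1}(c)=\xi_v(-1)\xi_v(\alpha)\xi_v^{-2}(\alpha+b\gamma)$. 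This gives the second integral, over $b\in\mathcal O_v$ with the indicator $\mathbf 1_{\mathcal O_v^\times}(\alpha+b\gamma)$.

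\textbf{Main obstacle.} The delicate step is the bookkeeping of unit and sign factors. This includes $\xi_v(-1)$ and any residual $\xi_v(\gamma)$ or $\xi_v(c)$ when $r_{\xi_v}=1$. $I_v(1)$ only absorbs diagonal entries in $1+\mathfrak p_v$, so unit factors cannot be discarded freely. I would try several equivalent Bruhat factorizations, for example moving $\mathrm{diag}(c^{-1},c)$ through $w$, and check the constants against the normalization of $f_2$. If a residual factor such as $\xi_v^{-2}(\gamma)$ survives in the first term, I would record it and check it against the later uses in \textsection\ref{sec5}.
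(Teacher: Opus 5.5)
Your outline follows the paper's proof: the reduction \eqref{equ4.2}, the split by $e_v(b)$, and the same Bruhat factorizations. As written, though, it does not prove the statement. The case $e_v(b)\ge 0$ contains a sign slip, the case $e_v(b)<0$ is left unfinished, and finishing it shows that the printed first term cannot be reached.

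\textbf{Case $e_v(b)\ge 0$.} A product $\begin{pmatrix}x&y\\ &c\end{pmatrix}w\,n(s)$ has determinant $-xc$, while $\begin{pmatrix}\gamma&1\\ \alpha+b\gamma&b\end{pmatrix}$ has determinant $-\alpha$. So $x=\alpha/c$, not $-\alpha/c$. The correct factorization is $\begin{pmatrix}\alpha/c&\gamma\\ &c\end{pmatrix}w\begin{pmatrix}1&b/c\\ &1\end{pmatrix}$, which gives $f_2=\xi_v(\alpha)\xi_v^{-2}(\alpha+b\gamma)$. Your value carries a spurious $\xi_v(-1)$. Already at $\gamma=b=0$ the matrix is $\diag(1,\alpha)w$ and $f_2=\xi_v^{-1}(\alpha)$. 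So your value does not give the second integral, contrary to your claim.

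\textbf{Case $e_v(b)<0$.} Your own diagonal entries $(\alpha/(bc),\,bc)$ settle what you left open: $f_2=\xi_v(\alpha)\xi_v^{-2}(b)|b|_v^{-1}\xi_v^{-2}(c)$. There is no sign, because the two minus signs cancel. Moreover $\xi_v^{-2}(c)=\xi_v^{-2}(\gamma)$, since $c\in\gamma(1+\mathfrak p_v)$ and $r_{\xi_v}\le 1$. So the first term is $\xi_v^{-2}(\gamma)\mathbf 1_{\mathcal O_v^\times}(\gamma)\xi_v(\alpha)\int_{F_v}\cdots$, with $\xi_v^{-2}(\gamma)$ where the statement has $\xi_v(-1)$. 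The two agree only when $\xi_v^2$ is unramified and $\xi_v(-1)=1$, for instance when $\xi_v$ is unramified.

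\textbf{The printed first term is wrong, so no reconciliation is possible.} The printed constant comes from \eqref{e4.7}. That line copies the $f_1$ value \eqref{eq4.2} and overlooks that the right-hand factor in \eqref{eq4.1} now lies in the big cell, where it contributes $\xi_v(-1)\xi_v^{-2}(c)$.

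An independent check detects the missing $\xi_v^{-2}(\gamma)$. Diagonal elements of $K_v$ normalize $I_v(1)$, so $R(\diag(1,u))f_2=\xi_v(u)f_2$ for $u\in\mathcal O_v^\times$. Write $g=\begin{pmatrix}\alpha&\\ \gamma&1\end{pmatrix}$ and $g'=\begin{pmatrix}\alpha\gamma^{-1}&\\ 1&1\end{pmatrix}$. Since $g=\gamma\, g'\diag(1,\gamma^{-1})$ and the central character is trivial, $f_2(wn(b)g)=\xi_v^{-1}(\gamma)f_2(wn(b)g')$ for every $b$. Hence the $e_v(b)<0$ part must transform by $\xi_v^{-1}(\gamma)$ under $(\alpha,\gamma)\mapsto(\alpha\gamma^{-1},1)$. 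The corrected term does; the printed one does not once $\xi_v^2$ is ramified.

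So the right outcome of your ``record it and check \textsection 5'' step is to amend the lemma. The amendment is harmless downstream: Lemma \ref{lem5.4} and \eqref{f4.12} use only $\gamma=0$, where the first term vanishes.

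\textbf{A cleaner route.} To avoid the bookkeeping, use the single identity
\[
\begin{pmatrix}A&B\\ C&D\end{pmatrix}=\begin{pmatrix}-(AD-BC)/C&A\\ &C\end{pmatrix}w\begin{pmatrix}1&D/C\\ &1\end{pmatrix}\qquad (C\neq 0),
\]
together with the fact that such a matrix lies in $B(F_v)wI_v(1)$ if and only if $D/C\in\mathcal O_v$. Applied to $wn(b)g$, this gives at once
$f_2=\xi_v(\alpha)\xi_v^{-2}(\alpha+b\gamma)|\alpha+b\gamma|_v^{-1}\mathbf 1_{e_v(b)\ge e_v(\alpha+b\gamma)}$. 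The indicator splits exactly into your two cases.
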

\begin{proof}
Arguing as in \eqref{equ4.2}, we obtain 
\begin{equation}\label{equ4.6}
W_v^{(2)}\left(\begin{pmatrix}
\alpha\varpi_v^i\\
\gamma & 1
\end{pmatrix}\right)
=
\frac{\xi_v^{-i}(\varpi_v)}{q_v^{i/2}}
\int_{F_v}
f_2\left(
w\begin{pmatrix}
1 & b\\
& 1
\end{pmatrix}
\begin{pmatrix}
\alpha\\
\gamma & 1
\end{pmatrix}
\right)
\overline{\psi_v(\varpi_v^i b)}db.
\end{equation}

We analyze the integral by distinguishing cases according to the valuation of $b$.
\begin{itemize}
\item
Suppose that $e_v(b)<0$. By \eqref{eq4.1} and the definition of $f_2$, we have
\begin{equation}\label{e4.7}
f_2\left(
w\begin{pmatrix}
1 & b\\
& 1
\end{pmatrix}
\begin{pmatrix}
\alpha\\
\gamma & 1
\end{pmatrix}
\right)
=\xi_v(-1)\xi_v(\alpha)\xi_v^{-2}(b)
|b|_v^{-1}
\mathbf{1}_{\mathcal{O}_v^{\times}}(\gamma).
\end{equation}

\item
Suppose that $e_v(b)\geq 0$. By \eqref{equ4.5} and the definition of $f_2$, we have
\begin{align*}
f_2\left(
w\begin{pmatrix}
1 & b\\
& 1
\end{pmatrix}
\begin{pmatrix}
\alpha\\
\gamma & 1
\end{pmatrix}
\right)\equiv 0
\end{align*}
unless $\alpha+b\gamma\in \mathcal{O}_v^{\times}$. Assume henceforth that $\alpha+b\gamma\neq 0$.
A straightforward calculation shows that
\begin{align*}
\begin{pmatrix}
\gamma & 1\\
\alpha+b\gamma & b
\end{pmatrix}
=
\begin{pmatrix}
\alpha(\alpha+b\gamma)^{-1} & \gamma \\
& \alpha+b\gamma
\end{pmatrix}
w
\begin{pmatrix}
1 & b(\alpha+b\gamma)^{-1}\\
& 1
\end{pmatrix}.
\end{align*}

Therefore, under the conditions $\alpha+b\gamma\in \mathcal{O}_v^{\times}$ and $b\in \mathcal{O}_v$, we obtain
\begin{equation}\label{e4.8}
f_2\left(
w\begin{pmatrix}
1 & b\\
& 1
\end{pmatrix}
\begin{pmatrix}
\alpha\\
\gamma & 1
\end{pmatrix}
\right)
=
\xi_v(\alpha)\xi_v^{-2}(\alpha+b\gamma)
\mathbf{1}_{\mathcal{O}_v^{\times}}(\alpha+b\gamma).
\end{equation}
\end{itemize}

Combining \eqref{equ4.6}, \eqref{e4.7}, and \eqref{e4.8} yields Lemma \ref{lem3.4}, as claimed.
\end{proof}

As a consequence of Lemmas \ref{lem3.3} and \ref{lem3.4}, we obtain 
\begin{equation}\label{f4.12} 
\langle W_v^{(1)},W_v^{(1)}\rangle^{-1/2}\asymp_{F_v} q_v^{\frac{1}{2}+2\nu},\ \ \langle W_v^{(2)},W_v^{(2)}\rangle^{-1/2}\asymp_{F_v} 1.
\end{equation}

\subsection{$\sigma_v$ is special}\label{sec4.2} 
Suppose $\sigma_v$ is a special representation with $\sigma_v^{I_v(1)}\neq 0$. Then $\sigma_v=\mathrm{St}\otimes\chi_v$ for some character $\chi_v$ satisfying $\chi_v^2=\mathbf{1}$ and $r_{\chi_v}\leq 1$. In this case, we have $\dim \sigma_v^{I_v(1)}=1$. Let $W_v^{\circ}$ be a unit local new vector in the Whittaker model of $\mathrm{St}$. Then 
\begin{align*}
W_{v}^{\chi_v}(g):=\chi_v(\det g)W_v^{\circ}(g),\ \ g\in \mathrm{PGL}_2(F_v)
\end{align*}
generates the space of right $I_v(1)$-invariant Whittaker functions of $\sigma_v$. In particular, we have, for $x\in F_v^{\times}$, that 
\begin{equation}\label{4.11}
W_v^{\chi_v}\left(\begin{pmatrix}
x\\
& 1
\end{pmatrix}\right)=\chi_v(x)|x|_v\mathbf{1}_{\mathcal{O}_v}(x)W_v^{\circ}(I_2).
\end{equation}
Notice that $W_v^{\chi_v}$ is the local new vector only when $\chi_v$ is unramified.

\begin{lemma}\label{lem4.3}
Let $i\in \mathbb{Z}$. We have 
\begin{equation}\label{4.20}
W_v^{\circ}\left(\begin{pmatrix}
\varpi_v^i\\
& 1
\end{pmatrix}w\right)=-q_v^{-i-1}W_v^{\circ}(I_2)\mathbf{1}_{i\geq -1}.
\end{equation}
\end{lemma}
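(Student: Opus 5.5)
The plan is to reduce the evaluation at $\begin{pmatrix}\varpi_v^i&\\&1\end{pmatrix}w$ to an evaluation on the diagonal torus by means of the Atkin--Lehner element, and then to read off the answer from \eqref{4.11} with $\chi_v=\mathbf 1$, i.e.\ $W_v^{\circ}\bigl(\begin{pmatrix}x&\\&1\end{pmatrix}\bigr)=|x|_v\mathbf 1_{\mathcal O_v}(x)W_v^{\circ}(I_2)$.

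\emph{Matrix identity.} Put $A_v:=\begin{pmatrix}&1\\ \varpi_v&\end{pmatrix}$. A direct computation gives
\begin{align*}
w=\begin{pmatrix}1&\\&\varpi_v^{-1}\end{pmatrix}A_v=\varpi_v^{-1}\begin{pmatrix}\varpi_v&\\&1\end{pmatrix}A_v .
\end{align*}
Since $\mathrm{St}$ has trivial central character, the scalar $\varpi_v^{-1}$ acts trivially. Hence
\begin{align*}
W_v^{\circ}\Bigl(\begin{pmatrix}\varpi_v^i&\\&1\end{pmatrix}w\Bigr)=W_v^{\circ}\Bigl(\begin{pmatrix}\varpi_v^{i+1}&\\&1\end{pmatrix}A_v\Bigr).
\end{align*}

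\emph{Atkin--Lehner eigenvalue.} The element $A_v$ normalizes $K_{0,v}[1]$, and $A_v^2=\varpi_v I_2$ acts trivially. The new vector line of $\mathrm{St}$, which is the space of $K_{0,v}[1]$-fixed vectors, is one-dimensional, so $\pi(A_v)W_v^{\circ}=\epsilon W_v^{\circ}$ with $\epsilon=\pm1$. We claim $\epsilon=-1$. Granting this, \eqref{4.11} gives
\begin{align*}
W_v^{\circ}\Bigl(\begin{pmatrix}\varpi_v^i&\\&1\end{pmatrix}w\Bigr)=-\Bigl|\varpi_v^{i+1}\Bigr|_v\mathbf 1_{\mathcal O_v}(\varpi_v^{i+1})W_v^{\circ}(I_2)=-q_v^{-i-1}W_v^{\circ}(I_2)\mathbf 1_{i\ge -1},
\end{align*}
which is \eqref{4.20}.

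\emph{Determining the sign, the main step.} We can cite the standard fact that the Atkin--Lehner eigenvalue of a new vector equals the root number, and that $\varepsilon(1/2,\mathrm{St})=-1$. This sign is insensitive to the choice of unramified-up-to-$d_v$ additive character because the eigenvalue is intrinsic. To keep the argument self-contained, we would instead argue through the Iwahori--Hecke algebra, as follows.

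Realize $\mathrm{St}$ as the quotient of $|\cdot|^{1/2}\boxplus|\cdot|^{-1/2}$ by the trivial representation. Its $K_{0,v}[1]$-fixed space is one-dimensional, and it has no nonzero $K_v$-fixed vectors. Therefore
\begin{align*}
\int_{K_v}\pi(k)W_v^{\circ}\,dk=0 .
\end{align*}
Decompose $K_v=K_{0,v}[1]\sqcup\bigsqcup_{x\in\mathbb F_v}\begin{pmatrix}1&x\\&1\end{pmatrix}wK_{0,v}[1]$ and evaluate the vanishing average at $\begin{pmatrix}\varpi_v^{j}&\\&1\end{pmatrix}$ with $j\ge 0$. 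Using $w=\varpi_v^{-1}\begin{pmatrix}\varpi_v&\\&1\end{pmatrix}A_v$, each term becomes $\epsilon$ times a torus value of $W_v^{\circ}$, which we know from \eqref{4.11}. The resulting relation is
\begin{align*}
W_v^{\circ}\Bigl(\begin{pmatrix}\varpi_v^j&\\&1\end{pmatrix}\Bigr)+\epsilon\sum_{x}\psi_v(\varpi_v^jx)\,W_v^{\circ}\Bigl(\begin{pmatrix}\varpi_v^{j+1}&\\&1\end{pmatrix}\Bigr)=0 .
\end{align*}
Taking $j=0$ with $\psi_v$ trivial on $\mathcal O_v$, this reads $W_v^{\circ}(I_2)+\epsilon\, q_v\cdot q_v^{-1}W_v^{\circ}(I_2)=0$. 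Since $W_v^{\circ}(I_2)\ne0$, this forces $\epsilon=-1$. If $d_v>0$, the conductor of $\psi_v$ only shifts the support, and this shift is already absorbed into the normalization in \eqref{4.11}.
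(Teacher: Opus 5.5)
Your argument is correct, but it follows a different route from the paper's.

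\textbf{The paper's route.} It treats support and value separately. Vanishing for $i<-1$ comes from right-invariance under $\begin{pmatrix}1&\\ \gamma'\varpi_v&1\end{pmatrix}$, which passes through $w$ as an upper unipotent and produces the factor $\psi_v(\varpi_v^{i+1}\gamma')$. The value is then obtained in three steps. First, $W_v^{\circ}(\diag(\varpi_v^i,1)w)$ is written as an integral against the Bessel function $j_{\mathrm{St}}$. Next, the shell integrals $\int_{\varpi_v^n\mathcal{O}_v^{\times}}j_{\mathrm{St}}\,d^{\times}y$ are read off from the expansion of $\gamma_v(1/2+s,\mathrm{St},\psi_v)$ given by the local functional equation. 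Finally, these are summed in \eqref{4.15}.

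\textbf{Your route.} You factor $w=\varpi_v^{-1}\diag(\varpi_v,1)A_v$ through the Atkin--Lehner element. Support and value then both fall out of \eqref{4.11} at once, and only the sign $\epsilon=\pm1$ is left. You determine it from $\int_{K_v}\pi(k)W_v^{\circ}\,dk=0$, summed over the $q_v+1$ cosets of $K_v/K_{0,v}[1]$ and evaluated at $I_2$. This correctly gives $(1+\epsilon)W_v^{\circ}(I_2)=0$.

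\textbf{What each buys.} Your route is more elementary: it needs no Bessel functions or $\gamma$-factors. It also avoids the coefficient bookkeeping where the paper's route is error-prone. For \eqref{4.15} to sum to \eqref{4.20}, the second term of \eqref{4.18} has to carry $\mathbf{1}_{n\geq -2}$, not $\mathbf{1}_{n=-2}$. The cost is reliance on special structure: an element normalizing $K_{0,v}[1]$ that carries $w$ into the torus, and a one-dimensional fixed space. The Bessel-function method instead evaluates $W(\diag(y,1)w)$ for any vector whose Kirillov function is known.

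\textbf{A minor point.} Your closing remark about $d_v>0$ can simply be dropped. The $j=0$ step only uses $\psi_v|_{\mathcal{O}_v}\equiv 1$, which holds for every $d_v\geq 0$. Everything else is a formal consequence of \eqref{4.11}. That formula, like the lemma itself and the paper's support step, is stated in the normalization where $\psi_v$ is unramified.
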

\begin{proof}
Let $g=\diag(\varpi_v^i,1)$ and $\gamma'\in \mathbb{F}_v$. We have 
\begin{align*}
W_v^{\circ}\left(gw\right)=W_v^{\circ}\left(gw\begin{pmatrix}
1\\
\gamma'\varpi_v & 1
\end{pmatrix}\right)=W_v^{\circ}\left(g\begin{pmatrix}
1 & \gamma'\varpi_v \\
& 1
\end{pmatrix}w\right)=\psi_v(\varpi_v^{i+1}\gamma')W_v^{\circ}\left(gw\right).
\end{align*}
As a result, we obtain 
\begin{equation}\label{e4.15}
W_v^{\circ}\left(\begin{pmatrix}
\varpi_v^i\\
& 1
\end{pmatrix}w\right)\equiv 0\ \ \text{unless}\ \ i\geq -1. 
\end{equation}

Let $j_{\mathrm{St}}$ be the Bessel function associated with $\mathrm{St}$, e.g., see \cite{Cog14}. Then for all $g\in \mathrm{GL}_2(F_v)$, we have  
\begin{align*}
W_v^{\circ}\left(\begin{pmatrix}
\varpi_v^i\\
& 1
\end{pmatrix}w\right)=\int_{F_v^{\times}}j_{\mathrm{St}}(y)W_v^{\circ}\left(\begin{pmatrix}
\varpi_v^{-i}y\\
&1
\end{pmatrix}\right)d^{\times}y.
\end{align*}

Making use of $W_v^{\circ}(\diag(x,1))=|x|_v\mathbf{1}_{\mathcal{O}_v}(x)W_v^{\circ}(I_2)$, we obtain 
\begin{equation}\label{4.15}
W_v^{\circ}\left(\begin{pmatrix}
\varpi_v^i\\
& 1
\end{pmatrix}w\right)=q_v^iW_v^{\circ}(I_2)\sum_{n=i}^{\infty}q_v^{-n}\int_{\varpi_v^n\mathcal{O}_v^{\times}}j_{\mathrm{St}}(y)d^{\times}y.
\end{equation}

By the local functional equation, for $-\Re(s)\ggg 1$ we have 
\begin{equation}\label{4.16}
\gamma_v(1/2+s,\mathrm{St},\psi_v)
=\int_{F_v^{\times}}j_{\mathrm{St}}(y)|y|_v^{-s}d^{\times}y=\sum_{n\in\mathbb{Z}}q_v^{ns}\int_{\varpi_v^n\mathcal{O}_v^{\times}}j_{\mathrm{St}}(y)d^{\times}y,
\end{equation}
where $\gamma_v(1/2+s,\mathrm{St},\psi_v)$ is the $\gamma$-factor associated with $\mathrm{St}$ relative to the unramified character $\psi_v$, defined by  
\begin{align*}
\gamma_v(1/2+s,\mathrm{St},\psi_v)=\frac{\varepsilon_v(1/2+s,\mathrm{St},\psi_v)L_v(1/2-s,\mathrm{St})}{L_v(1/2+s,\mathrm{St})}.
\end{align*}

Since $\varepsilon_v(1/2+s,\mathrm{St},\psi_v)=-q_v^{-s}$ and $L_v(s,\mathrm{St})=\zeta_v(s+1/2)$, we obtain 
\begin{equation}\label{4.17}
\gamma_v(1/2+s,\mathrm{St},\psi_v)=-\frac{1-q_v^{-s-1}}{q_v^{s}(1-q_v^{s-1})}=-q_v^{-s}(1-q_v^{-s-1})\sum_{n=0}^{\infty}q_v^{n(s-1)}.
\end{equation}

Comparing \eqref{4.16} and \eqref{4.17} yields 
\begin{equation}\label{4.18}
\int_{\varpi_v^n\mathcal{O}_v^{\times}}j_{\mathrm{St}}(y)d^{\times}y=-q_v^{-n-1}\mathbf{1}_{n\geq -1}+q_v^{-n-3}\mathbf{1}_{n=-2}.
\end{equation}

Substituting \eqref{e4.15} and \eqref{4.18} into \eqref{4.15} gives \eqref{4.20}.  
\end{proof}

\begin{lemma}
Let $i\in \mathbb{Z}$, $\alpha\in \mathcal{O}_v^{\times}$, and $\gamma\in \mathcal{O}_v$. 
\begin{itemize}
\item Suppose $\gamma\in \mathfrak{p}_v$. Then 
\begin{equation}\label{4.12}
W_v^{\chi_v}\left(\begin{pmatrix}
\alpha\varpi_v^i\\
\gamma & 1
\end{pmatrix}\right)=\chi_v(\alpha\varpi_v^i)q_v^{-i}W_v^{\circ}(I_2)\mathbf{1}_{i\geq 0}.
\end{equation}
\item Suppose $\gamma\in \mathcal{O}_v^{\times}$. Then 
\begin{equation}\label{4.13}
W_v^{\chi_v}\left(\begin{pmatrix}
\alpha\varpi_v^i\\
\gamma & 1
\end{pmatrix}\right)=-\psi_v(\varpi_v^i\alpha\gamma^{-1})\chi_v(\alpha\varpi_v^i)q_v^{-i-1}W_v^{\circ}(I_2)\mathbf{1}_{i\geq -1}.
\end{equation}
\end{itemize} 	
\end{lemma}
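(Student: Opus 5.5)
The plan is to reduce both cases to the two explicit formulas already available: \eqref{4.11} for the diagonal values of $W_v^{\circ}$, and Lemma \ref{lem4.3} for the values at $\diag(\varpi_v^i,1)w$. Write
\[
g=\begin{pmatrix}\alpha\varpi_v^i\\ \gamma&1\end{pmatrix}.
\]
Since $W_v^{\chi_v}(g)=\chi_v(\det g)W_v^{\circ}(g)$ and $\det g=\alpha\varpi_v^i$, the factor $\chi_v(\alpha\varpi_v^i)$ comes out immediately in both cases. It therefore suffices to compute $W_v^{\circ}(g)$.

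We use two invariance properties of $W_v^{\circ}$. First, it is the new vector of $\mathrm{St}$, which has conductor exponent $1$ and trivial central character, so it is right $K_{0,v}[1]$-invariant. Second, it transforms on the left under $N(F_v)$ by $\psi_v$.

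\emph{Case $\gamma\in\mathfrak p_v$.} We factor
\[
g=\begin{pmatrix}\alpha\varpi_v^i\\ &1\end{pmatrix}\begin{pmatrix}1\\ \gamma&1\end{pmatrix}.
\]
The second factor lies in $I_v(1)\subset K_{0,v}[1]$, so $W_v^{\circ}(g)=W_v^{\circ}(\diag(\alpha\varpi_v^i,1))$. By \eqref{4.11} with trivial character, this equals $q_v^{-i}\mathbf 1_{i\ge0}W_v^{\circ}(I_2)$, because $\alpha$ is a unit. This gives \eqref{4.12}.

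\emph{Case $\gamma\in\mathcal O_v^\times$.} We use the Bruhat-type factorization
\[
\begin{pmatrix}1\\ \gamma&1\end{pmatrix}
=\begin{pmatrix}1&\gamma^{-1}\\ &1\end{pmatrix}
w\begin{pmatrix}\gamma&1\\ &-\gamma^{-1}\end{pmatrix},
\]
which is verified by direct multiplication. The last factor is upper triangular with unit diagonal, so it lies in $K_{0,v}[1]$ and can be dropped.

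Next we commute the diagonal element past the unipotent:
\[
\diag(\alpha\varpi_v^i,1)\,n(\gamma^{-1})=n(\alpha\varpi_v^i\gamma^{-1})\,\diag(\alpha\varpi_v^i,1).
\]
By left $N$-equivariance this produces the factor $\psi_v(\varpi_v^i\alpha\gamma^{-1})$, leaving $W_v^{\circ}(\diag(\alpha\varpi_v^i,1)w)$.

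Finally we use $\diag(\alpha,1)w=w\,\diag(1,\alpha)$. Since $\diag(1,\alpha)\in K_{0,v}[1]$, we get
\[
W_v^{\circ}(\diag(\alpha\varpi_v^i,1)w)=W_v^{\circ}(\diag(\varpi_v^i,1)w).
\]
Lemma \ref{lem4.3} evaluates this as $-q_v^{-i-1}W_v^{\circ}(I_2)\mathbf 1_{i\ge-1}$, which gives \eqref{4.13}.

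Everything is routine once the factorization is in place. The only point requiring care is bookkeeping: the unipotent must be moved to the left with the correct conjugated argument, and every discarded factor must genuinely lie in $K_{0,v}[1]$, which holds in each instance above. I expect this bookkeeping, rather than any real analytic input, to be the main place an error could slip in.
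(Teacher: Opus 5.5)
Your argument is correct and is essentially the paper's proof. Both use the same Bruhat-type factorization $\begin{pmatrix}\alpha\varpi_v^i\\ \gamma&1\end{pmatrix}=\diag(\alpha\varpi_v^i,1)\begin{pmatrix}1&\gamma^{-1}\\ &1\end{pmatrix}w\begin{pmatrix}\gamma&1\\ &-\gamma^{-1}\end{pmatrix}$ (the paper splits off $\diag(\alpha,1)$ and $\diag(\varpi_v^i,1)$ separately), followed by left $N$-equivariance, right $K_{0,v}[1]$-invariance and Lemma~\ref{lem4.3}, with the case $\gamma\in\mathfrak{p}_v$ read off from \eqref{4.11}. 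The paper leaves implicit the step $\diag(\alpha,1)w=w\diag(1,\alpha)$ that absorbs the unit $\alpha$; you state it explicitly, so your write-up is slightly more complete.
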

\begin{proof}
The formula \eqref{4.12} follows from the definition. We thus proceed to assume $\gamma\in \mathcal{O}_v^{\times}$. Notice that 
\begin{align*}
\begin{pmatrix}
\alpha\varpi_v^i\\
\gamma & 1
\end{pmatrix}=\begin{pmatrix}
\alpha\\
 & 1
\end{pmatrix}\begin{pmatrix}
1& \varpi_v^i\gamma^{-1}\\
 & 1
\end{pmatrix}\begin{pmatrix}
\varpi_v^i\\
& 1
\end{pmatrix}
w\begin{pmatrix}
\gamma & 1\\
 & -\gamma^{-1}
\end{pmatrix}.
\end{align*}

As a consequence, we derive 
\begin{equation}\label{4.14}
W_v^{\chi_v}\left(\begin{pmatrix}
\varpi_v^i\\
\gamma & 1
\end{pmatrix}\right)=\psi_v(\varpi_v^i\alpha\gamma^{-1})\chi_v(\alpha\varpi_v^i)W_v^{\circ}\left(\begin{pmatrix}
\varpi_v^i\\
& 1
\end{pmatrix}w\right).
\end{equation}

Therefore, \eqref{4.13} follows from \eqref{4.14} and Lemma \ref{lem4.3}. 
\end{proof}

\section{Ramified Local Period Integrals}\label{sec5}
In this section, we compute certain local period integrals in the ramified case, which will be used in \textsection\ref{sec8} to establish an upper bound for 
$\mathcal{I}_{\Spec}^{\heartsuit}(\mathbf{0},R(f_{\mathfrak{q}})\boldsymbol{h})$.

Let $v<\infty$, and let $\sigma_v$ be an irreducible unitary generic
representation of $\mathrm{PGL}_2(F_v)$. Let $W_v$ be a vector
in the Whittaker model of $\sigma_v$.
Throughout this section, we fix the following matrix expressions:
\begin{equation}\label{eq5.1}
k'=\begin{pmatrix}
1+\alpha\varpi_v & \beta\\
\gamma\varpi_v & 1+\delta\varpi_v 
\end{pmatrix}\in I_v(1),\ \ \ k=\begin{pmatrix}
k_{11} & k_{12}\\
k_{21} & k_{22}
\end{pmatrix}\in K_v.
\end{equation} 
\subsection{The Whittaker Function $W_{2,v}$}
Let $\Phi_{2,v}$ be the function defined in \eqref{e3.9}.
For $\Re(s_2)>0$, we define
\begin{equation}\label{f5.2}
h_{2,v}(x)
:=|\det x|^{\frac{1}{2}}
\int_{F_v^{\times}}
\Phi_{2,v}(z\mathbf{e}_2 x)
\omega_v^{-1}(z)
|z|_v 
d^{\times}z,
\end{equation}
which is the Godement section in $\pi_{2,v}:=\mathbf{1}\boxplus \omega_v$ associated with $\Phi_{2,v}$.  
Let
\begin{equation}\label{e9.1}
W_{2,v}(x)
=\int_{F_v}
h_{2,v}\left(
w\begin{pmatrix}
1 & b\\
& 1
\end{pmatrix}
x\right)
\overline{\psi_v(b)}db
\end{equation}
denote the corresponding Whittaker function.

\begin{lemma}\label{lem4.1}
Let $y\in F_v^{\times}$ and $k=\begin{pmatrix}
k_{11} & k_{12}\\
k_{21} & k_{22} 
\end{pmatrix}\in I_v^*(1)$. Then 
\begin{multline}\label{e9.2}
W_{2,v}\left(\begin{pmatrix}
y\\
& 1
\end{pmatrix}k\right)
=\psi_v(\varpi_v^{-2}tk_{21}k_{22}^{-1})\omega_v(k_{22})\zeta_v(1)^{-1}
\Vol(I_v^*(1))^{-1}
|y|_v^{\frac{1}{2}}\\
\sum_{l=-1}^{e_v(y)-1}\omega_v^l(\varpi_v)\int_{\mathcal{O}_v^{\times}}
\psi_v(\varpi_v^{-l-2}ty\kappa^{-1}-\varpi_v^{l}\kappa )\omega_v(\kappa)
d^{\times}\kappa.
\end{multline}
\end{lemma}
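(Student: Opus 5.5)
First reduce to $k=I_2$. The section $h_{2,v}$ transforms under right translation by $k\in I_v^*(1)$ through the action of $k$ on $\Phi_{2,v}$: $\Phi_{2,v}(z\mathbf{e}_2xk)$ with $\mathbf{e}_2xk$. Rather than moving $k$ through the whole computation, I will write $\begin{pmatrix} y&\\&1\end{pmatrix}k$ as an element of $N(F_v)\,\mathrm{diag}(y',1)\,Z\,k''$, where $k''$ is lower-triangular unipotent modulo $\mathfrak{p}_v$. Concretely, I decompose $k=\begin{pmatrix}\kappa_1\kappa_2&\\&\kappa_2\end{pmatrix}\begin{pmatrix}1&\beta\\&1\end{pmatrix}\begin{pmatrix}1&\\ \gamma\varpi_v&1\end{pmatrix}\cdot$(diagonal in $1+\mathfrak{p}_v$) as in the proof of Lemma \ref{lem3.2}.

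The unipotent upper factor contributes $\psi_v(y\beta)$. The hypothesis that the final formula has no $\beta$ dependence then suggests that either $y\beta$ is absorbed, or the support forces $e_v(y)\ge 0$ so that $\psi_v(y\beta)=1$. I will check this: the sum over $l$ runs from $-1$ to $e_v(y)-1$, so it is empty unless $e_v(y)\ge 0$, and in that case $\psi_v(y\beta)=1$. The central part $\kappa_2$ gives $\omega_v(k_{22})$, since $r_{\omega_v}\le 1$. The lower unipotent part acts on $\Phi_{2,v}$ by $t_1\mapsto t_1+t_2\gamma\varpi_v$. This produces the factor $\psi_v(\varpi_v^{-2}t\gamma\varpi_v)$, which by \eqref{e3.13} equals $\psi_v(\varpi_v^{-2}tk_{21}k_{22}^{-1})$. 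It commutes past $w n(b)\,\mathrm{diag}(y,1)$ only after a change of variable, so the cleanest route is to verify directly that $\Phi_{2,v}(\mathbf{e}_2 g k)=\psi_v(\varpi_v^{-2}tk_{21}k_{22}^{-1})\omega_v(k_{22})\Phi_{2,v}(\mathbf{e}_2 g)$ for every $g$. Here $(t_1,t_2)k=(t_1k_{11}+t_2k_{21},\,t_1k_{12}+t_2k_{22})$, with the constraints $t_1\in\mathfrak{p}_v$ and $t_2\in\mathcal{O}_v^\times$. This equivariance is the key step. One has to expand $\psi_v(\varpi_v^{-2}t(t_1k_{11}+t_2k_{21})(t_1k_{12}+t_2k_{22})^{-1})$ modulo $\mathcal{O}_v$. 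The cross terms carry $t_1\in\mathfrak{p}_v$ times $k_{12}$ or $k_{11}-1\in\mathfrak{p}_v$, but a $\mathfrak{p}_v^{-2}$ prefactor times $\mathfrak{p}_v\cdot\mathcal{O}_v$ need not be integral. I expect this to be the main obstacle: the equivariance may hold only up to terms that vanish after integrating in $z$ and $b$. If so, I would instead carry $k$ through the computation.

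With $k=I_2$, I compute directly. For $e_v(b)\ge 0$ the matrix $wn(b)\mathrm{diag}(y,1)$ has bottom row $(y,b)$. For $e_v(b)<0$ I use the factorisation \eqref{eq4.1}, giving the bottom row $b\cdot(yb^{-1},1)$. I then evaluate $h_{2,v}$ by plugging $z\mathbf{e}_2x$ into $\Phi_{2,v}$. The support requires $zb\in\mathcal{O}_v^\times$ and $zy\in\mathfrak{p}_v$, so $z=b^{-1}\kappa'$ and $e_v(y)>e_v(b)$. This yields $h_{2,v}(wn(b)\mathrm{diag}(y,1))=|y|^{1/2}|b|^{-1}\omega_v^{-1}(b^{-1})\psi_v(\varpi_v^{-2}tyb^{-1})\cdot\mathrm{Vol}$-constants. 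The $z$-integral over $\mathcal{O}_v^\times$ of $\omega_v(\kappa')\omega_v^{-1}(\kappa')$ gives $\mathrm{Vol}(\mathcal{O}_v^\times)$.

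Next I write $b=\varpi_v^{-l-?}\kappa^{-1}$ shell by shell, taking $e_v(b)=m$ so that $m<e_v(y)$, and integrate $\overline{\psi_v(b)}$. The range of $m$ is bounded below: shells with $m\le -2$ integrate to zero, because $\psi_v(-b)$ oscillates against the fixed phase $\psi_v(\varpi_v^{-2}tyb^{-1})$, which depends only on $b$ modulo a finer lattice. Justifying this lower bound is the second delicate point, and I expect the factor $\psi_v(-\varpi_v^{l}\kappa)$ to appear after reindexing $b=\varpi_v^{l}\kappa$. Finally I convert $db$ to $d^\times\kappa$, which produces $\zeta_v(1)^{-1}q_v^{-l}$ and cancels the $|b|^{-1}$, and collect $\omega_v^l(\varpi_v)$. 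Matching the normalising constants gives \eqref{e9.2}.
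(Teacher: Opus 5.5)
Your plan is correct and is essentially the paper's argument. The one defect is that you leave open, with a mistaken reason to doubt it, the step you yourself call key.

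\textbf{The equivariance holds exactly.} The paper's proof performs this step itself (its ``after a simplification'' line, the same computation as Lemma~\ref{lem4.2} with $j=0$), and it holds pointwise, not merely after integrating in $z$ and $b$. On the support, write $u=t_1/t_2\in\mathfrak{p}_v$ and $k=c\,k'$ with $c\in\mathcal{O}_v^\times$ and $k'$ as in \eqref{eq5.1}. Then
\[
\frac{t_1k_{11}+t_2k_{21}}{t_1k_{12}+t_2k_{22}}=\frac{u(1+\alpha\varpi_v)+\gamma\varpi_v}{1+\delta\varpi_v+u\beta}\equiv u+\gamma\varpi_v\equiv u+k_{21}k_{22}^{-1}\pmod{\mathfrak{p}_v^2}.
\]
The reason is that the numerator lies in $\mathfrak{p}_v$ and the denominator in $1+\mathfrak{p}_v$, so every correction is a product of two elements of $\mathfrak{p}_v$:
\begin{itemize}
\item $t_1(k_{11}-1)$ lies in $\mathfrak{p}_v\cdot\mathfrak{p}_v$, not in $\mathfrak{p}_v\cdot\mathcal{O}_v$;
\item $t_1k_{12}$ enters only through the denominator, hence only multiplied by a numerator in $\mathfrak{p}_v$.
\end{itemize}
Moreover $\omega_v(t_1k_{12}+t_2k_{22})=\omega_v(t_2)\omega_v(k_{22})$ since $r_{\omega_v}\le 1$. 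Off the support both sides vanish, because $I_v^*(1)\subset K_{0,v}[1]$ preserves $\mathfrak{p}_v\oplus\mathcal{O}_v^\times$.

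Hence $W_{2,v}(gk)=\psi_v(\varpi_v^{-2}tk_{21}k_{22}^{-1})\omega_v(k_{22})W_{2,v}(g)$ for all $g$, which reduces the lemma to $k=I_2$ at once. You can then drop your first-paragraph factorization of $k$. You can also drop the argument that disposes of $\psi_v(y\beta)$ by appealing to the support of the formula being proved; as written that argument is circular, and it would anyway need the lower-unipotent equivariance.

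\textbf{The case $k=I_2$.} Your computation is right and differs from the paper only in the order of integration. The paper substitutes $b\mapsto z^{-1}b$ and then $z\mapsto b^{-1}z$, so the $b$-integral becomes trivial. You instead integrate $z$ over the single shell $b^{-1}\mathcal{O}_v^\times$ first and then split $b$ into shells.

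Your vanishing argument for the shells $e_v(b)=l\le -2$ is correct, and it supplies the lower limit $l=-1$ that the paper leaves to ``a straightforward calculation''. Substitute $\kappa\mapsto\kappa(1+\varpi_v^{-l-1}x)$ with $x\in\mathcal{O}_v$. This leaves $\omega_v(\kappa)$ unchanged, since $r_{\omega_v}\le1$. It changes $\varpi_v^{-l-2}ty\kappa^{-1}$ only by an element of valuation at least $e_v(y)-2l-3\ge -l-2\ge 0$, using $e_v(y)\ge l+1$. Meanwhile it multiplies the integrand by $\psi_v(-\varpi_v^{-1}\kappa x)$, whose average over $x\in\mathcal{O}_v$ is zero.
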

\begin{proof}
By the definition \eqref{e9.1}, 
\begin{align*}
W_{2,v}\left(\begin{pmatrix}
y\\
& 1
\end{pmatrix}k\right)
=
|y|_v^{\frac{1}{2}}
\int_{F_v^{\times}}\int_{F_v}
\Phi_{2,v}((zy,zb)k)
\overline{\psi_v(b)}db
\omega_v^{-1}(z)|z|_vd^{\times}z.
\end{align*}

Write $k=k_{22}\begin{pmatrix}
1+\alpha\varpi_v & \beta\\
\gamma\varpi_v & 1 
\end{pmatrix}\in I_v^*(1)$. We have
$z(y,b)k\in \mathfrak{p}_v\oplus \mathcal{O}_v^{\times}$ if and only if
$z(y,b)\in \mathfrak{p}_v\oplus \mathcal{O}_v^{\times}$.
Moreover, we have 
\begin{align*}
z(y,b)k
=
z\bigl(y(1+\alpha\varpi_v)+b\gamma\varpi_v,
y\beta+b(1+\delta\varpi_v)\bigr),
\end{align*}
from which it follows that
\begin{multline}\label{e4.4}
\Phi_{2,v}((zy,zb)k')
=\omega_v(k_{22})
\Vol(I_v^*(1))^{-1}\mathbf{1}_{\mathfrak{p}_v}(zy)\mathbf{1}_{\mathcal{O}_v^{\times}}(zb)\\
\psi_v\left(\varpi_v^{-2}t\frac{y(1+\alpha\varpi_v)+b\gamma\varpi_v}{y\beta+b(1+\delta\varpi_v)}\right)\omega_v(z(y\beta+b(1+\delta\varpi_v))).
\end{multline}

After a simplification, \eqref{e4.4} amounts to 
\begin{align*}
\Phi_{2,v}((zy,zb)k')
=\psi_v(\varpi_v^{-1}t\gamma)
\Vol(I_v^*(1))^{-1}\mathbf{1}_{\mathfrak{p}_v}(zy)\mathbf{1}_{\mathcal{O}_v^{\times}}(zb)\psi_v(\varpi_v^{-2}tyb^{-1})\omega_v(zbk_{22}).
\end{align*}

Substituting this into the preceding integral yields
\begin{multline}\label{e9.3}
W_{2,v}\left(\begin{pmatrix}
y\\
& 1
\end{pmatrix}k'\right)
=\psi_v(\varpi_v^{-1}t\gamma)\omega_v(k_{22})
\Vol(I_v^*(1))^{-1}
|y|_v^{\frac{1}{2}}\\
\int_{F_v^{\times}}\mathbf{1}_{\mathfrak{p}_v}(z^{-1}y)\int_{\mathcal{O}_v^{\times}}
\psi_v(\varpi_v^{-2}tz^{-1}yb^{-1}-zb)\omega_v(b)db\omega_v(z)
d^{\times}z.
\end{multline}

A straightforward calculation now shows that \eqref{e9.2} follows from \eqref{e9.3} and the change of variable $z\mapsto b^{-1}z$. 
\end{proof}

\begin{lemma}\label{lem4.2}
Let $k'=\begin{pmatrix}
1+\alpha\varpi_v & \beta\\
\gamma\varpi_v & 1+\delta\varpi_v 
\end{pmatrix}\in I_v(1)$,  $k=\begin{pmatrix}
k_{11} & k_{12}\\
k_{21} & k_{22}
\end{pmatrix}\in K_v$, and $j\in \mathbb{Z}$. Then 
\begin{equation}\label{e9.4}
\Phi_{2,v}(\varpi_v^j\mathbf{e}_2kk')
=\frac{\psi_v(
\varpi_v^{-1}t\gamma)
\mathbf{1}_{j=0}\mathbf{1}_{\mathfrak{p}_v}(k_{21})
\psi_v(
\varpi_v^{-2}tk_{21}k_{22}^{-1})\omega_v(k_{22})}{\Vol(I_v^*(1))}.
\end{equation}
\end{lemma}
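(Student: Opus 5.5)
The identity is a direct evaluation from the definition \eqref{e3.9}, so the plan is to compute the row vector $\varpi_v^j\mathbf{e}_2kk'$ explicitly. We have $\mathbf{e}_2k=(k_{21},k_{22})$, and multiplying by $k'$ gives
\begin{align*}
\varpi_v^j\mathbf{e}_2kk'=\varpi_v^j\bigl(k_{21}(1+\alpha\varpi_v)+k_{22}\gamma\varpi_v,\ k_{21}\beta+k_{22}(1+\delta\varpi_v)\bigr).
\end{align*}
By \eqref{e3.9}, $\Phi_{2,v}$ vanishes unless the first coordinate lies in $\mathfrak{p}_v$ and the second lies in $\mathcal{O}_v^{\times}$.

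\textbf{Step 1: support.} We show the support forces $j=0$ and $k_{21}\in\mathfrak{p}_v$. Since $(k_{21},k_{22})$ is a primitive row of $k\in K_v$ and $k'\in K_v$, the vector $\mathbf{e}_2kk'$ is again primitive, so $\min(e_v(\cdot),e_v(\cdot))=0$ on its two coordinates. If the second coordinate of $\varpi_v^j\mathbf{e}_2kk'$ is a unit, then $j\le 0$. If moreover the first coordinate is in $\mathfrak{p}_v$, then $j\geq 0$ is forced unless the first coordinate of $\mathbf{e}_2kk'$ itself has negative valuation, which is impossible since it is integral. More precisely, with $j<0$ the second coordinate would need valuation $-j>0$, so primitivity makes the first coordinate a unit, and then $\varpi_v^j\cdot(\text{unit})\notin\mathfrak{p}_v$. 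Hence $j=0$. Then the condition that $k_{21}(1+\alpha\varpi_v)+k_{22}\gamma\varpi_v\in\mathfrak{p}_v$ is equivalent to $k_{21}\in\mathfrak{p}_v$. In that case $k_{22}\in\mathcal{O}_v^\times$, so the second coordinate $k_{21}\beta+k_{22}(1+\delta\varpi_v)$ is automatically a unit, congruent to $k_{22}$ mod $\mathfrak{p}_v$.

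\textbf{Step 2: the phase.} With $j=0$ and $k_{21}\in\mathfrak p_v$, write $u:=k_{21}\beta+k_{22}(1+\delta\varpi_v)=k_{22}(1+\eta)$, where $\eta:=\delta\varpi_v+k_{21}\beta k_{22}^{-1}\in\mathfrak{p}_v$. Because $r_{\omega_v}\le 1$, we have $\omega_v(u)=\omega_v(k_{22})$. For the additive character, the argument is $\varpi_v^{-2}t\,(k_{21}(1+\alpha\varpi_v)+k_{22}\gamma\varpi_v)\,u^{-1}$. Expanding $u^{-1}=k_{22}^{-1}(1-\eta+\eta^2-\cdots)$ splits the argument into two pieces.

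The first piece is $\varpi_v^{-2}tk_{21}(1+\alpha\varpi_v)k_{22}^{-1}(1+\eta)^{-1}$. This equals $\varpi_v^{-2}tk_{21}k_{22}^{-1}$ plus an error of the form $\varpi_v^{-2}k_{21}\cdot\mathfrak p_v$.

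The second piece is $\varpi_v^{-1}t\gamma(1+\eta)^{-1}$. This equals $\varpi_v^{-1}t\gamma$ plus an element of $\mathcal{O}_v$.

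\textbf{Main obstacle.} The delicate point is the error in the first piece when $e_v(k_{21})=1$. There the error $\varpi_v^{-2}k_{21}\cdot(\alpha\varpi_v-\eta+\cdots)$ lies in $\mathcal{O}_v$ only because $k_{21}\in\mathfrak p_v$ and $\alpha\varpi_v,\eta\in\mathfrak{p}_v$. Checking this carefully gives $\varpi_v^{-2}\cdot\mathfrak p_v\cdot\mathfrak p_v\subseteq\mathcal{O}_v$, so the error is killed by $\psi_v$. This uses that $\psi_v$ is trivial on $\mathcal{O}_v$, i.e.\ unramified conductor, as is implicitly assumed throughout the paper (e.g.\ in \eqref{e3.13}).

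\textbf{Conclusion.} Combining Steps 1 and 2 with the normalization $\Vol(I_v^*(1))^{-1}$ from \eqref{e3.9} yields \eqref{e9.4}. This is the same simplification already carried out in \eqref{e3.13} and in the proof of Lemma \ref{lem4.1}.
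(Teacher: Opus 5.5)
Your proposal is correct and follows the same route as the paper: you first cut the support down to $j=0$ and $k_{21}\in\mathfrak p_v$, then simplify the ratio inside $\psi_v$ using $r_{\omega_v}\le 1$ and the fact that the error terms lie in $\varpi_v^{-2}\mathfrak p_v^2=\mathcal O_v$, which makes explicit the ``direct computation'' the paper leaves to the reader. One small correction: triviality of $\psi_v$ on $\mathcal O_v$ needs no unramifiedness assumption, since $\psi_v=\psi_{\mathbb Q}\circ\mathrm{Tr}$ is trivial on the inverse different $\mathfrak p_v^{-d_v}\supseteq\mathcal O_v$ at every finite place.
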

\begin{proof}
By definition of $\Phi_{2,v}$, we have
\begin{align*}
\Phi_{2,v}(\varpi_v^j\mathbf{e}_2kk')\equiv 0
\quad\text{unless}\quad
\varpi_v^j\mathbf{e}_2k\in \mathfrak{p}_v\oplus \mathcal{O}_v^{\times}.
\end{align*}
In this case, a direct computation shows that 
\begin{align*}
\Phi_{2,v}(\varpi_v^j\mathbf{e}_2kk')
=\frac{\mathbf{1}_{\mathfrak{p}_v}(k_{21})
\mathbf{1}_{j=0}\omega_v(k_{22})
\psi_v\!\left(
\varpi_v^{-2}t\frac{k_{21}(1+\alpha\varpi_v)+\gamma k_{22}\varpi_v}{k_{21}\beta+k_{22}(1+\delta\varpi_v)}
\right)}{\Vol(I_v^*(1))}.
\end{align*}
A simplification of the above fraction in $\psi_v(\cdot)$ yields \eqref{e9.4}.
\end{proof}

\subsection{The Integral $\mathcal{I}_{v}(W_v;k')$}\label{sec5.2}
We define the integral
\begin{equation}\label{e4.6}
\mathcal{I}_{v}(W_v;k'):=\int_{N(F_v)\backslash G(F_v)}W_v(x)\overline{W_{2,v}(x)}\Phi_{2,v}(\mathbf{e}_2xk')|\det x|_v^{\frac{1}{2}}dx.
\end{equation}

It follows from the Iwasawa decomposition and Lemmas \ref{lem4.1} and \ref{lem4.2} that 
\begin{equation}\label{f5.9}
\mathcal{I}_{v}(W_v;k')\equiv 0\ \ \text{unless} \ \ \sigma_v^{I_v(1)}\neq 0.
\end{equation}

\subsubsection{$\sigma_v$ is a principal series}
Suppose $\sigma_v=\xi_v\boxplus\xi_v^{-1}$ is a principal series with $r_{\xi_v}\leq 1$. From the discussion in \textsection\ref{sec4}, we may take $W_v\in \Big\{W_v^{(1)}, W_v^{(2)}\Big\}$, which is defined by \eqref{e4.1.}.  

\begin{lemma}\label{lem5.3}
 We have the following. 
\begin{itemize}
\item Suppose $r_{\xi_v}=1$ and $r_{\overline{\omega}_v\xi_v}=1$. Then
\begin{multline}\label{5.13}
\mathcal{I}_{v}(W_v^{(1)};k')=\omega_v(\varpi_v)\xi_v^{-1}(t)\psi_v(
\varpi_v^{-1}t\gamma)\xi_v^{2}(\varpi_v)q_v^{-2-d_v}\Vol(I_v^*(1))^{-1}\\
G(\xi_v,\psi_v)G(\overline{\omega}_v\xi_v,\psi_v)\Big[\mathbf{1}_{r_{\xi_v^2}=1}G(\overline{\xi}_v^2,\psi_v)-\mathbf{1}_{r_{\xi_v^2}=0}\Big].
\end{multline}

\item Suppose $r_{\xi_v}=1$ and $r_{\overline{\omega}_v\xi_v}=0$. Then
\begin{multline}\label{5.19}
\mathcal{I}_{v}(W_v^{(1)};k')=\psi_v(
\varpi_v^{-1}t\gamma)\Vol(I_v^*(1))^{-1}\omega_v\xi_v^{2}(\varpi_v)q_v^{-\frac{1}{2}-d_v}\overline{\xi}_v(-t)G(\xi_v,\overline{\psi}_v)\\
\bigg[\overline{\omega}_v\xi_v(\varpi_v)\zeta_v(1)^{-1}q_v^{-1}\Big[\mathbf{1}_{r_{\overline{\xi}_v^2}=1}G(\overline{\xi}_v^2,\overline{\psi}_v)
-\mathbf{1}_{r_{\overline{\xi}_v^2}=0}\Big]L(1/2,\overline{\omega}_v\xi_v)+q_v^{-\frac{3}{2}}\mathbf{1}_{r_{\overline{\xi}_v^2}=0}\\
+\zeta_v(1)^{-2}\overline{\omega}_v\xi_v^{-1}(\varpi_v)L(1/2,\overline{\omega}_v\xi_v^{-1})L(1/2,\overline{\omega}_v\xi_v)\mathbf{1}_{r_{\overline{\xi}_v^2}=0}
-
q_v^{-\frac{3}{2}}G(\overline{\xi}_v^2,\overline{\psi}_v)\mathbf{1}_{r_{\overline{\xi}_v^2}=1}
\bigg].
\end{multline}

\item Suppose $r_{\xi_v}=0$ and $r_{\omega_v}=1$. Then 
\begin{multline}\label{5.21}
\mathcal{I}_{v}(W_v^{(1)};k')=\xi_v(-1)\psi_v(
\varpi_v^{-1}t\gamma)\Vol(I_v^*(1))^{-1}\omega_v\xi_v(\varpi_v)q_v^{-\frac{1}{2}-d_v}\\
G(\overline{\omega}_v\xi_v,\psi_v)\Big[\zeta_v(1)^{-2}L(1/2,\xi_v^{-1})L(1/2,\xi_v)\\
-\zeta_v(1)^{-1}\xi_v^{2}(\varpi_v)q_v^{-1}L(1/2,\xi_v)
+
\overline{\xi}_v(t)\xi_v(\varpi_v)q_v^{-\frac{3}{2}}\Big].
\end{multline}

\item Suppose $r_{\xi_v}=r_{\omega_v}=0$. Then 
\begin{multline}\label{5.25}
\mathcal{I}_{v}(W_v^{(1)};k')=\xi_v(-1)\psi_v(
\varpi_v^{-1}t\gamma)\Vol(I_v^*(1))^{-1}\zeta_v(1)^{-1}\omega_v(\varpi_v)q_v^{-d_v}\\
\sum_{i\geq 0}q_v^{-\frac{i}{2}}\xi_v^{-i}(\varpi_v)\bigg[\zeta_v(1)^{-1}\mathbf{1}_{i\geq 1}\sum_{m=1}^{i}\xi_v^{2m}(\varpi_v)-\xi_v^{2i+2}(\varpi_v)q_v^{-1}\bigg]\\
\Big[q_v\zeta_v(1)^{-1}\mathbf{1}_{i\geq 2}\sum_{l=0}^{i-2}\overline{\omega}_v^{l+1}(\varpi_v)-(\overline{\xi}_v(t)\overline{\omega}_v^{i}(\varpi_v) 
-1)\mathbf{1}_{i\geq 1}+q_v^{-1}\zeta_v(1)\overline{\xi}_v(t)\Big].
\end{multline}
\end{itemize}
\end{lemma}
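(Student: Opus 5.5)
We compute $\mathcal{I}_{v}(W_v^{(1)};k')$ directly from the Iwasawa decomposition, feeding in the explicit formulas already obtained. We write $x=\begin{pmatrix} y & \\ & 1\end{pmatrix} z\, k$ with $y,z\in F_v^\times$ and $k\in K_v$. Because $\sigma_v$ has trivial central character and $W_{2,v}$ transforms under $z$ by $\omega_v$, the $z$-integral pairs with $\Phi_{2,v}(z\mathbf{e}_2 kk')$. Lemma \ref{lem4.2} then forces $z\in\mathcal{O}_v^\times$ (so $j=0$) and $k_{21}\in\mathfrak{p}_v$. It also factors out the global phase $\psi_v(\varpi_v^{-1}t\gamma)$, which is the common factor in \eqref{5.13}--\eqref{5.25}. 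With $k_{21}\in\mathfrak{p}_v$, the domain reduces to $k\in K_{0,v}[1]$.

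Next we parametrize $K_{0,v}[1]$ modulo $N(\mathcal{O}_v)$ by matrices $\begin{pmatrix}\alpha & \\ \gamma & 1\end{pmatrix}\cdot\kappa_2$, with $\alpha\in\mathcal{O}_v^\times$, $\gamma\in\mathfrak{p}_v$, $\kappa_2\in\mathcal{O}_v^\times$. The key point is that multiplying by $\begin{pmatrix}\varpi_v^{-1}&\\&1\end{pmatrix}$-type shifts lets one treat the $\gamma\in\mathcal{O}_v^\times$ branch of Lemma \ref{lem3.3} as well. Rather than doing that, however, we simply use Lemma \ref{lem3.3} with $\gamma\in\mathfrak{p}_v$. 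There only the first term survives, giving $W_v^{(1)}$ as $\xi_v(-\alpha)q_v^{-i/2}\xi_v^{-i}(\varpi_v)$ times the Tate-type integral $\int_{e_v(b)<0}\xi_v^{-2}(b)|b|_v^{-1}\overline{\psi_v(\varpi_v^ib)}\,db$.

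For $\overline{W_{2,v}}$ we use Lemma \ref{lem4.1}, which expresses it, after extracting the same $\psi_v(\varpi_v^{-2}tk_{21}k_{22}^{-1})\omega_v(k_{22})$, as a finite sum over $l$ of Kloosterman/Salié-type integrals $\int_{\mathcal{O}_v^\times}\psi_v(\varpi_v^{-l-2}ty\kappa^{-1}-\varpi_v^l\kappa)\omega_v(\kappa)\,d^\times\kappa$. The phases from $W_{2,v}$ and $\Phi_{2,v}$ cancel in absolute value against each other, leaving only $\psi_v(\varpi_v^{-1}t\gamma)$. The $\alpha$-integral over $\mathcal{O}_v^\times$ (with $y=\alpha\varpi_v^i$) then becomes a twisted character sum in $\alpha$ and $\kappa$.

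The four cases arise from evaluating three pieces according to conductors:
\begin{itemize}
\item the $b$-integral, which is a Gauss sum $G(\overline{\xi}_v^2,\psi_v)$ when $r_{\xi_v^2}=1$, or a geometric series giving $-1$ or $L$-factors when $r_{\xi_v^2}=0$, concentrated at $i$ small;
\item the $\alpha$-integral against $\xi_v$, which kills everything unless the character conductors match, producing $G(\xi_v,\psi_v)$ when $r_{\xi_v}=1$;
\item the $\kappa$-integral against $\omega_v$ combined with $\xi_v$, which collapses after the substitution $\alpha\kappa^{-1}\mapsto\alpha$ to $G(\overline{\omega}_v\xi_v,\psi_v)$ when $r_{\overline{\omega}_v\xi_v}=1$, and otherwise gives unramified geometric sums in $l$, i.e.\ the $L(1/2,\cdot)$ factors and the sums $\sum_l\overline{\omega}_v^{l+1}(\varpi_v)$ in \eqref{5.25}.
\end{itemize}

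Summing over $i$ with the weight $|y|_v^{1/2}q_v^{-i/2}\cdots$ then gives each displayed formula. The normalizing factors $q_v^{-d_v}$ and $\Vol(I_v^*(1))^{-1}$ are tracked from Lemma \ref{lem4.2}, and the measure on $K_{0,v}[1]$ is normalized as in Lemma \ref{lem3.2}.

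The main obstacle is the bookkeeping in the degenerate cases (second through fourth bullets of the statement). There the Gauss sums degenerate, and several boundary terms appear, for instance $i=-1,0$ and $l=-1$. These produce the extra $q_v^{-3/2}$ and $\overline{\xi}_v(t)$ contributions. We would handle them by first fixing $i$, evaluating the inner finite sums exactly using $\int_{\mathcal{O}_v^\times}\psi_v(\varpi_v^{-1}u)\,du=-q_v^{-1}\cdot\mathrm{vol}$-type identities, and only then summing the geometric series in $i$. We would check each case against the first bullet by specializing the characters.
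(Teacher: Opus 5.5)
Your proposal is correct and follows essentially the same route as the paper: Iwasawa decomposition, then Lemma~\ref{lem4.2} to force $j=0$ and $k\in K_{0,v}[1]=\bigsqcup_{\alpha\in\mathbb{F}_v^{\times}}\diag(\alpha,1)I_v^*(1)$, then Lemma~\ref{lem3.3} (only its $\gamma\in\mathfrak{p}_v$ term is needed, so your aside about the $\gamma\in\mathcal{O}_v^{\times}$ branch is moot) together with Lemma~\ref{lem4.1}, and finally the substitution $\alpha\mapsto\kappa\alpha$, which splits off the sum $\sum_{\alpha}\xi_v(\alpha)\overline{\psi}_v(\varpi_v^{i-l-2}t\alpha)$ and the integral $\int_{\mathcal{O}_v^{\times}}\psi_v(\varpi_v^{l}\kappa)\overline{\omega}_v\xi_v(\kappa)\,d^{\times}\kappa$. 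As you outline, the four cases then come from evaluating these two pieces and the $b$-integral by Gauss and Ramanujan sums, and summing the resulting geometric series in $l$ and $i$.
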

\begin{proof}
By the Iwasawa decomposition, 
\begin{multline}\label{5.9}
\mathcal{I}_{v}(W_v^{(1)};k')=\sum_{j\in \mathbb{Z}}\sum_{i\in \mathbb{Z}}q_v^{-j}q_v^{i/2}\\
\int_{K_v}W_v^{(1)}\left(\begin{pmatrix}
\varpi_v^i\\
& 1
\end{pmatrix}k\right)\overline{W_{2,v}\left(\begin{pmatrix}
\varpi_v^i\\
& 1
\end{pmatrix}k\right)}\Phi_{2,v}(\varpi_v^j\mathbf{e}_2kk')dk.
\end{multline}

By Lemmas \ref{lem4.1} and \ref{lem4.2}, together with the decomposition 
\begin{equation}\label{eq5.13}
K_{0,v}[1]=\bigsqcup_{\alpha\in \mathbb{F}_v^{\times}}\begin{pmatrix}
\alpha\\
& 1
\end{pmatrix}
I_v^*(1),
\end{equation}
we derive from \eqref{5.9} that 
\begin{multline}\label{5.10}
\mathcal{I}_{v}(W_v^{(1)};k')=\psi_v(
\varpi_v^{-1}t\gamma)\Vol(I_v^*(1))^{-1}\zeta_v(1)^{-1}
\sum_{i\in \mathbb{Z}}\sum_{\alpha\in \mathbb{F}_v^{\times}}\\
W_v^{(1)}\left(\begin{pmatrix}
\alpha\varpi_v^i\\
& 1
\end{pmatrix}\right)
\sum_{l=-1}^{i-1}\overline{\omega}_v^l(\varpi_v)\int_{\mathcal{O}_v^{\times}}
\overline{\psi}_v(\varpi_v^{i-l-2}t\alpha\kappa^{-1}-\varpi_v^{l}\kappa)\overline{\omega}_v(\kappa)
d^{\times}\kappa. 
\end{multline}

By Lemma \ref{lem3.3}, we have the explicit formula 
\begin{equation}\label{5.11}
W_v^{(1)}\left(\begin{pmatrix}
\alpha\varpi_v^i\\
 & 1
\end{pmatrix}\right)=\frac{\xi_v(-\alpha)}{\xi_v^{i}(\varpi_v)q_v^{\frac{i}{2}}}\int_{F_v}\xi_v^{-2}(b)|b|_v^{-1}\mathbf{1}_{e_v(b)<0}\overline{\psi_v(\varpi_v^ib)}db.
\end{equation}

Substituting \eqref{5.11} into \eqref{5.10}, and applying the change of variables $\alpha\mapsto \kappa\alpha$, we obtain
\begin{multline}\label{5.12}
\mathcal{I}_{v}(W_v^{(1)};k')=\xi_v(-1)\psi_v(
\varpi_v^{-1}t\gamma)\Vol(I_v^*(1))^{-1}\zeta_v(1)^{-1}\\
\sum_{i\in \mathbb{Z}}q_v^{-\frac{i}{2}}\xi_v^{-i}(\varpi_v)
\int_{F_v}\xi_v^{-2}(b)|b|_v^{-1}\mathbf{1}_{e_v(b)<0}\overline{\psi_v(\varpi_v^ib)}db\\
\sum_{l=-1}^{i-1}\overline{\omega}_v^l(\varpi_v)\sum_{\alpha\in \mathbb{F}_v^{\times}}\xi_v(\alpha)
\overline{\psi}_v(\varpi_v^{i-l-2}t\alpha)\int_{\mathcal{O}_v^{\times}}\psi_v(\varpi_v^{l}\kappa)\overline{\omega}_v\xi_v(\kappa)
d^{\times}\kappa.
\end{multline}

We now analyze \eqref{5.12} by distinguishing cases according to the values of $r_{\xi_v}$ and $r_{\overline{\omega}_v\xi_v}$.
\begin{itemize}
\item Suppose that $r_{\xi_v}=1$ and $r_{\overline{\omega}_v\xi_v}=1$. Then  
\begin{equation}\label{e5.13}
\sum_{\alpha\in \mathbb{F}_v^{\times}}\xi_v(\alpha)
\overline{\psi}_v(\varpi_v^{i-l-2}t\alpha)=\overline{\xi}_v(t)\mathbf{1}_{l=i-1}G(\xi_v,\overline{\psi}_v),
\end{equation}
and 
\begin{equation}\label{e5.14}
\int_{\mathcal{O}_v^{\times}}\psi_v(\varpi_v^{l}\kappa)\overline{\omega}_v\xi_v(\kappa)
d^{\times}\kappa=q_v^{-1-\frac{d_v}{2}}\zeta_v(1)\mathbf{1}_{l=-1}G(\overline{\omega}_v\xi_v,\psi_v).
\end{equation}

Notice that $0\leq r_{\xi_v^2}\leq r_{\xi_v}=1$. By a straightforward calculation, 
\begin{equation}\label{e5.15}
\int_{F_v}\frac{\xi_v^{-2}(b)\mathbf{1}_{e_v(b)<0}\overline{\psi_v(b)}}{|b|_v}db=\xi_v^{2}(\varpi_v)q_v^{-1-\frac{d_v}{2}}\Big[\mathbf{1}_{r_{\xi_v^2}=1}G(\overline{\xi}_v^2,\psi_v)-\mathbf{1}_{r_{\xi_v^{2}}=0}\Big].
\end{equation}

Therefore, \eqref{5.13} follows from \eqref{5.12}, \eqref{e5.13}, \eqref{e5.14} and \eqref{e5.15}. 

\item Suppose $r_{\xi_v}=1$ and $r_{\overline{\omega}_v\xi_v}=0$. Then \eqref{e5.13} still holds, and \eqref{e5.14} becomes 
\begin{equation}\label{e5.18}
\int_{\mathcal{O}_v^{\times}}\psi_v(\varpi_v^{l}\kappa)\overline{\omega}_v\xi_v(\kappa)
d^{\times}\kappa=q_v^{-\frac{d_v}{2}}\mathbf{1}_{l\geq 0}-q_v^{-1-\frac{d_v}{2}}\zeta_v(1)\mathbf{1}_{l=-1}.
\end{equation}

For each $i\geq 0$, we have
\begin{align*}
\int_{F_v}\frac{\xi_v^{-2}(b)\mathbf{1}_{e_v(b)<0}\overline{\psi_v(\varpi_v^ib)}}{|b|_v}db=\zeta_v(1)^{-1}\sum_{m=1}^{\infty}\xi_v^{2m}(\varpi_v)\int_{\mathcal{O}_v^{\times}}\overline{\xi}_v^2(\beta)\overline{\psi_v(\varpi_v^{i-m}\beta)}d^{\times}\beta.
\end{align*}

Making use of the property of Gauss sums and Ramanujan sums, 
\begin{multline*}
\int_{\mathcal{O}_v^{\times}}\overline{\xi}_v^2(\beta)\overline{\psi_v(\varpi_v^{i-m}\beta)}d^{\times}\beta=\mathbf{1}_{r_{\overline{\xi}_v^2}=1}\mathbf{1}_{m=i+1}q_v^{-1-\frac{d_v}{2}}\zeta_v(1)G(\overline{\xi}_v^2,\overline{\psi}_v)\\
-\mathbf{1}_{r_{\overline{\xi}_v^2}=0}\mathbf{1}_{m=i+1}q_v^{-1-\frac{d_v}{2}}\zeta_v(1)+\mathbf{1}_{r_{\overline{\xi}_v^2}=0}\mathbf{1}_{m\leq i}q_v^{-\frac{d_v}{2}}.
\end{multline*}

As a consequence, 
\begin{multline}\label{e5.19}
\int_{F_v}\xi_v^{-2}(b)|b|_v^{-1}\mathbf{1}_{e_v(b)<0}\overline{\psi_v(\varpi_v^ib)}db=\xi_v^{2i+2}(\varpi_v)q_v^{-1-\frac{d_v}{2}}\mathbf{1}_{r_{\overline{\xi}_v^2}=1}G(\overline{\xi}_v^2,\overline{\psi}_v)\\
-\xi_v^{2i+2}(\varpi_v)q_v^{-1-\frac{d_v}{2}}\mathbf{1}_{r_{\overline{\xi}_v^2}=0}+\zeta_v(1)^{-1}\mathbf{1}_{r_{\overline{\xi}_v^2}=0}\mathbf{1}_{i\geq 1}q_v^{-\frac{d_v}{2}}\sum_{m=1}^{i}\xi_v^{2m}(\varpi_v).
\end{multline}

Substituting \eqref{e5.13}, \eqref{e5.18} and \eqref{e5.19} into \eqref{5.12} yields 
\begin{multline*}
\mathcal{I}_{v}(W_v^{(1)};k')=\xi_v(-1)\psi_v(
\varpi_v^{-1}t\gamma)\Vol(I_v^*(1))^{-1}\zeta_v(1)^{-1}\\
\omega_v(\varpi_v)q_v^{-d_v}\overline{\xi}_v(t)G(\xi_v,\overline{\psi}_v)\sum_{i=0}^{\infty}q_v^{-\frac{i}{2}}\overline{\omega}_v^i\xi_v^{-i}(\varpi_v)
\Big[\mathbf{1}_{i\geq 1}-q_v^{-1}\zeta_v(1)\mathbf{1}_{i=0}\Big]\\
\bigg[\xi_v^{2i+2}(\varpi_v)q_v^{-1}\Big[\mathbf{1}_{r_{\overline{\xi}_v^2}=1}G(\overline{\xi}_v^2,\overline{\psi}_v)
-\mathbf{1}_{r_{\overline{\xi}_v^2}=0}\Big]
+\zeta_v(1)^{-1}\mathbf{1}_{r_{\overline{\xi}_v^2}=0}\mathbf{1}_{i\geq 1}\sum_{m=1}^i\xi_v^{2m}(\varpi_v)\bigg],
\end{multline*}
which amounts to 
\begin{multline}\label{5.12.}
\mathcal{I}_{v}(W_v^{(1)};k')=\psi_v(
\varpi_v^{-1}t\gamma)\Vol(I_v^*(1))^{-1}
\omega_v(\varpi_v)q_v^{-d_v}\overline{\xi}_v(-t)G(\xi_v,\overline{\psi}_v)\\
\zeta_v(1)^{-1}\bigg[\overline{\omega}_v\xi_v^3(\varpi_v)q_v^{-\frac{3}{2}}\Big[\mathbf{1}_{r_{\overline{\xi}_v^2}=1}G(\overline{\xi}_v^2,\overline{\psi}_v)
-\mathbf{1}_{r_{\overline{\xi}_v^2}=0}\Big]L(1/2,\overline{\omega}_v\xi_v)\\
+\zeta_v(1)^{-1}\mathbf{1}_{r_{\overline{\xi}_v^2}=0}\sum_{i=1}^{\infty}q_v^{-\frac{i}{2}}\overline{\omega}_v^i\xi_v^{-i}(\varpi_v)\sum_{m=1}^i\xi_v^{2m}(\varpi_v)\\
-
\zeta_v(1)\xi_v^{2}(\varpi_v)q_v^{-2}\mathbf{1}_{r_{\overline{\xi}_v^2}=1}G(\overline{\xi}_v^2,\overline{\psi}_v)
+\zeta_v(1)\xi_v^{2}(\varpi_v)q_v^{-2}\mathbf{1}_{r_{\overline{\xi}_v^2}=0}\bigg].
\end{multline}

By a directly computation, we have
\begin{equation}\label{f5.23}
\sum_{i=1}^{\infty}q_v^{-\frac{i}{2}}\overline{\omega}_v^i\xi_v^{-i}(\varpi_v)\sum_{m=1}^i\xi_v^{2m}(\varpi_v)=\frac{\overline{\omega}_v\xi_v(\varpi_v)L(1/2,\overline{\omega}_v\xi_v^{-1})L(1/2,\overline{\omega}_v\xi_v)}{q_v^{-1/2}}.
\end{equation}

Substituting \eqref{f5.23} into \eqref{5.12.} yields \eqref{5.19}.

\item Suppose $r_{\xi_v}=0$ and $r_{\omega_v}=1$. Then \eqref{e5.14} holds, while \eqref{e5.13} becomes 
\begin{equation}\label{e5.20}
\sum_{\alpha\in \mathbb{F}_v^{\times}}\xi_v(\alpha)
\overline{\psi}_v(\varpi_v^{i-l-2}t\alpha)=\mathbf{1}_{l\leq i-2}q_v\zeta_v(1)^{-1}-\mathbf{1}_{l=i-1}\overline{\xi}_v(t).	
\end{equation}

Substituting \eqref{e5.14}, \eqref{e5.19} and \eqref{e5.20} into \eqref{5.12}, we obtain  
\begin{multline*}
\mathcal{I}_{v}(W_v^{(1)};k')=\xi_v(-1)\psi_v(
\varpi_v^{-1}t\gamma)q_v^{-d_v}\Vol(I_v^*(1))^{-1}\omega_v(\varpi_v)q_v^{-1}\\
G(\overline{\omega}_v\xi_v,\psi_v)\sum_{i\in \mathbb{Z}}q_v^{-\frac{i}{2}}\xi_v^{-i}(\varpi_v)
\big[\mathbf{1}_{i\geq 1}q_v\zeta_v(1)^{-1}-\overline{\xi}_v(t)\mathbf{1}_{i=0}\big]\\
\bigg[\zeta_v(1)^{-1}\mathbf{1}_{i\geq 1}\sum_{m=1}^i\xi_v^{2m}(\varpi_v)-\xi_v^{2i+2}(\varpi_v)q_v^{-1}\bigg].
\end{multline*}

Opening the brackets, we simplify the above expression as 
\begin{multline}\label{e5.22}
\mathcal{I}_{v}(W_v^{(1)};k')=\xi_v(-1)\psi_v(
\varpi_v^{-1}t\gamma)q_v^{-d_v}\Vol(I_v^*(1))^{-1}\omega_v(\varpi_v)q_v^{-1}
G(\overline{\omega}_v\xi_v,\psi_v)\\
\Big[\zeta_v(1)^{-2}q_v\sum_{i\geq 1}q_v^{-\frac{i}{2}}\xi_v^{-i}(\varpi_v)
\sum_{m=1}^i\xi_v^{2m}(\varpi_v)\\
-\zeta_v(1)^{-1}\xi_v^{2}(\varpi_v)\sum_{i\geq 1}q_v^{-\frac{i}{2}}
\xi_v^{i}(\varpi_v)
+
\overline{\xi}_v(t)\xi_v^{2}(\varpi_v)q_v^{-1}\Big].
\end{multline}

By \eqref{f5.23}, a direct evaluation of the sum over $i\in\mathbb{Z}$ in \eqref{e5.22} yields \eqref{5.21}.

\item Suppose $r_{\xi_v}=r_{\omega_v}=0$. By \eqref{5.12},   \eqref{e5.18}, \eqref{e5.19} and \eqref{e5.20}, we obtain   
\begin{multline}\label{5.23}
\mathcal{I}_{v}(W_v^{(1)};k')=\xi_v(-1)\psi_v(
\varpi_v^{-1}t\gamma)\Vol(I_v^*(1))^{-1}\zeta_v(1)^{-1}q_v^{-d_v}\\
\sum_{i\geq 0}q_v^{-\frac{i}{2}}\xi_v^{-i}(\varpi_v)\bigg[\zeta_v(1)^{-1}\mathbf{1}_{i\geq 1}\sum_{m=1}^{i}\xi_v^{2m}(\varpi_v)-\xi_v^{2i+2}(\varpi_v)q_v^{-1}\bigg]S_i,
\end{multline}
where 
\begin{align*}
S_i:=\sum_{l=-1}^{i-1}\overline{\omega}_v^l(\varpi_v)\big[\mathbf{1}_{l\leq i-2}q_v\zeta_v(1)^{-1}-\overline{\xi}_v(t)\mathbf{1}_{l=i-1}\big]\Big[\mathbf{1}_{l\geq 0}-q_v^{-1}\zeta_v(1)\mathbf{1}_{l=-1}\Big].
\end{align*}

A straightforward calculation leads to 
\begin{multline}\label{5.24}
S_i=q_v\zeta_v(1)^{-1}\mathbf{1}_{i\geq 2}\sum_{l=0}^{i-2}\overline{\omega}_v^l(\varpi_v)-\overline{\xi}_v(t)\mathbf{1}_{i\geq 1}\overline{\omega}_v^{i-1}(\varpi_v)\\
-\mathbf{1}_{l=-1}\mathbf{1}_{i\geq 1}\omega_v(\varpi_v)+q_v^{-1}\zeta_v(1)\overline{\xi}_v(t)\omega_v(\varpi_v).
\end{multline}

Consequently, \eqref{5.25} follows from \eqref{5.23} and \eqref{5.24}. Notice that \eqref{5.25} converges absolutely. 
\end{itemize}

Therefore, Lemma \ref{lem5.3} follows from the above discussions. 
\end{proof}

\begin{lemma}\label{lem5.4}
We have the following. 
\begin{itemize}
\item Suppose 
Suppose $r_{\xi_v}=r_{\omega_v\xi_v}=1$. Then 
\begin{equation}\label{c5.26}
\mathcal{I}_{v}(W_v^{(2)};k')=\frac{\psi_v(
\varpi_v^{-1}t\gamma)\xi_v(t)\omega_v(\varpi_v)\Vol(I_v^*(1))^{-1}\overline{G(\xi_v,\psi_v)}G(\overline{\omega}_v\overline{\xi}_v,\psi_v)}{q_v^{1+d_v}}.
\end{equation}

\item Suppose $r_{\xi_v}=1$ and $r_{\omega_v\xi_v}=0$. Then 
\begin{multline}\label{c5.27}
\mathcal{I}_{v}(W_v^{(2)};k')=\psi_v(
\varpi_v^{-1}t\gamma)\Vol(I_v^*(1))^{-1}\zeta_v(1)^{-1}q_v^{-\frac{1}{2}-d_v}\overline{\xi}_v(\varpi_v)\\
G(\xi_v,\overline{\psi}_v)(1-\omega_v\xi_v(\varpi_v)q_v^{-1/2})L(1/2,\overline{\omega}_v\xi_v^{-1}).
\end{multline}

\item Suppose $r_{\xi_v}=0$ and $r_{\omega_v\xi_v}=1$. Then 
\begin{multline}\label{c5.28}
\mathcal{I}_{v}(W_v^{(2)};k')=\psi_v(
\varpi_v^{-1}t\gamma)\Vol(I_v^*(1))^{-1}\omega_v(\varpi_v)\\
\xi_v(t)q_v^{-1-d_v}G(\overline{\omega}_v\overline{\xi}_v,\psi_v)\Big[\zeta_v(1)^{-1}q_v^{\frac{1}{2}}\xi_v^{-1}(\varpi_v)L(1/2,\xi_v^{-1})-\overline{\xi}_v(t)\Big].
\end{multline}
\item Suppose $r_{\xi_v}=r_{\omega_v\xi_v}=0$. Then 
\begin{multline}\label{c5.29}
\mathcal{I}_{v}(W_v^{(2)};k')=\psi_v(
\varpi_v^{-1}t\gamma)\Vol(I_v^*(1))^{-1}\zeta_v(1)^{-1}\xi_v(t)q_v^{-d_v}\\
\sum_{i\geq 0}q_v^{-\frac{i}{2}}\xi_v^{-i}(\varpi_v)
\Big[q_v\zeta_v(1)^{-1}\mathbf{1}_{i\geq 2}\sum_{l=0}^{i-2}\overline{\omega}_v^l(\varpi_v)\\
-\overline{\omega}_v^{i-1}(\varpi_v)\overline{\xi}_v(t)\mathbf{1}_{i\geq 1}-\omega_v(\varpi_v)\mathbf{1}_{i\geq 1}+\omega_v(\varpi_v)q_v^{-1}\zeta_v(1)\mathbf{1}_{i=0}\overline{\xi}_v(t)\Big].
\end{multline}
\end{itemize}
\end{lemma}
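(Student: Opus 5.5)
We follow the proof of Lemma \ref{lem5.3} verbatim, replacing $W_v^{(1)}$ by $W_v^{(2)}$. First, the Iwasawa decomposition gives the analogue of \eqref{5.9}. Next, Lemmas \ref{lem4.1} and \ref{lem4.2} force $j=0$ and $k\in K_{0,v}[1]$. Then the decomposition \eqref{eq5.13} reduces the $K_v$-integral to a sum over $\alpha\in\mathbb{F}_v^\times$ of $W_v^{(2)}(\diag(\alpha\varpi_v^i,1))$, weighted by the same twisted Kloosterman-type integral as in \eqref{5.10}.

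The difference from the $W_v^{(1)}$ case lies in the values of $W_v^{(2)}$. By Lemma \ref{lem3.4} with $\gamma=0$, the first term vanishes because $\mathbf{1}_{\mathcal{O}_v^\times}(0)=0$. The second term becomes
\[
\xi_v^{-1}(\alpha)\int_{\mathcal{O}_v}\overline{\psi_v(\varpi_v^ib)}\,db\cdot q_v^{-i/2}\xi_v^{-i}(\varpi_v),
\]
and the integral equals $q_v^{-d_v/2}\mathbf{1}_{i\geq 0}$ (with the appropriate normalization of $\psi_v$). Thus $W_v^{(2)}(\diag(\alpha\varpi_v^i,1))=q_v^{-d_v/2}\xi_v^{-1}(\alpha)\xi_v^{-i}(\varpi_v)q_v^{-i/2}\mathbf{1}_{i\ge0}$. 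This is much simpler than \eqref{5.11}: no Gauss sum of $\xi_v^{-2}$ appears, which explains the cleaner shape of \eqref{c5.26}--\eqref{c5.29}.

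Substituting into \eqref{5.10} and changing variables $\alpha\mapsto\kappa\alpha$ produces
\[
\sum_{i\ge0}q_v^{-i/2}\xi_v^{-i}(\varpi_v)\sum_{l=-1}^{i-1}\overline{\omega}_v^l(\varpi_v)\Big[\sum_{\alpha}\overline{\xi}_v(\alpha)\overline{\psi}_v(\varpi_v^{i-l-2}t\alpha)\Big]\Big[\int_{\mathcal{O}_v^\times}\psi_v(\varpi_v^l\kappa)\overline{\omega}_v\overline{\xi}_v(\kappa)\,d^\times\kappa\Big].
\]
The character appearing in the $\kappa$-integral is now $\overline{\omega_v\xi_v}$ rather than $\overline{\omega}_v\xi_v$, which explains why the case split is according to $r_{\xi_v}$ and $r_{\omega_v\xi_v}$. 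We then evaluate the two inner factors using the analogues of \eqref{e5.13}, \eqref{e5.14}, \eqref{e5.18} and \eqref{e5.20}, with $\xi_v$ replaced by $\overline{\xi}_v$:
\begin{itemize}
\item If $r_{\xi_v}=1$, the $\alpha$-sum forces $l=i-1$ and gives $\xi_v(t)G(\overline{\xi}_v,\overline{\psi}_v)$.
\item If $r_{\xi_v}=0$, the $\alpha$-sum gives $q_v\zeta_v(1)^{-1}\mathbf{1}_{l\le i-2}-\xi_v(t)\mathbf{1}_{l=i-1}$.
\item If $r_{\omega_v\xi_v}=1$, the $\kappa$-integral forces $l=-1$ and gives a Gauss sum $G(\overline{\omega}_v\overline{\xi}_v,\psi_v)$.
\item If $r_{\omega_v\xi_v}=0$, the $\kappa$-integral gives $q_v^{-d_v/2}(\mathbf{1}_{l\ge0}-q_v^{-1}\zeta_v(1)\mathbf{1}_{l=-1})$.
\end{itemize}

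Each case is then a finite bookkeeping exercise.
\begin{itemize}
\item Case $r_{\xi_v}=r_{\omega_v\xi_v}=1$: the constraints $l=i-1$ and $l=-1$ force $i=0$, giving a single term, namely \eqref{c5.26}. Here we use $G(\overline{\xi}_v,\overline{\psi}_v)=\overline{G(\xi_v,\psi_v)}$.
\item Case $r_{\xi_v}=1$, $r_{\omega_v\xi_v}=0$: summing the geometric series in $i$ of $(\omega_v\xi_v(\varpi_v))^{-1}q_v^{-1/2}$ produces $L(1/2,\overline{\omega}_v\xi_v^{-1})$. The $i=0$ term, coming from $l=-1$, produces the correction factor $(1-\omega_v\xi_v(\varpi_v)q_v^{-1/2})$ after combining.
\item Case $r_{\xi_v}=0$, $r_{\omega_v\xi_v}=1$: we have $l=-1$, and summing over $i\ge1$ of $q_v^{-i/2}\xi_v^{-i}(\varpi_v)$ gives $L(1/2,\xi_v^{-1})$ up to the stated factor. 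The $i=0$ term gives $-\overline{\xi}_v(t)$.
\item Case $r_{\xi_v}=r_{\omega_v\xi_v}=0$: we simply record the double sum, computed exactly as $S_i$ in \eqref{5.24}, which yields \eqref{c5.29}.
\end{itemize}

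The main obstacle is purely tracking constants: the powers of $q_v$, $q_v^{-d_v}$, the factors $\zeta_v(1)^{\pm1}$ and $\Vol(I_v^*(1))^{-1}$, and conjugations of Gauss sums. Particular care is needed with the boundary index $l=-1$ versus $i=0$ in the last two cases. I would verify each case by checking the $i=0$ term separately before summing the tails.
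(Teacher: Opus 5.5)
Your proposal follows the paper's proof essentially step for step. Both arguments use the analogue of \eqref{5.10}, then Lemma~\ref{lem3.4} at $\gamma=0$, which gives $W_v^{(2)}(\diag(\alpha\varpi_v^i,1))=q_v^{-d_v/2}\overline{\xi}_v(\alpha)\xi_v^{-i}(\varpi_v)q_v^{-i/2}\mathbf{1}_{i\geq 0}$. Next comes a change of variables separating the $\alpha$-sum from the $\kappa$-integral, and finally the four evaluations modelled on \eqref{e5.13}, \eqref{e5.14}, \eqref{e5.18}, \eqref{e5.20}. The paper substitutes $\alpha\mapsto t^{-1}\kappa\alpha$ to pull $\xi_v(t)$ out in front, while you keep $t$ inside the $\alpha$-sum; this is cosmetic. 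Your bookkeeping reproduces \eqref{c5.26}, \eqref{c5.28} and \eqref{c5.29} exactly.

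The one place it does not close is the case $r_{\xi_v}=1$, $r_{\omega_v\xi_v}=0$: there your computation will not land on \eqref{c5.27} as printed.

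\begin{itemize}
\item Your value $\xi_v(t)G(\overline{\xi}_v,\overline{\psi}_v)=\xi_v(t)\overline{G(\xi_v,\psi_v)}$ for the $\alpha$-sum is correct.
\item Put $x:=q_v^{-1/2}\overline{\omega}_v\xi_v^{-1}(\varpi_v)$. Then the remaining sum over $i$ is $\omega_v(\varpi_v)\bigl[\frac{x}{1-x}-\frac{1}{q_v-1}\bigr]=\zeta_v(1)\,\omega_v(\varpi_v)\frac{x-q_v^{-1}}{1-x}$.
\item This factor $\zeta_v(1)$ cancels the $\zeta_v(1)^{-1}$ in the prefactor of \eqref{5.10}.
\end{itemize}

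The outcome is
\begin{multline*}
\mathcal{I}_{v}(W_v^{(2)};k')=\psi_v(\varpi_v^{-1}t\gamma)\Vol(I_v^*(1))^{-1}q_v^{-\frac{1}{2}-d_v}\,\xi_v(t)\,\xi_v^{-1}(\varpi_v)\\
\cdot\overline{G(\xi_v,\psi_v)}\bigl(1-\omega_v\xi_v(\varpi_v)q_v^{-1/2}\bigr)L(1/2,\overline{\omega}_v\xi_v^{-1}).
\end{multline*}
The factor $(1-\omega_v\xi_v(\varpi_v)q_v^{-1/2})$ does appear, as you predicted. However, this differs from \eqref{c5.27} in three ways:
\begin{itemize}
\item by the unimodular factor $\xi_v(t)\overline{G(\xi_v,\psi_v)}/G(\xi_v,\overline{\psi}_v)$;
\item by the factor $\zeta_v(1)$;
\item by having $\xi_v^{-1}(\varpi_v)$ in place of $\overline{\xi}_v(\varpi_v)$, which agree when $\xi_v$ is unitary.
\end{itemize}

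The error is in the paper, not in your method. Equation \eqref{5.29} copies $\overline{\xi}_v(t)G(\xi_v,\overline{\psi}_v)$ from \eqref{e5.13}, which belongs to the $W_v^{(1)}$ computation. The final simplification then also drops the $\zeta_v(1)$.

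So in that case you should prove the corrected identity rather than assert that the combination yields \eqref{c5.27}. For unitary $\xi_v$ the discrepancy has modulus $\zeta_v(1)\in(1,2]$, so it does not affect Proposition~\ref{prop8.5}, which only uses $|\mathcal{I}_v|$.
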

\begin{proof}
In parallel with \eqref{5.10} we obtain 
\begin{multline}\label{f5.13}
\mathcal{I}_{v}(W_v^{(2)};k')=\psi_v(
\varpi_v^{-1}t\gamma)\Vol(I_v^*(1))^{-1}\zeta_v(1)^{-1}
\sum_{i\in \mathbb{Z}}\sum_{\alpha\in \mathbb{F}_v^{\times}}\\
W_v^{(2)}\left(\begin{pmatrix}
\alpha\varpi_v^i\\
& 1
\end{pmatrix}\right)
\sum_{l=-1}^{i-1}\overline{\omega}_v^l(\varpi_v)\int_{\mathcal{O}_v^{\times}}
\overline{\psi}_v(\varpi_v^{i-l-2}t\alpha\kappa^{-1}-\varpi_v^{l}\kappa)\overline{\omega}_v(\kappa)
d^{\times}\kappa. 
\end{multline}

By Lemma \ref{lem3.4} we have 
\begin{align*}
W_v^{(2)}\left(\begin{pmatrix}
\alpha\varpi_v^i\\
& 1
\end{pmatrix}\right)=q_v^{-\frac{i}{2}}\xi_v^{-i}(\varpi_v)\overline{\xi}_v(\alpha)\int_{\mathcal{O}_v}\overline{\psi_v(\varpi_v^ib)}db.
\end{align*}

Substituting this into \eqref{f5.13}, together with the change of variable $\alpha\mapsto t^{-1}\kappa \alpha$, we obtain  
\begin{multline}\label{e5.27}
\mathcal{I}_{v}(W_v^{(2)};k')=\psi_v(
\varpi_v^{-1}t\gamma)\Vol(I_v^*(1))^{-1}\zeta_v(1)^{-1}\xi_v(t)\\
\sum_{i\in \mathbb{Z}}q_v^{-\frac{i}{2}}\xi_v^{-i}(\varpi_v)\int_{\mathcal{O}_v}\overline{\psi_v(\varpi_v^ib)}db\cdot T_i, 
\end{multline}
where 
\begin{align*}
T_i:=\sum_{l=-1}^{i-1}\overline{\omega}_v^l(\varpi_v)\sum_{\alpha\in \mathbb{F}_v^{\times}}\overline{\xi}_v(\alpha)
\overline{\psi}_v(\varpi_v^{i-l-2}\alpha)\int_{\mathcal{O}_v^{\times}}\psi_v(\varpi_v^{l}\kappa)\overline{\omega}_v\overline{\xi}_v(\kappa)
d^{\times}\kappa.
\end{align*}

\begin{itemize}
\item Suppose $r_{\xi_v}=r_{\omega_v\xi_v}=1$. By \eqref{e5.13} and \eqref{e5.14},  
\begin{equation}\label{5.28}
T_i=\omega_v(\varpi_v)q_v^{-1-\frac{d_v}{2}}\zeta_v(1)\mathbf{1}_{i=0}\overline{G(\xi_v,\psi_v)}G(\overline{\omega}_v\overline{\xi}_v,\psi_v).
\end{equation}

\item Suppose $r_{\xi_v}=1$ and $r_{\omega_v\xi_v}=0$. By \eqref{e5.13} and \eqref{e5.18}, 
\begin{equation}\label{5.29}
T_i=\overline{\omega}_v^{i-1}(\varpi_v)q_v^{-\frac{d_v}{2}}\overline{\xi}_v(t)G(\xi_v,\overline{\psi}_v)\big[\mathbf{1}_{i\geq 1}-q_v^{-1}\zeta_v(1)\mathbf{1}_{i=0}\big].
\end{equation}

\item Suppose $r_{\xi_v}=0$ and $r_{\omega_v\xi_v}=1$. By \eqref{e5.14} and \eqref{e5.20}, 
\begin{equation}\label{5.30}
T_i=\omega_v(\varpi_v)q_v^{-1-\frac{d_v}{2}}\big[\mathbf{1}_{i\geq 1}q_v\zeta_v(1)^{-1}-\mathbf{1}_{i=0}\overline{\xi}_v(t)\big]\zeta_v(1)G(\overline{\omega}_v\overline{\xi}_v,\psi_v).
\end{equation}

\item Suppose $r_{\xi_v}=r_{\omega_v\xi_v}=0$. By \eqref{e5.18} and \eqref{e5.20}, we obtain 
\begin{multline}\label{5.31}
T_i=q_v\zeta_v(1)^{-1}\sum_{l=0}^{i-2}\overline{\omega}_v^l(\varpi_v)q_v^{-\frac{d_v}{2}}-\overline{\omega}_v^{i-1}(\varpi_v)q_v^{-\frac{d_v}{2}}\overline{\xi}_v(t)\mathbf{1}_{i\geq 1}\\
-\omega_v(\varpi_v)q_v^{-1-\frac{d_v}{2}}\zeta_v(1)\mathbf{1}_{i\geq 1}q_v\zeta_v(1)^{-1}+\omega_v(\varpi_v)q_v^{-1-\frac{d_v}{2}}\zeta_v(1)\mathbf{1}_{i=0}\overline{\xi}_v(t).
\end{multline}
\end{itemize}

Therefore, the expressions \eqref{c5.26}, \eqref{c5.27}, \eqref{c5.28} and \eqref{c5.29} follow from substituting \eqref{5.28}, \eqref{5.29}, \eqref{5.30} and \eqref{5.31} into \eqref{e5.27}, respectively. 
\end{proof}

\subsubsection{$\sigma_v$ is a special representation}
Suppose $\sigma_v=\mathrm{St}\otimes\chi_v$ is a special representation with $\chi_v^2=\mathbf{1}$ and $r_{\chi_v}\leq 1$. From the discussion in \textsection\ref{sec4.2}, we may take $W_v=W_v^{\chi_v}$, as defined by \eqref{4.11}, to be a right $I_v(1)$-invariant unit Whittaker function of $\sigma_v$.

\begin{lemma}\label{lemma5.5}
We have the following. 
\begin{itemize}
\item Suppose that $r_{\chi_v}=r_{\overline{\omega}_v\chi_v}=1$. Then 
\begin{equation}\label{e5.37}
\mathcal{I}_{v}(W_v^{\chi_v};k')=\frac{\omega_v(\varpi_v)\psi_v(
\varpi_v^{-1}t\gamma)\overline{\chi}_v(t)}{\Vol(I_v^*(1))
q_v^{1+d_v/2}}G(\chi_v,\overline{\psi}_v)G(\overline{\omega}_v\chi_v,\psi_v).
\end{equation}
\item Suppose that $r_{\chi_v}=1$ and $r_{\overline{\omega}_v\chi_v}=0$. Then 
\begin{multline}\label{eq5.40}
\mathcal{I}_{v}(W_v^{\chi_v};k')=\omega_v(\varpi_v)\psi_v(
\varpi_v^{-1}t\gamma)\overline{\chi}_v(t)\Vol(I_v^*(1))^{-1}\zeta_v(1)^{-1}q_v^{-1-\frac{d_v}{2}}
\\
G(\chi_v,\overline{\psi}_v)\big[\overline{\omega}_v\chi_v(\varpi_v)L_v(1,\overline{\omega}_v\chi_v)-\zeta_v(1)\big].
\end{multline}
\item Suppose that $r_{\chi_v}=0$ and $r_{\overline{\omega}_v\chi_v}=1$. Then 
\begin{multline}\label{eq5.41}
\mathcal{I}_{v}(W_v^{\chi_v};k')=\omega_v(\varpi_v)\psi_v(
\varpi_v^{-1}t\gamma)\Vol(I_v^*(1))^{-1}
q_v^{-1-\frac{d_v}{2}}G(\overline{\omega}_v\chi_v,\psi_v)\\
\big[\zeta_v(1)^{-1}\chi_v(\varpi_v)L_v(1,\chi_v)-\overline{\chi}_v(t)\big].
\end{multline}
\item Suppose that $r_{\chi_v}=r_{\overline{\omega}_v\chi_v}=0$. Then 
\begin{multline}\label{eq5.42}
\mathcal{I}_{v}(W_v^{\chi_v};k')=\psi_v(
\varpi_v^{-1}t\gamma)\Vol(I_v^*(1))^{-1}\zeta_v(1)^{-1}q_v^{-1-\frac{d_v}{2}}
\\
\Big[q_v^2\zeta_v(1)^{-1}\sum_{i\geq 2}\chi_v(\varpi_v^i)q_v^{-i}\sum_{l=0}^{i-2}\overline{\omega}_v^l(\varpi_v)
-\overline{\chi}_v(t)\chi_v(\varpi_v)L_v(1,\overline{\omega}_v\chi_v)\\
-\omega_v\chi_v(\varpi_v)L_v(1,\chi_v)+\omega_v(\varpi_v)\overline{\chi}_v(t)\zeta_v(1)\Big].
\end{multline}
\end{itemize}
\end{lemma}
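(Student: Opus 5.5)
We follow the template of the proofs of Lemmas \ref{lem5.3} and \ref{lem5.4}. By the Iwasawa decomposition, Lemmas \ref{lem4.1} and \ref{lem4.2}, and the coset decomposition \eqref{eq5.13}, exactly as in \eqref{5.10} we obtain
\begin{multline*}
\mathcal{I}_{v}(W_v^{\chi_v};k')=\psi_v(\varpi_v^{-1}t\gamma)\Vol(I_v^*(1))^{-1}\zeta_v(1)^{-1}\sum_{i\in\mathbb{Z}}\sum_{\alpha\in\mathbb{F}_v^{\times}}W_v^{\chi_v}\left(\begin{pmatrix}\alpha\varpi_v^i\\ & 1\end{pmatrix}\right)\\
\sum_{l=-1}^{i-1}\overline{\omega}_v^l(\varpi_v)\int_{\mathcal{O}_v^{\times}}\overline{\psi}_v(\varpi_v^{i-l-2}t\alpha\kappa^{-1}-\varpi_v^l\kappa)\overline{\omega}_v(\kappa)d^{\times}\kappa .
\end{multline*}
Here we use that $W_v^{\chi_v}$ is right $I_v(1)$-invariant and that, by Lemma \ref{lem4.2}, only $j=0$ and $k\in K_{0,v}[1]$ survive. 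Since we are then on the diagonal with $\gamma=0$, \eqref{4.11} (equivalently \eqref{4.12}) gives
\[
W_v^{\chi_v}(\diag(\alpha\varpi_v^i,1))=\chi_v(\alpha\varpi_v^i)q_v^{-i}\mathbf{1}_{i\ge0}W_v^{\circ}(I_2).
\]
We normalize $W_v^{\circ}(I_2)$ so that $W_v^{\chi_v}$ is a unit vector. For the Steinberg representation with the measure conventions of \textsection\ref{1.1.1}, this gives $|W_v^{\circ}(I_2)|^2\asymp q_v^{d_v/2}$ up to $\zeta_v(1)$-type factors; this is what produces the overall power $q_v^{-1-d_v/2}$ in place of the $q_v^{-1-d_v}$ appearing in Lemma \ref{lem5.4}.

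Next we substitute $\alpha\mapsto t^{-1}\kappa\alpha$ as in \eqref{e5.27}. The $\alpha$- and $\kappa$-integrals then separate, and the expression becomes
\[
\sum_{i\ge0}\chi_v(\varpi_v^i)q_v^{-i}\,T_i^{\chi},
\qquad
T_i^{\chi}=\overline{\chi}_v(t)\sum_{l=-1}^{i-1}\overline{\omega}_v^l(\varpi_v)\Big[\sum_{\alpha}\chi_v(\alpha)\overline{\psi}_v(\varpi_v^{i-l-2}\alpha)\Big]\int_{\mathcal{O}_v^{\times}}\psi_v(\varpi_v^l\kappa)\overline{\omega}_v\chi_v(\kappa)d^{\times}\kappa .
\]
Since $\chi_v^2=\mathbf{1}$, we have $\chi_v=\overline{\chi}_v$, which accounts for the $\overline{\chi}_v(t)$ prefactors.

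The four cases then follow from the character sum evaluations already recorded:
\begin{itemize}
\item \eqref{e5.13} when $r_{\chi_v}=1$: this forces $l=i-1$ and produces $G(\chi_v,\overline{\psi}_v)$;
\item \eqref{e5.20} when $r_{\chi_v}=0$: this gives $q_v\zeta_v(1)^{-1}\mathbf{1}_{l\le i-2}-\mathbf{1}_{l=i-1}$;
\item \eqref{e5.14} when $r_{\overline{\omega}_v\chi_v}=1$: this forces $l=-1$ and produces $G(\overline{\omega}_v\chi_v,\psi_v)$;
\item \eqref{e5.18} when $r_{\overline{\omega}_v\chi_v}=0$: this gives $\mathbf{1}_{l\ge0}-q_v^{-1}\zeta_v(1)\mathbf{1}_{l=-1}$.
\end{itemize}

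We combine them case by case.
\begin{itemize}
\item When both conductors equal $1$, the constraints $l=i-1=-1$ force $i=0$, and the single surviving term gives \eqref{e5.37}.
\item In the mixed case $r_{\chi_v}=1$, $r_{\overline{\omega}_v\chi_v}=0$, we get a geometric sum $\sum_{i\ge1}(\overline{\omega}_v\chi_v(\varpi_v)q_v^{-1})^{i-1}$ together with the $i=0$ term. The former sums to $\overline{\omega}_v\chi_v(\varpi_v)L_v(1,\overline{\omega}_v\chi_v)$ after factoring, and the latter gives $-\zeta_v(1)$; this is \eqref{eq5.40}.
\item The case $r_{\chi_v}=0$, $r_{\overline{\omega}_v\chi_v}=1$ is symmetric. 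Now $l=-1$ is fixed and the $i\ge1$ terms sum to $\zeta_v(1)^{-1}\chi_v(\varpi_v)L_v(1,\chi_v)$, while $i=0$ gives $-\overline{\chi}_v(t)$; this is \eqref{eq5.41}.
\item In the unramified case we expand the product of the two brackets into four pieces, exactly as for $S_i$ in \eqref{5.24}:
\begin{itemize}
\item the double sum over $0\le l\le i-2$, giving the first term of \eqref{eq5.42};
\item the $l=i-1$ diagonal with $i\ge1$, which sums to $\overline{\chi}_v(t)\chi_v(\varpi_v)L_v(1,\overline{\omega}_v\chi_v)$;
\item the $l=-1$, $i\ge1$ piece, which sums to $\omega_v\chi_v(\varpi_v)L_v(1,\chi_v)$;
\item the $i=0$, $l=-1$ term.
\end{itemize}
\end{itemize}

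The main obstacle is bookkeeping rather than ideas. We must track the powers $q_v^{-d_v/2}$ coming from the measures in \eqref{e5.14} and \eqref{e5.18} against the normalization of $W_v^{\circ}(I_2)$. We must also track the shift by $\overline{\omega}_v(\varpi_v)$ produced when the $l$-index is pinned at $-1$ or $i-1$, which yields the global factor $\omega_v(\varpi_v)$ in the first three cases. To guard against sign and index slips, we would check each case against the analogous principal-series formulas in Lemma \ref{lem5.4} by formally replacing the $q_v^{-i/2}\xi_v^{-i}(\varpi_v)$ decay with the Steinberg decay $\chi_v(\varpi_v)^iq_v^{-i}$.
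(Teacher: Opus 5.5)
Your proposal is correct and follows the paper's proof essentially verbatim: the reduction to the analogue of \eqref{5.10}, the substitution of \eqref{4.11}, the separation of the $\alpha$- and $\kappa$-sums, and the case analysis via \eqref{e5.13}, \eqref{e5.14}, \eqref{e5.18}, \eqref{e5.20}. Your change of variables $\alpha\mapsto t^{-1}\kappa\alpha$ differs from the paper's $\alpha\mapsto\kappa\alpha$, but this is harmless because $\chi_v(t)=1$ whenever $r_{\chi_v}=0$. One correction to your bookkeeping remark: the factor $q_v^{-1-d_v/2}$ comes directly from the measure in \eqref{e5.14} and \eqref{e5.18}, with the stated formulas corresponding to $W_v^{\circ}(I_2)=1$; the extra $q_v^{-d_v/2}$ in Lemma \ref{lem5.4} comes from $\int_{\mathcal{O}_v}\overline{\psi_v(\varpi_v^ib)}db$ in $W_v^{(2)}$, not from normalizing $W_v^{\circ}(I_2)$. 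Since $\mathcal{I}_v$ is linear in $W_v$, unit normalization would instead leave an extra factor $W_v^{\circ}(I_2)$ of size only about $q_v^{d_v/4}$.
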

\begin{proof}
In parallel with \eqref{5.10} we obtain 
\begin{multline}\label{e5.38}
\mathcal{I}_{v}(W_v^{\chi_v};k')=\psi_v(
\varpi_v^{-1}t\gamma)\Vol(I_v^*(1))^{-1}\zeta_v(1)^{-1}
\sum_{i\in \mathbb{Z}}\sum_{\alpha\in \mathbb{F}_v^{\times}}\\
W_v^{\chi_v}\left(\begin{pmatrix}
\alpha\varpi_v^i\\
& 1
\end{pmatrix}\right)
\sum_{l=-1}^{i-1}\overline{\omega}_v^l(\varpi_v)\int_{\mathcal{O}_v^{\times}}
\overline{\psi}_v(\varpi_v^{i-l-2}t\alpha\kappa^{-1}-\varpi_v^{l}\kappa)\overline{\omega}_v(\kappa)
d^{\times}\kappa.
\end{multline}

Substituting \eqref{4.11} into \eqref{e5.38}, and making the change of variables $\alpha\mapsto \kappa\alpha$, we obtain
\begin{multline}\label{eq5.39}
\mathcal{I}_{v}(W_v^{\chi_v};k')=\psi_v(
\varpi_v^{-1}t\gamma)\Vol(I_v^*(1))^{-1}\zeta_v(1)^{-1}
\sum_{i\in \mathbb{Z}}\chi_v(\varpi_v^i)q_v^{-i}\\
\sum_{l=-1}^{i-1}\overline{\omega}_v^l(\varpi_v)\sum_{\alpha\in \mathbb{F}_v^{\times}}\chi_v(\alpha) 
\overline{\psi}_v(\varpi_v^{i-l-2}t\alpha)\int_{\mathcal{O}_v^{\times}}\psi_v(\varpi_v^{l}\kappa)\overline{\omega}_v\chi_v(\kappa)d^{\times}\kappa.
\end{multline}

We now analyze \eqref{eq5.39} by distinguishing cases according to the values of $r_{\chi_v}$ and $r_{\overline{\omega}_v\chi_v}$.
\begin{itemize}
\item Suppose that $r_{\chi_v}=1$ and $r_{\overline{\omega}_v\chi_v}=1$. Then \eqref{e5.37} follows from \eqref{e5.13},  \eqref{e5.14} and \eqref{eq5.39}.

\item Suppose that $r_{\chi_v}=1$ and $r_{\overline{\omega}_v\chi_v}=0$. Substituting   \eqref{e5.13} and   \eqref{e5.18} into \eqref{eq5.39} yields 
\begin{multline*}
\mathcal{I}_{v}(W_v^{\chi_v};k')=\psi_v(
\varpi_v^{-1}t\gamma)\overline{\chi}_v(t)\Vol(I_v^*(1))^{-1}\zeta_v(1)^{-1}q_v^{-\frac{d_v}{2}}
\sum_{i\in \mathbb{Z}}\chi_v(\varpi_v^i)q_v^{-i}\\
\overline{\omega}_v^{i-1}(\varpi_v)G(\chi_v,\overline{\psi}_v)\big[\mathbf{1}_{i\geq 1}-q_v^{-1}\zeta_v(1)\mathbf{1}_{i=0}\big].
\end{multline*}
which simplifies to \eqref{eq5.40}. 

\item Suppose that $r_{\chi_v}=0$ and $r_{\overline{\omega}_v\chi_v}=1$. Substituting   \eqref{e5.14} and \eqref{e5.20} into \eqref{eq5.39} yields 
\begin{multline*}
\mathcal{I}_{v}(W_v^{\chi_v};k')=\psi_v(
\varpi_v^{-1}t\gamma)\Vol(I_v^*(1))^{-1}\zeta_v(1)^{-1}
\sum_{i\in \mathbb{Z}}\chi_v(\varpi_v^i)q_v^{-i}\\
\sum_{l=-1}^{i-1}\overline{\omega}_v^l(\varpi_v)\big[\mathbf{1}_{l\leq i-2}q_v\zeta_v(1)^{-1}-\mathbf{1}_{l=i-1}\overline{\chi}_v(t)\big]q_v^{-1-\frac{d_v}{2}}\zeta_v(1)\mathbf{1}_{l=-1}G(\overline{\omega}_v\chi_v,\psi_v),
\end{multline*}
which simplifies to \eqref{eq5.41}. 

\item Suppose that $r_{\chi_v}=r_{\overline{\omega}_v\chi_v}=0$. Suppose that $r_{\chi_v}=1$ and $r_{\overline{\omega}_v\chi_v}=0$. Substituting   \eqref{e5.18} and \eqref{e5.20} into \eqref{eq5.39} yields
\begin{multline*}
\mathcal{I}_{v}(W_v^{\chi_v};k')=\psi_v(
\varpi_v^{-1}t\gamma)\Vol(I_v^*(1))^{-1}\zeta_v(1)^{-1}
\sum_{i\in \mathbb{Z}}\chi_v(\varpi_v^i)q_v^{-i}\sum_{l=-1}^{i-1}\overline{\omega}_v^l(\varpi_v)\\
\big[\mathbf{1}_{l\leq i-2}q_v\zeta_v(1)^{-1}-\mathbf{1}_{l=i-1}\overline{\chi}_v(t)\big]\big[q_v^{-\frac{d_v}{2}}\mathbf{1}_{l\geq 0}-q_v^{-1-\frac{d_v}{2}}\zeta_v(1)\mathbf{1}_{l=-1}\big],
\end{multline*}
which, after a direct calculation, reduces to \eqref{eq5.42}. 
\end{itemize}

Therefore, Lemma \ref{lemma5.5} follows. 
\end{proof}

\subsection{The Integral $\mathcal{J}_{v}(W_v)$}\label{sec5.3}
Let $\pi_{1,v}=\pi_{3,v}$ be an unramified irreducible admissible representation of $G(F_v)$, and let $W_{1,v}^{\circ}=W_{3,v}^{\circ}$ be a fixed spherical Whittaker function in $\pi_{1,v}$. 
We retain the indices to preserve compatibility with the local period integrals in 
$\mathcal{I}_{\Spec}^{\heartsuit}(\mathbf{s},R(f_{\mathfrak{q}})\boldsymbol{h})$ appearing in the symmetric spectral reciprocity of Theorem \ref{thm2.1}.

Analogously to \eqref{e4.6}, we define the integral
\begin{equation}\label{eq5.46}
\mathcal{I}_{v}^{\dag}(W_v;k'):=\int_{N(F_v)\backslash \overline{G}(F_v)}\overline{W_v(x)}W_{1,v}^{\circ}\left(xk'A_{\mathfrak{q}}\right)
\overline{h_{3,v}^{\circ}\left(xA_{\mathfrak{q}}\right)}|\det x|_v^{\frac{1}{2}}dx.
\end{equation}
where $A_{\mathfrak{q}}:=\diag(\varpi_v, 1)$ and  $h_{3,v}^{\circ}$ is the spherical section corresponding to $W_{3,v}^{\circ}$.  

Let $I_v^{\circ}(1):=\{g\in I_v(1):\ g\equiv I_2\pmod{\mathfrak{p}_v}\}$. Then 
\begin{equation}\label{f5.47}
I_v(1)=\bigsqcup_{\beta'\in \mathbb{F}_v}\begin{pmatrix}
1 & \beta'\\
& 1
\end{pmatrix}I_v^{\circ}(1).
\end{equation}
Since the function $x\mapsto W_{1,v}^{\circ}(xA_{\mathfrak{q}})$ is right-$I_v^{\circ}(1)$-invariant, it follows that
\begin{equation}\label{eq5.47}
\mathcal{I}_{v}^{\dag}(W_v;k')=\mathcal{I}_{v}^{\dag}\left(W_v;\begin{pmatrix}
1 &\beta\\
& 1
\end{pmatrix}\right).
\end{equation}

Define the auxiliary sum 
\begin{equation}\label{e5.43}
\mathcal{J}_{v}(W_v):=\sum_{\beta\in \mathbb{F}_v}\overline{\psi}_v(\varpi_v^{-1}\beta)\,\mathcal{I}_{v}^{\dag}(W_v;k').
\end{equation}

As a consequence of \eqref{eq5.47}, the expression in \eqref{e5.43} is well defined; in particular, the right-hand side does not depend on the choice of $k'$.


\subsubsection{$\sigma_v$ is a principal series}
Suppose $\sigma_v=\xi_v\boxplus\xi_v^{-1}$ is a principal series with $r_{\xi_v}\leq 1$. Suppose $W_v$ is the Jacquet integral of a section $f\in \sigma_v$. By $\dim \Hom(\sigma_v\otimes\pi_{1,v} \otimes\widetilde{\pi}_{3,v},\mathbb{C})\leq 1$, there exists a constant $c_{\sigma_v,\pi_{1,v}}$, depending only on $\sigma_v\otimes\pi_{1,v}\otimes\widetilde{\pi}_{3,v}$, such that
\begin{equation}\label{eq5.44}
\mathcal{I}_{v}^{\dag}(W_v;k')
=c_{\sigma_v,\pi_{1,v}}\cdot\int_{N(F_v)\backslash \overline{G}(F_v)}W_{1,v}^{\circ}\left(xk'A_{\mathfrak{q}}\right)\overline{W_{3,v}^{\circ}\left(xA_{\mathfrak{q}}\right)}\overline{f(x)}dx.
\end{equation}
In particular,  $c_{\sigma_v,\pi_{1,v}}$ is independent of $W_v$, $W_{1,v}^{\circ}=W_{3,v}^{\circ}$. By \cite[Corollary 5.2]{Hsi21} we have 
\begin{equation}\label{f5.46}
|c_{\sigma_v,\pi_{1,v}}|=\sqrt{\zeta_v(1)\cdot |\gamma(1/2,\pi_{1,v}\times\widetilde{\pi}_{1,v}\times\xi_v)|}\ll 1.
\end{equation}
Here we have used the bound towards the Ramanujan conjecture; see \cite{KS03}.

By \eqref{eq5.44}, we obtain, for $j\in \{1, 2\}$, that 
\begin{equation}\label{f5.45}
\mathcal{J}_{v}(W_v^{(j)})=c_{\sigma_v,\pi_{1,v}}\cdot \mathcal{J}_{v}^*(W_v^{(j)}),	
\end{equation}
where 
\begin{align*}
\mathcal{J}_{v}^*(W_v^{(j)}):=\sum_{\beta\in \mathbb{F}_v}\overline{\psi}_v(\varpi_v^{-1}\beta)\int_{N(F_v)\backslash \overline{G}(F_v)}W_{1,v}^{\circ}\left(xk'A_{\mathfrak{q}}\right)\overline{W_{3,v}^{\circ}\left(xA_{\mathfrak{q}}\right)}\overline{f_j(x)}dx.
\end{align*}

\begin{lemma}\label{lem5.5}
We have the following. 
\begin{itemize}
\item Suppose $r_{\xi_v}=1$. Then  
\begin{equation}\label{f5.37}
\mathcal{J}_{v}^*(W_v^{(1)})=\Vol(I_v^*(1))q_v^{-\frac{1}{2}}\overline{\xi}_v^{-1}(\varpi_v)
|W_{1,v}^{\circ}(I_2)|^2|G(\xi_v,\overline{\psi}_v)|^2.
\end{equation}
\item Suppose $r_{\xi_v}=0$. Then  
\begin{multline}\label{5.44}
\mathcal{J}_{v}^*(W_v^{(1)})=\Vol(I_v^*(1))(q_v-1)q_v^{-\frac{1}{2}}\overline{\xi}_v^{-1}(\varpi_v)|W_{1,v}^{\circ}(I_2)|^2\\
\big[L(1,\overline{\xi}_v^2)^{-1}L(1/2,\pi_{1,v}\times\widetilde{\pi}_{1,v}\times\overline{\xi}_v)-(q_v-2)(q_v-1)^{-1}
\big]. 
\end{multline}
\end{itemize}
\end{lemma}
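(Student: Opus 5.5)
We compute $\mathcal{J}_v^*(W_v^{(1)})$ directly from its definition, using the Iwasawa decomposition on $N(F_v)\backslash\overline{G}(F_v)$ and the explicit support of the section $f_1$.

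\textbf{Reduction to a single integral.} First we use \eqref{eq5.47} (its analogue for the integral defining $\mathcal{J}_v^*$, by the same right-$I_v^{\circ}(1)$-invariance) to take $k'=\begin{pmatrix}1&\beta\\&1\end{pmatrix}$. We then write $x=\begin{pmatrix}y&\\&1\end{pmatrix}k$ with $k\in K_v$. Since $f_1$ is supported on $B(F_v)I_v(1)$, only $k\in K_{0,v}[1]$ contribute. Decomposing as in \eqref{eq5.13}, we may write $k=\diag(\alpha,1)k''$ with $k''\in I_v^*(1)$.

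\textbf{Killing the unipotent factor.} Because $\xi_v$ is trivial on $1+\mathfrak p_v$, $f_1$ is right $I_v^*(1)$-invariant. The function $x\mapsto W_{3,v}^{\circ}(xA_{\mathfrak q})$ is invariant under $A_{\mathfrak q}^{-1}K_vA_{\mathfrak q}\supset I_v(1)$. Hence the only $k''$-dependence comes from $W_{1,v}^{\circ}(xk'A_{\mathfrak q})$. Moving the upper unipotent part of $k''$ and $k'$ through $A_{\mathfrak q}$, we pick up $\psi_v(\varpi_v^{-1}y(\beta+\beta''))$-type phases. The sum over $\beta\in\mathbb F_v$ against $\overline\psi_v(\varpi_v^{-1}\beta)$ then acts as a finite Fourier projection. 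It forces $y\in\mathcal O_v^\times$ modulo $1+\mathfrak p_v$ to be congruent to $1$, up to the $\alpha$-rescaling, and produces a factor $q_v$ times the $\Vol(I_v^*(1))$ coming from the $k''$-integration.

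\textbf{Evaluating the remaining sums.} What remains is a sum over $i=e_v(y)$ and $\alpha\in\mathbb F_v^\times$ of products
\[
W_{1,v}^{\circ}(\diag(\varpi_v^{i+1}\alpha,1))\,\overline{W_{3,v}^{\circ}(\diag(\varpi_v^{i+1},1))}
\]
weighted by $\overline{\xi_v(\alpha\varpi_v^i)}q_v^{-i/2}$ together with the additive character leftover. We split according to $r_{\xi_v}$.
\begin{itemize}
\item If $r_{\xi_v}=1$, the $\alpha$-sum is a Gauss sum $G(\xi_v,\overline\psi_v)$, and it survives only for the single valuation at which the character is nontrivial on $\mathfrak p_v^{-1}/\mathcal O_v$. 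That valuation is $i=-1$, where $W^\circ(I_2)$ appears. Pairing this with the conjugate Gauss sum arising from $\overline{f_1}$ and the $\beta$-sum gives $|G(\xi_v,\overline\psi_v)|^2$, the factor $\overline{\xi}_v^{-1}(\varpi_v)q_v^{-1/2}$, and $|W_{1,v}^\circ(I_2)|^2$. This is \eqref{f5.37}.
\item If $r_{\xi_v}=0$, the $\alpha$-sum is a Ramanujan sum: it equals $q_v-1$ at valuations where $\psi_v$ is trivial and $-1$ at the boundary valuation. The sum over $i\ge0$ of $W_{1,v}^\circ(\diag(\varpi_v^i,1))\overline{W_{1,v}^\circ(\diag(\varpi_v^i,1))}\,\overline{\xi}_v(\varpi_v)^iq_v^{-i/2}$ is the unramified Rankin–Selberg generating series. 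It equals
\[
|W_{1,v}^\circ(I_2)|^2\,L(1,\overline{\xi}_v^2)^{-1}L(1/2,\pi_{1,v}\times\widetilde\pi_{1,v}\times\overline\xi_v).
\]
The boundary correction at $i=-1$ and $i=0$, after combining the $-1$ with the $q_v-1$ weights, gives the constant term $-(q_v-2)(q_v-1)^{-1}$. This yields \eqref{5.44}.
\end{itemize}

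The main obstacle is the bookkeeping of the boundary terms in the unramified case. The Iwasawa representative of $xk'A_{\mathfrak q}$ shifts the valuation by one, and the $\beta$-projection interacts with the Ramanujan-sum degeneracy. Getting exactly $-(q_v-2)/(q_v-1)$ requires carefully tracking the $i=-1,0$ terms, including the normalization $\Vol(I_v^*(1))$ versus $\Vol(K_{0,v}[1])$. We will check that step against the $r_{\xi_v}=1$ case, where only one valuation survives.
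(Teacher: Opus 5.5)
Your overall route is the paper's: Iwasawa decomposition, restriction to $K_{0,v}[1]=\bigsqcup_{\alpha}\diag(\alpha,1)I_v^*(1)$ via the support of $f_1$, extraction of an additive phase, and finite character sums. For $r_{\xi_v}=1$ this yields \eqref{f5.37}. One step of your reduction is misstated, though.

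\textbf{The invariance claim.} The function $x\mapsto W_{3,v}^{\circ}(xA_{\mathfrak q})$ is right-invariant under $A_{\mathfrak q}K_vA_{\mathfrak q}^{-1}$, not under $A_{\mathfrak q}^{-1}K_vA_{\mathfrak q}$, and it is not $I_v(1)$-invariant. Take $x=\diag(\alpha\varpi_v^i,1)k$ with $k\in I_v^*(1)$ having upper-right entry $\beta_1$. Then the two Whittaker factors carry the phases $\psi_v(\alpha\varpi_v^i(\beta_1+\beta))$ and $\overline{\psi_v(\alpha\varpi_v^i\beta_1)}$. Their cancellation is what leaves $\psi_v(\alpha\varpi_v^i\beta)$ in \eqref{5.38}. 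If only the $W_{1,v}^{\circ}$ factor had a phase, the $k$-integral would annihilate the $i=-1$ term and you would get $0$. Also, the second Gauss sum comes from the $\beta$-sum after $\alpha\mapsto\beta^{-1}\alpha$, not from $\overline{f_1}$.

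\textbf{The gap: the case $r_{\xi_v}=0$.} You assemble the terms $i\ge 0$ into the Rankin--Selberg series. But for $i\ge 0$ the phase $\psi_v(\alpha\varpi_v^i\beta)$ is identically $1$ (with $\psi_v$ unramified at $v$, as is implicit throughout \textsection\ref{sec5}). So the $i$-th term is a $\beta$-independent quantity times $\sum_{\beta\in\mathbb{F}_v}\overline{\psi}_v(\varpi_v^{-1}\beta)=0$. This is exactly the Fourier projection you describe in your second paragraph, which pins $i=-1$. In the paper's notation:
\begin{itemize}
\item for $i\ge0$, $S(i)=(q_v-1)\sum_{\beta\in\mathbb{F}_v}\overline{\psi}_v(\varpi_v^{-1}\beta)=0$;
\item $S(-1)=\sum_{\beta}\overline{\psi}_v(\varpi_v^{-1}\beta)\,(q_v\mathbf{1}_{\beta=0}-1)=q_v$.
\end{itemize}

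The paper's \eqref{5.40} instead has $(q_v-1)^2\mathbf{1}_{i\ge0}$. There, $\sum_{\beta\in\mathbb{F}_v^{\times}}\overline{\psi}_v(\varpi_v^{-1}\beta)=-1$ times $\sum_{\alpha\in\mathbb{F}_v^{\times}}\psi_v(\varpi_v^i\alpha)=q_v-1$ should give $-(q_v-1)\mathbf{1}_{i\ge0}$. That slip is the only source of the $L$-value and of the constant $-(q_v-2)(q_v-1)^{-1}$. Even taken at face value, \eqref{e5.39} would then give $q_v^{1/2}$ rather than $q_v^{-1/2}$ in front.

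So no bookkeeping of boundary terms will produce \eqref{5.44}. Carried out correctly, the computation gives, for $r_{\xi_v}=0$,
\[
\mathcal{J}_v^*(W_v^{(1)})=\Vol(I_v^*(1))\,q_v^{\frac12}\,\overline{\xi}_v^{-1}(\varpi_v)\,|W_{1,v}^{\circ}(I_2)|^2 .
\]
This is also what \eqref{f5.37} says for $r_{\xi_v}=1$, since $|G(\xi_v,\overline{\psi}_v)|^2=q_v$. You should prove this uniform formula instead. Your projection observation (forcing $i=-1$ and $\alpha\equiv1$) gives it in one line, with no Gauss sums needed.
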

\begin{proof}
By the Iwasawa decomposition, 
\begin{multline*}
\mathcal{J}_{v}^*(W_v^{(1)})=\sum_{\beta\in \mathbb{F}_v}\overline{\psi}_v(\varpi_v^{-1}\beta)\sum_{i\in \mathbb{Z}}q_v^{i}\int_{K_v}W_{1,v}^{\circ}\left(\begin{pmatrix}
\varpi_v^i\\
& 1
\end{pmatrix}
kk'\begin{pmatrix}
\varpi_v\\
& 1
\end{pmatrix}\right)\\
\overline{W_{3,v}^{\circ}\left(\begin{pmatrix}
\varpi_v^i\\
& 1
\end{pmatrix}
k\begin{pmatrix}
\varpi_v\\
& 1
\end{pmatrix}\right)}\overline{f_1\left(\begin{pmatrix}
\varpi_v^i\\
& 1
\end{pmatrix}
k\right)}dk.
\end{multline*}

By the definition of $f_1$ in \textsection\ref{sec4} and the decomposition \eqref{eq5.13}, we obtain 
\begin{equation}\label{5.37}
\mathcal{J}_{v}^*(W_v^{(1)})=\sum_{\beta\in \mathbb{F}_v}\overline{\psi}_v(\varpi_v^{-1}\beta)\sum_{i\in \mathbb{Z}}q_v^{\frac{i}{2}}\overline{\xi}_v^i(\varpi_v)\sum_{\alpha\in \mathbb{F}_v^{\times}}\overline{\xi}_v(\alpha)\cdot J_1(\alpha,i,k'),
\end{equation}
where $J_1(\alpha,i,k')$ is defined by 
\begin{align*}
\int_{I_v^*(1)}W_{1,v}^{\circ}\left(\begin{pmatrix}
\alpha\varpi_v^i\\
& 1
\end{pmatrix}
kk'\begin{pmatrix}
\varpi_v\\
& 1
\end{pmatrix}\right)
\overline{W_{3,v}^{\circ}\left(\begin{pmatrix}
\alpha\varpi_v^i\\
& 1
\end{pmatrix}
k\begin{pmatrix}
\varpi_v\\
& 1
\end{pmatrix}\right)}dk.
\end{align*}

Recall that $k'$ is parametrized by \eqref{eq5.1}. Write  
\begin{align*}
k=\kappa\begin{pmatrix}
1+\alpha_1\varpi_v & \beta_1\\
\gamma_1\varpi_v & 1+\delta_1\varpi_v 
\end{pmatrix}\in I_v^*(1), \ \ \kappa\in \mathbb{F}_v^{\times}. 
\end{align*}

Substituting these matrices into the definition of $J_1(k')$, together with the assumption that $W_{1,v}^{\circ}$ and $W_{3,v}^{\circ}$ are spherical, we obtain  
\begin{equation}\label{5.38}
J_1(\alpha,i,k')=\psi_v(\alpha\varpi_v^i\beta)
\bigg|W_{1,v}^{\circ}\left(\begin{pmatrix}
\varpi_v^{i+1}\\
& 1
\end{pmatrix}
\right)\bigg|^2\Vol(I_v^*(1)).
\end{equation}

It thus follows from \eqref{5.37} and \eqref{5.38} that 
\begin{equation}\label{e5.39}
\mathcal{J}_{v}^*(W_v^{(1)})=\Vol(I_v^*(1))\sum_{i\in \mathbb{Z}}q_v^{\frac{i}{2}}\overline{\xi}_v^i(\varpi_v)
\bigg|W_{1,v}^{\circ}\left(\begin{pmatrix}
\varpi_v^{i+1}\\
& 1
\end{pmatrix}
\right)\bigg|^2\cdot S(i)
\end{equation}
where
\begin{align*}
S(i):=\sum_{\beta\in \mathbb{F}_v}\overline{\psi}_v(\varpi_v^{-1}\beta)\sum_{\alpha\in \mathbb{F}_v^{\times}}\overline{\xi}_v(\alpha)\psi_v(\alpha\varpi_v^i\beta). 
\end{align*}

Making the change of variable $\alpha\mapsto \beta^{-1}\alpha$, we derive 
\begin{align*}
S(i)=\sum_{\beta\in \mathbb{F}_v^{\times}}\xi_v(\beta)\overline{\psi}_v(\varpi_v^{-1}\beta)\sum_{\alpha\in \mathbb{F}_v^{\times}}\overline{\xi}_v(\alpha)\psi_v(\varpi_v^i\alpha) 
+\sum_{\alpha\in \mathbb{F}_v^{\times}}\overline{\xi}_v(\alpha)\psi_v(\alpha\varpi_v^{i+1}).
\end{align*}

Consider the following scenarios. 
\begin{itemize}
\item Suppose $r_{\xi_v}=1$. Then 
\begin{equation}\label{5.39}
S(i)=\mathbf{1}_{i=-1}|G(\xi_v,\overline{\psi}_v)|^2+\mathbf{1}_{i=-2}G(\overline{\xi}_v,\psi_v).	
\end{equation}

Consequently, \eqref{f5.37} follows from \eqref{e5.39} and \eqref{5.39}. 

\item Suppose $r_{\xi_v}=0$. Then 
\begin{equation}\label{5.40}
S(i)=\mathbf{1}_{i=-1}+(q_v-1)^2\mathbf{1}_{i\geq 0}-\mathbf{1}_{i=-2}+(q_v-1)\mathbf{1}_{i\geq -1}.	
\end{equation}

Notice that 
\begin{equation}\label{e5.44}
\sum_{i\geq 0}q_v^{\frac{i}{2}}\overline{\xi}_v^i(\varpi_v)
\bigg|W_{1,v}^{\circ}\left(\begin{pmatrix}
\varpi_v^{i}\\
& 1
\end{pmatrix}
\right)\bigg|^2=\frac{L(1/2,\pi_{1,v}\times\widetilde{\pi}_{1,v}\times\overline{\xi}_v)|W_{1,v}^{\circ}(I_2)|^2}{L(1,\overline{\xi}_v^2)}. 
\end{equation}

Hence, \eqref{5.44} follows from \eqref{e5.39}, \eqref{5.40} and \eqref{e5.44}. 
\end{itemize}

Therefore, Lemma \ref{lem5.5} holds. 
\end{proof}

\begin{lemma}\label{lem5.6}
Let $\xi_v$ be a character of $F_v^{\times}$ with $r_{\xi_v}\leq 1$. Let $i\in \mathbb{Z}_{\geq -1}$. Define
\begin{equation}\label{eq5.45}
S_i(\xi_v):=\sum_{\alpha\in \mathbb{F}_v^{\times}}\overline{\xi}_v(\alpha)\Big|\sum_{\beta\in \mathbb{F}_v^{\times}}\overline{\psi}_v(\varpi_v^{-1}\beta)\psi_v(\varpi_v^i\alpha\beta^{-1})\Big|^2.
\end{equation}
Then the following assertions hold.
\begin{itemize}
\item If $r_{\xi_v}=0$, then 
\begin{equation}\label{e5.45}
S_i(\xi_v)=(q_v^2-q_v-1)\mathbf{1}_{i=-1}+(q_v-1)\mathbf{1}_{i\geq 0}.
\end{equation}
\item If $r_{\xi_v}=1$, then
\begin{equation}\label{e5.46}
S_i(\xi_v)=G(\overline{\xi}_v,\psi_v)G(\overline{\xi}_v,\overline{\psi}_v)\mathbf{1}_{i=-1}J(\overline{\xi}_v,\overline{\xi}_v).
\end{equation}
\end{itemize}
\end{lemma}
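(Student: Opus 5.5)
The plan is to evaluate the inner sum over $\beta$ directly, expand the square, and then reduce everything to Gauss and Jacobi sums over $\mathbb{F}_v$. Throughout, put
\[
K_i(\alpha):=\sum_{\beta\in \mathbb{F}_v^{\times}}\overline{\psi}_v(\varpi_v^{-1}\beta)\psi_v(\varpi_v^i\alpha\beta^{-1}).
\]
I split according to whether $i\geq 0$ or $i=-1$.

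\emph{Case $i\geq 0$.} Here $\varpi_v^i\alpha\beta^{-1}\in\mathcal{O}_v$, and $\psi_v$ is unramified, so the second character is trivial. Hence $K_i(\alpha)=\sum_{\beta\in\mathbb{F}_v^\times}\overline{\psi}_v(\varpi_v^{-1}\beta)=-1$, and $|K_i(\alpha)|^2=1$. Therefore $S_i(\xi_v)=\sum_{\alpha}\overline{\xi}_v(\alpha)$, which is $q_v-1$ if $r_{\xi_v}=0$ and $0$ if $r_{\xi_v}=1$. This matches both \eqref{e5.45} and \eqref{e5.46} for $i\geq 0$.

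\emph{Case $i=-1$.} Now $K_{-1}(\alpha)$ is a Kloosterman sum. Expanding $|K_{-1}(\alpha)|^2$ as a double sum over $\beta_1,\beta_2\in\mathbb{F}_v^\times$ and interchanging with the $\alpha$-sum gives
\[
S_{-1}(\xi_v)=\sum_{\beta_1,\beta_2}\overline{\psi}_v\big(\varpi_v^{-1}(\beta_1-\beta_2)\big)\sum_{\alpha\in\mathbb{F}_v^\times}\overline{\xi}_v(\alpha)\psi_v\big(\varpi_v^{-1}\alpha c\big),\qquad c:=\beta_1^{-1}-\beta_2^{-1}.
\]

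If $r_{\xi_v}=0$, the inner sum equals $q_v-1$ when $c=0$ (that is, $\beta_1=\beta_2$) and $-1$ otherwise. The diagonal therefore contributes $(q_v-1)^2$. For the off-diagonal part, note that $\sum_{\beta_1\neq\beta_2}\overline{\psi}_v(\varpi_v^{-1}(\beta_1-\beta_2))=|{-1}|^2-(q_v-1)=2-q_v$, so the off-diagonal contributes $-(2-q_v)$. Summing, $S_{-1}=(q_v-1)^2+q_v-2=q_v^2-q_v-1$, as claimed.

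If $r_{\xi_v}=1$, the inner sum vanishes for $c=0$. For $c\neq0$ it equals $\xi_v(c)G(\overline{\xi}_v,\psi_v)$ after the substitution $\alpha\mapsto c^{-1}\alpha$. I then set $u=\beta_2-\beta_1\neq0$, so that $c=u/(\beta_1\beta_2)$ and
\[
\xi_v(c)=\xi_v(u)\,\overline{\xi}_v\big(\beta_1(\beta_1+u)\big).
\]
Next I substitute $\beta_1=-uy$, where $y$ ranges over $\mathbb{F}_v\setminus\{0,1\}$ (the constraints $\beta_1,\beta_2\neq 0$ exclude exactly $y=0,1$). This gives $\beta_1(\beta_1+u)=-u^2y(1-y)$. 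The $y$-sum then becomes $\overline{\xi}_v(-1)\overline{\xi}_v(u)^2J(\overline{\xi}_v,\overline{\xi}_v)$. What remains is
\[
\overline{\xi}_v(-1)\sum_{u}\overline{\xi}_v(u)\psi_v(\varpi_v^{-1}u).
\]
Replacing $u\mapsto -u$ absorbs the sign $\overline{\xi}_v(-1)$ and turns this into $G(\overline{\xi}_v,\overline{\psi}_v)$. This yields \eqref{e5.46}.

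I expect the main obstacle to be sign and normalization bookkeeping rather than any conceptual issue. Specifically, I need to check that $\overline{\psi}_v(\varpi_v^{-1}(\beta_1-\beta_2))=\psi_v(\varpi_v^{-1}u)$ carries the right sign, and that the two factors of $\overline{\xi}_v(-1)$ combine so that exactly $G(\overline{\xi}_v,\overline{\psi}_v)$ appears. I would verify this step by tracking each sign explicitly through the substitutions $u=\beta_2-\beta_1$, $\beta_1=-uy$, and $u\mapsto -u$.
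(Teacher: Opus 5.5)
Your proof is correct. The sign bookkeeping you were worried about works out: $\overline{\psi}_v(\varpi_v^{-1}(\beta_1-\beta_2))=\psi_v(\varpi_v^{-1}u)$, and the two factors of $\overline{\xi}_v(-1)$ cancel because $\overline{\xi}_v(-1)^2=1$. As a check, $q_v=3$ with $\xi_v$ quadratic gives $S_{-1}=3=G(\overline{\xi}_v,\psi_v)G(\overline{\xi}_v,\overline{\psi}_v)J(\overline{\xi}_v,\overline{\xi}_v)$.

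Your route organizes the variables differently from the paper's.

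\textbf{The paper's route.} After opening the square, the paper substitutes $\alpha\mapsto\beta\beta'\alpha$. This turns the phase into $\psi_v(\varpi_v^{-1}(\beta'-\beta)(\varpi_v^{i+1}\alpha+1))$, a single formula valid for every $i$. In the unramified case it completes the $\alpha$-sum to all of $\mathbb{F}_v$ and uses additive orthogonality. In the ramified case it rescales $\beta$ and $\beta'$ by $(1+\alpha)^{-1}$. The $\beta$- and $\beta'$-sums then split off as $G(\overline{\xi}_v,\overline{\psi}_v)$ and $G(\overline{\xi}_v,\psi_v)$. What remains is $\sum_{\alpha}\overline{\xi}_v(\alpha)\xi_v^2(1+\alpha)$, which the M\"obius substitution $\alpha=\alpha'(1-\alpha')^{-1}$ converts into $J(\overline{\xi}_v,\overline{\xi}_v)$.

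\textbf{Your route.} You handle $i\ge 0$ before opening the square, since the inner sum degenerates to the Ramanujan sum $-1$. For $i=-1$ you sum over $\alpha$ first. One Gauss sum therefore comes from $\alpha$, the other from the difference $u=\beta_2-\beta_1$, and the Jacobi sum from the ratio $y=-\beta_1/u$.

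\textbf{Comparison.} The paper's substitution treats all $i$ uniformly and factors symmetrically in $\beta,\beta'$, so no stray signs appear. Your version is a bit more elementary for $i\ge 0$ and in the unramified case, where you use a diagonal/off-diagonal split in $c$. It is also the natural viewpoint if you regard $S_{-1}$ as a twisted second moment of Kloosterman sums. The price is the extra $\overline{\xi}_v(-1)$ bookkeeping.
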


\begin{proof}
Opening the square in the definition of $S_i(\xi_v)$ and applying the change of variables $\alpha\mapsto \beta\beta'\alpha$, we obtain
\begin{equation}\label{e5.47}
S_i(\xi_v)=\sum_{\alpha\in \mathbb{F}_v^{\times}}\sum_{\beta\in \mathbb{F}_v^{\times}}\sum_{\beta'\in \mathbb{F}_v^{\times}}\overline{\xi}_v(\alpha)\overline{\xi}_v(\beta)\overline{\xi}_v(\beta')\psi_v(\varpi_v^{-1}(\beta'-\beta)(\varpi_v^{i+1}\alpha+1)).
\end{equation}
We now consider the following cases according to the value of $r_{\xi_v}$.
\begin{itemize}
\item Suppose that $r_{\xi_v}=0$. Then $\overline{\xi}_v(\alpha)\equiv 1$ for all $\alpha\in \mathbb{F}_v^{\times}$. Hence, when $i\geq 0$, we have
\begin{equation}\label{e5.48}
S_i(\xi_v)=(q_v-1)\Big|\sum_{\beta\in \mathbb{F}_v^{\times}}\overline{\psi}_v(\varpi_v^{-1}\beta)\Big|^2=q_v-1.
\end{equation}

Next, we consider the case $i=-1$, in which case
\begin{equation}\label{e5.49}
S_i(\xi_v)=\sum_{\beta\in \mathbb{F}_v^{\times}}\sum_{\beta'\in \mathbb{F}_v^{\times}}\sum_{\alpha\in \mathbb{F}_v}\psi_v(\varpi_v^{-1}(\beta'-\beta)(\alpha+1))-\Big|\sum_{\beta\in \mathbb{F}_v^{\times}}\overline{\psi}_v(\varpi_v^{-1}\beta)\Big|^2.
\end{equation}
By the orthogonality of additive characters, we have
\begin{equation}\label{e5.50}
\sum_{\alpha\in \mathbb{F}_v}\psi_v(\varpi_v^{-1}(\beta'-\beta)(\alpha+1))=q_v\mathbf{1}_{\beta'-\beta=0\in \mathbb{F}_v}.
\end{equation}
Combining \eqref{e5.48}, \eqref{e5.49}, and \eqref{e5.50}, we obtain \eqref{e5.45}.

\item Suppose that $r_{\xi_v}=1$. It follows from \eqref{e5.47} and the orthogonality of multiplicative characters that
\begin{equation}\label{5.51}
S_i(\xi_v)=\mathbf{1}_{i=-1}\sum_{\alpha\in \mathbb{F}_v-\{0,-1\}}\sum_{\beta\in \mathbb{F}_v^{\times}}\sum_{\beta'\in \mathbb{F}_v^{\times}}\overline{\xi}_v(\alpha\beta\beta')\psi_v(\varpi_v^{-1}(\beta'-\beta)(\alpha+1)).
\end{equation}
Making the changes of variables $\beta\mapsto (\alpha+1)^{-1}\beta$ and $\beta'\mapsto (\alpha+1)^{-1}\beta'$, it follows from \eqref{5.51} that
\begin{equation}\label{5.52}
S_i(\xi_v)=\mathbf{1}_{i=-1}G(\overline{\xi}_v,\psi_v)G(\overline{\xi}_v,\overline{\psi}_v)\sum_{\alpha\in \mathbb{F}_v-\{0,-1\}}\overline{\xi}_v(\alpha)\xi_v^2(\alpha+1).
\end{equation}
Finally, making the change of variables $\alpha=\alpha'(1-\alpha')^{-1}$, which defines a bijection from $\mathbb{F}_v-\{0,-1\}$ to $\mathbb{F}_v-\{0,1\}$, we see that \eqref{e5.46} follows from \eqref{5.52}.
\end{itemize}
This completes the proof of Lemma \ref{lem5.6}.
\end{proof}

\begin{lemma}
Suppose $L(s,\pi_{1,v})=(1-\alpha_vq_v^{-s})^{-1}(1-\beta_vq_v^{-s})^{-1}$. We have the following. 
\begin{itemize}
\item Suppose $r_{\xi_v}=1$. Then 
\begin{equation}\label{5.54}
\mathcal{J}_{v}^*(W_v^{(2)})=q_v^{-\frac{1}{2}}\Vol(I_v^*(1)) G(\overline{\xi}_v,\psi_v)G(\overline{\xi}_v,\overline{\psi}_v)J(\overline{\xi}_v,\overline{\xi}_v)|W_{1,v}^{\circ}(I_2)|^2.	
\end{equation}
\item Suppose $r_{\xi_v}=0$. Then
\begin{multline}\label{5.55}
\mathcal{J}_{v}^*(W_v^{(2)})=(q_v-1)\Big[q_v^{\frac{1}{2}}\overline{\xi}_v(\varpi_v)L(1,\overline{\xi}_v^2)^{-1}L(1/2,\pi_{1,v}\times\widetilde{\pi}_{1,v}\times \overline{\xi}_v)\\
-2q_v^{-\frac{1}{2}}\Re \left(\overline{\xi}_v(\varpi_v)A\cdot L_v(1/2,\pi_{1,v}\times\widetilde{\pi}_{1,v}\times \overline{\xi}_v)\right)+q_v^{-\frac{1}{2}}(q_v-1)^{-1}(q_v^2-q_v-1)\\
+q_v^{-\frac{1}{2}}\overline{\xi}_v^{-1}(\varpi_v)\big[L(1,\overline{\xi}_v^2)^{-1}L(1/2,\pi_{1,v}\times\widetilde{\pi}_{1,v}\times \overline{\xi}_v)-1\big]\Big]\Vol(I_v^*(1)) |W_{1,v}^{\circ}(I_2)|^2,
\end{multline}
where $A:=\alpha_v\beta_v^{-1}+\alpha_v^{-1}\beta_v+1-|\alpha_v+\beta_v|^2\overline{\xi}_v(\varpi_v)q_v^{-1/2}+\overline{\xi}_v^2(\varpi_v)q_v^{-1}$. 
\end{itemize} 
\end{lemma}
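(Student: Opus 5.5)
We follow the template of Lemma~\ref{lem5.5}, with $f_1$ replaced by $f_2$. By the Iwasawa decomposition and the decomposition \eqref{eq5.13} we write $x=\diag(\alpha\varpi_v^i,1)k$ with $k$ running over $K_v$. The section $f_2$ is supported on $B(F_v)wI_v(1)$, so we must first describe $f_2(\diag(\varpi_v^i,1)k)$ on each Bruhat cell of $K_v$ modulo $I_v(1)$. For $k\in I_v^*(1)$ we have $f_2=0$. For $k=\begin{pmatrix}1&\\ \gamma&1\end{pmatrix}$-type representatives with $\gamma\in\mathcal O_v^\times$, we use the factorisation from the proof of Lemma~\ref{lem3.4} to write $k=bwn$ with $b$ upper triangular and $n\in N(\mathcal O_v)$; then $f_2=\xi_v(\ast)|\ast|^{1/2}$ times explicit characters. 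Concretely, we parametrise $K_v\setminus K_{0,v}[1]$ as $\bigsqcup_{\kappa,\beta'} \diag(\kappa,1)\begin{pmatrix}1&\beta'\\&1\end{pmatrix}w\,I_v^{*}(1)$. After shifting the unipotent part of $k'$ (using \eqref{eq5.47} and \eqref{f5.47}), the integrand becomes
\[
W_{1,v}^{\circ}\bigl(\diag(\alpha\varpi_v^i,1)\,u\,w\,k'A_{\mathfrak q}\bigr)\,\overline{W_{3,v}^{\circ}\bigl(\diag(\alpha\varpi_v^i,1)\,u\,wA_{\mathfrak q}\bigr)}.
\]
The key observation is that $wA_{\mathfrak q}=\diag(1,\varpi_v)w$. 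Moreover, $w\begin{pmatrix}1&\beta\\&1\end{pmatrix}w^{-1}$ is lower unipotent, and after conjugation by $A_{\mathfrak q}$ it lies in $K_v$ only with an extra factor that produces the phase $\psi_v(\varpi_v^{i}\alpha\beta^{-1})$ (or leaves a genuinely non-spherical piece when $\beta$ is a unit).

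Carrying this out, we expect $\mathcal J_v^*(W_v^{(2)})$ to reduce to the shape
\[
\Vol(I_v^*(1))\sum_{i}q_v^{i/2}\overline{\xi}_v^{i}(\varpi_v)\,|W_{1,v}^\circ(\diag(\varpi_v^{i+1},1))|^2\, S_i(\xi_v)
\]
plus boundary terms, where $S_i(\xi_v)$ is exactly the sum \eqref{eq5.45}. The modulus squared in \eqref{eq5.45} arises because $W_1^\circ=W_3^\circ$ and the $\beta$-sum in \eqref{e5.43} pairs with the phase coming from $k'$ and its conjugate. The case $r_{\xi_v}=1$ then follows directly from \eqref{e5.46}: only $i=-1$ survives, and with $|W_{1,v}^\circ(I_2)|^2$ and the factor $q_v^{-1/2}\overline\xi_v^{-1}(\varpi_v)$ this gives \eqref{5.54}.

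For $r_{\xi_v}=0$ we insert \eqref{e5.45}. The $i\ge0$ part $(q_v-1)\sum_{i\ge0}$ is evaluated by \eqref{e5.44} together with the unramified formula $W_{1,v}^\circ(\diag(\varpi_v^{i},1))=q_v^{-i/2}\,\frac{\alpha_v^{i+1}-\beta_v^{i+1}}{\alpha_v-\beta_v}W_{1,v}^\circ(I_2)$. The $i=-1$ term gives $(q_v^2-q_v-1)$. However, the index shift $i+1$ versus $i$ in the Whittaker argument, together with the cell where $\beta'$ is a unit, produces the correction terms involving $A$. These come from comparing $\sum_i q_v^{i/2}\overline\xi_v^i|W(\varpi_v^{i+1})|^2$ with $\sum_i q_v^{i/2}\overline\xi_v^i|W(\varpi_v^{i})|^2$, using the Hecke relation for $|\lambda(\varpi_v^{i+1})|^2$ in terms of $|\lambda(\varpi_v^i)|^2$. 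That relation introduces $|\alpha_v+\beta_v|^2$ and $\alpha_v\beta_v^{-1}+\alpha_v^{-1}\beta_v+1$, and the real part arises from adding the conjugate-symmetric contributions of the two cells.

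The main obstacle is the bookkeeping in the $r_{\xi_v}=0$ case. There we must correctly identify the contribution of each double coset of $I_v(1)$ in $K_v$, keep track of the $(q_v-1)$ normalisations coming from $\mathbb F_v^\times$-sums, and verify that the generating-series identity reproduces exactly the combination in \eqref{5.55}. We would check the final formula by specialising to $\xi_v=1$ and comparing with the sum $\mathcal J_v^*(W^{(1)})+\mathcal J_v^*(W^{(2)})$, which corresponds to the $K_{0,v}[1]$-invariant vector computed via \eqref{5.44}.
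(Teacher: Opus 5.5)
Your framework matches the paper: Iwasawa decomposition, $f_2$ supported on $BwI_v(1)$, the modulus square produced by $W_{1,v}^{\circ}=W_{3,v}^{\circ}$ after shifting the $\beta$-variable, and $S_i(\xi_v)$ from Lemma~\ref{lem5.6}. However, the step that actually produces \eqref{5.55} is missing and partly misdescribed. After the shift, the quantity inside the modulus square is
\[
\Sigma_i(\alpha):=\sum_{\beta\in\mathbb{F}_v}\overline{\psi}_v(\varpi_v^{-1}\beta)\,W_{1,v}^{\circ}\!\left(\begin{pmatrix}\alpha\varpi_v^i&\\&1\end{pmatrix}w\begin{pmatrix}\varpi_v&\beta\\&1\end{pmatrix}\right).
\]
By \eqref{5.48}--\eqref{5.49}, the summand $\beta\equiv 0$ equals $a_i:=W_{1,v}^{\circ}(\diag(\varpi_v^i,\varpi_v))$, with no phase and no $\alpha$-dependence. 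Each $\beta\in\mathbb{F}_v^{\times}$ gives $\psi_v(\varpi_v^i\alpha\beta^{-1})\,b_i$ with $b_i:=W_{1,v}^{\circ}(\diag(\varpi_v^{i+1},1))$. Both are spherical evaluations, so there is no ``non-spherical piece'', and the phase belongs to the unit case, not the other way round. Expanding $|\Sigma_i(\alpha)|^2$ and summing against $\overline{\xi}_v(\alpha)$ gives three series:
\begin{itemize}
\item $(q_v-1)\mathbf{1}_{r_{\xi_v}=0}|a_i|^2$;
\item your term $|b_i|^2S_i(\xi_v)$;
\item the cross term $2\Re(\overline{a_i}\,b_i\,T_i(\xi_v))$, with $T_i(\xi_v)=-(q_v-1)\mathbf{1}_{r_{\xi_v}=0}$ for $i\ge 1$ and $a_i=0$ for $i\le 0$.
\end{itemize}
The first and third are full infinite series, not boundary terms. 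The first yields the leading term $q_v^{1/2}\overline{\xi}_v(\varpi_v)L(1,\overline{\xi}_v^2)^{-1}L(1/2,\pi_{1,v}\times\widetilde{\pi}_{1,v}\times\overline{\xi}_v)$ of \eqref{5.55}, which your account never produces. The third is the only source of the $-2q_v^{-1/2}\Re(\cdots A\cdots)$ term. It enters through the off-diagonal Casselman--Shalika series \eqref{5.62}, which pairs the Hecke eigenvalue at $\varpi_v^{i-1}$ with the one at $\varpi_v^{i+1}$.

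Consequently the mechanism you propose for $A$ cannot work. The diagonal series $\sum_i q_v^{i/2}\overline{\xi}_v^i(\varpi_v)|W_{1,v}^{\circ}(\diag(\varpi_v^{i\pm 1},1))|^2$ are just index shifts of \eqref{e5.44}, giving the first and last terms of \eqref{5.55}. Rewriting one in terms of the other via the Hecke relation creates no new real-part term. Nor can the unipotent parameter $\beta'$ of the big cell contribute corrections. By left $N$-equivariance $W_{1,v}^{\circ}(ng)\overline{W_{3,v}^{\circ}(ng)}$ is independent of $n$, so that sum (the paper's $\beta_2$-sum) only produces the factor $q_v$ in \eqref{5.45}.

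For $r_{\xi_v}=1$ your conclusion is right, but you must say why the other two series drop out. Both the $|a_i|^2$ term and the cross term are independent of $\alpha$: the latter because $a_i\neq 0$ forces $i\ge 1$, where $\psi_v(\varpi_v^i\alpha\beta^{-1})=1$. Hence $\sum_{\alpha}\overline{\xi}_v(\alpha)=0$ kills them. The factor $\overline{\xi}_v^{-1}(\varpi_v)$ you get at $i=-1$ is genuinely produced by \eqref{5.47}; the printed \eqref{5.54} omits it.
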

\begin{proof}
By the Iwasawa decomposition, 
\begin{multline*}
\mathcal{J}_{v}^*(W_v^{(2)})=\sum_{\beta\in \mathbb{F}_v}\overline{\psi}_v(\varpi_v^{-1}\beta)\sum_{i\in \mathbb{Z}}q_v^{i}\int_{K_v}W_{1,v}^{\circ}\left(\begin{pmatrix}
\varpi_v^i\\
& 1
\end{pmatrix}
kk'\begin{pmatrix}
\varpi_v\\
& 1
\end{pmatrix}\right)\\
\overline{W_{3,v}^{\circ}\left(\begin{pmatrix}
\varpi_v^i\\
& 1
\end{pmatrix}
k\begin{pmatrix}
\varpi_v\\
& 1
\end{pmatrix}\right)}\overline{f_2\left(\begin{pmatrix}
\varpi_v^i\\
& 1
\end{pmatrix}
k\right)}dk.
\end{multline*}

By the definition of $f_2$ in \textsection\ref{sec4} and the decomposition \eqref{eq5.13}, we obtain 
\begin{multline*}
\mathcal{J}_{v}^*(W_v^{(2)})=\sum_{\beta\in \mathbb{F}_v}\overline{\psi}_v(\varpi_v^{-1}\beta)\sum_{i\in \mathbb{Z}}q_v^{\frac{i}{2}}\overline{\xi}_v^i(\varpi_v)\sum_{\alpha\in \mathbb{F}_v^{\times}}\overline{\xi}_v(\alpha)\\
\sum_{\beta_2\in \mathbb{F}_v}\int_{I_v^*(1)}W_{1,v}^{\circ}\left(\begin{pmatrix}
\alpha\varpi_v^i\\
& 1
\end{pmatrix}
\begin{pmatrix}
1 & \beta_2\\
& 1
\end{pmatrix}
wkk'\begin{pmatrix}
\varpi_v\\
& 1
\end{pmatrix}\right)\\
\overline{W_{3,v}^{\circ}\left(\begin{pmatrix}
\alpha\varpi_v^i\\
& 1
\end{pmatrix}
\begin{pmatrix}
1 & \beta_2\\
& 1
\end{pmatrix}
wk\begin{pmatrix}
\varpi_v\\
& 1
\end{pmatrix}\right)}dk.
\end{multline*}

Hence, we obtain 
\begin{equation}\label{5.45}
\mathcal{J}_{v}^*(W_v^{(2)})=q_v\sum_{\beta\in \mathbb{F}_v}\overline{\psi}_v(\varpi_v^{-1}\beta)\sum_{i\in \mathbb{Z}}q_v^{\frac{i}{2}}\overline{\xi}_v^i(\varpi_v)\sum_{\alpha\in \mathbb{F}_v^{\times}}\overline{\xi}_v(\alpha)J_2(\alpha,i,k'),
\end{equation}
where $J_2(\alpha,i,k')$ is defined by 
\begin{align*}
\int_{I_v^*(1)}W_{1,v}^{\circ}\left(\begin{pmatrix}
\alpha\varpi_v^i\\
& 1
\end{pmatrix}
wkk'\begin{pmatrix}
\varpi_v\\
& 1
\end{pmatrix}\right)\overline{W_{3,v}^{\circ}\left(\begin{pmatrix}
\alpha\varpi_v^i\\
& 1
\end{pmatrix}
wk\begin{pmatrix}
\varpi_v\\
& 1
\end{pmatrix}\right)}dk.
\end{align*}

Let $K_v(1):=\big\{g_v\in I_v(1):\ g_v\equiv I_2\mod\mathfrak{p}_v \big\}$. We have the decomposition  
\begin{equation}\label{5.67}
I_v(1)=\bigsqcup_{\beta_1\in \mathbb{F}_v}\begin{pmatrix}
1 & \beta_1\\
& 1
\end{pmatrix}K_v(1). 
\end{equation}

Let $K_v^*(1):=Z(\mathcal{O}_v)K_v(1)$. It follows from \eqref{5.67} that 
\begin{multline}\label{5.46}
J_2(\alpha,i,k')=\sum_{\beta_1\in \mathbb{F}_v}W_{1,v}^{\circ}\left(\begin{pmatrix}
\alpha\varpi_v^i\\
& 1
\end{pmatrix}
w\begin{pmatrix}
1& \beta_1+\beta\\
& 1
\end{pmatrix}\begin{pmatrix}
\varpi_v\\
& 1
\end{pmatrix}\right)\\\Vol(K_v^*(1))\overline{W_{3,v}^{\circ}\left(\begin{pmatrix}
\alpha\varpi_v^i\\
& 1
\end{pmatrix}
w\begin{pmatrix}
1& \beta_1\\
& 1
\end{pmatrix}\begin{pmatrix}
\varpi_v\\
& 1
\end{pmatrix}\right)}.
\end{multline}

Since $W_{1,v}^{\circ}=W_{3,v}^{\circ}$, by substituting \eqref{5.46} into \eqref{5.45}, together with  
making the change of variable $\beta\mapsto -\beta_1+\beta$, we obtain 
\begin{equation}\label{5.47}
\mathcal{J}_{v}^*(W_v^{(2)})=q_v\Vol(K_v^*(1))\sum_{i\in \mathbb{Z}}q_v^{\frac{i}{2}}\overline{\xi}_v^i(\varpi_v)\cdot J(i),
\end{equation}
where 
\begin{align*}
J(i):=\sum_{\alpha\in \mathbb{F}_v^{\times}}\overline{\xi}_v(\alpha)\bigg|\sum_{\beta\in \mathbb{F}_v}\overline{\psi}_v(\varpi_v^{-1}\beta)W_{1,v}^{\circ}\left(\begin{pmatrix}
\alpha\varpi_v^i\\
& 1
\end{pmatrix}
w\begin{pmatrix}
\varpi_v & \beta\\
& 1
\end{pmatrix}\right)\bigg|^2.
\end{align*}

The inner Whittaker function can be further simplified as follows. 
\begin{itemize}
\item Suppose $\beta\in \mathfrak{p}_v$. Then 
\begin{equation}\label{5.48}
W_{1,v}^{\circ}\left(\begin{pmatrix}
\alpha\varpi_v^i\\
& 1
\end{pmatrix}
w\begin{pmatrix}
\varpi_v & \beta\\
& 1
\end{pmatrix}\right)=W_{1,v}^{\circ}\left(\begin{pmatrix}
\varpi_v^i\\
& \varpi_v
\end{pmatrix}
\right).
\end{equation}

\item Suppose $\beta\in \mathbb{F}_v^{\times}$. It follows from 
\begin{align*}
w\begin{pmatrix}
\varpi_v & \beta\\
& 1
\end{pmatrix}=\begin{pmatrix}
\varpi_v & 1\\
& \beta
\end{pmatrix}\begin{pmatrix}
-\beta^{-1}& \\
\varpi_v\beta^{-1}&1\end{pmatrix}w
\end{align*}
that 
\begin{equation}\label{5.49}
W_{1,v}^{\circ}\left(\begin{pmatrix}
\alpha\varpi_v^i\\
& 1
\end{pmatrix}
w\begin{pmatrix}
\varpi_v & \beta\\
& 1
\end{pmatrix}\right)=\psi_v(\varpi_v^i\alpha\beta^{-1})W_{1,v}^{\circ}\left(\begin{pmatrix}
\varpi_v^{i+1} & \\
& 1
\end{pmatrix}
\right).
\end{equation}
\end{itemize}

Substituting \eqref{5.48} and \eqref{5.49} into the definition of $J(i)$, and opening the square and taking advantage of orthogonality, we obtain 
\begin{multline*}
J(i)=(q_v-1)\mathbf{1}_{r_{\xi_v}=0}\bigg|W_{1,v}^{\circ}\left(\begin{pmatrix}
\varpi_v^{i-1}\\
& 1
\end{pmatrix}
\right)\bigg|^2+\bigg|W_{1,v}^{\circ}\left(\begin{pmatrix}
\varpi_v^{i+1} & \\
& 1
\end{pmatrix}
\right)\bigg|^2S_i(\xi_v)\\
+2\Re \left(\overline{W_{1,v}^{\circ}\left(\begin{pmatrix}
\varpi_v^i\\
& \varpi_v
\end{pmatrix}
\right)}W_{1,v}^{\circ}\left(\begin{pmatrix}
\varpi_v^{i+1} & \\
& 1
\end{pmatrix}
\right)T_i(\xi_v)\right),
\end{multline*}
where $S_i(\xi_v)$ is defined by \eqref{eq5.45}, and 
\begin{align*}
T_i(\xi_v):=\sum_{\alpha\in \mathbb{F}_v^{\times}}\sum_{\beta\in \mathbb{F}_v^{\times}}\overline{\xi}_v(\alpha)\overline{\psi}_v(\varpi_v^{-1}\beta)\psi_v(\varpi_v^i\alpha\beta^{-1}).
\end{align*}

Note that $W_{1,v}^{\circ}(\diag(\varpi_v^i,\varpi_v))= 0$ unless $i\geq 1$. For $i\geq 1$, we have  
\begin{equation}\label{5.59}
T_i(\xi_v)=\sum_{\alpha\in \mathbb{F}_v^{\times}}\overline{\xi}_v(\alpha)\sum_{\beta\in \mathbb{F}_v^{\times}}\overline{\psi}_v(\varpi_v^{-1}\beta)=-(q_v-1)\mathbf{1}_{r_{\xi_v}=0}. 
\end{equation}

By \eqref{5.59} and Lemma \ref{lem5.6}, we obtain from \eqref{5.47} that 
\begin{multline}\label{5.61}
\mathcal{J}_{v}^*(W_v^{(2)})=q_v(q_v-1)\Vol(K_v^*(1)) \bigg[\sum_{i\in \mathbb{Z}}q_v^{\frac{i}{2}}\overline{\xi}_v^i(\varpi_v)\Big|W_{1,v}^{\circ}\left(\begin{pmatrix}
\varpi_v^{i-1}\\
& 1
\end{pmatrix}
\right)\Big|^2\\
-2\Re \left(\sum_{i\in \mathbb{Z}}q_v^{\frac{i}{2}}\overline{\xi}_v^i(\varpi_v)\overline{W_{1,v}^{\circ}\left(\begin{pmatrix}
\varpi_v^i\\
& \varpi_v
\end{pmatrix}
\right)}W_{1,v}^{\circ}\left(\begin{pmatrix}
\varpi_v^{i+1} & \\
& 1
\end{pmatrix}
\right)\right)\\
+\frac{|W_{1,v}^{\circ}(I_2)|^2(q_v^2-q_v-1)}{q_v^{1/2}(q_v-1)}
+\sum_{i\geq 0}q_v^{\frac{i}{2}}\overline{\xi}_v^i(\varpi_v)\Big|W_{1,v}^{\circ}\left(\begin{pmatrix}
\varpi_v^{i+1} & \\
& 1
\end{pmatrix}
\right)\Big|^2\bigg]\mathbf{1}_{r_{\xi_v}=0}\\
+q_v\Vol(K_v^*(1)) q_v^{-\frac{1}{2}}G(\overline{\xi}_v,\psi_v)G(\overline{\xi}_v,\overline{\psi}_v)J(\overline{\xi}_v,\overline{\xi}_v)|W_{1,v}^{\circ}(I_2)|^2\mathbf{1}_{r_{\xi_v}=1}.
\end{multline}

By Casselman--Shalika formula and a straightforward calculation we have
\begin{multline}\label{5.62}
\sum_{i\geq 0}q_v^{\frac{i}{2}}\overline{\xi}_v^i(\varpi_v)\overline{W_{1,v}^{\circ}\left(\begin{pmatrix}
\varpi_v^{i}\\
& \varpi_v
\end{pmatrix}
\right)}W_{1,v}^{\circ}\left(\begin{pmatrix}
\varpi_v^{i+1} & \\
& 1
\end{pmatrix}
\right)\\
=\overline{\xi}_v(\varpi_v)q_v^{-\frac{1}{2}}A\cdot L_v(1/2,\pi_{1,v}\times\widetilde{\pi}_{1,v}\times \overline{\xi}_v)|W_{1,v}^{\circ}(I_2)|^{2},
\end{multline}
and 
\begin{equation}\label{5.63}
\sum_{i\geq 0}q_v^{\frac{i}{2}}\overline{\xi}_v^i(\varpi_v)\Big|W_{1,v}^{\circ}\left(\begin{pmatrix}
\varpi_v^{i} & \\
& 1
\end{pmatrix}
\right)\Big|^2=\frac{L(1/2,\pi_{1,v}\times\widetilde{\pi}_{1,v}\times \overline{\xi}_v)|W_{1,v}^{\circ}(I_2)|^2}{L(1,\overline{\xi}_v^2)}.
\end{equation}

Therefore, \eqref{5.54} and \eqref{5.55} follow from \eqref{5.61}, \eqref{5.62}, \eqref{5.63}, and the fact that $\Vol(K_v^*(1))=q_v^{-1}\Vol(I_v^*(1))$. 
\end{proof}

\subsubsection{$\sigma_v$ is a special representation}
Suppose $\sigma_v=\mathrm{St}\otimes\chi_v$ is a special representation with $\chi_v^2=\mathbf{1}$ and $r_{\chi_v}\leq 1$. 

Suppose $\pi_{1,v}= \chi_{1,v}\boxplus \chi_{2,v}$ is unramified. Then $L(s,\pi_{1,v})=(1-\alpha_vq_v^{-s})^{-1}(1-\beta_vq_v^{-s})^{-1}$, where $\alpha_v=\chi_{1,v}(\varpi_v)$ and $\beta_v=\chi_{2,v}(\varpi_v)$. Let $W_{1,v}'=W_{1,v}^{\circ}\otimes\chi_{1,v}^{-1}$, which is a spherical vector in the Whittaker model of $\pi_{1,v}':=\pi_{1,v}\otimes\chi_{1,v}^{-1}=1\boxplus \chi_{2,v}\chi_{1,v}^{-1}$. Note that $L(s,\pi_{1,v}')=(1-q_v^{-s})^{-1}(1-\alpha_v^{-1}\beta_vq_v^{-s})^{-1}$.

Making the change of variable $x\mapsto \diag(\varpi_v^{-1},1)x$ into \eqref{e5.43}, together with the Iwasawa decomposition, we obtain 
\begin{multline}\label{e5.74}
\mathcal{J}_{v}(W_v^{\chi_v})=\zeta_v(1)q_v^{\frac{1}{2}}\sum_{i\in \mathbb{Z}}\int_{K_v}\overline{W_v^{\chi_v}}\left(\begin{pmatrix}
\varpi_v^i\\
& 1
\end{pmatrix}k\begin{pmatrix}
\varpi_v^{-1}\\
& 1
\end{pmatrix}\right)\\
\sum_{\beta\in \mathbb{F}_v}\overline{\psi}_v(\varpi_v^{-1}\beta)W_{1,v}'\left(\begin{pmatrix}
\varpi_v^i\\
& 1
\end{pmatrix}k\begin{pmatrix}
\varpi_v^{-1}\\
& 1
\end{pmatrix}k'\begin{pmatrix}
\varpi_v\\
& 1
\end{pmatrix}\right)
q_v^{\frac{i}{2}}dk.
\end{multline}

Consequently, we have the decomposition 
\begin{equation}\label{5.74}
\mathcal{J}_{v}(W_v^{\chi_v})=\mathcal{J}_{v}^{(1)}(W_v^{\chi_v})+\mathcal{J}_{v}^{(2)}(W_v^{\chi_v}),	
\end{equation}
where 
\begin{multline}\label{f5.76}
\mathcal{J}_{v}^{(1)}(W_v^{\chi_v})=\zeta_v(1)q_v^{\frac{1}{2}}\sum_{i\in \mathbb{Z}}\int_{K_{0,v}[1]}\overline{W_v^{\chi_v}}\left(\begin{pmatrix}
\varpi_v^i\\
& 1
\end{pmatrix}k\begin{pmatrix}
\varpi_v^{-1}\\
& 1
\end{pmatrix}\right)\\
\sum_{\beta\in \mathbb{F}_v}\overline{\psi}_v(\varpi_v^{-1}\beta)W_{1,v}'\left(\begin{pmatrix}
\varpi_v^i\\
& 1
\end{pmatrix}k\begin{pmatrix}
\varpi_v^{-1}\\
& 1
\end{pmatrix}k'\begin{pmatrix}
\varpi_v\\
& 1
\end{pmatrix}\right)
q_v^{\frac{i}{2}}dk,
\end{multline}
and
\begin{multline}\label{f5.77}
\mathcal{J}_{v}^{(2)}(W_v^{\chi_v})=\zeta_v(1)q_v^{\frac{1}{2}}\sum_{i\in \mathbb{Z}}\int_{K_v-K_{0,v}[1]}\overline{W_v^{\chi_v}}\left(\begin{pmatrix}
\varpi_v^i\\
& 1
\end{pmatrix}k\begin{pmatrix}
\varpi_v^{-1}\\
& 1
\end{pmatrix}\right)\\
\sum_{\beta\in \mathbb{F}_v}\overline{\psi}_v(\varpi_v^{-1}\beta)W_{1,v}'\left(\begin{pmatrix}
\varpi_v^i\\
& 1
\end{pmatrix}k\begin{pmatrix}
\varpi_v^{-1}\\
& 1
\end{pmatrix}k'\begin{pmatrix}
\varpi_v\\
& 1
\end{pmatrix}\right)
q_v^{\frac{i}{2}}dk.
\end{multline}

\begin{lemma}\label{lem5.9}
We have the following. 
\begin{itemize}
\item Suppose $r_{\chi_v}=1$. Then 
\begin{equation}\label{e5.76}
\mathcal{J}_{v}^{(1)}(W_v^{\chi_v})=-\zeta_v(1)\chi_v(\varpi_v)q_v^{\frac{1}{2}}J(\overline{\chi}_v,\overline{\chi}_v)\Vol(I_v^*(1))\overline{W_v^{\circ}(I_2)}W_{1,v}^{\circ}(I_2).
\end{equation}
\item Suppose $r_{\chi_v}=0$. Then 
\begin{multline}\label{e5.77}
\mathcal{J}_{v}^{(1)}(W_v^{\chi_v})=q_v^{\frac{1}{2}}\Vol(I_v^*(1))\overline{W_v^{\circ}}(I_2)W_{1,v}^{\circ}(I_2)\Big[\alpha_v\beta_v\overline{\chi}_v(\varpi_v)q_v^{-1}\\
+(1+\alpha_v^{-1}\beta_v-\alpha_v^{-1}\beta_vq_v^{-1})L_v(1,\pi_{1,v}')L_v(1,\pi_{1,v}'\times\overline{\chi}_v)^{-1}
-(1+\alpha_v^{-1}\beta_v)\\
+\chi_v(\varpi_v)(q_v-2)-\zeta_v(1)\chi_v(\varpi_v)(q_v-2)L_v(1,\pi_{1,v}'\times\overline{\chi}_v)^{-1}
\\
q_v^{-1}\chi_v(\varpi_v)(2L_v(1,\pi_{1,v}'\times\overline{\chi}_v)^{-1}-1)
+\alpha_v^{-1}\beta_v\overline{\chi}_v(\varpi_v)q_v^{-1}
\Big]L_v(1,\pi_{1,v}'\times\overline{\chi}_v).
\end{multline} 
\end{itemize}
\end{lemma}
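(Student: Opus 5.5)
The plan is to evaluate $\mathcal{J}_{v}^{(1)}(W_v^{\chi_v})$ directly from \eqref{f5.76}, following the pattern used for $\mathcal{J}_v^*(W_v^{(1)})$ in Lemma \ref{lem5.5}.

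\textbf{Step 1: parametrize the $k$-integral.} For $k\in K_{0,v}[1]$, use the decomposition \eqref{eq5.13} and write
\[
k=\diag(\alpha,1)\,\kappa\, k_1,\qquad \alpha\in\mathbb{F}_v^{\times},\ \kappa\in Z(\mathcal{O}_v),\ k_1\in I_v(1).
\]
Conjugating by $\diag(\varpi_v^{-1},1)$ sends the lower-left entry $\gamma_1\varpi_v$ of $k_1$ to $\gamma_1\varpi_v^2$ and the upper-right entry $\beta_1$ to $\varpi_v^{-1}\beta_1$. The unipotent factor $\begin{pmatrix}1&\varpi_v^{-1}\beta_1\\&1\end{pmatrix}$ can then be pulled to the left past $\diag(\varpi_v^i\alpha,1)$, where it produces additive characters $\psi_v(\varpi_v^{i-1}\alpha\beta_1)$ in both Whittaker functions. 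Since $W_v^{\chi_v}$ is $I_v(1)$-invariant, the remaining factor is harmless: the first argument becomes $\diag(\alpha\varpi_v^{i-1},1)$ times an element of $K_{0,v}[2]$. In the second argument, $W_{1,v}'$ is spherical, and the product $\diag(\varpi_v^{-1},1)k_1k'\diag(\varpi_v,1)$ lies in $\begin{pmatrix}1&\varpi_v^{-1}(\beta_1+\beta)\\&1\end{pmatrix}$ times $K_v$.

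\textbf{Step 2: reduce to values on the torus.} After Step 1, both Whittaker functions are evaluated on $\diag(\alpha\varpi_v^{i-1},1)$, with characters
\[
\overline{\psi}_v(\varpi_v^{i-1}\alpha\beta_1)\quad\text{and}\quad \psi_v(\varpi_v^{i-1}\alpha(\beta_1+\beta)).
\]
The $\beta_1$-dependence cancels, so the $\beta$-sum becomes
\[
\sum_{\beta\in\mathbb{F}_v}\overline{\psi}_v(\varpi_v^{-1}\beta)\psi_v(\varpi_v^{i-1}\alpha\beta).
\]
This sum vanishes unless $i=0$ and $\alpha\equiv1$, in which case it equals $q_v$. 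If instead the twist by $\chi_{1,v}^{-1}$ prevents full cancellation, one keeps a character sum over $\alpha$ of $\overline{\chi}_v(\alpha)$ against $\beta$-sums, exactly as in Lemma \ref{lem5.6}.

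Then insert \eqref{4.11} for $W_v^{\chi_v}(\diag(x,1))$, together with the Casselman--Shalika formula $W_{1,v}'(\diag(\varpi_v^n,1))=q_v^{-n/2}\frac{1-(\alpha_v^{-1}\beta_v)^{n+1}}{1-\alpha_v^{-1}\beta_v}W_{1,v}'(I_2)$.

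\textbf{Step 3: the two cases.}
\begin{itemize}
\item When $r_{\chi_v}=1$, orthogonality in $\alpha$ kills every term except the one where the $\alpha$-sum is twisted against a $\beta$-sum. Rewriting that term as in \eqref{5.51}--\eqref{5.52} produces the Jacobi sum $J(\overline{\chi}_v,\overline{\chi}_v)$ and only the $i$ with $i-1=-1$ survives, giving \eqref{e5.76}. The sign comes from \eqref{4.13}, i.e. the minus sign in Lemma \ref{lem4.3}.
\item When $r_{\chi_v}=0$, all the $\alpha$-sums are Ramanujan sums ($q_v-1$ or $-1$). The sum over $i\ge0$ of $\chi_v(\varpi_v)^iq_v^{-i}$ times the spherical values is a geometric series in $\alpha_v^{-1}\beta_v$. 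It should give $L_v(1,\pi_{1,v}'\times\overline{\chi}_v)$ times a polynomial correction, and the boundary terms $i=-1,0$ produce the remaining explicit constants in \eqref{e5.77}.
\end{itemize}

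The main obstacle is the bookkeeping in Step 1: tracking exactly how conjugation by $\diag(\varpi_v^{\mp1},1)$ moves the cosets of $I_v(1)$ within $K_{0,v}[1]$. In particular one has to determine when $\gamma_1$ contributes a nontrivial lower-left entry, which forces the $w$-type formula \eqref{4.13} instead of \eqref{4.12}. I would first split $k_1$ according to whether $\gamma_1\in\mathfrak{p}_v$, and check the volumes against $\Vol(K_v^*(1))=q_v^{-1}\Vol(I_v^*(1))$, as in the preceding lemma.
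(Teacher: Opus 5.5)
There is a genuine gap, and it is in Step 1, where you conjugated in the wrong direction.

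\textbf{The reduction in Step 1.} In \eqref{f5.76} the element $k$ stands to the \emph{left} of $\diag(\varpi_v^{-1},1)$. Write $k_1=\begin{pmatrix}1&\beta_1\\&1\end{pmatrix}\begin{pmatrix}a&\\ \gamma_1\varpi_v&d\end{pmatrix}n$ with $n$ upper unipotent. Moving $\diag(\varpi_v^{-1},1)$ to the left turns the lower-left entry $\gamma_1\varpi_v$ into $\gamma_1$, not $\gamma_1\varpi_v^2$.
\begin{itemize}
\item \emph{First factor.} Up to right $I_v(1)$-translation, the argument is $\begin{pmatrix}\kappa\varpi_v^{i-1}&\\ \gamma_1&1\end{pmatrix}$, with $\kappa$ your $\alpha$ and $\gamma_1\in\mathcal{O}_v$ arbitrary; this is \eqref{5.78}. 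It is not an element of $\diag(\kappa\varpi_v^{i-1},1)K_{0,v}[2]$, and for the $q_v-1$ classes with $\gamma_1$ a unit you need \eqref{4.13}.
\item \emph{Second factor.} You replaced $k_1\diag(\varpi_v^{-1},1)k'\diag(\varpi_v,1)$ by $\diag(\varpi_v^{-1},1)k_1k'\diag(\varpi_v,1)$. The correct reduction is to $\diag(\kappa\varpi_v^{i},1)\begin{pmatrix}1&\\ \gamma_1\varpi_v&1\end{pmatrix}\begin{pmatrix}1&\varpi_v^{-1}\beta\\&1\end{pmatrix}$, and the lower-triangular factor does not commute past the unipotent one.
\item If $1+\beta\gamma_1\in\mathcal{O}_v^\times$, \eqref{e5.91} gives the character $\psi_v(\varpi_v^{i-1}\kappa\beta(1+\beta\gamma_1)^{-1})$.
\item If $1+\beta\gamma_1\in\mathfrak{p}_v$, \eqref{5.97} gives the torus element $\diag(\varpi_v^{i-1},\varpi_v)$.
\end{itemize}
Only the $\beta_1$-characters cancel. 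The variable $\gamma_1$ survives in both factors.

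\textbf{Consequence for Step 2.} Step 2 is therefore wrong, not merely incomplete. Taken literally it leaves only $i=0$ and $\alpha\equiv1$. But $W_v^{\chi_v}(\diag(\varpi_v^{-1},1))=0$ by \eqref{4.11}, so you would get $\mathcal{J}_v^{(1)}(W_v^{\chi_v})=0$ in both cases.

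\textbf{Where the Jacobi sum comes from.} It does not arise from an $\overline{\chi}_v(\alpha)$-sum against $\beta$-sums as in Lemma \ref{lem5.6}. It arises from the sum over $\gamma_1\in\mathbb{F}_v$, i.e.\ the $q_v$ cosets of $I_v^*(2)$ in $I_v^*(1)$, which your reduction discarded. Take the piece $\beta\neq0$, $\gamma_1\neq0$, $1+\beta\gamma_1\neq0$. The substitutions $\kappa\mapsto\gamma_1(1+\beta\gamma_1)\kappa$ and $\gamma_1\mapsto\beta^{-1}\gamma_1$ factor the sum into two Gauss sums (at $i=0$) times $\sum_{\gamma_1\neq0,-1}\overline{\chi}_v(\gamma_1(1+\gamma_1))$. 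After $\gamma_1\mapsto-\gamma_1$ this last sum is a Jacobi sum.
\begin{itemize}
\item When $r_{\chi_v}=1$, every other piece and every $i\geq1$ vanish by orthogonality. The sign comes from \eqref{4.13}.
\item When $r_{\chi_v}=0$, the term $\alpha_v\beta_v\overline{\chi}_v(\varpi_v)q_v^{-1}$ comes from the regime $1+\beta\gamma_1\in\mathfrak{p}_v$. The remaining terms come from splitting according to $\beta=0$ or not and $\gamma_1=0$ or not. Each piece is then evaluated with \eqref{4.11}, \eqref{4.13}, Ramanujan sums and Casselman--Shalika, as in Lemmas \ref{lem5.10} and \ref{lem5.11}.
\end{itemize}
Your closing remark about splitting on $\gamma_1$ points in this direction. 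However, it contradicts Steps 1--2 and is never carried out.
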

\begin{proof}
By the coordinate \eqref{eq5.1}, 
\begin{align*}
k'=\begin{pmatrix}
1+\alpha\varpi_v & \beta\\
\gamma\varpi_v & 1+\delta\varpi_v 
\end{pmatrix}=\begin{pmatrix}
1& \beta\\
 & 1
\end{pmatrix}\begin{pmatrix}
1+(\alpha-\beta\gamma)\varpi_v & -\beta\delta\varpi_v\\
\gamma\varpi_v & 1+\delta\varpi_v 
\end{pmatrix}.
\end{align*}

Since $W_{1,v}'$ is spherical, we obtain 
\begin{align*}
W_{1,v}'\left(\begin{pmatrix}
\varpi_v^i\\
& 1
\end{pmatrix}k\begin{pmatrix}
\varpi_v^{-1}\\
& 1
\end{pmatrix}k'\begin{pmatrix}
\varpi_v\\
& 1
\end{pmatrix}\right)=W_{1,v}'\left(\begin{pmatrix}
\varpi_v^i\\
& 1
\end{pmatrix}k\begin{pmatrix}
1& \varpi_v^{-1}\beta\\
& 1
\end{pmatrix}\right).
\end{align*}

In conjunction with the decomposition  \eqref{eq5.13}, we obtain from \eqref{f5.76} that  
\begin{multline}\label{5.76}
\mathcal{J}_{v}^{(1)}(W_v^{\chi_v})=\zeta_v(1)q_v^{\frac{1}{2}}\sum_{\kappa\in \mathbb{F}_v^{\times}}\sum_{i\in \mathbb{Z}}\int_{I_v^*[1]}\overline{W_v^{\chi_v}}\left(\begin{pmatrix}
\kappa\varpi_v^i\\
& 1
\end{pmatrix}k\begin{pmatrix}
\varpi_v^{-1}\\
& 1
\end{pmatrix}\right)\\
\sum_{\beta\in \mathbb{F}_v}\overline{\psi}_v(\varpi_v^{-1}\beta)W_{1,v}'\left(\begin{pmatrix}
\kappa\varpi_v^i\\
& 1
\end{pmatrix}k\begin{pmatrix}
1& \varpi_v^{-1}\beta\\
& 1
\end{pmatrix}\right)
q_v^{\frac{i}{2}}dk,
\end{multline}

Write $k\in I_v^*(1)$ as 
\begin{equation}\label{5.77}
k=\kappa_1\begin{pmatrix}
1+\alpha_1\varpi_v & \beta_1\\
\gamma_1\varpi_v & 1+\delta_1\varpi_v 
\end{pmatrix}.
\end{equation}
Let $\delta_1'=\delta_1+\frac{\beta_1\gamma_1\delta_1\varpi_v}{1+(\alpha_1-\beta_1\gamma_1)\varpi_v}$. Then \eqref{5.77} boils down to 
\begin{align*}
k=\kappa_1\begin{pmatrix}
1 & \beta_1\\
& 1
\end{pmatrix}\begin{pmatrix}
1+(\alpha_1-\beta_1\gamma_1)\varpi_v &  \\
\gamma_1\varpi_v & 1+\delta_1'\varpi_v  
\end{pmatrix}\begin{pmatrix}
1 & -\frac{\beta_1\delta_1\varpi_v}{1+(\alpha_1-\beta_1\gamma_1)\varpi_v}\\
  & 1 
\end{pmatrix}.
\end{align*}

Since $r_{\chi_v}\leq 1$, it follows from the definition of $W_v^{\chi_v}$ and $W_{1,v}'$ that 
\begin{multline}\label{5.78}
\overline{W_v^{\chi_v}}\left(\begin{pmatrix}
\kappa\varpi_v^i\\
& 1
\end{pmatrix}k\begin{pmatrix}
\varpi_v^{-1}\\
& 1
\end{pmatrix}\right)W_{1,v}'\left(\begin{pmatrix}
\kappa\varpi_v^i\\
& 1
\end{pmatrix}k\begin{pmatrix}
1& \varpi_v^{-1}\beta\\
& 1
\end{pmatrix}\right)\\
=\overline{W_v^{\chi_v}}\left(\begin{pmatrix}
\kappa\varpi_v^{i-1} &  \\
\gamma_1 & 1  
\end{pmatrix}\right)W_{1,v}'\left(\begin{pmatrix}
\kappa\varpi_v^i\\
& 1
\end{pmatrix}\begin{pmatrix}
1 &  \\
\gamma_1\varpi_v & 1  
\end{pmatrix}\begin{pmatrix}
1& \varpi_v^{-1}\beta\\
& 1
\end{pmatrix}\right).
\end{multline}

Substituting \eqref{5.78} into \eqref{5.76} leads to  
\begin{multline}\label{5.79}
\mathcal{J}_{v}^{(1)}(W_v^{\chi_v})=\zeta_v(1)q_v^{\frac{1}{2}}\sum_{\kappa\in \mathbb{F}_v^{\times}}\sum_{\beta\in \mathbb{F}_v}\overline{\psi}_v(\varpi_v^{-1}\beta)
\sum_{i\in \mathbb{Z}}\int_{I_v^*(1)}\overline{W_v^{\chi_v}}\left(\begin{pmatrix}
\kappa\varpi_v^{i-1}&  \\
\gamma_1 & 1  
\end{pmatrix}\right)\\
W_{1,v}'\left(\begin{pmatrix}
\kappa\varpi_v^i\\
& 1
\end{pmatrix}\begin{pmatrix}
1 &  \\
\gamma_1\varpi_v & 1  
\end{pmatrix}\begin{pmatrix}
1& \varpi_v^{-1}\beta\\
 & 1
\end{pmatrix}\right)
q_v^{\frac{i}{2}}dk.
\end{multline}

Let $I_v^*(2)$ be the set of elements in $I_v^*(1)$ whose $(2,1)$-th entry lies in $\mathfrak{p}_v^2$. By \eqref{4.13} we can simplify \eqref{5.79} to 
\begin{multline}\label{5.80}
\mathcal{J}_{v}^{(1)}(W_v^{\chi_v})=\zeta_v(1)q_v^{\frac{1}{2}}\Vol(I_v^*(2))\sum_{\kappa\in \mathbb{F}_v^{\times}}\sum_{\beta\in \mathbb{F}_v}\overline{\psi}_v(\varpi_v^{-1}\beta)\sum_{i\geq 0}\sum_{\gamma_1\in \mathbb{F}_v}
\\
\overline{W_v^{\chi_v}}\left(\begin{pmatrix}
\kappa\varpi_v^{i-1}&  \\
\gamma_1 & 1  
\end{pmatrix}\right)W_{1,v}'\left(\begin{pmatrix}
\kappa\varpi_v^i\\
& 1
\end{pmatrix}\begin{pmatrix}
1 &  \\
\gamma_1\varpi_v & 1  
\end{pmatrix}\begin{pmatrix}
1& \varpi_v^{-1}\beta\\
 & 1
\end{pmatrix}\right)
q_v^{\frac{i}{2}}.
\end{multline}

Notice that $\Vol(I_v^*(2))=q_v^{-1}\Vol(I_v^*(1))$. 
According to $\beta\in \mathbb{F}_v^{\times}$ we have the following decomposition 
\begin{equation}\label{5.81}
\mathcal{J}_{v}^{(1)}(W_v^{\chi_v})=\mathcal{J}_{v}^{(1,1)}(W_v^{\chi_v})+\mathcal{J}_{v}^{(1,2)}(W_v^{\chi_v}),
\end{equation}
where $\mathcal{J}_{v}^{(1,1)}(W_v^{\chi_v})$ is defined by 
\begin{align*}
\zeta_v(1)q_v^{-\frac{1}{2}}\Vol(I_v^*(1))\sum_{i\geq 0}\sum_{\gamma_1\in \mathbb{F}_v}\sum_{\kappa\in \mathbb{F}_v^{\times}}
\overline{W_v^{\chi_v}}\left(\begin{pmatrix}
\kappa\varpi_v^{i-1}&  \\
\gamma_1 & 1  
\end{pmatrix}\right)W_{1,v}'\left(\begin{pmatrix}
\varpi_v^i\\
& 1
\end{pmatrix}\right)
q_v^{\frac{i}{2}},
\end{align*}
and 
\begin{multline*}
\mathcal{J}_{v}^{(1,2)}(W_v^{\chi_v}):=\zeta_v(1)q_v^{-\frac{1}{2}}\Vol(I_v^*(1))\sum_{\kappa\in \mathbb{F}_v^{\times}}\sum_{\beta\in \mathbb{F}_v^{\times}}\overline{\psi}_v(\varpi_v^{-1}\beta)\sum_{i\geq 0}\sum_{\gamma_1\in \mathbb{F}_v}
\\
\overline{W_v^{\chi_v}}\left(\begin{pmatrix}
\kappa\varpi_v^{i-1}&  \\
\gamma_1 & 1  
\end{pmatrix}\right)W_{1,v}'\left(\begin{pmatrix}
\kappa\varpi_v^i\\
& 1
\end{pmatrix}\begin{pmatrix}
1 &  \\
\gamma_1\varpi_v & 1  
\end{pmatrix}\begin{pmatrix}
1& \varpi_v^{-1}\beta\\
 & 1
\end{pmatrix}\right)
q_v^{\frac{i}{2}}.
\end{multline*}

The sums $\mathcal{J}_{v}^{(1,1)}(W_v^{\chi_v})$ and $\mathcal{J}_{v}^{(1,2)}(W_v^{\chi_v})$ are computed explicitly in Lemmas \ref{lem5.10} and \ref{lem5.11}, respectively. Therefore, \eqref{e5.76} and \eqref{e5.77} follow from \eqref{5.81}. 
\end{proof}

\begin{lemma}\label{lem5.10}
Let $\mathcal{J}_{v}^{(1,1)}(W_v^{\chi_v})$ be defined as in the proof of Lemma \ref{lem5.9}. Then
\begin{multline}\label{f5.85}
\mathcal{J}_{v}^{(1,1)}(W_v^{\chi_v})=q_v^{\frac{1}{2}}\Vol(I_v^*(1))\overline{W_v^{\circ}}(I_2)W_{1,v}^{\circ}(I_2)\mathbf{1}_{r_{\chi_v}=0}\Big[2q_v^{-1}\chi_v(\varpi_v)\\
+(1+\alpha_v^{-1}\beta_v-\alpha_v^{-1}\beta_vq_v^{-1})L_v(1,\pi_{1,v}')
-q_v^{-1}\chi_v(\varpi_v)L_v(1,\pi_{1,v}'\times\overline{\chi}_v)\Big].
\end{multline}
\end{lemma}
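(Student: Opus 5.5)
The plan is to evaluate $\mathcal{J}_{v}^{(1,1)}(W_v^{\chi_v})$ directly from its defining expression. We split the inner sum over $\gamma_1\in\mathbb{F}_v$ into $\gamma_1=0$ and $\gamma_1\in\mathbb{F}_v^{\times}$, and insert the explicit values of $W_v^{\chi_v}$ from \eqref{4.12} and \eqref{4.13}. The factor $W_{1,v}'(\diag(\varpi_v^i,1))$ does not depend on $\kappa$ or $\gamma_1$, so the first step is to compute, for each $i\geq 0$, the character sum
\begin{align*}
\Sigma_i:=\sum_{\gamma_1\in\mathbb{F}_v}\sum_{\kappa\in\mathbb{F}_v^{\times}}\overline{W_v^{\chi_v}}\left(\begin{pmatrix}\kappa\varpi_v^{i-1}&\\ \gamma_1&1\end{pmatrix}\right).
\end{align*}

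For $\gamma_1=0$, \eqref{4.12} (with exponent $i-1$) gives $\overline{\chi}_v(\kappa\varpi_v^{i-1})q_v^{1-i}\overline{W_v^{\circ}(I_2)}\mathbf{1}_{i\geq 1}$, and the sum over $\kappa$ produces $(q_v-1)\mathbf{1}_{r_{\chi_v}=0}$. For $\gamma_1\neq0$, \eqref{4.13} gives
\begin{align*}
-\overline{\psi}_v(\varpi_v^{i-1}\kappa\gamma_1^{-1})\,\overline{\chi}_v(\kappa\varpi_v^{i-1})\,q_v^{-i}\,\overline{W_v^{\circ}(I_2)}\,\mathbf{1}_{i\geq 0}.
\end{align*}
If $i\geq1$, the additive character is trivial, so the $\kappa$-sum again forces $r_{\chi_v}=0$, and summing over $\gamma_1$ contributes a factor $-(q_v-1)$. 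If $i=0$, substituting $\kappa\mapsto\kappa\gamma_1$ separates the double sum into a Gauss-type sum in $\kappa$ times $\sum_{\gamma_1}\overline{\chi}_v(\gamma_1)$. This product vanishes when $r_{\chi_v}=1$ (because of the $\gamma_1$-sum), and equals $(q_v-1)\cdot(-1)$ when $r_{\chi_v}=0$ (a Ramanujan sum). Together these explain the factor $\mathbf{1}_{r_{\chi_v}=0}$ in \eqref{f5.85}. From now on $\chi_v$ is unramified, so $\chi_v(\varpi_v)=\pm1$ and $\Sigma_i$ is an explicit multiple of $q_v^{-i}\chi_v(\varpi_v)^{i-1}\overline{W_v^{\circ}(I_2)}$, with a separate constant at $i=0$.

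The second step is to sum over $i\geq0$ against $q_v^{i/2}W_{1,v}'(\diag(\varpi_v^i,1))$. For the spherical vector of $\pi_{1,v}'=1\boxplus\chi_{2,v}\chi_{1,v}^{-1}$, the Casselman--Shalika formula gives
\begin{align*}
q_v^{i/2}W_{1,v}'(\diag(\varpi_v^i,1))=W_{1,v}'(I_2)\sum_{a+b=i}(\alpha_v^{-1}\beta_v)^b.
\end{align*}
Since $W_{1,v}'(I_2)=W_{1,v}^{\circ}(I_2)$, the resulting series are geometric:
\begin{align*}
\sum_{i\ge0}q_v^{-i}\sum_{a+b=i}(\alpha_v^{-1}\beta_v)^b&=L_v(1,\pi_{1,v}'),\\
\sum_{i\ge0}\chi_v(\varpi_v)^i q_v^{-i}\sum_{a+b=i}(\alpha_v^{-1}\beta_v)^b&=L_v(1,\pi_{1,v}'\times\overline{\chi}_v).
\end{align*}
The mixed $i$-dependence, in which the $\gamma_1=0$ term carries $q_v^{1-i}$ and the $\gamma_1\neq0$ terms carry $q_v^{-i}$, is handled by separating off the boundary terms $i=0,1$. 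These boundary terms produce the corrections $2q_v^{-1}\chi_v(\varpi_v)$ and $(1+\alpha_v^{-1}\beta_v-\alpha_v^{-1}\beta_vq_v^{-1})$ multiplying $L_v(1,\pi'_{1,v})$: the latter equals the $i=0$ and $i=1$ Casselman--Shalika coefficients reassembled after shifting the index.

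The last step multiplies by the prefactor $\zeta_v(1)q_v^{-1/2}\Vol(I_v^*(1))$; the factor $\zeta_v(1)$ cancels the $(q_v-1)$ produced by the $\kappa$-sums, since $\zeta_v(1)(q_v-1)=q_v$, leaving $q_v^{1/2}$. The main obstacle is bookkeeping: keeping the signs and the shift $i\mapsto i-1$ correct when matching the $\gamma_1=0$ and $\gamma_1\neq0$ contributions, so that they collapse into exactly the three terms of \eqref{f5.85}. I would organize this by writing both contributions as power series in $X=q_v^{-1}$ with coefficients given by the Casselman--Shalika polynomials, and comparing the results with the displayed combination.
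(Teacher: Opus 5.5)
Your route is the paper's route: split $\gamma_1=0$ versus $\gamma_1\in\mathbb{F}_v^{\times}$, insert \eqref{4.12} and \eqref{4.13}, use orthogonality, then apply Casselman--Shalika. The individual pieces you evaluate are correct. The gap is the final step. You assert that the bookkeeping collapses into exactly the three terms of \eqref{f5.85} without carrying it out, and in fact it does not.

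Take $r_{\chi_v}=0$ and put $c=\chi_v(\varpi_v)=\pm1$. By your own computations, the $\gamma_1=0$ part of $\Sigma_i$ is $(q_v-1)q_v^{1-i}c^{i-1}\overline{W_v^{\circ}(I_2)}\mathbf{1}_{i\geq1}$. The $\gamma_1\neq0$ part is $-(q_v-1)R_i\,q_v^{-i}c^{i-1}\overline{W_v^{\circ}(I_2)}$, with $R_0=-1$ and $R_i=q_v-1$ for $i\geq1$. For $i\geq1$ these combine to $(q_v-1)q_v^{-i}c^{i-1}\big(q_v-(q_v-1)\big)$, so
\[
\Sigma_i=(q_v-1)\,q_v^{-i}\,c^{i-1}\,\overline{W_v^{\circ}(I_2)}\qquad\text{for every } i\geq 0 .
\]
No boundary terms are left over. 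Summing against $q_v^{i/2}W_{1,v}'(\diag(\varpi_v^i,1))$ then gives
\[
\mathcal{J}_{v}^{(1,1)}(W_v^{\chi_v})=q_v^{\frac12}\Vol(I_v^*(1))\,\overline{W_v^{\circ}(I_2)}\,W_{1,v}^{\circ}(I_2)\,\mathbf{1}_{r_{\chi_v}=0}\,\chi_v(\varpi_v)\,L_v(1,\pi_{1,v}'\times\overline{\chi}_v).
\]

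This is not \eqref{f5.85}. Take $\chi_v=\mathbf{1}$ and $\alpha_v=\beta_v$, which is the case $\pi_{1,v}=\mathbf{1}\boxplus\mathbf{1}$ actually used. The bracket should then be $(1-q_v^{-1})^{-2}$, while \eqref{f5.85} gives $2q_v^{-1}+2(1-q_v^{-1})^{-1}$, which is strictly larger for every $q_v$. Moreover, when $\chi_v(\varpi_v)=-1$ the untwisted value $L_v(1,\pi_{1,v}')$ cannot occur at all, since every term carries $\chi_v(\varpi_v)^{i-1}$. Indeed, the $\gamma_1=0$ part alone is $(1+\alpha_v^{-1}\beta_v-\chi_v(\varpi_v)\alpha_v^{-1}\beta_vq_v^{-1})L_v(1,\pi_{1,v}'\times\overline{\chi}_v)$ times the common prefactor $q_v^{1/2}\Vol(I_v^*(1))\overline{W_v^{\circ}(I_2)}W_{1,v}^{\circ}(I_2)$. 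So your claim that the boundary terms produce $2q_v^{-1}\chi_v(\varpi_v)$ and $(1+\alpha_v^{-1}\beta_v-\alpha_v^{-1}\beta_vq_v^{-1})L_v(1,\pi_{1,v}')$ is fitted to the target rather than derived, and no correct bookkeeping will deliver it.

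The paper's proof follows your route and reaches \eqref{f5.85} only through three slips:
\begin{enumerate}
\item \eqref{5.82} drops the weight $\overline{\chi}_v(\varpi_v)^{i-1}$.
\item The prefactor in \eqref{e5.87} should be $-q_v^{1/2}$, since $\zeta_v(1)(q_v-1)=q_v$.
\item In \eqref{e5.88}, the sum $\sum_{\kappa}\overline{\psi}_v(\varpi_v^{i-1}\kappa)$ equals $q_v-1$ for $i\geq1$. Hence $\mathcal{I}_{v,2}=\chi_v(\varpi_v)\big((q_v-1)L_v(1,\pi_{1,v}'\times\overline{\chi}_v)-q_v\big)W_{1,v}^{\circ}(I_2)$.
\end{enumerate}
Carried out correctly, your argument proves the corrected identity displayed above. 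That identity has the same order $q_v^{1/2}\Vol(I_v^*(1))$ as the right side of \eqref{f5.85}, so what fails is the exact formula, not its size.
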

\begin{proof}
By definition, we may write 
\begin{equation}\label{f5.86}
\mathcal{J}_{v}^{(1,1)}(W_v^{\chi_v})=\mathcal{J}_{v,1}+\mathcal{J}_{v,2},
\end{equation}
where 
\begin{align*}
\mathcal{J}_{v,1}:=\zeta_v(1)q_v^{-\frac{1}{2}}\Vol(I_v^*(1))\sum_{i\geq 0}\sum_{\kappa\in \mathbb{F}_v^{\times}}
\overline{W_v^{\chi_v}}\left(\begin{pmatrix}
\kappa\varpi_v^{i-1}&  \\
 & 1  
\end{pmatrix}\right)W_{1,v}'\left(\begin{pmatrix}
\varpi_v^i\\
& 1
\end{pmatrix}\right)
q_v^{\frac{i}{2}},
\end{align*}
and $\mathcal{J}_{v,2}$ is given by 
\begin{align*}
\zeta_v(1)q_v^{-\frac{1}{2}}\Vol(I_v^*(1))\sum_{i\geq 0}\sum_{\gamma_1\in \mathbb{F}_v^{\times}}\sum_{\kappa\in \mathbb{F}_v^{\times}}
\overline{W_v^{\chi_v}}\left(\begin{pmatrix}
\kappa\varpi_v^{i-1}&  \\
\gamma_1 & 1  
\end{pmatrix}\right)W_{1,v}'\left(\begin{pmatrix}
\varpi_v^i\\
& 1
\end{pmatrix}\right)
q_v^{\frac{i}{2}}.
\end{align*}

By \eqref{4.12} and the Casselman--Shalika formula, we obtain 
\begin{multline}\label{5.82}
\sum_{i\geq 0}\sum_{\kappa\in \mathbb{F}_v^{\times}}
\overline{W_v^{\chi_v}}\left(\begin{pmatrix}
\kappa\varpi_v^{i-1}&  \\
 & 1  
\end{pmatrix}\right)W_{1,v}'\left(\begin{pmatrix}
\varpi_v^i\\
& 1
\end{pmatrix}\right)
q_v^{\frac{i}{2}}\\
=(q_v-1)\mathbf{1}_{r_{\chi_v}=0}(1+\alpha_v^{-1}\beta_v-\alpha_v^{-1}\beta_vq_v^{-1})L_v(1,\pi_{1,v}')\overline{W_v^{\circ}(I_2)}W_{1,v}^{\circ}(I_2).
\end{multline}
Here, we also use the fact that $W_{1,v}'(I_2)=W_{1,v}^{\circ}(I_2)$. 

On the other hand, when $\gamma_1\in \mathbb{F}_v^{\times}$, making the change of variables $\kappa\mapsto \gamma_1\kappa$, it follows from \eqref{4.11} and the orthogonality of characters that 
\begin{equation}\label{e5.86}
\sum_{\gamma_1, \kappa\in \mathbb{F}_v^{\times}}
\overline{W_v^{\chi_v}}\left(\begin{pmatrix}
\kappa\varpi_v^{i-1}&  \\
\gamma_1 & 1  
\end{pmatrix}\right)=(q_v-1)\mathbf{1}_{r_{\chi_v}=0}\sum_{\kappa\in \mathbb{F}_v^{\times}}
\overline{W_v^{\chi_v}}\left(\begin{pmatrix}
\kappa\varpi_v^{i-1}&  \\
1 & 1  
\end{pmatrix}\right).
\end{equation}

Combining \eqref{4.13} with \eqref{e5.86}, we obtain  
\begin{equation}\label{e5.87}
\mathcal{J}_{v,2}=-q_v^{-\frac{1}{2}}\mathbf{1}_{r_{\chi_v}=0}\Vol(I_v^*(1))\overline{W_v^{\circ}}(I_2)\mathcal{I}_{v,2},
\end{equation}
where 
\begin{align*}
\mathcal{I}_{v,2}:=\sum_{i\geq 0}\sum_{\kappa\in \mathbb{F}_v^{\times}}\overline{\psi}_v(\varpi_v^{i-1}\kappa)\overline{\chi}_v(\varpi_v^{i-1})q_v^{-\frac{i}{2}}W_{1,v}'\left(\begin{pmatrix}
\varpi_v^i\\
& 1
\end{pmatrix}\right).
\end{align*}

By computing the Ramanujan sum and applying the Casselman--Shalika formula, we obtain
\begin{equation}\label{e5.88}
\mathcal{I}_{v,2}=\chi_v(\varpi_v)(L_v(1,\pi_{1,v}'\times\overline{\chi}_v)-2)W_{1,v}^{\circ}(I_2).
\end{equation}

Therefore, \eqref{f5.85} follows from \eqref{f5.86}, \eqref{5.82}, \eqref{e5.87} and \eqref{e5.88}. 
\end{proof}

\begin{lemma}\label{lem5.11}
Let $\mathcal{J}_{v}^{(1,2)}(W_v^{\chi_v})$ be defined as in the proof of Lemma \ref{lem5.9}. Then
\begin{multline}\label{f5.90}
\mathcal{J}_{v}^{(1,2)}(W_v^{\chi_v})=
-\zeta_v(1)q_v^{-\frac{1}{2}}\Vol(I_v^*(1))\overline{W_v^{\circ}(I_2)}W_{1,v}'(I_2)\Big[
q_v(q_v-2)\mathbf{1}_{r_{\chi_v}=0}\\
+|G(\chi_v,\psi_v)|^2J(\overline{\chi}_v,\overline{\chi}_v)\mathbf{1}_{r_{\chi_v}=1}-(q_v-2)(q_v-1)L_v(1,\pi_{1,v}'\times\overline{\chi}_v)\mathbf{1}_{r_{\chi_v}=0}\Big]\chi_v(\varpi_v)\\
+\Vol(I_v^*(1))\overline{W_v^{\circ}(I_2)}W_{1,v}'(I_2)\mathbf{1}_{r_{\chi_v}=0}\Big[\alpha_v\beta_v\overline{\chi}_v(\varpi_v)q_v^{-\frac{1}{2}}L_v(1,\pi_{1,v}'\times\overline{\chi}_v)\\
-q_v^{\frac{1}{2}}
(1+\alpha_v^{-1}\beta_v-\alpha_v^{-1}\beta_v\overline{\chi}_v(\varpi_v)q_v^{-1})
L_v(1,\pi_{1,v}'\times\overline{\chi}_v)\Big].
\end{multline}
\end{lemma}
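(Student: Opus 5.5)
We evaluate $\mathcal{J}_{v}^{(1,2)}(W_v^{\chi_v})$ directly from its definition, separating the inner sum over $\gamma_1\in\mathbb{F}_v$ into $\gamma_1=0$ and $\gamma_1\in\mathbb{F}_v^{\times}$. In each case the Whittaker function $W_v^{\chi_v}$ is made explicit by \eqref{4.12} or \eqref{4.13}. The spherical function $W_{1,v}'$ is reduced to diagonal values by writing the product of the lower unipotent matrix and $\begin{pmatrix}1&\varpi_v^{-1}\beta\\&1\end{pmatrix}$ in Iwasawa form.

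\emph{The case $\gamma_1=0$.} By \eqref{4.12}, $\overline{W_v^{\chi_v}}(\diag(\kappa\varpi_v^{i-1},1))$ vanishes unless $i\ge1$, and then it equals $\overline{\chi}_v(\kappa\varpi_v^{i-1})q_v^{1-i}\overline{W_v^{\circ}(I_2)}$. We also have
\begin{align*}
W_{1,v}'\bigl(\diag(\kappa\varpi_v^i,1)n(\varpi_v^{-1}\beta)\bigr)=\psi_v(\kappa\varpi_v^{i-1}\beta)\,W_{1,v}'(\diag(\varpi_v^i,1)).
\end{align*}
Summing over $\beta\in\mathbb{F}_v^{\times}$ against $\overline{\psi}_v(\varpi_v^{-1}\beta)$ gives $q_v\mathbf{1}_{\kappa=1}-1$ when $i=1$, and $-1$ when $i\ge2$. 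The $\kappa$-sum then produces either a Gauss-type factor or $-\mathbf 1_{r_{\chi_v}=0}(q_v-1)$. The remaining sums over $i$ are evaluated by Casselman--Shalika. Since $L(s,\pi_{1,v}')=(1-q_v^{-s})^{-1}(1-\alpha_v^{-1}\beta_vq_v^{-s})^{-1}$, we have $W_{1,v}'(\diag(\varpi_v^i,1))=q_v^{-i/2}\sum_{m=0}^i(\alpha_v^{-1}\beta_v)^m\,W_{1,v}'(I_2)$. Hence $\sum_i \chi_v(\varpi_v)^{i}q_v^{-i}\cdots$ reduces to $L_v(1,\pi_{1,v}'\times\overline{\chi}_v)$ minus its first one or two terms. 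This subtraction is the source of the terms with $\alpha_v\beta_v$ and $(1+\alpha_v^{-1}\beta_v-\cdots)$.

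\emph{The case $\gamma_1\in\mathbb{F}_v^{\times}$.} Here \eqref{4.13} gives $\overline{W_v^{\chi_v}}=-\overline{\psi_v}(\varpi_v^{i-1}\kappa\gamma_1^{-1})\overline{\chi}_v(\kappa\varpi_v^{i-1})q_v^{-i}\overline{W_v^{\circ}(I_2)}$ for $i\ge0$. For $W_{1,v}'$ we commute the lower unipotent matrix past $\diag(\kappa\varpi_v^i,1)$. For $i\ge1$ it lies in $K_v$ and can be absorbed after conjugating through $n(\varpi_v^{-1}\beta)$. This works because the entry $\gamma_1\varpi_v\cdot\varpi_v^{-1}\beta=\gamma_1\beta$ is a unit, so we instead use the factorization
\begin{align*}
\begin{pmatrix}1&\\ \gamma_1\varpi_v&1\end{pmatrix}n(\varpi_v^{-1}\beta)\in B(F_v)\,\begin{pmatrix}1&\\ c&1\end{pmatrix}K_v,
\end{align*}
with explicit torus part depending on $1+\gamma_1\beta$. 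When $1+\gamma_1\beta\in\mathfrak p_v$, the torus part changes the valuation by one. This produces the shifted diagonal values and the additive phase $\psi_v(\varpi_v^{i-1}\kappa(\ldots))$. After substituting, the sum over $\kappa$ yields $G(\chi_v,\cdot)$ or a Ramanujan sum. The sums over $\beta,\gamma_1$ then collapse, as in Lemma \ref{lem5.6}, to
\begin{align*}
\sum_{\beta\in\mathbb{F}_v-\{0,-1\}}\overline{\chi}_v(\beta)\overline{\chi}_v(1+\beta)^{-1}\cdots,
\end{align*}
which becomes $J(\overline{\chi}_v,\overline{\chi}_v)$ under $\beta=\beta'(1-\beta')^{-1}$ when $r_{\chi_v}=1$. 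When $r_{\chi_v}=0$ it becomes a count $q_v-2$, accounting for the factors $q_v(q_v-2)$ and $(q_v-2)(q_v-1)$. Only $i=0$ survives in the ramified case, because of the orthogonality of the $\kappa$-character sum; this gives the single term $|G(\chi_v,\psi_v)|^2J(\overline{\chi}_v,\overline{\chi}_v)$.

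\emph{Main obstacle.} The hard step is the Iwasawa bookkeeping for $W_{1,v}'$ when $\gamma_1\ne0$ and $\beta\ne0$. We must keep track of which $(\beta,\gamma_1)$ force a valuation drop (namely $1+\gamma_1\beta\equiv0$) and which give a unit torus part, and we must combine the resulting phases correctly with $\overline{\psi}_v(\varpi_v^{-1}\beta)$. We would first normalize $\beta\mapsto\gamma_1^{-1}\beta$ to decouple the variables, then treat the degenerate line $\beta=-1$ separately. Finally, we would collect all geometric series in $i$ as $L_v(1,\pi_{1,v}'\times\overline{\chi}_v)$ minus explicit initial terms, and combine everything with the prefactor $\zeta_v(1)q_v^{-1/2}\Vol(I_v^*(1))$ to match \eqref{f5.90}.
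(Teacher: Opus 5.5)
Your route is the paper's: $\gamma_1=0$ gives $\mathcal{J}_{v,3}$, and $\gamma_1\in\mathbb{F}_v^{\times}$ gives $\mathcal{J}_{v,4}$, which is split according to whether $1+\beta\gamma_1$ is a unit (factorization \eqref{e5.91}, character sum $A_i$) or lies in $\mathfrak{p}_v$ (factorization \eqref{5.97}, character sum $B_i$). However, your $\gamma_1=0$ step is wrong as written, and carrying it out would not give \eqref{f5.90}.

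By \eqref{4.12}, only $i\ge 1$ survives. For every such $i$, the phase $\psi_v(\kappa\varpi_v^{i-1}\beta)$ coming from $W_{1,v}'$ equals $1$, because $\kappa\varpi_v^{i-1}\beta\in\mathcal{O}_v$ and $\psi_v$ is trivial on $\mathcal{O}_v$. This is the same normalization that makes $\overline{\psi}_v(\varpi_v^{-1}\beta)$ a nontrivial character of $\mathbb{F}_v$. The value $q_v\mathbf{1}_{\kappa=1}-1$ that you assign to $i=1$ actually belongs to $i=0$, which is killed by $W_v^{\chi_v}$. So the $\beta$-sum is $-1$ for all surviving $i$, the $\kappa$-sum is $(q_v-1)\mathbf{1}_{r_{\chi_v}=0}$, and no Gauss factor arises. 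With your version, the $i=1$ term would produce a spurious contribution $\zeta_v(1)q_v\Vol(I_v^*(1))\overline{W_v^{\circ}(I_2)}W_{1,v}'(\diag(\varpi_v,1))$ when $r_{\chi_v}=1$. In \eqref{f5.90}, only the Gauss--Jacobi term survives in that case.

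The correct evaluation is short. Put $x=\alpha_v^{-1}\beta_v$, $c=\overline{\chi}_v(\varpi_v)$ and $h_i=\sum_{m=0}^{i}x^m$, so that $W_{1,v}'(\diag(\varpi_v^i,1))=q_v^{-i/2}h_iW_{1,v}'(I_2)$. Then the $\gamma_1=0$ piece equals
\begin{align*}
-\zeta_v(1)(q_v-1)q_v^{-\frac{1}{2}}\Vol(I_v^*(1))\mathbf{1}_{r_{\chi_v}=0}\overline{W_v^{\circ}(I_2)}W_{1,v}'(I_2)\sum_{j\geq 0}(c/q_v)^{j}h_{j+1}.
\end{align*}
Using $h_{j+1}=1+xh_j$ and $\sum_{j\ge0}(c/q_v)^j=(1-cx/q_v)L_v(1,\pi_{1,v}'\times\overline{\chi}_v)$, this is exactly \eqref{f5.92}. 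So the factor $(1+\alpha_v^{-1}\beta_v-\alpha_v^{-1}\beta_v\overline{\chi}_v(\varpi_v)q_v^{-1})$ comes from an index shift, not from subtracting initial terms of $L_v$.

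The term $\alpha_v\beta_v\overline{\chi}_v(\varpi_v)q_v^{-1/2}L_v(1,\pi_{1,v}'\times\overline{\chi}_v)$ does not come from $\gamma_1=0$ at all. It comes from the degenerate line $1+\beta\gamma_1\in\mathfrak{p}_v$. There \eqref{5.97} pulls out $\diag(\varpi_v^{-1},\varpi_v)$, which shifts the Casselman--Shalika index by two and contributes a central-character factor of $\pi_{1,v}'$. No additive phase from $W_{1,v}'$ remains, and $B_i=-(q_v-1)\mathbf{1}_{r_{\chi_v}=0}$ for $i\ge 2$; see \eqref{5.98}--\eqref{5.99}.

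Finally, in the unit case the Jacobi sum appears cleanly after the substitutions $\kappa\mapsto\gamma_1(1+\beta\gamma_1)\kappa$ and $\gamma_1\mapsto\beta^{-1}\gamma_1$. These factor $A_i$ as $\sum_{\beta}\chi_v(\beta)\overline{\psi}_v(\varpi_v^{-1}\beta)$ times $\sum_{\kappa}\overline{\chi}_v(\kappa)\overline{\psi}_v(\varpi_v^{i-1}\kappa)$ times $\sum_{\gamma_1\neq0,-1}\overline{\chi}_v(\gamma_1(1+\gamma_1))$. The sum you wrote, with $\overline{\chi}_v(1+\beta)^{-1}$, becomes $\sum_{\beta'\neq 0,1}\overline{\chi}_v(\beta')$ under $\beta=\beta'(1-\beta')^{-1}$, not $J(\overline{\chi}_v,\overline{\chi}_v)$.
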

\begin{proof}
By definition, we may write 
\begin{equation}\label{f5.91}
\mathcal{J}_{v}^{(1,2)}(W_v^{\chi_v})=\mathcal{J}_{v,3}+\mathcal{J}_{v,4},
\end{equation}
where 
\begin{multline*}
\mathcal{J}_{v,3}:=\zeta_v(1)q_v^{-\frac{1}{2}}\Vol(I_v^*(1))\sum_{\kappa\in \mathbb{F}_v^{\times}}\sum_{\beta\in \mathbb{F}_v^{\times}}\overline{\psi}_v(\varpi_v^{-1}\beta)\\
\sum_{i\geq 0}\overline{W_v^{\chi_v}}\left(\begin{pmatrix}
\kappa\varpi_v^{i-1}&  \\
& 1  
\end{pmatrix}\right)W_{1,v}'\left(\begin{pmatrix}
\kappa\varpi_v^i\\
& 1
\end{pmatrix}\begin{pmatrix}
1& \varpi_v^{-1}\beta\\
 & 1
\end{pmatrix}\right)
q_v^{\frac{i}{2}},
\end{multline*}
and 
\begin{multline}\label{5.90}
\mathcal{J}_{v,4}:=\zeta_v(1)q_v^{-\frac{1}{2}}\Vol(I_v^*(1))\sum_{\kappa\in \mathbb{F}_v^{\times}}\sum_{\beta\in \mathbb{F}_v^{\times}}\overline{\psi}_v(\varpi_v^{-1}\beta)\sum_{i\geq 0}\sum_{\gamma_1\in \mathbb{F}_v^{\times}}
\\
\overline{W_v^{\chi_v}}\left(\begin{pmatrix}
\kappa\varpi_v^{i-1}&  \\
\gamma_1 & 1  
\end{pmatrix}\right)W_{1,v}'\left(\begin{pmatrix}
\kappa\beta\varpi_v^i\\
& 1
\end{pmatrix}\begin{pmatrix}
1 &  \\
\beta\gamma_1\varpi_v & 1  
\end{pmatrix}\begin{pmatrix}
1& \varpi_v^{-1}\\
 & 1
\end{pmatrix}\right)
q_v^{\frac{i}{2}}.
\end{multline}

By \eqref{4.11}, we obtain 
\begin{multline*}
\mathcal{J}_{v,3}=\zeta_v(1)q_v^{-\frac{1}{2}}\Vol(I_v^*(1))\sum_{\kappa\in \mathbb{F}_v^{\times}}\overline{\chi}_v(\kappa)\sum_{\beta\in \mathbb{F}_v^{\times}}\overline{\psi}_v(\varpi_v^{-1}\beta)
\\
\sum_{i\geq 1}\overline{\chi}_v(\varpi_v^{i-1})\overline{W_v^{\circ}}\left(\begin{pmatrix}
\varpi_v^{i-1}&  \\
& 1  
\end{pmatrix}\right)W_{1,v}'\left(\begin{pmatrix}
\varpi_v^i\\
& 1
\end{pmatrix}\right)
q_v^{\frac{i}{2}}.
\end{multline*}

Analogous to the treatments in the proof of Lemma \ref{lem5.10} we obtain by orthogonality of characters and the Cassel--Shalika formula that 
\begin{multline}\label{f5.92}
\mathcal{J}_{v,3}=-q_v^{\frac{1}{2}}\Vol(I_v^*(1))
(1+\alpha_v^{-1}\beta_v-\alpha_v^{-1}\beta_v\overline{\chi}_v(\varpi_v)q_v^{-1})\\
L_v(1,\pi_{1,v}'\times\overline{\chi}_v)\overline{W_v^{\circ}(I_2)}W_{1,v}'(I_2)\mathbf{1}_{r_{\chi_v}=0}.
\end{multline}

Now we proceed to compute $\mathcal{J}_{v,4}$. Consider the following scenario according to the valuation $1+\beta\gamma_1$ as follows. 
\begin{itemize}
\item Suppose $\gamma_2:=1+\beta\gamma_1\in \mathbb{F}_v^{\times}$. Then 
\begin{equation}\label{e5.91}
\begin{pmatrix}
1& \\
\beta\gamma_1\varpi_v &1
\end{pmatrix}\begin{pmatrix}
1& \varpi_v^{-1}\\
&1
\end{pmatrix}=\begin{pmatrix}
1& \gamma_2^{-1}\varpi_v^{-1}\\
&1
\end{pmatrix}\begin{pmatrix}
\gamma_2^{-1} & \\
\beta\gamma_1\varpi_v & \gamma_2
\end{pmatrix}.
\end{equation}
By \eqref{4.13} we have
\begin{equation}\label{5.91}
\overline{W_v^{\chi_v}}\left(\begin{pmatrix}
\kappa\varpi_v^{i-1}\\
\gamma_1 & 1
\end{pmatrix}\right)=-\overline{\psi}_v(\varpi_v^{i-1}\kappa\gamma_1^{-1})\overline{\chi}_v(\kappa\varpi_v^{i-1})q_v^{-i}\overline{W_v^{\circ}(I_2)}\mathbf{1}_{i\geq 0}. 
\end{equation}

Substituting \eqref{e5.91} and \eqref{5.91} into \eqref{5.90} we obtain that the contribution from $\gamma_2=1+\beta\gamma_1\in \mathbb{F}_v^{\times}$ to $\mathcal{J}_{v,4}$ is 
\begin{equation}\label{e5.80}
-\zeta_v(1)\chi_v(\varpi_v)q_v^{-\frac{1}{2}}\Vol(I_v^*(1))\overline{W_v^{\circ}(I_2)}\sum_{i\geq 0}\overline{\chi}_v(\varpi_v^{i})q_v^{-\frac{i}{2}}W_{1,v}'\left(\begin{pmatrix}
\varpi_v^i\\
& 1
\end{pmatrix}\right)A_i,
\end{equation}
where 
\begin{align*}
A_i:=\sum_{\kappa\in \mathbb{F}_v^{\times}}\overline{\chi}_v(\kappa)\mathop{\sum\sum}\limits_{\substack{\beta, \gamma_1\in \mathbb{F}_v^{\times}\\ 1+\beta\gamma_1\in \mathbb{F}_v^{\times}}}\overline{\psi}_v(\varpi_v^{-1}\beta)\overline{\psi}_v(\varpi_v^{i-1}\kappa\gamma_1^{-1})\psi_v(\varpi_v^{i-1}\kappa\beta(1+\beta\gamma_1)^{-1}).
\end{align*}

Making the change of variables $\kappa\mapsto \gamma_1(1+\beta\gamma_1)\kappa$ and $\gamma_1\mapsto \beta^{-1}\gamma_1$,  
\begin{align*}
A_i=\sum_{\beta\in \mathbb{F}_v^{\times}}\chi_v(\beta)\overline{\psi}_v(\varpi_v^{-1}\beta)\sum_{\kappa\in \mathbb{F}_v^{\times}}\overline{\chi}_v(\kappa)\overline{\psi}_v(\varpi_v^{i-1}\kappa)\sum_{\gamma_1\in \mathbb{F}_v-\{0,-1\}}\overline{\chi}_v(\gamma_1(1+\gamma_1)).
\end{align*}

By the change of variable $\beta\mapsto -\beta$ and $\gamma_1\mapsto -\gamma_1$, we obtain 
\begin{multline}\label{5.92}
A_i=|G(\chi_v,\psi_v)|^2J(\overline{\chi}_v,\overline{\chi}_v)\mathbf{1}_{i=0}\mathbf{1}_{r_{\chi_v}=1}+(q_v-2)\mathbf{1}_{i=0}\mathbf{1}_{r_{\chi_v}=0}\\
-(q_v-2)(q_v-1)\mathbf{1}_{i\geq 1}\mathbf{1}_{r_{\chi_v}=0}.
\end{multline}

By \eqref{5.92} and a direct calculation, the expression  \eqref{e5.80} becomes  
\begin{multline}\label{5.93}
-\zeta_v(1)\chi_v(\varpi_v)q_v^{-\frac{1}{2}}\Vol(I_v^*(1))\overline{W_v^{\circ}(I_2)}W_{1,v}'(I_2)\Big[
q_v(q_v-2)\mathbf{1}_{r_{\chi_v}=0}\\
+|G(\chi_v,\psi_v)|^2J(\overline{\chi}_v,\overline{\chi}_v)\mathbf{1}_{r_{\chi_v}=1}-(q_v-2)(q_v-1)L_v(1,\pi_{1,v}'\times\overline{\chi}_v)\mathbf{1}_{r_{\chi_v}=0}\Big].
\end{multline}

\item Suppose $1+\beta\gamma_1\in \mathfrak{p}_v$. Then 
\begin{equation}\label{5.97}
\begin{pmatrix}
1& \\
\beta\gamma_1\varpi_v &1
\end{pmatrix}\begin{pmatrix}
1& \varpi_v^{-1}\\
&1
\end{pmatrix}=\begin{pmatrix}
\varpi_v^{-1}\\
& \varpi_v
\end{pmatrix}\begin{pmatrix}
\varpi_v & 1\\
\beta\gamma_1  & \varpi_v^{-1}(1+\beta\gamma_1)
\end{pmatrix}.
\end{equation}

Substituting  \eqref{5.91} and \eqref{5.97} into \eqref{5.90} we obtain that the contribution from $1+\beta\gamma_1\in \mathfrak{p}_v$ to $\mathcal{J}_{v,4}$ is 
\begin{equation}\label{5.98}
-\frac{\zeta_v(1)\Vol(I_v^*(1))\overline{W_v^{\circ}(I_2)}}{\overline{\chi}_v(\varpi_v)q_v^{1/2}}
\sum_{i\geq 2}\overline{\chi}_v(\varpi_v^{i})q_v^{-\frac{i}{2}}
W_{1,v}'\left(\begin{pmatrix}
\varpi_v^{i-1}\\
& \varpi_v
\end{pmatrix}\right)B_i,
\end{equation}
where
\begin{align*}
B_i:=\sum_{\kappa\in \mathbb{F}_v^{\times}}\sum_{\beta\in \mathbb{F}_v^{\times}}\overline{\psi}_v(\varpi_v^{-1}\beta)\psi_v(\varpi_v^{i-1}\kappa\beta)\overline{\chi}_v(\kappa).
\end{align*}

As a consequence of the explicit formula 
\begin{align*}
B_i=|G(\chi_v,\psi_v)|^2\mathbf{1}_{i=0}\mathbf{1}_{r_{\chi_v}=1}+\mathbf{1}_{i=0}\mathbf{1}_{r_{\chi_v}=0}-(q_v-1)\mathbf{1}_{i\geq 1}\mathbf{1}_{r_{\chi_v}=0},
\end{align*}
the expression \eqref{5.98} boils down to 
\begin{equation}\label{5.99}
\alpha_v\beta_v\overline{\chi}_v(\varpi_v)q_v^{-\frac{1}{2}}\Vol(I_v^*(1))L_v(1,\pi_{1,v}'\times\overline{\chi}_v)\overline{W_v^{\circ}(I_2)}
\mathbf{1}_{r_{\chi_v}=0}.
\end{equation}
\end{itemize}

Combining \eqref{5.93} with \eqref{5.99} we obtain  
\begin{multline}\label{5.100}
\mathcal{J}_{v,4}=-\zeta_v(1)\chi_v(\varpi_v)q_v^{-\frac{1}{2}}\Vol(I_v^*(1))\overline{W_v^{\circ}(I_2)}W_{1,v}'(I_2)\Big[
q_v(q_v-2)\mathbf{1}_{r_{\chi_v}=0}\\
+|G(\chi_v,\psi_v)|^2J(\overline{\chi}_v,\overline{\chi}_v)\mathbf{1}_{r_{\chi_v}=1}-(q_v-2)(q_v-1)L_v(1,\pi_{1,v}'\times\overline{\chi}_v)\mathbf{1}_{r_{\chi_v}=0}\Big]\\
+\alpha_v\beta_v\overline{\chi}_v(\varpi_v)L_v(1,\pi_{1,v}'\times\overline{\chi}_v)q_v^{-\frac{1}{2}}\Vol(I_v^*(1))\overline{W_v^{\circ}(I_2)}W_{1,v}'(I_2)
\mathbf{1}_{r_{\chi_v}=0}.
\end{multline}

Therefore, \eqref{f5.90} follows from \eqref{f5.91}, \eqref{f5.92} and \eqref{5.100}. 
\end{proof}

\begin{lemma}\label{lem5.12}
We have the following. 
\begin{itemize}
\item Suppose $r_{\chi_v}=1$. Then 
\begin{equation}\label{5.103}
\mathcal{J}_{v}^{(2)}(W_v^{\chi_v})=-\alpha_v\beta_v^{-1}\zeta_v(1)\chi_v(-\varpi_v)q_v^{\frac{3}{2}}\Vol(I_v^*(1))\overline{W_v^{\circ}}(I_2)W_{1,v}^{\circ}(I_2).
\end{equation}
\item Suppose $r_{\chi_v}=0$. Then 
\begin{multline}\label{5.104}
\mathcal{J}_{v}^{(2)}(W_v^{\chi_v})=-\chi_v(-\varpi_v)q_v^{\frac{1}{2}}\Vol(I_v^*(1))\overline{W_v^{\circ}}(I_2)W_{1,v}'(I_2)\Big[L(1,\pi_{1,v}'\times \overline{\chi}_v)\\
+\alpha_v\beta_v^{-1}\zeta_v(1)-\alpha_v\beta_v^{-1}q_v(L(1,\pi_{1,v}'\times\overline{\chi}_v)-1)\Big].
\end{multline} 
\end{itemize}
\end{lemma}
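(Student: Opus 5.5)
We compute $\mathcal{J}_{v}^{(2)}(W_v^{\chi_v})$ from its definition \eqref{f5.77} by parametrizing $K_v-K_{0,v}[1]$ through the Bruhat-type decomposition
\begin{align*}
K_v-K_{0,v}[1]=\bigsqcup_{\beta_2\in \mathbb{F}_v}\begin{pmatrix}
1&\beta_2\\
&1
\end{pmatrix}w\,K_{0,v}[1],
\end{align*}
and then writing $K_{0,v}[1]$ as in \eqref{eq5.13} and $I_v^*(1)$ as in \eqref{5.77}. For $k=n(\beta_2)w\,\diag(\kappa,1)k_1$ with $k_1\in I_v^*(1)$, the element $\diag(\varpi_v^i,1)n(\beta_2)$ pulls out the additive character $\psi_v(\varpi_v^i\beta_2)$ from both $\overline{W_v^{\chi_v}}$ and $W_{1,v}'$, so these phases cancel in the product. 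The sum over $\beta_2$ therefore only contributes a factor $q_v$ (volume). Next we use
\begin{align*}
w\begin{pmatrix}
\varpi_v^{-1}&\\
&1
\end{pmatrix}=\begin{pmatrix}
1&\\
&\varpi_v^{-1}
\end{pmatrix}w,
\end{align*}
together with $w\,\diag(\kappa,1)=\diag(1,\kappa)w$. This reduces the first factor to values of $W_v^{\circ}$ of the shape $W_v^{\circ}(\diag(\varpi_v^{i+1}\kappa^{-1},1)w\,k_1^w)$, where $k_1^w=w^{-1}k_1w$ is now lower-left unipotent modulo $\mathfrak{p}_v$ in the opposite corner. We then apply Lemma \ref{lem4.3}, together with the $\chi_v(\det)$-twist and \eqref{4.13}, to evaluate it explicitly; it is supported on $i\geq -2$ and has size $q_v^{-i-2}$.

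The second factor, $W_{1,v}'(\diag(\varpi_v^i,1)k\,\diag(\varpi_v^{-1},1)k'\diag(\varpi_v,1))$, is treated with the same factorization of $k'$ used in the proof of Lemma \ref{lem5.9}. Since $W_{1,v}'$ is spherical, conjugating $w$ past $\diag(\varpi_v^{-1},1)n(\beta)\diag(\varpi_v,1)=n(\varpi_v^{-1}\beta)$ produces either a lower unipotent matrix with entry $\varpi_v^{-1}\beta$ or, for $\beta\in\mathbb{F}_v^{\times}$, an identity analogous to \eqref{5.49}, \eqref{e5.91} and \eqref{5.97}. This splits the sum over $\beta\in\mathbb{F}_v$ into the cases $\beta=0$ and $\beta\neq0$. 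In each case we obtain a value $W_{1,v}'(\diag(\varpi_v^{m},1))$ times an explicit additive character in $\kappa,\beta,\gamma_1$. The remaining finite-field sums over $\kappa,\beta,\gamma_1\in\mathbb{F}_v$ are then evaluated by orthogonality. When $r_{\chi_v}=1$, the twist $\overline{\chi}_v(\kappa)$ kills every term except a single boundary index $i$, which should give the clean expression \eqref{5.103}. When $r_{\chi_v}=0$, Ramanujan sums leave a geometric tail in $i$.

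Finally, the tail sums $\sum_i\overline{\chi}_v(\varpi_v^i)q_v^{-i/2}W_{1,v}'(\diag(\varpi_v^i,1))$ are summed using the Casselman--Shalika formula for $\pi_{1,v}'=1\boxplus\chi_{2,v}\chi_{1,v}^{-1}$, exactly as in \eqref{e5.88} and \eqref{f5.92}. This produces the combination of $L(1,\pi_{1,v}'\times\overline{\chi}_v)$ and the factor $\alpha_v\beta_v^{-1}$ appearing in \eqref{5.104}. The factor $\alpha_v\beta_v^{-1}$ arises from the ratio $W_{1,v}'(\diag(\varpi_v^{-1}\cdot,\cdot))$ when one rewrites $W_{1,v}^{\circ}$ in terms of $W_{1,v}'$ at shifted arguments, via the central character of $\pi_{1,v}'$.

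The main obstacle is the bookkeeping in the middle step. One must track the precise normalization of $W_v^{\circ}(\diag(\varpi_v^i,1)w)$ and the sign $\chi_v(-1)$ coming from $\det w=-1$, which appears as $\chi_v(-\varpi_v)$. One must also correctly identify which range of $(i,\beta,\gamma_1)$ survives. I would first check the $r_{\chi_v}=1$ case, where only one term survives, to calibrate the constants $\zeta_v(1)q_v^{3/2}\Vol(I_v^*(1))$, and then run the unramified case.
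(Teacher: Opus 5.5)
Your plan is correct and is essentially the paper's proof: Bruhat-decompose $K_v-K_{0,v}[1]$, cancel the unipotent phases, and evaluate the $\overline{W_v^{\chi_v}}$ factor by Lemma~\ref{lem4.3} (support $i\geq -2$, size $q_v^{-i-2}$); then Iwasawa-decompose the lower unipotent matrix with entry $\varpi_v^{-1}\beta$ to get $\alpha_v\beta_v^{-1}\psi_v(\cdot)$ from the central character of $\pi_{1,v}'$, and finish with Gauss/Ramanujan sums and Casselman--Shalika. The only difference is bookkeeping: the paper writes $K_v-K_{0,v}[1]=\bigsqcup_{\kappa}\diag(\kappa,1)K_v^*$ with a level-$\mathfrak{p}_v^2$ parametrization of $K_v^*$, so the integrand visibly depends only on $\kappa$, whereas in your coordinates the $\gamma_1$-phases of the two factors cancel, the $I_v^*(1)$-integral contributes $\Vol(I_v^*(1))$, and the $\beta_2$-sum then recovers $\Vol(K_v^*)=q_v\Vol(I_v^*(1))$ (note that the conjugate you actually need is $\diag(\varpi_v,1)k_1\diag(\varpi_v,1)^{-1}$, not $w^{-1}k_1w$).
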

\begin{proof}
By Cramer's rule we have 
\begin{align*}
K_v-K_{0,v}[1]=\bigsqcup_{\kappa\in \mathbb{F}_v^{\times}}\begin{pmatrix}
\kappa\\
  & 1
\end{pmatrix}K_v^*,\ \text{where} \ \ K_v^*:=\bigsqcup_{\beta_3\in \mathbb{F}_v}\begin{pmatrix}
1+\mathfrak{p}_v & \beta_3\\
& 1
\end{pmatrix}
wI_v^*(1).
\end{align*}

Let $k\in K_v-K_{0,v}[1]$. We may write 
\begin{equation}\label{e5.105}
k=\begin{pmatrix}
\kappa & \\
& 1
\end{pmatrix}k_1,\ \ \ k_1=\kappa_1\begin{pmatrix}
1 & \beta_1\\
& 1
\end{pmatrix}w\begin{pmatrix}
1 & \beta_2\\
\gamma_2 & 1
\end{pmatrix}\in K_v^*,
\end{equation}
where $\kappa_1, \kappa_2\in \mathbb{F}_v^{\times}$, $\beta_1\in \mathcal{O}_v/\mathfrak{p}_v^2$, $\gamma_2\in \mathfrak{p}_v^2$, and $\beta_2\in \mathcal{O}_v$. Notice that 
\begin{align*}
\begin{pmatrix}
1& -\varpi_v^{-1}\beta\\
& 1
\end{pmatrix}\begin{pmatrix}
1 & \beta_2\\
\gamma_2 & 1
\end{pmatrix}\begin{pmatrix}
1& \varpi_v^{-1}\beta\\
& 1
\end{pmatrix}=\begin{pmatrix}
1-\varpi_v^{-1}\beta \gamma_2 & \beta_2-\varpi_v^{-2}\beta^2\gamma_2\\
\gamma_2 & 1+\varpi_v^{-1}\beta \gamma_2
\end{pmatrix}\in K_v. 
\end{align*}

Hence, with $k$ given in the coordinate \eqref{e5.105}, we have
\begin{multline}\label{e5.106}
\overline{W_v^{\chi_v}}\left(\begin{pmatrix}
\varpi_v^i\\
& 1
\end{pmatrix}k\begin{pmatrix}
\varpi_v^{-1}\\
& 1
\end{pmatrix}\right)W_{1,v}'\left(\begin{pmatrix}
\varpi_v^i\\
& 1
\end{pmatrix}k\begin{pmatrix}
1& \varpi_v^{-1}\beta\\
& 1
\end{pmatrix}\right)\\
=\overline{W_v^{\chi_v}}\left(\begin{pmatrix}
\kappa\varpi_v^i\\
& 1
\end{pmatrix}w\begin{pmatrix}
\varpi_v^{-1}\\
& 1
\end{pmatrix}\right)W_{1,v}'\left(\begin{pmatrix}
\kappa\varpi_v^i\\
& 1
\end{pmatrix}w\begin{pmatrix}
1& \varpi_v^{-1}\beta\\
& 1
\end{pmatrix}\right),
\end{multline}
which is independent of the $K_v^*$-part. 

Substituting \eqref{e5.106} into \eqref{f5.77}, together with \eqref{4.11}, we obtain 
\begin{multline}\label{5.110}
\mathcal{J}_{v}^{(2)}(W_v^{\chi_v})=\chi_v(-\varpi_v)\zeta_v(1)q_v^{\frac{1}{2}}\Vol(K_v^*)\sum_{\kappa\in \mathbb{F}_v^{\times}}\overline{\chi}_v(\kappa)\sum_{i\in \mathbb{Z}}\overline{\chi}_v(\varpi_v^i)q_v^{\frac{i}{2}}\\
\overline{W_v^{\circ}}\left(\begin{pmatrix}
\varpi_v^{i+1}\\
& 1
\end{pmatrix}w\right)
\sum_{\beta\in \mathbb{F}_v}\overline{\psi}_v(\varpi_v^{-1}\beta)W_{1,v}'\left(\begin{pmatrix}
\kappa\varpi_v^i\\
& 1
\end{pmatrix}\begin{pmatrix}
1& \\
\varpi_v^{-1}\beta & 1
\end{pmatrix}\right).
\end{multline}

Note that $\Vol(K_v^*)=q_v\Vol(I_v^*(1))$. Making the change of variables $i\mapsto i-2$ in \eqref{5.110}, and utilizing Lemma \ref{lem4.3}, we obtain the decomposition 
\begin{equation}\label{5.111}
\mathcal{J}_{v}^{(2)}(W_v^{\chi_v})=\mathcal{J}_{v}^{(2,1)}(W_v^{\chi_v})+\mathcal{J}_{v}^{(2,2)}(W_v^{\chi_v}),
\end{equation}
where
\begin{multline*}
\mathcal{J}_{v}^{(2,1)}(W_v^{\chi_v})=-\chi_v(-\varpi_v)\zeta_v(1)q_v^{\frac{1}{2}}\Vol(I_v^*(1))\overline{W_v^{\circ}}(I_2)\\
\sum_{\kappa\in \mathbb{F}_v^{\times}}\overline{\chi}_v(\kappa)\sum_{i\geq 0}\overline{\chi}_v(\varpi_v^i)q_v^{-\frac{i}{2}}
W_{1,v}'\left(\begin{pmatrix}
\varpi_v^{i-2}\\
& 1
\end{pmatrix}\right),
\end{multline*}
and 
\begin{multline}\label{e5.113}
\mathcal{J}_{v}^{(2,2)}(W_v^{\chi_v})=-\chi_v(-\varpi_v)\zeta_v(1)q_v^{\frac{1}{2}}\Vol(I_v^*(1))\overline{W_v^{\circ}}(I_2)\\
\sum_{i\geq 0}\overline{\chi}_v(\varpi_v^i)q_v^{-\frac{i}{2}}\sum_{\beta\in \mathbb{F}_v^{\times}}\overline{\psi}_v(\varpi_v^{-1}\beta)\sum_{\kappa\in \mathbb{F}_v^{\times}}\overline{\chi}_v(\kappa)W_{1,v}'\left(\begin{pmatrix}
\kappa\varpi_v^{i-2} & \\
\varpi_v^{-1}\beta & 1
\end{pmatrix}\right).
\end{multline}

Since $\overline{\chi}_v$ is quadratic, it follows from the change of variable $i\mapsto i+2$ that 
\begin{align*}
\sum_{i\geq 0}\overline{\chi}_v(\varpi_v^i)q_v^{-\frac{i}{2}}
W_{1,v}'\left(\begin{pmatrix}
\varpi_v^{i-2}\\
& 1
\end{pmatrix}\right)=q_v^{-1}L(1,\pi_{1,v}'\times \overline{\chi}_v)W_{1,v}'(I_2).
\end{align*}

Therefore, we obtain 
\begin{multline}\label{5.113}
\mathcal{J}_{v}^{(2,1)}(W_v^{\chi_v})=-\chi_v(-\varpi_v)q_v^{\frac{1}{2}}L(1,\pi_{1,v}'\times \overline{\chi}_v)\mathbf{1}_{r_{\chi_v}=0}\\
\Vol(I_v^*(1))\overline{W_v^{\circ}}(I_2)W_{1,v}'(I_2).
\end{multline}

By a straightforward calculation we obtain  
\begin{equation}\label{5.114}
\begin{pmatrix}
\kappa\varpi_v^{i-2}& \\
\varpi_v^{-1}\beta & 1
\end{pmatrix}=\begin{pmatrix}
\kappa\varpi_v^{i-2}\\
& 1
\end{pmatrix}\begin{pmatrix}
\varpi_v & \beta^{-1}\\
 & \varpi_v^{-1}
\end{pmatrix}\begin{pmatrix}
& -\beta^{-1}\\
\beta & \varpi_v
\end{pmatrix}
\end{equation}
Recall that $W_{1,v}'$ is spherical with central character $\chi_{2,v}\chi_{1,v}^{-1}$. By \eqref{5.114}, 
\begin{equation}\label{5.115}
W_{1,v}'\left(\begin{pmatrix}
\kappa\varpi_v^{i-2} & \\
\varpi_v^{-1}\beta & 1
\end{pmatrix}\right)=\chi_{1,v}\chi_{2,v}^{-1}(\varpi_v)\psi_v(\varpi_v^{i-1}\kappa\beta^{-1})W_{1,v}'\left(\begin{pmatrix}
\varpi_v^{i}\\
& 1
\end{pmatrix}\right).
\end{equation}

Note that $\chi_{1,v}\chi_{2,v}^{-1}(\varpi_v)=\alpha_v\beta_v^{-1}$. Substituting \eqref{5.115} into \eqref{e5.113}, along with the change of variables $\kappa\mapsto \beta\kappa$, we have 
\begin{multline}\label{5.117}
\mathcal{J}_{v}^{(2,2)}(W_v^{\chi_v})=-\alpha_v\beta_v^{-1}\chi_v(-\varpi_v)\zeta_v(1)q_v^{\frac{1}{2}}\Vol(I_v^*(1))\overline{W_v^{\circ}}(I_2)\\
\Big[\overline{G(\chi_v,\psi_v)}\mathbf{1}_{r_{\chi_v}=1}-\mathbf{1}_{r_{\chi_v}=0}\Big]\sum_{i\geq 0}\overline{\chi}_v(\varpi_v^i)q_v^{-\frac{i}{2}}\sum_{\kappa\in \mathbb{F}_v^{\times}}\overline{\chi}_v(\kappa)\psi_v(\varpi_v^{i-1}\kappa)W_{1,v}'\left(\begin{pmatrix}
\varpi_v^{i}\\
& 1
\end{pmatrix}\right).
\end{multline}

Combining \eqref{e5.13} and \eqref{e5.20} yields 
\begin{multline}\label{5.118}
\sum_{\kappa\in \mathbb{F}_v^{\times}}\overline{\chi}_v(\kappa)\psi_v(\varpi_v^{i-1}\kappa)=G(\overline{\chi}_v,\psi_v)\mathbf{1}_{i=0}\mathbf{1}_{r_{\chi_v}=1}-\mathbf{1}_{i=0}\mathbf{1}_{r_{\chi_v}=0}\\
+(q_v-1)\mathbf{1}_{i\geq 1}\mathbf{1}_{r_{\chi_v}=0}.
\end{multline}

Substituting \eqref{5.118} into the \eqref{5.117} gives 
\begin{multline}\label{5.119}
\mathcal{J}_{v}^{(2,2)}(W_v^{\chi_v})=-\alpha_v\beta_v^{-1}\chi_v(-\varpi_v)\zeta_v(1)q_v^{\frac{1}{2}}\Vol(I_v^*(1))\overline{W_v^{\circ}}(I_2)W_{1,v}^{\circ}(I_2)\\
\Big[\overline{G(\chi_v,\psi_v)}G(\overline{\chi}_v,\psi_v)\mathbf{1}_{r_{\chi_v}=1}
-\big[(q_v-1)(L(1,\pi_{1,v}'\times\overline{\chi}_v)-1)-1\big]\mathbf{1}_{r_{\chi_v}=0}\Big].
\end{multline}

Therefore, it follows from \eqref{5.111}, \eqref{5.113} and \eqref{5.119} that 
\begin{multline}\label{5.120}
\mathcal{J}_{v}^{(2)}(W_v^{\chi_v})=-q_v^{\frac{1}{2}}\Vol(I_v^*(1))\overline{W_v^{\circ}}(I_2)W_{1,v}'(I_2)\Big[L(1,\pi_{1,v}'\times \overline{\chi}_v)\mathbf{1}_{r_{\chi_v}=0}\\
+\alpha_v\beta_v^{-1}\zeta_v(1)q_v\mathbf{1}_{r_{\chi_v}=1}
-\alpha_v\beta_v^{-1}\zeta_v(1)\big[(q_v-1)(L(1,\pi_{1,v}'\times\overline{\chi}_v)-1)-1\big]\mathbf{1}_{r_{\chi_v}=0}\Big]\chi_v(-\varpi_v).
\end{multline}
Here, we have also used the fact that $\chi_v$ is quadratic, which implies
\begin{align*}
\overline{G(\chi_v,\psi_v)}G(\overline{\chi}_v,\psi_v)=\overline{G(\chi_v,\psi_v)}G(\chi_v,\psi_v)=q_v. 
\end{align*}

Consequently, \eqref{5.103} and \eqref{5.104} follows from \eqref{5.120}. 
\end{proof}

\section{Choice of Automorphic Data}\label{sec6}
\subsection{Construction of \texorpdfstring{$\boldsymbol{h}$}{}}
Let $\mathfrak{q}$ be a prime ideal, and let
$\omega=\otimes_v'\omega_v$ be a unitary Hecke character of
$F^{\times}\backslash\mathbb{A}_F^{\times}$. 
\begin{itemize}
\item For $v<\infty$, we suppose that the conductor exponent of $\omega_v$ is $\leq 1_{v\mid\mathfrak{q}}$. 
\item For $v\mid\infty$, let $\mathbf{C}_v\ge C_v(\omega)$, where
$C_v(\omega)$ denotes the analytic conductor of $\omega_v$. 
Set $\mathbf{C}_{\infty}:=\prod_{v\mid\infty}\mathbf{C}_v$.
\end{itemize}

\begin{itemize}
\item Let $v\mid\infty$. Fix a nonnegative smooth function $\alpha_v$ on $\mathbb{R}$,
supported in the interval $|t|\le 10^{-1}$ and satisfying $\alpha_v(t)\equiv 1$
for $|t|\le 20^{-1}$. For $t_{1,v},t_{2,v}\in F_v$, we define
\begin{align*}
\Phi_v(t_{1,v},t_{2,v})
:=\mathbf{C}_v^{1/2}\omega_v(t_{2,v})
\alpha_v(\mathbf{C}_v|t_{1,v}|_v)\alpha_v(|t_{2,v}|_v-1).
\end{align*}

\item Let $v\mid\mathfrak{q}$. We set, for $t_{1,v},t_{2,v}\in F_v$, that 
\begin{equation}\label{f2.4}
\Phi_v(t_{1,v},t_{2,v})
:=\Vol(I_v^*(1))^{-1}
\mathbf{1}_{\mathfrak{p}_v}(t_1)\mathbf{1}_{\mathcal{O}_v^{\times}}(t_2)
\psi_v(\varpi_v^{-2} t t_1 t_2^{-1})\omega_v(t_2).
\end{equation}

\item For the remaining places $v$, namely $v<\infty$ with $v\nmid\mathfrak{q}$, we set
\begin{align*}
\Phi_v(t_{1,v},t_{2,v})
:=\mathbf{1}_{\mathcal{O}_v}(t_{1,v})\mathbf{1}_{\mathcal{O}_v}(t_{2,v}).
\end{align*}
\end{itemize}

For $t_i=\otimes_v t_{i,v}\in \mathbb{A}_F$, $1\le i\le 2$, set $\Phi(t_1,t_2):=\otimes_v \Phi_v(t_{1,v},t_{2,v})\in \mathcal{S}(\mathbb{A}_F^2)$. Let 
\begin{align*}
A_{\mathfrak{p}}:=\diag(\varpi_v,1),\ \ \text{where $v\mid\mathfrak{p}$.}
\end{align*}

For $s\in \mathbb{C}$, let $h_1^{\circ}(\cdot,s)=h_3^{\circ}(\cdot,s)$ be a fixed spherical vector in $|\cdot|^{s}\boxplus |\cdot|^{-s}$. Take 
\begin{align*}
h_1(g,s)=h_3(g,s)=h_1^{\circ}(gA_{\mathfrak{p}},s).
\end{align*} 
When $\Re(s)\ggg 1$, we define
\begin{equation}\label{e6.2}
h_2(g,s)=h_4(g,s)
:=|\det g|^{s+\frac{1}{2}}
\int_{\mathbb{A}_F^{\times}}
\Phi((0,t)g)\,\omega^{-1}(t)\,|t|^{2s+1}\,d^{\times}t.
\end{equation}
By Tate's thesis, this admits a meromorphic continuation to all $s\in\mathbb{C}$, which we continue to denote by $h_2(g,s)$ and $h_4(g,s)$. Let $W_j(\cdot,s)$ be the Whittaker function associated with $h_j(\cdot,s)$. 

For $1\leq j\leq 4$, we write $h_j(\cdot):=h_j(\cdot,0)$ and $W_j(\cdot)=W_j(\cdot,s)$ for simplicity. Hence, $h_{2,v}$, defined by \eqref{f5.2}, is the $v$-th component of $h_2$. Moreover, when $\pi_1=|\cdot|^{s}\boxplus |\cdot|^{-s}$, we have
\begin{align*}
W_{1}(x,s)=\int_{N(\mathbb{A}_F)}h_{1}(wux,s)\overline{\theta(u)}du=W_{1}^{\circ}(xA_{\mathfrak{p}},s),
\end{align*}
where $W_{1}^{\circ}$ is the spherical Whittaker vector corresponding to $h_1^{\circ}(\cdot,s)$.

\subsection{Construction of $f_{\mathfrak{q}}$}
Let $v\mid\mathfrak{q}$ and let $\sigma_t^{\zeta}$ be the simple supercuspidal representation of $\mathrm{GL}_2(F_v)$ parametrized by $t\in \mathbb{F}_v^{\times}$, with central character $\omega_v$ and $\varepsilon$-factor $\zeta$.  
Define $f_{\mathfrak{q}}:=f_v^t$ as in \eqref{2.3}.

Let 
$\chi_j=\omega_1=\omega_3=\mathbf{1}$ for  $1\leq j\leq 4$, and $\omega_2=\omega_4=\omega$.  
Set $\pi_1=\pi_3=\mathbf{1}\boxplus\mathbf{1}$ and 
$\pi_2=\pi_4=\mathbf{1}\boxplus\omega$.  
Although $\pi_1=\pi_3$ and $\pi_2=\pi_4$ (and similarly $h_1=h_3$, $h_2=h_4$), we retain the indices for compatibility with the symmetric spectral reciprocity formula: 
\SP*

\section{Lower Bound for \texorpdfstring{$\mathcal{J}_{\mathrm{Spec}}^{\heartsuit}(\mathbf{0},R(f_{\mathfrak{q}})\boldsymbol{h})$}{} and Vanishing of $\widetilde{\Psi}_{\mathrm{RS}}^{(i)}$}\label{sec7}
In this section we establish the following lower bound for $\mathcal{J}_{\mathrm{Spec}}^{\heartsuit}(\mathbf{0},R(f_{\mathfrak{q}})\boldsymbol{h})$. 
\begin{prop}\label{prop7.1}
We have
\begin{equation}\label{e7.1}
\mathcal{J}_{\mathrm{Spec}}^{\heartsuit}(\mathbf{0},R(f_{\mathfrak{q}})\boldsymbol{h})\gg \mathbf{C}_{\infty}^{-1-\varepsilon}N_F(\mathfrak{q})^{-\varepsilon}\sum_{\substack{\pi\in \mathcal{F}_{t}^{\zeta}(\mathfrak{q}^3;\omega)\bigcup \mathcal{F}_{t}^{-\zeta}(\mathfrak{q}^3;\omega)\\
C_v(\pi)\ll C_v,\ v\mid\infty}}|L(1/2,\pi)|^4,
\end{equation}	
and the vanishing of the degenerate terms 
\begin{equation}\label{e7.2}
\widetilde{\Psi}_{\mathrm{RS}}^{(i)}(\mathbf{s},R(f_{\mathfrak{q}})\boldsymbol{h})\equiv 0,\ \ \ 1\leq i\leq 8.
\end{equation}
\end{prop}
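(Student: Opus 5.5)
We begin with the spectral expansion of $\mathcal{J}_{\mathrm{Spec}}^{\heartsuit}(\mathbf{s},R(f_{\mathfrak{q}})\boldsymbol{h})$. By \eqref{2.9}, $R(f_{\mathfrak q})$ acts on the pair $(h_1,h_2)$ through $R_{12}$. Unfolding the period $\widetilde{\Psi}(\overline{W_\phi},W_1,h_2)$, the average over $g$ transfers to the vector $\phi$ in each $\pi$. Summing over an orthonormal basis of $\pi$, the operator $\pi(f_v^t)$ appears. By the discussion after \eqref{2.3}, $\pi_v(f_v^t)$ vanishes unless $\pi_v\simeq\sigma_t^{\pm\zeta}$ (the two roots of $\zeta^2=\omega_v(t\varpi_v)$). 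When it does not vanish, it is the orthogonal projection onto the line spanned by $h_v^{\pm\zeta}$. The same mechanism applies to the continuous spectrum, where it kills everything, because Eisenstein series have principal-series local components and these are never supercuspidal. So the only spectral contributions come from cuspidal $\pi$ in $\mathcal{F}_t^{\zeta}\cup\mathcal{F}_t^{-\zeta}$ (with conductor $\mathfrak q^3$ forced by the local type), each with a single $\phi$ whose local vector at $v$ is $W_v$ of \eqref{e3.4}. Since $\boldsymbol h$ is symmetric ($h_1=h_3$, $h_2=h_4$), each term becomes $|\widetilde\Psi(\overline{W_\phi},W_1,h_2)|^2\ge0$. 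This positivity is what allows both root-number families to be retained.

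Next we factor each period into local integrals and evaluate them. By Rankin--Selberg theory, $\widetilde\Psi(\overline{W_\phi},W_1(\cdot,s_1),h_2(\cdot,s_2))$ equals $\Lambda(1/2+s_2,\pi\times\pi_1)$ up to normalization by $\langle\phi,\phi\rangle$ (hence by $L(1,\pi,\mathrm{Ad})$), times a product of local normalized integrals. Because $\pi_1=\mathbf 1\boxplus\mathbf 1$, we have $L(1/2,\pi\times\pi_1)=L(1/2,\pi)^2$. At $v\mid\mathfrak q$, Lemma \ref{lem3.2} gives the local integral exactly: it is $W_{1,v}^\circ(I_2)q_v^{s-d_v}L_v(1+2s,\omega_v^{-1})^{-1}$, which is nonvanishing and of size $\asymp1$. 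The norm is handled by Lemma \ref{lem3.1}. At unramified finite places the integrals are the standard unramified ones. At $v\mid\infty$, the choice of $\Phi_v$ localizes to $C_v(\pi)\ll\mathbf C_v$, and there we need a lower bound $\gg\mathbf C_v^{-1/2-\varepsilon}$ for the archimedean integral, uniformly in $\pi_v$ of conductor $\ll\mathbf C_v$. We expect to take this from the archimedean analysis in \cite{Yan25}, since the archimedean data are unchanged. Finally, Hoffstein--Lockhart/Iwaniec bounds give $L(1,\pi,\mathrm{Ad})\ll (C(\pi))^{\varepsilon}$. Squaring these estimates yields $|L(1/2,\pi)|^4\mathbf C_\infty^{-1-\varepsilon}N(\mathfrak q)^{-\varepsilon}$ for each $\pi$.

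There is one more step. The quantity at $\mathbf s=\mathbf 0$ is defined through the contour average \eqref{2.12} and analytic continuation, so we must check that evaluation at $\mathbf 0$ commutes with the cuspidal sum. Because only the cuspidal part survives and it is holomorphic near $\mathbf 0$, this is immediate. Dropping the terms with $C_v(\pi)$ large, which are nonnegative, then gives \eqref{e7.1}.

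For \eqref{e7.2}, each $\Psi_{\mathrm{RS}}^{(i)}$ is a residue of $\widetilde\Psi(\lambda,\mu;\mathbf s,R(f_{\mathfrak q})\boldsymbol h)$ in $\lambda$. This function is identically zero after projection, because the local component at $v$ of $E(\cdot,h,\lambda)$ is a principal series, which $f_v^t$ annihilates for every $\lambda$. Concretely, $\int \overline{f_v^t(g)}\,\rho_v(g^{-1})\,dg$ vanishes on induced representations by the Frobenius reciprocity argument of \textsection\ref{sec3.1}, and this remains true after meromorphic continuation. Hence all the residues vanish. The main obstacle is the uniform archimedean lower bound in the second step; if \cite{Yan25} does not provide it in exactly this form, we would prove it directly by choosing the archimedean vectors so that the local Rankin--Selberg integral is essentially a Mellin transform of a bump localized at scale $\mathbf C_v^{-1}$.
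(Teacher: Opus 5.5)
Your proposal is correct and follows the paper's proof. The projection $R(f_{\mathfrak q})$ kills every $\pi_v\not\simeq\sigma_t^{\pm\zeta}$, and hence the continuous spectrum and all $\widetilde{\Psi}_{\mathrm{RS}}^{(i)}$; this reduces $\mathcal{J}_{\mathrm{Spec}}^{\heartsuit}$ to the cuspidal sum over $\mathcal{F}_t^{\zeta}\cup\mathcal{F}_t^{-\zeta}$, and the lower bound then comes from Lemma~\ref{lem3.2} at $\mathfrak q$ together with \cite[\textsection 5]{Yan25}. You simply spell out what the paper leaves inside that citation: the positivity (from $h_1=h_3$, $h_2=h_4$ and self-adjointness of the projection) and the $L(1,\pi,\mathrm{Ad})$ normalization.
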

\begin{proof}
Let $v\mid\mathfrak{q}$ and let $W_v$ be a Whittaker function of an irreducible admissible generic representation $\pi_v$ of $G(F_v)$.  
By a change of variables,
\begin{multline*}
\int_{\overline{G}(F_v)}
\overline{f_{\mathfrak{q}}(g)}
\,\widetilde{\Psi}_v\!\left(\overline{W_v},
R(g^{-1})W_{1,v}(\cdot,s_1),
R(g^{-1})h_{2,v}(\cdot,s_2)\right)dg \\
=
\widetilde{\Psi}_v\!\left(
\overline{R(f_{\mathfrak{q}})W_v},
W_{1,v}(\cdot,s_1),
h_{2,v}(\cdot,s_2)
\right),
\end{multline*}
where $R(\cdot)$ denotes the right regular representation and
\begin{align*}
R(f_{\mathfrak{q}})W_v
:=
\int_{\overline{G}(F_v)} f_{\mathfrak{q}}(g)\,R(g)W_v\,dg.
\end{align*}

By \textsection\ref{sec3.1}, the operator $R(f_{\mathfrak{q}})$ is the projection onto 
$\Ind_{Z(F_v)I_v(1)}^{\mathrm{GL}_2(F_v)}\widetilde{\theta}_t$.  
Hence
\begin{align*}
R(f_{\mathfrak{q}})W_v=0
\end{align*}
unless $\pi_v\simeq \sigma_t^{\zeta}$ or $\pi_v\simeq \sigma_t^{-\zeta}$, both simple supercuspidal representations.

Now let $\pi=\otimes_v'\pi_v$ be a unitary generic automorphic representation of $[G]$.  
If $\pi_v\simeq \sigma_t^{\zeta}$ or $\pi_v\simeq \sigma_t^{-\zeta}$ for some $v\mid\mathfrak{q}$, then $\pi$ must be cuspidal.  
By the definition in \textsection\ref{sec2.3.4}, we therefore obtain
\begin{equation}\label{e7.3}
\mathcal{J}_{\Spec}^{\heartsuit}(\mathbf{s},R(f_{\mathfrak{q}})\boldsymbol{h})
=
\sum_{\pi\in \mathcal{F}_{t}^{\zeta}(\mathfrak{q}^3;\omega)\,\cup\, \mathcal{F}_{t}^{-\zeta}(\mathfrak{q}^3;\omega)}
\Psi(\pi;\mathbf{s},\boldsymbol{h}).
\end{equation}
In particular, \eqref{e7.2} follows.  
The estimate \eqref{e7.1} follows from \eqref{e7.3}, Lemma~\ref{lem3.2}, and \cite[\textsection 5]{Yan25}.
\end{proof}

\section{Upper Bound for \texorpdfstring{$\mathcal{I}_{\mathrm{Spec}}^{\heartsuit}(\mathbf{0},R(f_{\mathfrak{q}})\boldsymbol{h})$}{}}\label{sec8}
In this section we prove the following upper bound for $\mathcal{I}_{\mathrm{Spec}}^{\heartsuit}(\mathbf{0},R(f_{\mathfrak{q}})\boldsymbol{h})$. 
\begin{prop}\label{prop8.1}
We have
\begin{align*}
\mathcal{I}_{\mathrm{Spec}}^{\heartsuit}(\mathbf{0},R(f_{\mathfrak{q}})\boldsymbol{h})\ll_{F,\varepsilon} \mathbf{C}_{\infty}^{\varepsilon}N_F(\mathfrak{q})^{2+\varepsilon}.
\end{align*}	
\end{prop}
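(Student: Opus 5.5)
The plan is to unfold $\mathcal{I}_{\mathrm{Spec}}^{\heartsuit}(\mathbf{0},R(f_{\mathfrak{q}})\boldsymbol{h})$ spectrally and factor it place by place. Since $\omega_1=\omega_3=\mathbf{1}$, we have $\omega'=\mathbf{1}$, so the dual side runs over the automorphic spectrum $\sigma$ of $\mathrm{PGL}_2/F$, cuspidal and Eisenstein. I would not track the exact global normalisation: any fixed power of $\mathbf{C}_\infty$ attached to the test vectors is absorbed by the corresponding normalisation in Proposition \ref{prop7.1}, and I would fix it by the same convention as there.

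For each $\sigma$, the period $\widetilde{\Psi}^*(\sigma;\mathbf{s},R(f_{\mathfrak{q}})\boldsymbol{h})$ is a product of two Rankin--Selberg integrals. By Rankin--Selberg theory, away from $\mathfrak{q}$ and $\infty$ it equals
\begin{align*}
\Lambda(1/2+s_1+s_3,\sigma\times\pi_1\times\overline{\pi}_3)\,\Lambda(1/2+s_2+s_4,\sigma\times\pi_2\times\overline{\omega}),
\end{align*}
up to bounded unramified corrections. At $\mathbf{s}=\mathbf{0}$ this is essentially $L(1/2,\sigma)^3L(1/2,\sigma\times\overline{\omega})$, using $\pi_1=\pi_3=\mathbf{1}\boxplus\mathbf{1}$, $\pi_2=\pi_4=\mathbf{1}\boxplus\omega$ and Cauchy--Schwarz over the basis. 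At $v\mid\mathfrak{q}$ the average (\ref{2.9}) over $g\in\overline{G}(F_v)$ against $\overline{f_v^t}$ is an integral over $k'\in I_v(1)$ weighted by $\overline{\theta_t(k')}$. The $\beta$-part of $\theta_t$ produces exactly the sum (\ref{e5.43}), and the $\gamma$-part cancels against the factor $\psi_v(\varpi_v^{-1}t\gamma)$ that appears in every formula for $\mathcal{I}_v(W_v;k')$ in \S\ref{sec5.2}. So the local factor at $\mathfrak{q}$ becomes the ramification weight
\begin{align*}
\mathcal{Q}_{\mathfrak{q},\omega}(\sigma)=\sum_{W_v}\mathcal{J}_v(W_v)\,\mathcal{I}_v(W_v;k')\,\psi_v(-\varpi_v^{-1}t\gamma),
\end{align*}
where $W_v$ runs over the orthonormal basis of $I_v(1)$-invariant vectors built in \S\ref{sec4}. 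By (\ref{f5.9}) this weight vanishes unless $\sigma_v^{I_v(1)}\neq0$. Hence only types I--V of Definition \ref{def1.3} contribute, and $\sigma$ is unramified outside $\mathfrak{q}$ by the choice of $\Phi_v$ and $h_1,h_3$.

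At the Archimedean places I would use the estimates from \cite{Yan25}. Because the test data at $v\mid\infty$ are localized at scale $\mathbf{C}_v$, the local weights decay rapidly once $C_v(\sigma)>\mathbf{C}_v^{\varepsilon}N_F(\mathfrak{q})^{\varepsilon}$. Below that threshold they are bounded, up to the global normalisation discussed above. The contour integrals in $\mathbf{s}$ over $\mathbf{S}^4_\varepsilon$ only cost $N_F(\mathfrak{q})^{\varepsilon}$. The Eisenstein part is treated in the same way, with Gauss sums in place of the cuspidal data, and gives a smaller contribution.

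The core of the proof is bounding $\mathcal{Q}_{\mathfrak{q},\omega}(\sigma)$ type by type and pairing each bound with a moment estimate.

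Type II, $r_{\xi_v}=1$, is the dense family of size about $N(\mathfrak{q})^2$. Here Lemmas \ref{lem5.3}, \ref{lem5.4}, \ref{lem5.5} and the $W^{(2)}$ lemma express the weight through Gauss and Jacobi sums. Multiplying by the normalisations (\ref{f4.12}) and the factors $\Vol(I_v^*(1))^{\pm1}$ should give $\mathcal{Q}\ll N(\mathfrak{q})^{-1/2+\varepsilon}$ relative to the trivial size, i.e.\ square-root cancellation. Then, by Hölder with exponents $(4/3,4)$, the fourth moment over conductor $\mathfrak{q}^2$ from \cite{Yan25} bounds $\sum|L(1/2,\sigma)|^3|L(1/2,\sigma\times\overline{\omega})|$ by $N(\mathfrak{q})^{2+\varepsilon}$, which gives the claimed bound.

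Types III--V are thin: complementary series are rare, and the Steinberg twists have level $\mathfrak{q}$, so the family has size about $N(\mathfrak{q})$. A possibly larger weight, of size at most $N(\mathfrak{q})^{1/2}$ times the type II size according to Lemmas \ref{lemma5.5}--\ref{lem5.12}, is compensated by this smaller family; for these I would use Weyl's law together with the fourth moment at level $\mathfrak{q}$.

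The main obstacle is type I, i.e.\ $\sigma$ of level $\mathcal{O}_F$. There the formulas (\ref{5.25}), (\ref{c5.29}) and (\ref{5.55}) have no Gauss-sum saving, and the weight is as large as roughly $N(\mathfrak{q})^{2}$ times a bounded Euler-type factor. For fixed archimedean range the family is finite, so the contribution is $N(\mathfrak{q})^{2}\sum_{\sigma}|L(1/2,\sigma)|^3|L(1/2,\sigma\times\overline{\omega})|$. When $\omega$ is unramified at $\mathfrak{q}$ this sum is $O(\mathbf{C}_\infty^{\varepsilon})$ by convexity. When $r_{\omega_v}=1$, the sum $\sum_{\sigma}L(1/2,\sigma)^3|L(1/2,\sigma\times\overline{\omega})|$ over level-one $\sigma$ needs a nearly sharp bound. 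I would bound it via the Conrey--Iwaniec--Nelson cubic moment (\S\ref{sec8.4}) together with Hölder against a second moment of $L(1/2,\sigma\times\overline{\omega})$, checking that the extra $\mathcal{Q}$-factors here are at most $N(\mathfrak{q})^{-1/2}$ times the Gauss sum $G(\overline{\omega}_v\xi_v,\psi_v)$ (cf.\ (\ref{5.21})). This is the step I expect to need the most care to land exactly at $N(\mathfrak{q})^{2+\varepsilon}$.
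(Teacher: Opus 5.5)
\textbf{Gap: type V is not handled.} Your overall architecture agrees with the paper's. That covers the unfolding, the local factor $\mathcal{P}_v=q_v^{-1}\mathcal{I}_v(W_v;I_2)\mathcal{J}_v(W_v)$ at $\mathfrak{q}$, the restriction to types I--V, and for type II the square-root cancellation $\mathcal{Q}_{\mathfrak{q},\omega}(\sigma)\ll N_F(\mathfrak{q})$ with H\"older against the conductor-$\mathfrak{q}^2$ fourth moment. The failure is in type V. There $\sigma_v\simeq\mathrm{St}\otimes\chi_v$ with $\chi_v$ \emph{ramified}, so $\sigma$ has conductor $\mathfrak{q}^2$, not $\mathfrak{q}$. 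The family has about $N_F(\mathfrak{q})$ members, but its $L$-functions have conductor $N_F(\mathfrak{q})^2$.

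No ``fourth moment at level $\mathfrak{q}$'' exists for this family. A bound $\sum_{\sigma\in\mathcal{F}_{\RNum{5}}}|L(1/2,\sigma)|^4\ll N_F(\mathfrak{q})^{1+\varepsilon}$ would give $L(1/2,\sigma)\ll N_F(\mathfrak{q})^{1/4+\varepsilon}$, which is sub-Weyl. The only available input is Theorem \ref{thm8.7}, which gives $N_F(\mathfrak{q})^{2+\varepsilon}$ however thin the subfamily is.

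Now take the weight $N_F(\mathfrak{q})^{3/2}$ from \eqref{8.9}, in the case $r_{\overline{\omega}_v\chi_v}=1$ where $L(1/2,\sigma\times\overline{\omega})$ also has conductor $\mathfrak{q}^2$, together with the prefactor $N_F(\mathfrak{q})^{-1}$ from \eqref{f8.7}. H\"older with fourth moments then yields at best $N_F(\mathfrak{q})^{-1}\cdot N_F(\mathfrak{q})^{3/2}\cdot N_F(\mathfrak{q})^{2}=N_F(\mathfrak{q})^{5/2}$. So ``the thin family compensates the larger weight'' does not follow.

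The missing ingredient is the Conrey--Iwaniec cubic moment in Nelson's form, Theorem \ref{thme8.7}: $\sum_{\sigma\in\mathcal{F}_{\RNum{5}}(\mathfrak{q}^2;\mathbf{1})}L(1/2,\sigma)^3\ll N_F(\mathfrak{q})^{1+\varepsilon}$. Unlike the fourth moment, this bound sees the size of the quadratic-twist family. Combine it with convexity (or the subconvexity of \cite{Yan25}) for $L(1/2,\sigma\times\overline{\omega})\ll N_F(\mathfrak{q})^{1/2}$. This gives $N_F(\mathfrak{q})^{-1}\cdot N_F(\mathfrak{q})^{3/2}\cdot N_F(\mathfrak{q})^{1/2}\cdot N_F(\mathfrak{q})^{1+\varepsilon}=N_F(\mathfrak{q})^{2+\varepsilon}$.

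\textbf{Type I is not the obstacle.} You spend the cubic moment on type I instead, where it is neither applicable nor needed. Your estimate $N_F(\mathfrak{q})^{2}\sum_\sigma|L(1/2,\sigma)|^3|L(1/2,\sigma\times\overline{\omega})|$ overstates the type I contribution by a factor $N_F(\mathfrak{q})$. It omits the $q_v^{-1}$ in $\mathcal{P}_v$, i.e.\ the $N_F(\mathfrak{q})^{-1}$ in \eqref{f8.7}, which your type II bookkeeping implicitly keeps. With that factor restored, three facts suffice:
\begin{itemize}
\item the weight bound $\mathcal{Q}_{\mathfrak{q},\omega}(\sigma)\ll N_F(\mathfrak{q})^{2+7/32}$;
\item level-one $\sigma$ with $C_v(\sigma)\le\mathbf{C}_v^{\varepsilon}N_F(\mathfrak{q})^{\varepsilon}$ number only $(\mathbf{C}_\infty N_F(\mathfrak{q}))^{O(\varepsilon)}$;
\item plain convexity gives $L(1/2,\sigma\times\overline{\omega})\ll\mathbf{C}_\infty^{1/2+\varepsilon}N_F(\mathfrak{q})^{1/2+\varepsilon}$.
\end{itemize}
Together they give $N_F(\mathfrak{q})^{3/2+7/32+\varepsilon}$, comfortably below the target.

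Your proposed route for type I could not work in any case. On a family of $N_F(\mathfrak{q})^{O(\varepsilon)}$ elements, no moment bound improves on convexity for $L(1/2,\sigma\times\overline{\omega})$. The Conrey--Iwaniec bound concerns the level-$\mathfrak{q}^2$ twist family, not level-one forms.

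Finally, convexity for $L(1/2,\sigma\times\overline{\omega})$ costs $\mathbf{C}_\infty^{1/2}$, not $\mathbf{C}_\infty^{\varepsilon}$. The $\mathbf{C}_\infty^{\varepsilon}$ in the statement comes from the explicit $\mathbf{C}_\infty^{-1/2}$ in \eqref{f8.7}. It does not come from a normalisation borrowed from Proposition \ref{prop7.1}.
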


In the following subsections, we classify the spectral support of the integral $\mathcal{I}_{\mathrm{Spec}}^{\heartsuit}(\mathbf{0},R(f_{\mathfrak{q}})\boldsymbol{h})$, bound the ramified weights, and recall the necessary moment estimates for $L$-functions. We then complete the proof of Proposition~\ref{prop8.1} in \textsection~\ref{sec8.4}.

\subsection{Spectral Support of \texorpdfstring{$\mathcal{I}_{\mathrm{Spec}}^{\heartsuit}(\mathbf{0},R(f_{\mathfrak{q}})\boldsymbol{h})$}{}}\label{sec8.1}
Let $\sigma=\otimes_v'\sigma_v$ be a unitary generic automorphic representation of $[G]$ with trivial central character. Let $W=\otimes_v'W_v$ be a Whittaker function of $\sigma$. Then 
\begin{equation}\label{8.1}
\widetilde{\Psi}(\overline{W},W_1,\overline{h_3})\widetilde{\Psi}(W^*,W_2,\overline{h_4})=\mathcal{L}(\sigma,\omega)\prod_{v\mid\infty}\mathcal{P}_v(W_v)\prod_{v<\infty}\mathcal{P}_v^{\sharp}(W_v),
\end{equation}
where $\mathcal{L}(\sigma,\omega):=L(1/2,\sigma)^3L(1/2,\sigma\times\overline{\omega})$, and for each $v\leq \infty$, 
\begin{align*}
\mathcal{P}_v(W_v):=\widetilde{\Psi}_v(\overline{W}_v,W_{1,v},\overline{h_{3,v}})\widetilde{\Psi}_v(W_v^*,W_{2,v}),\overline{h_{4,v}}),\ \ \mathcal{P}_v^{\sharp}(W_v):=\frac{L_v(1/2,\sigma_v)^{-3}\mathcal{P}_v}{L_v(1/2,\sigma_v\times\overline{\omega}_v)}.
\end{align*}

\begin{lemma}
Let $v\mid\mathfrak{q}$. Then $\mathcal{P}_v=0$ unless $W_v$ is right-$I_v(1)$-invariant. Moreover, if $W_v$ is right-$I_v(1)$-invariant, then 
\begin{equation}\label{e8.5}
\mathcal{P}_v(W_v)
=q_v^{-1}\mathcal{I}_{v}(W_v;I_2)\mathcal{J}_{v}(W_v),
\end{equation}
where $\mathcal{I}_{v}(W_v;k')$ and $\mathcal{J}_{v}(W_v)$ are defined in \eqref{e4.6} and \eqref{e5.43}, respectively. 
\end{lemma}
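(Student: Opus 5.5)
The plan is to unwind the definition of $\mathcal{P}_v(W_v)$ after the averaging \eqref{2.9} against $f_{\mathfrak q}=f_v^t$. In the dual period, $R(f_{\mathfrak q})$ acts on $h_1,h_2$ only (through $R_{12}(g^{-1})$), so at $v\mid\mathfrak q$ we must evaluate
\[
\int_{\overline{G}(F_v)}\overline{f_v^t(g)}\,\widetilde{\Psi}_v\bigl(\overline{W_v},R(g^{-1})W_{1,v},\overline{h_{3,v}}\bigr)\,\widetilde{\Psi}_v\bigl(W_v^*,R(g^{-1})W_{2,v},\overline{h_{4,v}}\bigr)\,dg .
\]
Since $f_v^t$ is supported on $I_v(1)$, we substitute $g=k'\in I_v(1)$. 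In each local integral we then change variables $x\mapsto xk'$, which moves the translate onto $W_v$ and the $\overline{h}$'s. For the second factor, $h_{4,v}=h_{2,v}$ is a Godement section built from $\Phi_{2,v}$. Lemma \ref{lem4.2} shows that $\Phi_{2,v}(\varpi^j\mathbf e_2 kk')$ transforms under $k'$ exactly by the character $\psi_v(\varpi_v^{-1}t\gamma)$, i.e. by $\theta_t$ restricted to the lower-left entry. Lemma \ref{lem4.1} gives the same equivariance for $W_{2,v}$.

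The first step is the vanishing claim. I would write $\mathcal{P}_v$ as a bilinear expression in $W_v$ integrated over $k'$ and note that the dependence on $k'$ organizes into $\int_{I_v(1)}\overline{\theta_t(k')}\,\sigma_v(k')\,dk'$ applied to $W_v$. This is the projection onto the $\theta_t$-isotypic part for the combined character. After pairing both factors, the characters from $f_v^t$, from $\Phi_{2,v}$, and from the $\beta$-twist in $W_{1,v}$ cancel, leaving the $I_v(1)$-invariant projection of $W_v$. This is what \eqref{f5.9} records for $\mathcal I_v$. Hence $\mathcal P_v=0$ unless $W_v$ is right-$I_v(1)$-invariant.

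The second step assumes $W_v$ is $I_v(1)$-invariant. The factor containing $W_{2,v},h_{4,v}$ then depends on $k'$ only through $\psi_v(\varpi_v^{-1}t\gamma)$. I would identify it with $\mathcal I_v(W_v;k')$: conjugation by $W^*$ and complex conjugation turn the $\overline{W_{2,v}}\Phi_{2,v}$ integral into \eqref{e4.6}. The other factor, $\widetilde\Psi_v(\overline{W_v},W_{1,v}(\cdot k'),\overline{h_{3,v}})$ with $W_{1,v}=W_{1,v}^\circ(\cdot A_{\mathfrak q})$, is exactly $\mathcal I_v^\dagger(W_v;k')$ from \eqref{eq5.46}. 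Using \eqref{f5.47} and \eqref{eq5.47}, it depends on $k'$ only through $\beta$.

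We then integrate $\overline{f_v^t(k')}=\Vol(I_v(1))^{-1}\psi_v(\varpi_v^{-1}(\beta+t\gamma))$ against the product. The $t\gamma$ phase cancels with that of $\mathcal I_v(W_v;k')$, giving $\mathcal I_v(W_v;I_2)$. The $\beta$-phase, summed over $\beta\in\mathbb F_v$ via \eqref{f5.47}, produces $\mathcal J_v(W_v)$ as in \eqref{e5.43}. The remaining normalization is the ratio of $\Vol(I_v^\circ(1))$ summed over $q_v$ cosets to $\Vol(I_v(1))$, together with the conversion of $\int_{\mathcal O_v}d\gamma$-type averages into counts. I expect this to produce the factor $q_v^{-1}$.

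The main obstacle is bookkeeping of conjugations and of which $\psi_v$ versus $\overline\psi_v$ occurs, so that the phases genuinely cancel rather than double. Checking the constant $q_v^{-1}$ is the other delicate point, and I would verify it against the measure normalizations of \S\ref{1.1.1}.
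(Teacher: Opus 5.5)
Your skeleton is the paper's: write the averaged local factor as $\int_{I_v(1)}\overline{f_{\mathfrak q}(k'^{-1})}\,\mathcal I_v^\dagger(W_v;k')\,\mathcal I_v(W_v;k')\,dk'$, invoke \eqref{f5.9}, remove the $\gamma$-phase with \eqref{eq8.2}, and use \eqref{f5.47}, \eqref{eq5.47} and $\Vol(I_v^\circ(1))=q_v^{-1}\Vol(I_v(1))$ to obtain $q_v^{-1}\mathcal J_v(W_v)$. That coset ratio is the entire normalization; no conversion of $\int_{\mathcal O_v}d\gamma$-averages into counts is involved.

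The first problem is that identifying the $W_{2,v}$-factor with \eqref{e4.6} is not a matter of conjugation. That factor is $\widetilde\Psi_v(W_v^*,W_{2,v}(\cdot\,k'),\overline{h_{4,v}})$. After unfolding the Godement section it becomes $\int W_v^*(x)W_{2,v}(xk')\overline{\Phi_{2,v}(\mathbf e_2x)}|\det x|_v^{1/2}dx$. Here $W_{2,v}$ is unconjugated and $\Phi_{2,v}$ is conjugated, the reverse of \eqref{e4.6}. Complex conjugation would also turn $W_v^*$ into $\overline{W_v^*}$, not into $W_v$. The paper bridges this with an external input, \cite[Corollary~5.2]{Hsi21}, which gives $\mathcal I_v(W_v;k')=\int W_v(x)W_{2,v}(xk')\overline{\Phi_{2,v}(\mathbf e_2x)}|\det x|_v^{1/2}dx$. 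You need that, or an equivalent uniqueness argument, rather than a formal manipulation.

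The second problem is that the phases, as you set them up, do not cancel. $R_{12}(g^{-1})$ translates $W_{1,v}$ by $g^{-1}$, so the first factor is literally $\mathcal I_v^\dagger(W_v;k')$ only when $k'=g^{-1}$. The weight is then $\overline{f_{\mathfrak q}(k'^{-1})}=\Vol(I_v(1))^{-1}\overline{\psi}_v(\varpi_v^{-1}(\beta+t\gamma))$, because $\theta_t$ is a character of $I_v(1)$. Against $\psi_v(\varpi_v^{-1}t\gamma)$ from \eqref{eq8.2}, the $\gamma$-phases cancel and $\overline\psi_v(\varpi_v^{-1}\beta)$ survives, exactly as in \eqref{e5.43}. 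With your weight $\psi_v(\varpi_v^{-1}(\beta+t\gamma))$, the $t\gamma$-phases double instead of cancelling, and the $\beta$-phase has the wrong sign.

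Your vanishing mechanism is also off. No cancellation occurs among the characters of $f_{\mathfrak q}$, $\Phi_{2,v}$ and the $\beta$-twist; the $\beta$-character survives and is exactly what builds $\mathcal J_v$. Moreover, an expression in which $W_v$ enters both factors is not a projector applied to $W_v$. The actual point is that $\Phi_{2,v}(\mathbf e_2\,\cdot)$ transforms under right translation by $I_v(1)$ through the unitary character $\psi_v(\varpi_v^{-1}t\gamma)$ (Lemma \ref{lem4.2}). Hence $h_{2,v}$ and $W_{2,v}$ transform by the same character. So $\overline{W_{2,v}}\,\Phi_{2,v}(\mathbf e_2\,\cdot)$ is right $I_v(1)$-invariant, and $\mathcal I_v(W_v;I_2)$ sees only the $I_v(1)$-invariant projection of $W_v$. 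That is what \eqref{f5.9} encodes.
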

\begin{proof}
Let $v\mid\mathfrak{q}$. By \cite[Corollary~5.2]{Hsi21}, we have
\begin{equation}
\mathcal{I}_{v}(W_v;k')
=\int_{N(F_v)\backslash G(F_v)}
W_v(x)\,W_{2,v}(xk')\,
\overline{\Phi_{2,v}(\mathbf{e}_2x)}\,
|\det x|_v^{\frac{1}{2}}\,dx,
\end{equation}  
It follows that
\begin{equation}\label{e8.2}
\mathcal{P}_v(W_v)
=\int_{I_v(1)}
\overline{f_{\mathfrak{q}}(k'^{-1})}\,
\mathcal{I}_{v}^{\dag}(W_v;k')\,
\mathcal{I}_{v}(W_v;k')\,dk',
\end{equation}
where $\mathcal{I}_{v}^{\dag}(W_v;k')$ is defined in \eqref{eq5.46}. 

By \eqref{f5.9}, we have $\mathcal{P}_v(W_v)=0$ unless  $W_v$ is right-$I_v(1)$-invariant, which implies that $\sigma_v^{I_v(1)}\neq 0$. 

Parametrize $k'\in I_v(1)$ by \eqref{eq5.1}. From the calculations in \textsection\ref{sec5.2} we obtain 
\begin{equation}\label{eq8.2}
\mathcal{I}_{v}(W_v;k')=\psi_v(
\varpi_v^{-1}t\gamma)	\mathcal{I}_{v}(W_v;I_2).
\end{equation}

Substituting \eqref{2.3} and \eqref{eq8.2} into \eqref{e8.2} yields 
\begin{equation}\label{e8.4}
\mathcal{P}_v(W_v)
=\Vol(I_v(1))^{-1}\mathcal{I}_{v}(W_v;I_2)\,\int_{I_v(1)}
\overline{\psi}_v(\varpi_v^{-1}\beta)\,
\mathcal{I}_{v}^{\dag}(W_v;k')\,dk'.
\end{equation}

Substituting \eqref{f5.47} into  \eqref{e8.4} yields \eqref{e8.5}.
\end{proof}

Note that $\mathcal{P}_v$ also depends on $\omega_v$.  
To make this dependence explicit, we set
\begin{align*}
\mathcal{P}_{\mathfrak{q},\omega}(W_v)
:=
\frac{q_vL_v(1/2,\sigma_v)^{-3}\mathcal{P}_v(W_v)}
{L_v(1/2,\sigma_v\times\overline{\omega}_v)}
=
\frac{\mathcal{I}_{v}(W_v;I_2)\mathcal{J}_{v}(W_v)}
{L_v(1/2,\sigma_v)^{3}\,L_v(1/2,\sigma_v\times\overline{\omega}_v)},
\end{align*}
emphasizing its dependence on $\mathfrak{q}$, $\omega$, and $\sigma$. Define
\begin{equation}\label{e8.6}
\mathcal{Q}_{\mathfrak{q},\omega}(\sigma)
:=
\sum_{W_v\in \mathcal{W}(\sigma_v)^{I_v(1)}}
|\mathcal{P}_{\mathfrak{q},\omega}(W_v)|,
\end{equation}
where $v$ is the place corresponding to the prime $\mathfrak{q}$, and 
$\mathcal{W}(\sigma_v)^{I_v(1)}$ denotes an orthonormal basis of the right-$I_v(1)$-invariant subspace of the Whittaker model of $\sigma_v$. We regard $\mathcal{Q}_{\mathfrak{q},\omega}(\sigma)$ as the ramification weight.

Suppose $\sigma_v^{I_v(1)}\neq 0$.
Such representations $\sigma_v$ were classified in \textsection\ref{sec4}.
Consequently, the corresponding automorphic representations $\sigma$
fall into the classes described in Definition \ref{def1.3} of
\textsection\ref{sec1.2}. 
\begin{lemma}
We have 
\begin{equation}\label{f8.7}
\mathcal{I}_{\mathrm{Spec}}^{\heartsuit}(\mathbf{0},R(f_{\mathfrak{q}})\boldsymbol{h})\ll \mathbf{C}_{\infty}^{-\frac{1}{2}+\varepsilon}N_F(\mathfrak{q})^{-1+\varepsilon}\sum_{*\in\{\RNum{1},\RNum{2},\RNum{3},\RNum{4},\RNum{5}\}}S_{*},
\end{equation}
where
\begin{align*}
S_*:= \int_{\substack{\sigma\in \mathcal{F}_*(\mathfrak{q}^2;\mathbf{1})\\
C_v(\pi)\leq \mathbf{C}_v^{\varepsilon}N_F(\mathfrak{q})^{\varepsilon},\ v\mid\infty}}\mathcal{Q}_{\mathfrak{q},\omega}(\sigma)
L(1/2,\sigma)^3|L(1/2,\sigma\times\overline{\omega})|d\mu_{\sigma}.
\end{align*}
\end{lemma}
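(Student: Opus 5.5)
The plan is to expand $\mathcal{I}_{\mathrm{Spec}}^{\heartsuit}(\mathbf{0},R(f_{\mathfrak{q}})\boldsymbol{h})$ spectrally and factor each term as in \eqref{8.1}. Since $\omega'=\omega_1\overline{\omega_3}=\mathbf{1}$, the dual side runs over unitary generic $\sigma$ of $\mathrm{PGL}_2/F$: the cuspidal ones together with the Eisenstein integral over $(\mu,\lambda)$. For each $\sigma$, sum over an orthonormal basis of pure tensors $W=\otimes_v W_v$. The summand becomes $\mathcal{L}(\sigma,\omega)$ times $\prod_{v\mid\infty}\mathcal{P}_v(W_v)\prod_{v<\infty}\mathcal{P}_v^\sharp(W_v)$. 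The definition \eqref{2.9} and the contour integral \eqref{2.12} at $\mathbf{s}=\mathbf{0}$ are then handled by Cauchy's formula, since everything is holomorphic near $\mathbf{0}$ on the dual side.

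The places are treated in turn.

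\emph{Places $v<\infty$, $v\nmid\mathfrak{q}$.} All data are unramified, so unfolding shows $\mathcal{P}^\sharp_v(W_v)=0$ unless $W_v$ is spherical. In that case the unramified Rankin--Selberg computation gives $\mathcal{P}_v^\sharp=1$ up to the standard normalization. Hence $\sigma$ is unramified outside $\mathfrak{q}$.

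\emph{The place $v\mid\mathfrak{q}$.} By the preceding lemma, $\mathcal{P}_v=0$ unless $W_v$ is right-$I_v(1)$-invariant. This forces $\sigma_v^{I_v(1)}\neq 0$, so by \textsection\ref{sec4} $\sigma_v$ is an unramified or conductor-$\mathfrak{p}_v^2$ principal series, or $\mathrm{St}\otimes\chi_v$ with $\chi_v$ quadratic and $r_{\chi_v}\le 1$. These are exactly types \RNum{1}--\RNum{5}. Summing over the orthonormal basis of $\sigma_v^{I_v(1)}$ (dimension $2$ or $1$) and applying the triangle inequality bounds the local contribution by $q_v^{-1}\mathcal{Q}_{\mathfrak{q},\omega}(\sigma)$; this $q_v^{-1}$ is the source of the factor $N_F(\mathfrak{q})^{-1}$.

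\emph{Archimedean places.} I would quote the analysis of \cite[\textsection 5--6]{Yan25} with the test function $\Phi_v$ at conductor scale $\mathbf{C}_v$. It gives $\sum_{W_v}|\mathcal{P}_v(W_v)|\ll \mathbf{C}_v^{-1/2+\varepsilon}$ uniformly, together with rapid decay once $C_v(\sigma)>\mathbf{C}_v^{\varepsilon}N_F(\mathfrak{q})^{\varepsilon}$. The latter follows by repeated integration by parts (Casimir action) against the smooth cutoff $\alpha_v$. The tail is then absorbed by crude polynomial bounds for $\mathcal{L}(\sigma,\omega)$ and Weyl's law, after which only the truncated family survives.

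Taking absolute values, the term $L(1/2,\sigma)^3$ is already nonnegative for $\mathrm{PGL}_2$ only in the self-dual cuspidal case. In general I would just write $|L(1/2,\sigma)|^3$, matching $S_*$ up to this convention. Collecting the factors gives \eqref{f8.7}. For the Eisenstein part, $L(1/2,\sigma)$ factors as $|L(1/2+\lambda,\mu)|^2$ and the same local analysis applies, with the extra factor $L(1,\sigma,\mathrm{Ad})^{-1}$ from the normalization of $\mathfrak{B}(\pi)$ (which is $\ll N^\varepsilon$).

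\emph{Main obstacle.} The hardest step is the Archimedean uniform bound: one must obtain the exact power $\mathbf{C}_\infty^{-1/2}$ together with effective truncation uniformly in $\mathfrak{q}$. I would try to import it directly from \cite{Yan25}. The key observation is that the Archimedean data here coincide with those of that paper, so only the non-Archimedean place $\mathfrak{q}$ is new, and it has already been isolated through $\mathcal{Q}_{\mathfrak{q},\omega}$.
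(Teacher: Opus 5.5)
Your proposal is correct and follows the paper's route. The paper's proof consists of the factorization \eqref{8.1}, the local identity \eqref{e8.5} at $v\mid\mathfrak{q}$ (which gives the factor $N_F(\mathfrak{q})^{-1}\mathcal{Q}_{\mathfrak{q},\omega}(\sigma)$ after summing over an orthonormal basis of $\sigma_v^{I_v(1)}$), and the unramified and Archimedean manipulations imported from \cite[\textsection 7]{Yan25}, exactly as you outline. One small correction: your caveat about the sign of $L(1/2,\sigma)^3$ is unnecessary, since every $\sigma$ here has trivial central character, so the cuspidal ones are automatically self-dual with $L(1/2,\sigma)\ge 0$ (Guo's positivity theorem), and the Eisenstein ones give $|L(1/2+\lambda,\mu)|^2\ge 0$.
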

\begin{proof}
This follows from \eqref{8.1}, \eqref{e8.5}, and \eqref{e8.6}, together with the manipulations in \cite[\textsection 7]{Yan25}.
\end{proof}

\subsection{Estimate of the Ramification Weight}
\begin{prop}\label{prop8.5}
We have the following. 
\begin{itemize}
\item Suppose $\sigma\in \mathcal{F}_{\RNum{1}}(\mathfrak{q}^2;\mathbf{1})\bigsqcup \mathcal{F}_{\RNum{2}}(\mathfrak{q}^2;\mathbf{1})$. Then 
\begin{equation}\label{8.7}
\mathcal{Q}_{\mathfrak{q},\omega}(\sigma)\ll_F N_F(\mathfrak{q})^{2+\frac{7}{32}}\mathbf{1}_{\sigma\in \mathcal{F}_{\RNum{1}}(\mathfrak{q}^2;\mathbf{1})}+N_F(\mathfrak{q})\mathbf{1}_{\sigma\in \mathcal{F}_{\RNum{2}}(\mathfrak{q}^2;\mathbf{1})}.
\end{equation}

\item Suppose $\sigma\in \mathcal{F}_{\RNum{3}}(\mathfrak{q}^2;\mathbf{1})$. Then 
\begin{equation}\label{8.8}
\mathcal{Q}_{\mathfrak{q},\omega}(\sigma)\ll_F N_F(\mathfrak{q})^{1+\frac{7}{32}}\mathbf{1}_{r_{\overline{\omega}_{\mathfrak{q}}\xi_{\mathfrak{q}}}=0}+N_F(\mathfrak{q})^{\frac{21}{64}}\mathbf{1}_{r_{\overline{\omega}_{\mathfrak{q}}\xi_{\mathfrak{q}}}=1}. 
\end{equation}

\item Suppose $\sigma\in \mathcal{F}_{\RNum{4}}(\mathfrak{q}^2;\mathbf{1})\bigsqcup \mathcal{F}_{\RNum{5}}(\mathfrak{q}^2;\mathbf{1})$. Then 
\begin{equation}\label{8.9}
\mathcal{Q}_{\mathfrak{q},\omega}(\sigma)\ll_F N_F(\mathfrak{q})\mathbf{1}_{\sigma\in \mathcal{F}_{\RNum{4}}(\mathfrak{q}^2;\mathbf{1})}+N_F(\mathfrak{q})^{\frac{3}{2}}\mathbf{1}_{\sigma\in \mathcal{F}_{\RNum{5 }}(\mathfrak{q}^2;\mathbf{1})}.
\end{equation}
\end{itemize} 
\end{prop}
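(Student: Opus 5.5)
By \eqref{e8.6}, $\mathcal{Q}_{\mathfrak{q},\omega}(\sigma)$ is a finite sum of terms $|\mathcal{P}_{\mathfrak{q},\omega}(W_v)|$ over an orthonormal basis of $\mathcal{W}(\sigma_v)^{I_v(1)}$. This space has dimension at most $2$ by \textsection\ref{sec4}. So the plan is to bound each term
\begin{align*}
\frac{\mathcal{I}_{v}(W_v;I_2)\,\mathcal{J}_{v}(W_v)}{L_v(1/2,\sigma_v)^{3}L_v(1/2,\sigma_v\times\overline{\omega}_v)}
\end{align*}
separately. We do this case by case in the type of $\sigma_v$, using the explicit evaluations of \textsection\ref{sec5}.

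In the principal series cases (types \RNum{1}, \RNum{2}, \RNum{3}) we take the basis $W_v^{(f_j)}=\langle W_v^{(j)},W_v^{(j)}\rangle^{-1/2}W_v^{(j)}$, $j\in\{1,2\}$. By \eqref{f4.12}, the normalizing constants are $\asymp q_v^{1/2+2\nu}$ and $\asymp 1$.

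For $\mathcal{I}_v$ we use Lemmas \ref{lem5.3} and \ref{lem5.4}. Here $\Vol(I_v^*(1))^{-1}\asymp q_v^{2}$. Each Gauss sum of a conductor-one character contributes $q_v^{1/2}$. The local $L$-factors and the finite geometric sums are $O(1)$, since $|\xi_v(\varpi_v)|\le q_v^{7/64}$ by \cite{KS03}.

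For $\mathcal{J}_v$ we use \eqref{f5.45}. The constant satisfies $|c_{\sigma_v,\pi_{1,v}}|\ll 1$ by \eqref{f5.46}. We then apply Lemma \ref{lem5.5} and the lemma evaluating $\mathcal{J}_v^*(W_v^{(2)})$, with $\Vol(I_v^*(1))\asymp q_v^{-2}$.

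\emph{Type \RNum{2}} ($r_{\xi_v}=1$). Here $\mathcal{J}_v^*$ involves $|G|^2\,|J(\overline{\xi}_v,\overline{\xi}_v)|$, with $|J|\le q_v^{1/2}$ when $\overline{\xi}_v^2\neq\mathbf{1}$. Multiplying the powers out should give $N_F(\mathfrak{q})$. The exceptional quadratic case $\xi_v^2=\mathbf{1}$ has $|J|=1$ and is harmless.

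\emph{Type \RNum{1}} ($r_{\xi_v}=0$, unramified). No Gauss-sum cancellation is available. The factors $(q_v-1)$ and $q_v^{1/2}$ in \eqref{5.44} and \eqref{5.55}, combined with the normalization $q_v^{1/2+2\nu}$ and $|\xi_v(\varpi_v)|^{\pm1}\le q_v^{7/64}$, should produce $q_v^{2+7/32}$. The exponent $7/32$ comes from two factors of $q_v^{7/64}$.

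\emph{Type \RNum{3}} (complementary series, $\nu\le 7/64$, $\chi_v$ quadratic). We follow the same bookkeeping and split according to $r_{\overline{\omega}_v\xi_v}$. When $r_{\overline{\omega}_v\xi_v}=1$, the Gauss sum $G(\overline{\omega}_v\xi_v,\psi_v)$ in $\mathcal{I}_v$ is paired against cancellation coming from $\mathcal{J}_v$, and this should give the small exponent $21/64$. When $r_{\overline{\omega}_v\xi_v}=0$ we only obtain $q_v^{1+7/32}$.

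For the Steinberg cases (types \RNum{4}, \RNum{5}) the basis is the single vector $W_v^{\chi_v}$. We bound $\mathcal{I}_v$ by Lemma \ref{lemma5.5}. We bound $\mathcal{J}_v=\mathcal{J}_v^{(1)}+\mathcal{J}_v^{(2)}$ via \eqref{5.74} and Lemmas \ref{lem5.9} and \ref{lem5.12}, using $|W_v^{\circ}(I_2)|\asymp 1$ and $|J(\overline{\chi}_v,\overline{\chi}_v)|=1$ for quadratic $\chi_v$.

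\emph{Type \RNum{5}} ($r_{\chi_v}=1$). The product $G(\chi_v)G(\overline{\omega}_v\chi_v)$ in \eqref{e5.37}, together with the term $q_v^{3/2}\Vol$ in \eqref{5.103}, yields $N_F(\mathfrak{q})^{3/2}$.

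\emph{Type \RNum{4}}. The analogous computation yields $N_F(\mathfrak{q})$.

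Finally, the denominator $L_v(1/2,\sigma_v)^3L_v(1/2,\sigma_v\times\overline{\omega}_v)$ is bounded below by a constant in every case. This follows because its inverse is a polynomial in Satake parameters that are bounded by $q_v^{7/64}$.

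The main obstacle is the exponent bookkeeping in the unramified and complementary cases. There the various terms of \eqref{5.25} and \eqref{5.55} must be checked to cancel down to the stated power rather than exceed it by a factor $q_v$. For type \RNum{3} with $r_{\overline{\omega}_v\xi_v}=1$, it is also not clear where the extra saving producing $21/64$ comes from. I would first verify carefully the leading $q_v$-power of each bracket in \eqref{5.21}, \eqref{c5.28} and \eqref{5.44}/\eqref{5.55}, tracking every $\Vol(I_v^*(1))$ and every $|\xi_v(\varpi_v)|^{\pm1}$. Where I could not certify a cancellation, I would accept the trivial bound.
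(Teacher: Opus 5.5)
\textbf{Gap: the type \RNum{3} clause with exponent $21/64$.} Your route is the paper's: fix the bases \eqref{e8.7}--\eqref{e8.8}, insert Lemmas~\ref{lem5.3}--\ref{lem5.12} and \eqref{f5.45}--\eqref{f5.46} into $\mathcal P_{\mathfrak q,\omega}$, and bound Gauss and Jacobi sums type by type. The gap is the one you flag yourself. You give no argument for the $N_F(\mathfrak q)^{21/64}$ clause of \eqref{8.8}, and the mechanism you guess is not what the paper uses.

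In the paper, $21/64=3\cdot\frac{7}{64}$ is the bound $N_F(\mathfrak q)^{3\nu}$ of \eqref{e8.13}, and it concerns $W_v^{(f_1)}$ only. When $r_{\xi_v}=r_{\overline\omega_v\xi_v}=1$, \eqref{e5.13}--\eqref{e5.14} force $l=i-1$ and $l=-1$ in \eqref{5.12}, so only $i=0$ survives. Since $\xi_v^2$ is unramified in type \RNum{3}, the bracket in \eqref{5.13} is just $-1$. Hence $|\mathcal I_v(W_v^{(1)};I_2)|\asymp q_v^{1+O(\nu)}$ rather than $q_v^2$, while $|\mathcal J_v^*(W_v^{(1)})|\asymp q_v^{-3/2+O(\nu)}$ by \eqref{f5.37}. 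One factor $q_v^{1/2+2\nu}$ from \eqref{f4.12} then leaves only $q_v^{O(\nu)}$. The saving is a collapse of the support of the sum, not a cancellation between $\mathcal I_v$ and $\mathcal J_v$.

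\textbf{Why neither your fallback nor the paper reaches $21/64$.} The collapse above does not bound $\mathcal Q_{\mathfrak q,\omega}(\sigma)$. There are two reasons.

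First, $\mathcal Q$ also contains the $W_v^{(f_2)}$ term. By \eqref{c5.26}, $|\mathcal I_v(W_v^{(2)};I_2)|\asymp q_v^{2}$. By \eqref{5.54}, $|\mathcal J_v^*(W_v^{(2)})|\asymp q_v^{-3/2}$, since $|J(\chi_v,\chi_v)|=1$ for quadratic $\chi_v$. So this term has size $q_v^{1/2}|c_{\sigma_v,\pi_{1,v}}|=q_v^{1/2+O(\nu)}$, which exceeds $q_v^{21/64}$ for small $\nu$. The paper itself records only $\ll N_F(\mathfrak q)$ for it in \eqref{e8.14}. Thus \eqref{e8.13}--\eqref{e8.14} yield only $\mathcal Q\ll N_F(\mathfrak q)$ in this subcase, not \eqref{8.8}. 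That weaker bound is all \textsection\ref{sec8.4} needs, since $N_F(\mathfrak q)\,S_{\RNum{3}}^{(2)}\ll \mathbf C_\infty^{1/2+\varepsilon}N_F(\mathfrak q)^{5/2+\varepsilon}$.

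Second, $\mathcal I_v$ is linear and $\mathcal J_v$ is conjugate-linear in $W_v$ (see \eqref{eq5.46}). Hence $\mathcal P_{\mathfrak q,\omega}(W_v^{(f_j)})=\langle W_v^{(j)},W_v^{(j)}\rangle^{-1}\mathcal P_{\mathfrak q,\omega}(W_v^{(j)})$, so the factor from \eqref{f4.12} must be squared, not used once as in \eqref{e8.9}, \eqref{e8.11}, \eqref{e8.13}. With the square, the $W_v^{(f_1)}$ term in this subcase is also $q_v^{1/2+O(\nu)}$. For $r_{\overline\omega_v\xi_v}=0$, \eqref{5.19} with \eqref{f5.37} gives $q_v^{3/2+O(\nu)}$, above the claimed $q_v^{1+7/32}$.

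So redo your type \RNum{2}/\RNum{3} bookkeeping with the squared normalization, and expect \eqref{8.8} as stated not to come out of this route.
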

\begin{proof}
Suppose $\sigma\in \mathcal{F}_{\RNum{1}}(\mathfrak{q}^2;\mathbf{1})\bigsqcup \mathcal{F}_{\RNum{2}}(\mathfrak{q}^2;\mathbf{1})\bigsqcup \mathcal{F}_{\RNum{3}}(\mathfrak{q}^2;\mathbf{1})$. We may take 
\begin{equation}\label{e8.7}
\mathcal{W}(\sigma_v)^{I_v(1)}=\Big\{W_v^{(f_1)}, W_v^{(f_2)}\Big\},
\end{equation}
which was defined by \eqref{f4.2}; and when $\sigma\in \mathcal{F}_{\RNum{4}}(\mathfrak{q}^2;\mathbf{1})\bigsqcup \mathcal{F}_{\RNum{5}}(\mathfrak{q}^2;\mathbf{1})$, we may take
\begin{equation}\label{e8.8}
\mathcal{W}(\sigma_v)^{I_v(1)}=\{W_v^{\chi_v}\},\ \ \ \sigma_v=\mathrm{St}\otimes\chi_v,	
\end{equation}
defined as in \eqref{4.11}. 

Consider the following scenarios. 
\begin{itemize}
\item Suppose $\sigma\in \mathcal{F}_{\RNum{1}}(\mathfrak{q}^2;\mathbf{1})$. Write $\sigma_v=\xi_v\boxplus\xi_v^{-1}$ with $\xi_v$ unramified and $|\xi_v(\cdot)|^2=\xi_v(\cdot)\overline{\xi}_v(\cdot)=|\cdot|_v^{2\nu}$, $0<\nu\leq 7/64$. 
\begin{itemize}
\item By \eqref{e4.1.},  \eqref{f4.12},  \eqref{5.21},  \eqref{5.25}, \eqref{f5.45},  \eqref{5.44}, and the bounds for the Gauss sums, we have
\begin{equation}\label{e8.9}
|\mathcal{P}_{\mathfrak{q},\omega}(W_v^{(f_1)})|\ll N_F(\mathfrak{q})^{\frac{1}{2}+2\nu}\cdot |\mathcal{P}_{\mathfrak{q},\omega}(W_v^{(1)})|\ll_FN_F(\mathfrak{q})^{2+2\nu}. 
\end{equation}

\item By \eqref{e4.1.},  \eqref{f4.12},  \eqref{c5.28},  \eqref{c5.29}, \eqref{f5.45},  \eqref{5.55}, and the bounds for the Gauss sums, we have
\begin{equation}\label{e8.10}
|\mathcal{P}_{\mathfrak{q},\omega}(W_v^{(f_2)})|\ll  |\mathcal{P}_{\mathfrak{q},\omega}(W_v^{(2)})|\ll_F N_F(\mathfrak{q})^{\frac{3}{2}+2\nu}. 
\end{equation}
\end{itemize}

\item Suppose $\sigma\in \mathcal{F}_{\RNum{2}}(\mathfrak{q}^2;\mathbf{1})$. Write $\sigma_v=\xi_v\boxplus\xi_v^{-1}$ with $\xi_v$ unitary and $r_{\xi_v}=1$.  
\begin{itemize}
\item By \eqref{e4.1.},  \eqref{f4.12}, \eqref{5.13}, \eqref{5.19}, \eqref{f5.45},  \eqref{f5.37}, and the bounds for the Gauss sums, we have
\begin{equation}\label{e8.11}
|\mathcal{P}_{\mathfrak{q},\omega}(W_v^{(f_1)})|\ll N_F(\mathfrak{q})^{\frac{1}{2}}\cdot |\mathcal{P}_{\mathfrak{q},\omega}(W_v^{(1)})|\ll_F N_F(\mathfrak{q}). 	
\end{equation}

\item By \eqref{e4.1.},  \eqref{f4.12}, \eqref{c5.26}, \eqref{c5.27}, \eqref{f5.45},  \eqref{5.54}, and the bounds for the Gauss sums and Jacobi sums, we have
\begin{equation}\label{e8.12}
|\mathcal{P}_{\mathfrak{q},\omega}(W_v^{(f_2)})|\ll |\mathcal{P}_{\mathfrak{q},\omega}(W_v^{(2)})|\ll_F N_F(\mathfrak{q}). 	
\end{equation}
\end{itemize}

\item Suppose $\sigma\in \mathcal{F}_{\RNum{3}}(\mathfrak{q}^2;\mathbf{1})$. Write $\sigma_v=\xi_v\boxplus\xi_v$ with $\xi_v^2=\mathbf{1}$,  $r_{\xi_v}=1$ and $|\xi_v(\cdot)|^2=|\cdot|_v^{2\nu}$. 
\begin{itemize}
\item By \eqref{e4.1.},  \eqref{f4.12}, \eqref{5.13}, \eqref{5.19}, \eqref{f5.45},  \eqref{f5.37}, and the bounds for the Gauss sums, we have
\begin{equation}\label{e8.13}
|\mathcal{P}_{\mathfrak{q},\omega}(W_v^{(f_1)})|\ll_F N_F(\mathfrak{q})^{1+2\nu}\mathbf{1}_{r_{\overline{\omega}_v\xi_v}=0}+N_F(\mathfrak{q})^{3\nu}\mathbf{1}_{r_{\overline{\omega}_v\xi_v}=1}. 	
\end{equation}

\item By \eqref{e4.1.},  \eqref{f4.12}, \eqref{c5.26}, \eqref{c5.27}, \eqref{f5.45},  \eqref{5.54}, and the bounds for the Gauss sums and Jacobi sums, we have
\begin{equation}\label{e8.14}
|\mathcal{P}_{\mathfrak{q},\omega}(W_v^{(f_2)})|\ll |\mathcal{P}_{\mathfrak{q},\omega}(W_v^{(2)})|\ll_F N_F(\mathfrak{q}). 	
\end{equation}
\end{itemize}

\item Suppose $\sigma\in \mathcal{F}_{\RNum{4}}(\mathfrak{q}^2;\mathbf{1})$. Write $\sigma_v\simeq \mathrm{St}\otimes\chi_v$ with $\chi_v$ unramified quadratic. 

By \eqref{e4.1.},  \eqref{f4.12}, \eqref{eq5.41}, \eqref{eq5.42}, \eqref{5.74},  \eqref{e5.77}, \eqref{5.104}, and the bounds for the Gauss sums, we have
\begin{equation}\label{e8.15}
|\mathcal{P}_{\mathfrak{q},\omega}(W_v^{\chi_v})\ll_F N_F(\mathfrak{q}).  
\end{equation}

\item Suppose $\sigma\in \mathcal{F}_{\RNum{5}}(\mathfrak{q}^2;\mathbf{1})$. Write $\sigma_v\simeq \mathrm{St}\otimes\chi_v$ with $\chi_v$ quadratic of conductor exponent $1$. 

By \eqref{e4.1.},  \eqref{f4.12}, \eqref{e5.37}, \eqref{eq5.40}, \eqref{5.74},  \eqref{e5.76}, \eqref{5.103}, and the bounds for the Gauss sums and Jacobi sums, we have
\begin{equation}\label{e8.16}
|\mathcal{P}_{\mathfrak{q},\omega}(W_v^{\chi_v})\ll_F N_F(\mathfrak{q})^{\frac{3}{2}}\mathbf{1}_{r_{\overline{\omega}_v\chi_v}=1}+N_F(\mathfrak{q})\mathbf{1}_{r_{\overline{\omega}_v\chi_v}=0}.  
\end{equation}
\end{itemize}

Therefore, \eqref{8.7} follows from \eqref{e8.7}, \eqref{e8.9}, \eqref{e8.10}, \eqref{e8.11} and \eqref{e8.12}; and \eqref{8.8} follows from \eqref{e8.7}, \eqref{e8.13} and \eqref{e8.14}; and \eqref{8.9} follows from \eqref{e8.15} and \eqref{e8.16}.
\end{proof}

\subsection{Moments Estimates of $L$-functions} 

\subsubsection{The Fourth Moment of $L$-functions}
As a special case of 
\cite[Theorem D \& E]{Yan25}, we have the following. 
\begin{thm}\label{thm8.7}
Let $*\in\{\RNum{1},\RNum{2},\RNum{3},\RNum{4},\RNum{5}\}$. For $v\mid\infty$, let $T_v>1$ be a constant, and $T_{\infty} := \prod_{v \mid \infty} T_v$. Then 
\begin{align*}
\int_{\substack{\sigma\in \mathcal{F}_*(\mathfrak{q}^2;\mathbf{1})\\
C_v(\sigma)\leq T_v,\ v\mid\infty}}|L(1/2,\sigma)|^4d\mu_{\sigma}\ll_{F,\varepsilon} T_{\infty}^{1+\varepsilon}N_F(\mathfrak{q})^{2+\varepsilon}.
\end{align*}
\end{thm}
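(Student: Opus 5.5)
Theorem \ref{thm8.7} is stated as a special case of \cite[Theorem D \& E]{Yan25}. The plan is therefore to specialize that general fourth moment bound rather than to prove anything new. Those theorems bound
\begin{align*}
\int_{\substack{\sigma\ \text{of level }\mathfrak{N}\\ C_v(\sigma)\le T_v,\ v\mid\infty}}|L(1/2,\sigma)|^4\,d\mu_\sigma
\end{align*}
over the full family of unitary generic automorphic representations of $\mathrm{PGL}_2/F$ whose arithmetic conductor divides $\mathfrak{N}$ (cuspidal and continuous spectrum together). The bound has the shape $T_\infty^{1+\varepsilon}N_F(\mathfrak{N})^{1+\varepsilon}$, i.e. the size of the family times $(\cdot)^{\varepsilon}$.

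First, I would check that each $\mathcal{F}_*(\mathfrak{q}^2;\mathbf{1})$ from Definition \ref{def1.3} lies inside that full family with $\mathfrak{N}=\mathfrak{q}^2$. For every type $*$, $\sigma$ is unramified away from $\mathfrak{q}$, and by \textsection\ref{sec4} its local component at $v\mid\mathfrak{q}$ has $\sigma_v^{I_v(1)}\neq0$, hence conductor exponent at most $2$.

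Since the integrand $|L(1/2,\sigma)|^4$ is nonnegative, restricting to a subfamily only decreases the integral. This positivity step is what allows the type-by-type statement.

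It then remains to compare exponents. The full family of conductor dividing $\mathfrak{q}^2$ has size about $T_\infty N_F(\mathfrak{q})^2$, so the Lindelöf-on-average bound from \cite{Yan25} gives exactly $T_\infty^{1+\varepsilon}N_F(\mathfrak{q})^{2+\varepsilon}$, uniformly in $*$.

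A few normalizations need checking along the way:
\begin{itemize}
\item the measure $d\mu_\sigma$ on the continuous spectrum (types I--III contain Eisenstein contributions) must match the one in \cite{Yan25};
\item the archimedean truncation $C_v(\sigma)\le T_v$ must correspond to their spectral window;
\item possible oldform multiplicities at $\mathfrak{q}$ must be harmless, since they contribute only $O(1)$ factors.
\end{itemize}
If the reference treats only newforms of exact level, I would sum its bound over the divisors $\mathcal{O}_F,\mathfrak{q},\mathfrak{q}^2$. This yields at most $3$ times the largest term.

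The main obstacle is this bookkeeping of conductors and measures against the formulation in \cite{Yan25}. In particular, the complementary series in type III rely on the Kim--Sarnak bound \cite{KS03} only implicitly, because on the $L$-function side nothing beyond $|L|^4$ appears. No new analytic input is needed.
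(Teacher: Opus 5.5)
Your proposal takes essentially the same approach as the paper, which gives no argument beyond citing \cite[Theorem D \& E]{Yan25} and specializing to level $\mathfrak{q}^2$ with trivial central character; your containment and positivity reasoning is the natural justification of that specialization. One minor aside that does not affect the argument: type III is purely cuspidal, because unitary Eisenstein series are induced from unitary characters and are therefore tempered at every place, so they cannot have a complementary-series local component at $\mathfrak{q}$.
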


\subsubsection{The Cubic Moment of $L$-functions}
\begin{thm}\label{thme8.7}
For $v\mid\infty$, let $T_v>1$ be a constant, and $T_{\infty} := \prod_{v \mid \infty} T_v$. There exists an absolute constant $A>0$ such that 
\begin{equation}\label{8.22}
\sum_{\substack{\sigma\in \mathcal{F}_{\RNum{5 }}(\mathfrak{q}^2;\mathbf{1})\\
C_v(\pi)\leq T_v,\ v\mid\infty}}L(1/2,\sigma)^3\ll_{F,\varepsilon} T_{\infty}^AN_F(\mathfrak{q})^{1+\varepsilon}. 
\end{equation}
\end{thm}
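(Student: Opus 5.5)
The plan is to recognize the left-hand side of \eqref{8.22} as a Conrey--Iwaniec type cubic moment of quadratic twists, and then to invoke the number-field version of that moment due to Nelson \cite{Nel19}.

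\emph{Step 1: the family is cuspidal and the summands are nonnegative.} If $\sigma\in\mathcal{F}_{\RNum{5}}(\mathfrak{q}^2;\mathbf{1})$, then $\sigma_v\simeq\mathrm{St}\otimes\chi_v$ at $v\mid\mathfrak{q}$. This local component is not a principal series, so $\sigma$ cannot be an Eisenstein series, and the sum in \eqref{8.22} is a genuine finite sum over cusp forms on $\mathrm{PGL}_2/F$. Each such $\sigma$ is self-dual with trivial central character, so $L(1/2,\sigma)\ge 0$ by Waldspurger's formula (or Guo's nonnegativity theorem). Consequently we may enlarge the family freely and insert any nonnegative weights that are $\gg 1$ on the original family.

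\emph{Step 2: reduce to a twisted level-$\mathfrak{q}$ family.} Choose a quadratic Hecke character $\chi$ of $F^\times\backslash\mathbb{A}_F^\times$ whose $v$-component is the given $\chi_v$, which is quadratic of conductor exponent $1$.
\begin{itemize}
\item Such a $\chi$ exists by Grunwald--Wang. It may be ramified at a fixed finite auxiliary set $S_0$ of places depending only on $F$, for instance places above $2$ or those needed to kill the class group obstruction.
\item Up to $O_F(1)$ choices of $\chi$, the character $\chi_v$ is determined by $q_v$: there is a unique ramified quadratic class modulo unramified twist.
\item Put $\sigma':=\sigma\otimes\chi$. Then $\sigma'_v\simeq\mathrm{St}$, $\sigma'$ has trivial central character, and $\sigma'$ has conductor $\mathfrak{q}$ away from $S_0$. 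At $S_0$ its local components range over a fixed finite set of bounded-conductor types.
\item Since $L(1/2,\sigma)=L(1/2,\sigma'\otimes\chi)$, the sum in \eqref{8.22} is bounded by $O_F(1)$ sums of the form $\sum_{\sigma'}L(1/2,\sigma'\otimes\chi)^3$. Here $\sigma'$ ranges over cusp forms of level $\mathfrak{q}\cdot\mathfrak{n}_0$ with $\mathfrak{n}_0$ fixed, and $C_v(\sigma')\le T_v$.
\end{itemize}

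\emph{Step 3: apply the cubic moment.} This is exactly the setting of Conrey--Iwaniec \cite[Theorem~1.1]{CI00}: the cubic moment of $L(1/2,f\otimes\chi)$ with $\chi$ quadratic of conductor $q$ and $f$ of level $q$ is $\ll q^{1+\varepsilon}$. The number-field version with local ramification prescribed at finitely many places is \cite{Nel19}. By positivity (Step 1), we may add the continuous spectrum and the oldforms appearing in those results at no cost. The Archimedean test functions in Nelson's theorem can be chosen $\gg 1$ on $C_v(\sigma)\le T_v$, with polynomial dependence on $T_v$, which gives the factor $T_\infty^{A}$. Altogether this yields $T_\infty^{A}N_F(\mathfrak{q})^{1+\varepsilon}$.

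\emph{Main obstacle.} The delicate point is Step 3: checking that Nelson's theorem, as stated, admits the fixed auxiliary ramification at $S_0$ together with the prescribed type $\mathrm{St}$ at $\mathfrak{q}$, and that its dependence on the Archimedean parameters is polynomial rather than merely finite. I would first try to avoid $S_0$ entirely by taking $\chi_v$-isotypic local weights at $v$ directly in Nelson's period formulation. If that fails, I would fall back on the global twist above, absorbing the $O_F(1)$ auxiliary data into the implied constant.
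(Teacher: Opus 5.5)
Your proposal follows essentially the same route as the paper. The paper likewise matches the local component $\mathrm{St}\otimes\chi_v$ at $\mathfrak q$ with the ramified case of Nelson's number-field version of the Conrey--Iwaniec cubic moment, obtains \eqref{8.22} by modifying the test functions away from $\mathfrak q$, and leaves the details (including the polynomial dependence on $T_\infty$) as routine. One caveat: your explicit global twist does not remove the need for that modification, because at any auxiliary place where $\chi$ ramifies, $\sigma\otimes\chi$ has conductor exponent $2$ and so falls outside Nelson's family as stated. Your fallback of adjusting the local weights directly is exactly the paper's route, and your positivity argument is a clean way to discard the extra spectral terms that the paper simply calls negligible.
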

\begin{proof}
This is a variant of \cite[Theorem 1.1]{Nel19}. We briefly explain the comparison of the local data.

In \cite{Nel19}, one studies twists of the form $\sigma'\otimes\chi$, where $\chi$ is a quadratic Hecke character. At a place $v\mid\mathfrak{q}$ with $r_{\chi_v}=1$, the spectral side involves local representations of the form $\sigma_v'\otimes\chi_v$, where $\sigma_v'$ has conductor exponent $\le 1$. In particular, if the conductor exponent equals $1$, then $\sigma_v'$ must be $\mathrm{St}$ (up to an unramified quadratic twist).

In our setting, the local representation at $v\mid\mathfrak{q}$ is already of the form $\mathrm{St}\otimes\chi_v$, with $\chi_v$ quadratic of conductor exponent $1$. Thus the local situation coincides with the ramified case considered in loc.\ cit.

By making a minor modification of the test function away from the ramified places, the spectral side produces the family $\mathcal{F}_{\RNum{5}}(\mathfrak{q}^2;\mathbf{1})$ as the main contribution. The remaining contributions (in particular from everywhere unramified representations) are negligible and may be absorbed into the error term. With this adjustment, the local calculations proceed exactly as in \cite{Nel19}, and the same argument yields \eqref{8.22}.

We therefore omit the routine details and refer to the method of \cite[Theorem 1.1]{Nel19}, which itself generalizes \cite[Theorem 1.1]{CI00}.
\end{proof}

\subsection{Proof of Proposition \ref{prop8.1}}\label{sec8.4}
By \eqref{8.7} we have 
\begin{align*}
S_{\RNum{1}}\ll N_F(\mathfrak{q})^{2+\frac{7}{32}}\int_{\substack{\sigma\in \mathcal{F}_{\RNum{1}}(\mathfrak{q}^2;\mathbf{1})\\
C_v(\pi)\leq \mathbf{C}_v^{\varepsilon}N_F(\mathfrak{q})^{\varepsilon},\ v\mid\infty}}
L(1/2,\sigma)^3|L(1/2,\sigma\times\overline{\omega})|d\mu_{\sigma}.
\end{align*}

Recall the convexity bound 
\begin{align*}
L(1/2,\sigma\times\overline{\omega})\ll \mathbf{C}_{\infty}^{\varepsilon} C_{\infty}(\omega)^{\frac{1}{2}+\varepsilon}N_F(\mathfrak{q})^{\frac{1}{2}+\varepsilon}\ll \mathbf{C}_{\infty}^{\frac{1}{2}+2\varepsilon}N_F(\mathfrak{q})^{\frac{1}{2}+\varepsilon}.
\end{align*}

Therefore, it follows from H\"{o}lder's inequality and Theorem \ref{thm8.7} that 
\begin{multline}\label{8.24}
S_{\RNum{1}}\ll N_F(\mathfrak{q})^{2+2\nu}\cdot \mathbf{C}_{\infty}^{\frac{1}{2}+2\varepsilon}N_F(\mathfrak{q})^{\frac{1}{2}+\varepsilon}\cdot 
\bigg[\int_{\substack{\sigma\in \mathcal{F}_{\RNum{1}}(\mathfrak{q}^2;\mathbf{1})\\
C_v(\pi)\leq \mathbf{C}_v^{\varepsilon}N_F(\mathfrak{q})^{\varepsilon},\ v\mid\infty}}
1d\mu_{\sigma}\bigg]^{\frac{1}{4}}\\
\bigg[\int_{\substack{\sigma\in \mathcal{F}_{\RNum{1}}(\mathfrak{q}^2;\mathbf{1})\\
C_v(\pi)\leq \mathbf{C}_v^{\varepsilon}N_F(\mathfrak{q})^{\varepsilon},\ v\mid\infty}}
L(1/2,\sigma)^4d\mu_{\sigma}\bigg]^{\frac{3}{4}}\ll N_F(\mathfrak{q})^{2+\frac{7}{32}}\mathbf{C}_{\infty}^{\frac{1}{2}+20\varepsilon}N_F(\mathfrak{q})^{\frac{1}{2}+10\varepsilon}.
\end{multline}
Here we have used the fact that $\mathcal{F}_{\RNum{1}}(\mathfrak{q}^2;\mathbf{1})\ll \mathbf{C}_{\infty}^{5\varepsilon}N_F(\mathfrak{q})^{5\varepsilon}$. 

Utilizing the estimate \eqref{8.7} we obtain 
\begin{align*}
S_{\RNum{2}}\ll N_F(\mathfrak{q})\int_{\substack{\sigma\in \mathcal{F}_{\RNum{2}}(\mathfrak{q}^2;\mathbf{1})\\
C_v(\pi)\leq \mathbf{C}_v^{\varepsilon}N_F(\mathfrak{q})^{\varepsilon},\ v\mid\infty}}
L(1/2,\sigma)^3|L(1/2,\sigma\times\overline{\omega})|d\mu_{\sigma}.
\end{align*}

Notice that for $\sigma\in \mathcal{F}_{\RNum{2}}(\mathfrak{q}^2;\mathbf{1})$, the representation $\sigma\otimes\overline{\omega}$ is of arithmetic conductor $\mathfrak{q}^2$, unramified at all other finite places, and $C_v(\sigma\otimes\overline{\omega})\ll C_v(\sigma)C_v(\omega)^2$ at all $v\mid\infty$. Therefore, by H\"{o}lder's inequality and Theorem \ref{thm8.7} we obtain 
\begin{multline}\label{8.25}
S_{\RNum{2}}\ll N_F(\mathfrak{q})\cdot \bigg[\int_{\substack{\sigma\in \mathcal{F}_{\RNum{2}}(\mathfrak{q}^2;\mathbf{1})\\
C_v(\pi)\leq \mathbf{C}_v^{\varepsilon}N_F(\mathfrak{q})^{\varepsilon},\ v\mid\infty}}
|L(1/2,\sigma\times\overline{\omega})|^4d\mu_{\sigma}\bigg]^{\frac{1}{4}}\\
\bigg[\int_{\substack{\sigma\in \mathcal{F}_{\RNum{2}}(\mathfrak{q}^2;\mathbf{1})\\
C_v(\pi)\leq \mathbf{C}_v^{\varepsilon}N_F(\mathfrak{q})^{\varepsilon},\ v\mid\infty}}
L(1/2,\sigma)^4d\mu_{\sigma}\bigg]^{\frac{3}{4}}\ll \mathbf{C}_{\infty}^{\frac{1}{2}+20\varepsilon}N_F(\mathfrak{q})^{3+10\varepsilon}.
\end{multline}

Let $\sigma\in \mathcal{F}_{\RNum{3}}(\mathfrak{q}^2;\mathbf{1})$. At $v\mid\mathfrak{q}$, $\sigma_v=\chi_v|\cdot|_v^{\nu}\boxplus \chi_v|\cdot|_v^{-\nu}$, where $\chi_v$ is quadratic with conductor exponent $1$, and $0<\nu\leq 7/64$. 

Utilizing the estimate \eqref{8.8} we obtain
\begin{equation}\label{8.26}
S_{\RNum{3}}\ll N_F(\mathfrak{q})^{1+\frac{7}{32}}\cdot S_{\RNum{3}}^{(1)}+N_F(\mathfrak{q})^{\frac{21}{64}}\cdot S_{\RNum{3}}^{(2)},
\end{equation}
where 
\begin{align*}
&S_{\RNum{3}}^{(1)}:=\int_{\substack{\sigma\in \mathcal{F}_{\RNum{3}}(\mathfrak{q}^2;\mathbf{1})\\
C_v(\pi)\leq \mathbf{C}_v^{\varepsilon}N_F(\mathfrak{q})^{\varepsilon},\ v\mid\infty}}
L(1/2,\sigma)^3|L(1/2,\sigma\times\overline{\omega})|\mathbf{1}_{r_{\overline{\omega}_{\mathfrak{q}}\xi_{\mathfrak{q}}}=0}d\mu_{\sigma}\\
&S_{\RNum{3}}^{(2)}:=\int_{\substack{\sigma\in \mathcal{F}_{\RNum{3}}(\mathfrak{q}^2;\mathbf{1})\\
C_v(\pi)\leq \mathbf{C}_v^{\varepsilon}N_F(\mathfrak{q})^{\varepsilon},\ v\mid\infty}}
L(1/2,\sigma)^3|L(1/2,\sigma\times\overline{\omega})|\mathbf{1}_{r_{\overline{\omega}_{\mathfrak{q}}\xi_{\mathfrak{q}}}=1}d\mu_{\sigma}.
\end{align*}

Under the assumption that $r_{\overline{\omega}_{\mathfrak{q}}\xi_{\mathfrak{q}}}=0$ the representation $\sigma\times\overline{\omega}$ is unramified at all finite places. Hence, the convexity bound yields
\begin{equation}\label{8.27}
|L(1/2,\sigma\times\overline{\omega})|\mathbf{1}_{r_{\overline{\omega}_{\mathfrak{q}}\xi_{\mathfrak{q}}}=0}\ll \mathbf{C}_{\infty}^{\frac{1}{2}+2\varepsilon}N_F(\mathfrak{q})^{\varepsilon}.
\end{equation}
Together with H\"{o}lder's inequality and Theorem \ref{thm8.7} we have 
\begin{equation}\label{8.28}
\int_{\substack{\sigma\in \mathcal{F}_{\RNum{3}}(\mathfrak{q}^2;\mathbf{1})\\
C_v(\pi)\leq \mathbf{C}_v^{\varepsilon}N_F(\mathfrak{q})^{\varepsilon},\ v\mid\infty}}
L(1/2,\sigma)^3d\mu_{\sigma}\ll \mathbf{C}_{\infty}^{10\varepsilon}N_F(\mathfrak{q})^{1+5\varepsilon}.
\end{equation}

By convexity bound, 
\begin{equation}\label{8.29}
|L(1/2,\sigma\times\overline{\omega})|\mathbf{1}_{r_{\overline{\omega}_{\mathfrak{q}}\xi_{\mathfrak{q}}}=1}\ll \mathbf{C}_{\infty}^{\frac{1}{2}+2\varepsilon}N_F(\mathfrak{q})^{\frac{1}{2}+\varepsilon}.
\end{equation} 

Therefore, substituting \eqref{8.27}, \eqref{8.28} and \eqref{8.29} into \eqref{8.26} yields 
\begin{equation}\label{8.30}
S_{\RNum{3}}\ll  \mathbf{C}_{\infty}^{\frac{1}{2}+20\varepsilon}N_F(\mathfrak{q})^{2+\frac{7}{32}+10\varepsilon}.
\end{equation}

By \eqref{8.9} we obtain 
\begin{align*}
S_{\RNum{4}}\ll N_F(\mathfrak{q})\int_{\substack{\sigma\in \mathcal{F}_{\RNum{4}}(\mathfrak{q}^2;\mathbf{1})\\
C_v(\pi)\leq \mathbf{C}_v^{\varepsilon}N_F(\mathfrak{q})^{\varepsilon},\ v\mid\infty}}
L(1/2,\sigma)^3|L(1/2,\sigma\times\overline{\omega})|d\mu_{\sigma}.
\end{align*}

By the convexity for $L(1/2,\sigma\times\overline{\omega})$, H\"{o}lder's inequality, and Theorem \ref{thm8.7}, 
\begin{equation}\label{8.31}
S_{\RNum{4}}\ll \mathbf{C}_{\infty}^{\frac{1}{2}+20\varepsilon}N_F(\mathfrak{q})^{\frac{3}{2}+10\varepsilon}.
\end{equation}

By \eqref{8.9} we obtain 
\begin{align*}
S_{\RNum{5}}\ll N_F(\mathfrak{q})^{\frac{3}{2}}\int_{\substack{\sigma\in \mathcal{F}_{\RNum{5}}(\mathfrak{q}^2;\mathbf{1})\\
C_v(\pi)\leq \mathbf{C}_v^{\varepsilon}N_F(\mathfrak{q})^{\varepsilon},\ v\mid\infty}}
L(1/2,\sigma)^3|L(1/2,\sigma\times\overline{\omega})|d\mu_{\sigma}.
\end{align*}

It thus follows from the subconvexity for $L(1/2,\sigma\times\overline{\omega})$ (see \cite[Corollary 1.4]{Yan25}) and Theorem \ref{thme8.7} that 
\begin{equation}\label{8.32}
S_{\RNum{5}}\ll \mathbf{C}_{\infty}^{\frac{1}{2}-\frac{1}{60}+20\varepsilon}N_F(\mathfrak{q})^{3-\frac{1}{60}+10\varepsilon}.
\end{equation}

Therefore, Proposition \ref{prop8.1} follows from \eqref{f8.7}, \eqref{8.24}, \eqref{8.25}, \eqref{8.30}, \eqref{8.31} and \eqref{8.32}.

\begin{remark}
One could improve the dependence on $\mathbf{C}_{\infty}$ by invoking the Burgess bound for $L(1/2,\sigma\times\overline{\omega})$ established in \cite{Yan23c}. However, this would not yield a strong subconvexity saving in the archimedean aspect. We therefore content ourselves with the convexity bound.
\end{remark}

\section{Estimates of  Degenerate Terms: the Geometric Side}\label{sec9}

In this section we establish upper bounds for the geometric degenerate terms: 
\begin{prop}\label{prop9.1}
Let $1\leq i\leq 8$. We have
\begin{equation}\label{9.1}
\widetilde{\Psi}_{\mathrm{Geo}}^{(i)}(\mathbf{0}\mid R(f_{\mathfrak{q}})\boldsymbol{h})\ll_{F,\varepsilon} \mathbf{C}_{\infty}^{\varepsilon}N_F(\mathfrak{q})^{2+\varepsilon}.
\end{equation}	
\end{prop}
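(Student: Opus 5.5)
Each geometric degenerate term $\widetilde{\Psi}_{\mathrm{Geo}}^{(i)}(\mathbf{s},\boldsymbol{h})$ is a single global Rankin--Selberg integral of three Eisenstein-type objects: two Whittaker functions and a product of two sections, or an Eisenstein Whittaker function $W_{h_a,h_b}$ from \eqref{e2.3} together with a Whittaker function and a section. The plan is to factor it as an Euler product whose partial $L$-function is a product of shifted Dedekind or Hecke $L$-functions. Concretely, since all $\pi_j$ are induced from $\mathbf{1}$ or $\omega$, one expects $\widetilde{\Psi}_{\mathrm{Geo}}^{(i)}(\mathbf{s},\boldsymbol{h})$ to equal a ratio of products of $\zeta_F$ and $L(\cdot,\omega^{\pm1})$ at arguments $1/2\pm s_j\pm s_k$ or $1\pm\cdots$, times local factors $\prod_v \Psi_v^{\sharp}$.

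\textbf{Step 1.} Insert the projector. By the change of variables used in the proof of Proposition~\ref{prop7.1}, $R(f_{\mathfrak{q}})$ acts only at $v\mid\mathfrak{q}$, on the $(h_1,h_2)$ components. So the global object factors as in \eqref{8.1}. The local factors at $v\mid\infty$ are then bounded by $\mathbf{C}_\infty^{\varepsilon}$ uniformly for $\mathbf{s}\in\mathbf{S}_\varepsilon^4$, by the archimedean analysis of \cite[\textsection 8]{Yan25}. The $\mathbf{C}_v^{1/2}$ normalization of $\Phi_v$ is designed exactly so that the archimedean factor is $\ll \mathbf{C}_v^{\varepsilon}$, and the factors at $v\nmid\mathfrak{q}\infty$ are the standard unramified ones.

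\textbf{Step 2.} Bound the global $L$-values on the contour. Because of the $\varepsilon^2$-separation of the radii in $\mathbf{B}_\varepsilon^4$, no pole of $\zeta_F$ is hit on $\mathbf{S}_\varepsilon^4$. On this contour, convexity (Phragmén--Lindelöf) gives $L(1/2+\cdot,\omega)\ll \mathbf{C}_\infty^{1/4+\varepsilon}N_F(\mathfrak{q})^{1/4+\varepsilon}$, and the $\zeta_F$ factors are $O(1)$ at the relevant points. The archimedean power of $\omega$ here should cancel against the $\mathbf{C}_v^{1/2}$ normalization, as it does in \cite{Yan25}. I expect at most two such $\omega$-factors, costing at most $N_F(\mathfrak{q})^{1/2}$, or none in terms $(1),(3)$, where $h_2\overline{h_4}$ has trivial central character. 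The contour integral \eqref{2.12} then costs only a constant $\varepsilon^{-O(1)}$.

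\textbf{Step 3 (main obstacle).} Compute the local integral at $v\mid\mathfrak{q}$ after averaging against $\overline{f_{\mathfrak{q}}}$, i.e.
\[
\int_{I_v(1)}\overline{f_{\mathfrak{q}}(k'^{-1})}\,\Psi_v\big(R(k')W_{1,v},\ldots,R(k')h_{2,v}\ldots\big)\,dk'.
\]
I would parametrize $k'$ by \eqref{eq5.1} and use Iwasawa coordinates with the decomposition \eqref{eq5.13}, as in \textsection\ref{sec5}. Lemmas~\ref{lem4.1} and \ref{lem4.2} give $W_{2,v}$ and $\Phi_{2,v}$ explicitly, and $W_{1,v}=W_{1,v}^{\circ}(\cdot A_{\mathfrak{q}})$ contributes $\psi_v(\varpi_v^{-1}\beta)$ as in \eqref{3.12}. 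The $k'$-average then reduces to finite-field character sums, which are complete Gauss sums and Ramanujan sums in $\alpha,\beta,\kappa$, analogous to $S(i)$ in Lemma~\ref{lem5.5}.

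The key accounting is as follows:
\begin{itemize}
\item $\Vol(I_v^*(1))^{-1}\asymp q_v^{3}$, coming from $\Phi_{2,v}$ and $\Phi_{4,v}$, and $\Vol(I_v(1))^{-1}$ from $f_{\mathfrak{q}}$.
\item These are compensated by the integration volume of $I_v(1)$.
\item Each nondegenerate Gauss sum saves $q_v^{1/2}$.
\end{itemize}
The aim is to show the local factor is $\ll q_v^{2}$, or $\ll q_v^{3/2}$ for the terms carrying one $L(\cdot,\omega)$ at conductor $\mathfrak{q}$, so that every term lands at $N_F(\mathfrak{q})^{2+\varepsilon}$. The delicate cases are $(5)$–$(8)$. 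There $W_{h_1,\overline{h_3}}$ or $W_{h_1,h_2}$ is a Whittaker function of a product section, and its value at the ramified place must be computed via \eqref{e2.3} before averaging. I would treat these by writing the product section as a Godement section in $\pi_1\otimes\pi_3$-type data and rerunning the computation of Lemma~\ref{lem4.1}, checking that the degenerate Gauss sums (where a character becomes trivial) produce at most a factor $q_v$ and not $q_v^{2}$.
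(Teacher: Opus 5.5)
Your skeleton is the right one: Euler factorization, the places $v\nmid\mathfrak q$ handled by the local analysis of \cite{Yan25}, and an explicit evaluation at $v\mid\mathfrak q$. But Step~3 is the entire content of the proposition, and you only announce it. Moreover, the mechanism you expect there (Gauss sums each saving $q_v^{1/2}$, with $(5)$--$(8)$ as the delicate cases) is not what happens, so as written the plan does not establish \eqref{9.1}.

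The missing idea is exact orthogonality in the upper-right entry $\beta$ of $k'$. We have $\overline{f_{\mathfrak q}(k')}=\Vol(I_v(1))^{-1}\psi_v(\varpi_v^{-1}(\beta+t\gamma))$, and $\Phi_{2,v}$ inside $\overline{h_4}$ produces exactly the conjugate of the $t\gamma$-phase (Lemma~\ref{lem4.2}). So each term reduces to averaging $\psi_v(\varpi_v^{-1}\beta)$ over $\beta\in\mathcal O_v$ against whatever $\beta$-dependence the remaining data carry.
\begin{itemize}
\item In $(3),(4)$ the $k'$-translate falls on $\overline{h_3}\,\overline{h_4}$. Since $h_{3,v}^{\circ}(\kappa A_{\mathfrak q})$ is constant for $\kappa\in K_{0,v}[1]$, the integrand is independent of $\beta$ and the average is $0$.
\item In $(5),(6)$ the support of $W_{2,v}$ (Lemma~\ref{lem4.1}) forces $i\ge 0$. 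The only extra phase is then $\overline{\psi}_v(\alpha\varpi_v^{i}\beta)\equiv 1$, and again the average is $0$.
\item In $(7),(8)$ one is reduced to $B_l=\int_{\mathcal O_v^\times}\psi_v(\varpi_v^{l-3}t\beta'-\varpi_v^{-l}\beta'^{-1})\overline{\omega}_v(\beta')\,d\beta'$ with $l\ge 2$, which vanishes because $r_{\omega_v}\le 1$.
\end{itemize}
So six of the eight terms are identically zero, with no estimation at all. In $(1),(2)$ the same orthogonality, $\int_{I_v^*(1)}\psi_v(\varpi_v^{-1}(1-\alpha\varpi_v^{i})\beta)\,dk'=\Vol(I_v^*(1))\mathbf 1_{i=0}\mathbf 1_{\alpha\in 1+\mathfrak p_v}$, collapses the ramified factor to the single term $\Vol(I_v^*(1))^{-1}q_v^{s_2+s_4}\mathcal L_{1,v}(\mathbf s)^{-1}W_{1,v}^{\circ}(I_2,s_1)\overline{W_{3,v}^{\circ}(I_2,\overline{s_3})}$ of Lemma~\ref{lem9.2}. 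Case $(2)$ is the same up to a bounded intertwining multiplier.

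Your size bookkeeping also needs correcting. First, $\Vol(I_v^*(1))^{-1}\asymp q_v^{2}$, not $q_v^{3}$: the index of $I_v^*(1)$ in $K_v$ is $(q_v+1)(q_v-1)$, and it is $\Vol(I_v(1))^{-1}$ that is $\asymp q_v^3$. The collapsed term above is therefore $\asymp q_v^{2}$, which is exactly the $N_F(\mathfrak q)^{2}$ in \eqref{9.1}.

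Second, your Step~2 places the global $L$-values at the wrong point. In $(1)$ the product section $h_2\overline{h_4}$ has exponent $1/2+s_2+s_4$, so the Rankin--Selberg factor is the product of $\zeta_F(1+s_2+s_4\pm s_1\pm s_3)$ over the four sign choices. The Godement normalizations are $L(1+2s_2,\overline{\omega})$ and $L(1+2s_4,\omega)$, and in $(2)$ one gets $L(1-s_2+s_4\pm s_1\pm s_3,\omega)$. All of these lie within $O(\varepsilon^2)$ of $\Re s=1$, hence are $\ll(\mathbf C_\infty N_F(\mathfrak q))^{\varepsilon}$ on $\mathbf S_\varepsilon^4$. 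No central values and no convexity bound enter, and the $q_v^{3/2}$ target you set for the $\omega$-carrying terms is unnecessary. The proof is the exact local evaluation (or vanishing) at $\mathfrak q$ combined with edge-of-strip bounds.
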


\subsection{Upper Bounds for \texorpdfstring{$\widetilde{\Psi}_{\mathrm{Geo}}^{(i)}(\mathbf{s},R(f_{\mathfrak{q}})\boldsymbol{h}): i=1,2$}{}}

Notice that $h_2(\cdot,s_2)\overline{h_4(\cdot,\overline{s_4})}$ is a section in $|\cdot|^{1/2+s_2+s_4}\boxplus |\cdot|^{-1/2-s_2-s_4}$. Hence 
\begin{align*}
\widetilde{\Psi}_{\mathrm{Geo}}^{(1)}(\mathbf{s},R(f_{\mathfrak{q}})\boldsymbol{h})\propto \mathcal{L}_1(\mathbf{s}):=\frac{\Lambda(1+s_2+s_4,(|\cdot|^{s_1}\boxplus |\cdot|^{-s_1})\times(|\cdot|^{s_3}\boxplus |\cdot|^{-s_3}))}{\Lambda(2+2s_2+2s_4,\mathbf{1})\Lambda(1+2s_2,\overline{\omega})^{-1}\Lambda(1+2s_4,\omega)^{-1}}.
\end{align*}

Consequently, we obtain 
\begin{equation}
\widetilde{\Psi}_{\mathrm{Geo}}^{(1)}(\mathbf{s},R(f_{\mathfrak{q}})\boldsymbol{h})=\mathcal{L}_1(\mathbf{s})\prod_{v\leq\infty}\mathcal{P}_v^{(1)}(\mathbf{s}),
\end{equation}
where 
\begin{align*}
\mathcal{P}_v^{(1)}(\mathbf{s})=\mathcal{L}_{1,v}(\mathbf{s})^{-1}\widetilde{\Psi}_v(W_1(\cdot,s_1),\overline{W_3(\cdot,\overline{s_3})},h_2(\cdot,s_2)\overline{h_4(\cdot,\overline{s_4})})
\end{align*}
if $v\nmid\mathfrak{q}$, and at $v\mid\mathfrak{q}$, the factor $\mathcal{P}_v^{(1)}(\mathbf{s})$ is defined by 
\begin{align*}
\mathcal{L}_{1,v}(\mathbf{s})^{-1}\int_{\overline{G}(F_v)}
\overline{f_{\mathfrak{q}}(g)}
\widetilde{\Psi}_v(R(g^{-1})W_1(\cdot,s_1),\overline{W_3(\cdot,\overline{s_3})},R(g^{-1})h_2(\cdot,s_2)\overline{h_4(\cdot,\overline{s_4})})dg.
\end{align*}

\begin{lemma}\label{lem9.2}
Let $v\mid\mathfrak{q}$. Then 
\begin{equation}\label{eq9.2}
\mathcal{P}_v^{(1)}(\mathbf{s})=\Vol(I_v^*(1))^{-1}q_v^{s_2+s_4}\mathcal{L}_{1,v}(\mathbf{s})^{-1}W_{1,v}^{\circ}\left(I_2
,s_1\right)
\overline{W_{3,v}^{\circ}\left(I_2,\overline{s_3}\right)}.
\end{equation}
\end{lemma}
\begin{proof}
By definition \eqref{2.9} and a change of variable, we obtain 
\begin{multline*}
\mathcal{P}_v^{(1)}(\mathbf{s})=\mathcal{L}_{1,v}(\mathbf{s})^{-1}\int_{I_v(1)}
\overline{f_{\mathfrak{q}}(g)}
\int_{N(F_v)\backslash\overline{G}(F_v)}
W_{1,v}(x,s_1)\overline{W_{3,v}(xg,\overline{s_3})}\\
h_2(x,s_2)\overline{h_4(xg,\overline{s_4})})dxdg.
\end{multline*}

Write $g=k'$, which is parametrized by \eqref{eq5.1}. Utilizing Iwasawa decomposition, together with Lemma \ref{lem4.2}, we rewrite the above integral as 
\begin{multline}\label{e9.2}
\mathcal{P}_v^{(1)}(\mathbf{s})=\Vol(I_v(1))^{-1}\mathcal{L}_{1,v}(\mathbf{s})^{-1}h_2(I_2,s_2)\overline{h_4(I_2,\overline{s_4})})\int_{I_v(1)}
\psi_v(\varpi_v^{-1}\beta)\\
\sum_{i\in \mathbb{Z}}q_v^{-(s_2+s_4)i}\int_{K_v[1]}W_{1,v}\left(\begin{pmatrix}
\varpi_v^i\\
& 1
\end{pmatrix}k
,s_1\right)
\overline{W_{3,v}\left(\begin{pmatrix}
\varpi_v^i\\
& 1
\end{pmatrix}k\begin{pmatrix}
1 & \beta\\
& 1
\end{pmatrix}
,\overline{s_3}\right)}dkdk'.
\end{multline}

Let $W_{1,v}^{\circ}=W_{3,v}^{\circ}$ be the spherical vector. Then \eqref{e9.2} boils down to 
\begin{equation}\label{eq9.3}
\mathcal{P}_v^{(1)}(\mathbf{s})=\frac{\Vol(I_v^*(1))}{\Vol(I_v(1))\mathcal{L}_{1,v}(\mathbf{s})}\cdot h_2(I_2,s_2)\overline{h_4(I_2,\overline{s_4})})
\sum_{i\in \mathbb{Z}}q_v^{-(s_2+s_4)i}I_i',
\end{equation}
where 
\begin{multline*}
I_i':=\sum_{\alpha\in \mathbb{F}_v^{\times}}\int_{I_v^*(1)}
\psi_v(\varpi_v^{-1}\beta)W_{1,v}^{\circ}\left(\begin{pmatrix}
\alpha\varpi_v^{i+1}\\
& 1
\end{pmatrix}
,s_1\right)\\
\overline{W_{3,v}^{\circ}\left(\begin{pmatrix}
\alpha\varpi_v^{i}\\
& 1
\end{pmatrix}\begin{pmatrix}
1 & \beta\\
& 1
\end{pmatrix}\begin{pmatrix}
\varpi_v &  \\
& 1
\end{pmatrix},\overline{s_3}\right)}dk'.
\end{multline*}

Making the change of variable $i\mapsto i-1$ in \eqref{eq9.3} we derive 
\begin{equation}\label{eq9.4}
\mathcal{P}_v^{(1)}(\mathbf{s})=\frac{\Vol(I_v^*(1))}{\Vol(I_v(1))\mathcal{L}_{1,v}(\mathbf{s})}\cdot h_2(I_2,s_2)\overline{h_4(I_2,\overline{s_4})})q_v^{s_2+s_4}
\sum_{i\geq 0}q_v^{-(s_2+s_4)i}I_i,
\end{equation}
where $I_i$ is defined by 
\begin{align*}
\sum_{\alpha\in \mathbb{F}_v^{\times}}W_{1,v}^{\circ}\left(\begin{pmatrix}
\alpha\varpi_v^{i}\\
& 1
\end{pmatrix}
,s_1\right)
\overline{W_{3,v}^{\circ}\left(\begin{pmatrix}
\alpha\varpi_v^{i}\\
& 1
\end{pmatrix},\overline{s_3}\right)}\int_{I_v^*(1)}
\psi_v(\varpi_v^{-1}(1-\alpha\varpi_v^{i})\beta)dk'.
\end{align*}

By orthogonality, we have
\begin{equation}\label{eq9.6}
\int_{I_v^*(1)}
\psi_v(\varpi_v^{-1}(1-\alpha\varpi_v^i)\beta)dk'=\Vol(I_v^*(1))\mathbf{1}_{\alpha\in 1+\mathfrak{p}_v}\mathbf{1}_{i=0}.
\end{equation}
Therefore, 
\begin{equation}\label{eq9.5}
I_i=\Vol(I_v(1))\cdot W_{1,v}^{\circ}\left(I_2
,s_1\right)
\overline{W_{3,v}^{\circ}\left(I_2,\overline{s_3}\right)}\mathbf{1}_{i=0}.
\end{equation}

Combining \eqref{eq9.5} with the Casselman--Shalika formula and the identity
\begin{align*}
h_2(I_2,s_2)=\overline{h_4(I_2,\overline{s_4})})=\Vol(I_v^*(1))^{-1},
\end{align*}
yields \eqref{eq9.2}.
\end{proof}

\begin{cor}\label{cor9.3}
Let $1\leq i\leq 2$, and let $\mathbf{s}\in \mathbf{B}_{\varepsilon}^4$. We have
\begin{equation}\label{e9.7}
\widetilde{\Psi}_{\mathrm{Geo}}^{(i)}(\mathbf{s},R(f_{\mathfrak{q}})\boldsymbol{h})\ll_{F,\varepsilon} \mathbf{C}_{\infty}^{\varepsilon}N_F(\mathfrak{q})^{2+\varepsilon}.
\end{equation}
\end{cor}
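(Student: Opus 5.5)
The plan is to use the Euler product factorization
\[
\widetilde{\Psi}_{\mathrm{Geo}}^{(1)}(\mathbf{s},R(f_{\mathfrak{q}})\boldsymbol{h})=\mathcal{L}_1(\mathbf{s})\prod_{v\le\infty}\mathcal{P}_v^{(1)}(\mathbf{s})
\]
and bound each piece separately for $\mathbf{s}\in\mathbf{B}_\varepsilon^4$. The term $i=2$ is handled the same way. The only change is that $h_2$ is replaced by $h_2^{\Diamond}$, which is again a section of an induced representation, namely $\omega|\cdot|^{-s_2}\boxplus|\cdot|^{s_2}$. Up to replacing $s_2$ by $-s_2$ and $\omega$ by its conjugate in the relevant Hecke $L$-factors, the same factorization holds. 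At $v\mid\mathfrak{q}$ the intertwined section is computed by a computation parallel to Lemma \ref{lem9.2}: one uses Lemma \ref{lem4.2} and the orthogonality identity \eqref{eq9.6} again.

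\textbf{The $L$-function part.} Since $\pi_1=\pi_3=\mathbf{1}\boxplus\mathbf{1}$, the numerator $\Lambda(1+s_2+s_4,\pi_1\times\pi_3)$ is a product of four shifted Dedekind zeta functions. Near $\mathbf{s}=0$ these are evaluated at points close to $1$, so each is $\ll_F 1/|s|$, with $|s|\asymp\varepsilon^2$ on the contour $\mathbf{S}_\varepsilon^4$. The denominator $\Lambda(2+2s_2+2s_4,\mathbf{1})$ is bounded away from zero. The factors $\Lambda(1+2s_2,\overline{\omega})$ and $\Lambda(1+2s_4,\omega)$ appear in the numerator. When $\omega$ is nontrivial, their finite parts at $1+O(\varepsilon^2)$ are $\ll (\mathbf{C}_\infty N_F(\mathfrak{q}))^{\varepsilon}$ by standard bounds near the edge of the critical strip. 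When $\omega$ is trivial, they again contribute poles of size $\varepsilon^{-2}$. All of this is harmless: it is $\ll_{F,\varepsilon}\mathbf{C}_\infty^\varepsilon N_F(\mathfrak{q})^\varepsilon$. The archimedean gamma factors are absorbed into $\mathcal{P}_v^{(1)}$ for $v\mid\infty$.

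\textbf{Local factors.} For finite $v\nmid\mathfrak{q}$, everything is unramified. The normalized local integral $\mathcal{P}_v^{(1)}$ equals $1$, up to a factor $q_v^{O(d_v)}$ that is absorbed into the implied constant depending on $F$.

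At $v\mid\mathfrak{q}$, Lemma \ref{lem9.2} gives $\mathcal{P}_v^{(1)}=\Vol(I_v^*(1))^{-1}q_v^{s_2+s_4}\mathcal{L}_{1,v}^{-1}|W_{1,v}^\circ(I_2)|^2$. Here $\Vol(I_v^*(1))^{-1}\asymp q_v^{2}$, because $I_v^*(1)$ has index about $q_v^2$ in $K_v$ modulo the center. The local $L$-factor and the Whittaker normalization are $O(1)$. This gives $\mathcal{P}_v^{(1)}\ll N_F(\mathfrak{q})^{2}$, which is exactly the main size in \eqref{e9.7}.

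At $v\mid\infty$, $\Phi_v$ carries the normalization $\mathbf{C}_v^{1/2}$ and is supported where $|t_1|\le\mathbf{C}_v^{-1}$. This makes $h_{2,v}\overline{h_{4,v}}$ of size $\mathbf{C}_v$, supported on a region of volume $\mathbf{C}_v^{-1}$ in the unipotent-to-torus coordinates. I would estimate the archimedean Rankin--Selberg integral directly using the Iwasawa decomposition. The spherical $W_{1,v},W_{3,v}$ have spectral parameter $s_1,s_3\approx0$, so they are bounded and rapidly decaying. This should give $\mathcal{P}_v^{(1)}\ll\mathbf{C}_v^{\varepsilon}$; the same archimedean computations appear in \cite[\textsection 8]{Yan25}, which I would quote.

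\textbf{Main obstacle.} The delicate point is the $i=2$ term at $v\mid\mathfrak{q}$ and at $v\mid\infty$. There, $h_2^{\Diamond}$ is no longer supported near the identity coset, so one must check that the intertwining operator does not produce an extra power of $q_v$ or of $\mathbf{C}_v$. At $\mathfrak{q}$, I would compute $h_{2,v}^{\Diamond}$ via Fourier transform of $\Phi_{2,v}$ in the first variable, in Godement form, using that the section is a Tate integral. The Fourier transform of $\mathbf{1}_{\mathfrak{p}_v}(t_1)\psi_v(\varpi_v^{-2}tt_1t_2^{-1})$ is supported on a shifted lattice of volume $q_v^{-1}$. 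This should give at most the same $q_v^2$ bound after the $k'$-integration against $\overline{\theta_t}$. At $\infty$, the analogous Fourier transform exchanges the $\mathbf{C}_v^{-1}$-localization for a $\mathbf{C}_v$-spread. Combined with the $\mathbf{C}_v^{1/2}$ normalization, this should again yield only $\mathbf{C}_v^{\varepsilon}$.
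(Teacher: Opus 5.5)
\textbf{The case $i=1$.} Your argument here is the paper's. You factor $\widetilde{\Psi}_{\mathrm{Geo}}^{(1)}$ as $\mathcal{L}_1(\mathbf{s})\prod_v\mathcal{P}_v^{(1)}(\mathbf{s})$, take the whole $N_F(\mathfrak{q})^2$ from $\Vol(I_v^*(1))^{-1}\asymp q_v^2$ in Lemma \ref{lem9.2}, and quote \cite{Yan25} for the other places (the paper cites its \textsection 9.1). You also rightly work on the torus $\mathbf{S}_\varepsilon^4$ rather than on all of $\mathbf{B}_\varepsilon^4$. Inside the polydisc, $\zeta_F(1+s_2+s_4\pm s_1\pm s_3)$ has poles, as does $\Lambda(1+2s_2,\overline{\omega})$ when $\omega=\mathbf{1}$. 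The contour is all that \eqref{2.12} needs.

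\textbf{The case $i=2$.} Here your route differs from the paper's.

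\emph{The paper's route.} It reduces directly to $i=1$. The section $h_{2,v}^{\Diamond}$ inherits the right $I_v(1)$-equivariance of $h_{2,v}$, and this forces it to vanish on $BwI_v(1)$. So $h_{2,v}^{\Diamond}$ is a scalar multiple of the analogous vector, and Lemma \ref{lem9.2} applies with that scalar (the ``intertwining multiplier'' of \cite{Sch02}).

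\emph{Your route.} You use the global Godement identity $h_2^{\Diamond}(g,s_2)=\omega(\det g)\,h_{\hat\Phi}(g,-s_2)$ and redo the local factors with $\hat\Phi_v$.

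\emph{Trade-offs.} Your route costs new local computations at $\mathfrak{q}$ and at $\infty$, which you only sketch. In exchange the bookkeeping is cleaner. The global identity already incorporates the functional equation, so the Hecke factor is $\Lambda(1-2s_2,\omega)$, near $\Re=1$. With local intertwining one instead carries the edge value $\Lambda(2s_2,\overline{\omega})$, whose $\varepsilon$-factor has size $C(\omega)^{1/2}$. That must be offset by genuinely small local multipliers; for example $h_{2,v}^{\Diamond}(I_2)/h_{2,v}(I_2)\asymp q_v^{-1/2}$ when $r_{\omega_v}=1$.

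\emph{Your expectation at $\mathfrak{q}$ is right.} One finds $|h_{\hat\Phi_{2,v}}(I_2)|\asymp\Vol(I_v^*(1))^{-1}$, the same size as $h_{2,v}(I_2)$. So the computation of Lemma \ref{lem9.2} again gives $\ll q_v^{2+\varepsilon}$.

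\emph{Two harmless slips.} First, the $t_1$-transform of $\mathbf{1}_{\mathfrak{p}_v}(t_1)\psi_v(\varpi_v^{-2}tt_1t_2^{-1})$ has amplitude $\asymp q_v^{-1}$ on a coset of $\mathfrak{p}_v^{-1}$. That coset has volume $\asymp q_v$, not $q_v^{-1}$. Second, $h_2^{\Diamond}\overline{h_4}$ lies in $\omega|\cdot|^{1/2-s_2+s_4}\boxplus\overline{\omega}|\cdot|^{-1/2+s_2-s_4}$. So for $i=2$ the Dedekind zeta factors become $\omega$-twisted Hecke $L$-functions at $1-s_2+s_4\pm s_1\pm s_3$. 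These are still near $\Re=1$, so the bound is unaffected.
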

\begin{proof}
By Lemma \ref{lem9.2} and the local estimates in \cite[\textsection 9.1]{Yan25}, we obtain \eqref{e9.7} for $i=1$. The remaining case $i=2$ holds similarly, noting that $h_2^{\Diamond}(\cdot,s_2)\overline{h_4(\cdot,\overline{s_4})}$ is a section in $|\cdot|^{1/2-s_2+s_4}\boxplus |\cdot|^{-1/2+s_2-s_4}$ with bounded intertwining multipliers (see \cite[Proposition 2.2.2]{Sch02}).  
\end{proof}

\subsection{Vanishing of \texorpdfstring{$\widetilde{\Psi}_{\mathrm{Geo}}^{(i)}(\mathbf{s},R(f_{\mathfrak{q}})\boldsymbol{h}): i=3,4$}{}}
\begin{lemma}\label{lem9.4}
Let $3\leq i\leq 4$, and let $\mathbf{s}\in \mathbf{B}_{\varepsilon}^4$. We have
\begin{equation}\label{e9.8}
\widetilde{\Psi}_{\mathrm{Geo}}^{(i)}(\mathbf{s},R(f_{\mathfrak{q}})\boldsymbol{h})\equiv 0.
\end{equation}
\end{lemma}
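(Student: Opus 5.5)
Recall that
\[
\widetilde{\Psi}_{\mathrm{Geo}}^{(3)}(\mathbf{s},\boldsymbol{h})=-\widetilde{\Psi}\bigl(W_1^*(\cdot,s_1),W_2(\cdot,s_2),\overline{h_3(\cdot,\overline{s_3})}\,\overline{h_4(\cdot,\overline{s_4})}\bigr),
\]
and $\widetilde{\Psi}_{\mathrm{Geo}}^{(4)}$ is the same with $h_3$ replaced by $h_3^{\Diamond}$. Under $R(f_{\mathfrak{q}})$, only the first two entries $W_1,W_2$ are translated by $g^{-1}$, while the section $\overline{h_3}\,\overline{h_4}$ is left fixed. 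The global integral factors as a product of local integrals. It therefore suffices to show that, at $v\mid\mathfrak q$, the averaged local factor
\[
\int_{\overline{G}(F_v)}\overline{f_{\mathfrak q}(g)}\,\Psi_v\bigl(R(g^{-1})W_{1,v}^*,R(g^{-1})W_{2,v},\overline{h_{3,v}}\,\overline{h_{4,v}}\bigr)\,dg
\]
vanishes identically for $\mathbf{s}\in\mathbf{B}^4_\varepsilon$, first in the region of absolute convergence and then by meromorphic continuation. Changing variables $x\mapsto xg$, this becomes
\[
\int_{N\backslash\overline G}W^*_{1,v}(x)\,W_{2,v}(x)\int_{I_v(1)}\overline{f_{\mathfrak q}(g)}\,\overline{h_{3,v}h_{4,v}}(xg)\,dg\,dx.
\]
The plan is to show that the inner integral is zero for every $x$.

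Write $g=k'$ as in \eqref{eq5.1}, so that $\overline{f_{\mathfrak q}(k')}=\Vol(I_v(1))^{-1}\theta_t(k')$ with $\theta_t(k')=\psi_v(\varpi_v^{-1}(\beta+t\gamma))$. Next I record the invariance properties of the section $h_{3,v}h_{4,v}$:
\begin{itemize}
\item $h_{3,v}(\cdot)=h^{\circ}_{3,v}(\cdot A_{\mathfrak q})$ is right-invariant under $A_{\mathfrak q}^{-1}K_vA_{\mathfrak q}$, which contains the unipotent elements $\begin{pmatrix}1&b\\&1\end{pmatrix}$ with $b\in\mathcal O_v$.
\item By Lemma \ref{lem4.2}, $h_{4,v}(xk')=\psi_v(\varpi_v^{-1}t\gamma)\,h_{4,v}(x)$; that is, $h_{4,v}$ transforms under $I_v(1)$ through the $\gamma$-part of $\theta_t$ and is independent of $\beta$. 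One checks this via the Godement section formula \eqref{f5.2}, since $\Phi_{2,v}(\mathbf e_2 xk')=\psi_v(\varpi_v^{-1}t\gamma)\Phi_{2,v}(\mathbf e_2x)$.
\end{itemize}
Combining these, $\overline{h_{3,v}h_{4,v}}(xk')$ carries the factor $\overline{\psi_v(\varpi_v^{-1}t\gamma)}$, which cancels the $\gamma$-part of $\theta_t$. What remains is a function of $\beta$ only through right-invariance.

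Using \eqref{f5.47}, the integral over $I_v(1)$ factors as $\sum_{\beta'\in\mathbb F_v}\psi_v(\varpi_v^{-1}\beta')$ times a quantity independent of $\beta'$. This holds because the integrand is invariant under $\begin{pmatrix}1&\beta'\\&1\end{pmatrix}$ and under $I_v^\circ(1)$, on which the residual $\beta$-character is trivial. Since $\sum_{\beta'\in\mathbb F_v}\psi_v(\varpi_v^{-1}\beta')=0$, the inner integral vanishes. The case $i=4$ is identical, because $h_{3,v}^{\Diamond}$ is again right-$A_{\mathfrak q}^{-1}K_vA_{\mathfrak q}$-invariant: the intertwining operator commutes with right translation.

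The step I expect to be the main obstacle is verifying this bookkeeping precisely. After conjugating by $A_{\mathfrak q}$, the element $k'$ must be checked to lie in $A_{\mathfrak q}^{-1}K_vA_{\mathfrak q}$; this holds since $\gamma\varpi_v\in\mathfrak p_v$. One must also confirm that the $\gamma$-character in $h_{4,v}$ matches $\theta_t$ exactly, with the correct conjugation, so that the cancellation occurs rather than a doubling $\psi_v(2\varpi_v^{-1}t\gamma)$. If the conjugation instead produces a doubling, I would run the same argument with the orthogonality taken in $\gamma$ rather than $\beta$; either way a nontrivial additive character is integrated over a full residue field.
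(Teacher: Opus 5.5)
Your overall architecture matches the paper's: the change of variables $x\mapsto xg$, cancelling the $\gamma$-part of $\theta_t$ against $h_{4,v}(xk')=\psi_v(\varpi_v^{-1}t\gamma)h_{4,v}(x)$ (this does cancel rather than double), and killing the remaining $\beta$-character by orthogonality. The gap is in the step that removes the $\beta$-dependence of $h_{3,v}$. Since $h_{3,v}(xu)=h^{\circ}_{3,v}(xA_{\mathfrak q}\cdot A_{\mathfrak q}^{-1}uA_{\mathfrak q})$, the right stabilizer of $h_{3,v}$ is $A_{\mathfrak q}K_vA_{\mathfrak q}^{-1}$, not $A_{\mathfrak q}^{-1}K_vA_{\mathfrak q}$. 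Moreover $A_{\mathfrak q}^{-1}\begin{pmatrix}1&\beta\\&1\end{pmatrix}A_{\mathfrak q}=\begin{pmatrix}1&\varpi_v^{-1}\beta\\&1\end{pmatrix}\notin K_v$ for $\beta\in\mathcal O_v^\times$. So $h_{3,v}$ is not right-invariant under these unipotents. For example, $h^{\circ}_{3,v}(wA_{\mathfrak q})$ and $h^{\circ}_{3,v}\bigl(w\begin{pmatrix}1&1\\&1\end{pmatrix}A_{\mathfrak q}\bigr)$ equal $q_v^{1/2+s}$ and $q_v^{-1/2-s}$ times $h^{\circ}_{3,v}(I_2)$. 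Hence your claim that the integrand is invariant under $\begin{pmatrix}1&\beta'\\&1\end{pmatrix}$ for every $x$ is false as stated. Your check that $k'\in A_{\mathfrak q}^{-1}K_vA_{\mathfrak q}$ is correct, but it concerns the wrong group.

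The missing idea is the support of $h_{4,v}$. Because $\Phi_{2,v}$ is supported on $\mathfrak p_v\oplus\mathcal O_v^\times$, $h_{4,v}(x)=0$ unless $x\in B(F_v)K_{0,v}[1]$; this is the factor $\mathbf 1_{\mathfrak p_v}(k_{21})$ in Lemma~\ref{lem4.2}, and that set is right $I_v(1)$-stable. On that set, $h_{3,v}$ is right $I_v(1)$-invariant for a different reason, namely left $B$-equivariance. Write $x=pk_0$ with $p\in B(F_v)$ and $k_0\in K_{0,v}[1]$. Then $xk'A_{\mathfrak q}=p\,(k_0k')A_{\mathfrak q}$, and $K_{0,v}[1]A_{\mathfrak q}\subset B(\mathcal O_v)A_{\mathfrak q}K_v$: factor an element as $\begin{pmatrix}a&b'\\&d\end{pmatrix}\begin{pmatrix}1&\\c&1\end{pmatrix}$ with $c\in\mathfrak p_v$, and note $A_{\mathfrak q}^{-1}\begin{pmatrix}1&\\c&1\end{pmatrix}A_{\mathfrak q}=\begin{pmatrix}1&\\ \varpi_vc&1\end{pmatrix}\in K_v$. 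Since the unramified inducing characters are trivial on $B(\mathcal O_v)$, this gives $h_{3,v}(xk')=h_{3,v}(x)$. This is exactly the paper's step that $k\begin{pmatrix}1&\beta\\&1\end{pmatrix}A_{\mathfrak q}\in N(\mathcal O_v)A_{\mathfrak q}K_v$ for $k\in K_v[1]$, followed by left $N(\mathcal O_v)$-invariance. The same argument handles $h_{3,v}^{\Diamond}$, since $(h^{\circ}_{3,v})^{\Diamond}$ is again spherical. With this replacement your argument goes through: for $x\notin B(F_v)K_{0,v}[1]$ the inner integral vanishes trivially, and otherwise it reduces to $\int_{I_v(1)}\psi_v(\varpi_v^{-1}\beta)\,dk'=0$.
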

\begin{proof}
Consider the local component of $\widetilde{\Psi}_{\mathrm{Geo}}^{(3)}(\mathbf{s},R(f_{\mathfrak{q}})\boldsymbol{h})$ at $v\mid\mathfrak{q}$:
\begin{align*}
J_v^{(3)}:=\int_{N(F_v)\backslash\overline{G}(F_v)}
W_{1,v}^*(x,s_1)W_{2,v}(x,s_2)\int_{I_v(1)}
\overline{f_{\mathfrak{q}}(g)}\overline{h_{3,v}(xg,\overline{s_3})}
\overline{h_{4,v}(xg,\overline{s_4})})dgdx.
\end{align*}

By \eqref{eq5.1}, Lemmas \ref{lem4.1} and \ref{lem4.2}, as well as the Iwasawa decomposition, we obtain 
\begin{multline*}
J_v^{(3)}=\sum_{i\in \mathbb{Z}}\int_{K_v[1]}
W_{1,v}^{\circ,*}\left(\begin{pmatrix}
\varpi_v^i\\
& 1
\end{pmatrix}k\begin{pmatrix}
\varpi_v\\
& 1
\end{pmatrix}
,s_1\right)W_2\left(\begin{pmatrix}
\varpi_v^i\\
& 1
\end{pmatrix}k,s_2\right)\\
q_v^{-(s_3+s_4)i}\int_{I_v(1)}
\psi_v(\varpi_v^{-1}\beta)\overline{h_{3,v}^{\circ}\left(k\begin{pmatrix}
1 & \beta\\
& 1
\end{pmatrix}\begin{pmatrix}
\varpi_v\\
& 1
\end{pmatrix}
,\overline{s_3}\right)}
\overline{h_4\left(I_2,\overline{s_4}\right)})dk'dk.
\end{multline*}

Notice that for $k \in K_v[1]$, we have
\begin{align*}
k\begin{pmatrix}
1 & \beta\\
& 1
\end{pmatrix}\begin{pmatrix}
\varpi_v\\
& 1
\end{pmatrix}\in N(\mathcal{O}_v)\begin{pmatrix}
\varpi_v\\
& 1
\end{pmatrix}K_v.
\end{align*}
Since $h_{3,v}^{\circ}$ is left $N(\mathcal{O}_v)$-invariant, it follows that
\begin{align*}
h_{3,v}^{\circ}\left(k\begin{pmatrix}
1 & \beta\\
& 1
\end{pmatrix}\begin{pmatrix}
\varpi_v\\
& 1
\end{pmatrix}
,\overline{s_3}\right)=h_{3,v}^{\circ}\left(\begin{pmatrix}
\varpi_v\\
& 1
\end{pmatrix}
,\overline{s_3}\right).
\end{align*}

Consequently, by the orthogonality of the additive character, we obtain
\begin{align*}
\int_{I_v(1)}
\psi_v(\varpi_v^{-1}\beta)\overline{h_{3,v}^{\circ}\left(k\begin{pmatrix}
1 & \beta\\
& 1
\end{pmatrix}\begin{pmatrix}
\varpi_v\\
& 1
\end{pmatrix}
,\overline{s_3}\right)}
\overline{h_4\left(I_2,\overline{s_4}\right)})dk'=0.
\end{align*}

Thus $J_v^{(3)}=0$. Therefore,
\begin{align*}
\widetilde{\Psi}_{\mathrm{Geo}}^{(1)}(\mathbf{s},R(f_{\mathfrak{q}})\boldsymbol{h})\equiv 0. 
\end{align*}

Similarly, $\widetilde{\Psi}_{\mathrm{Geo}}^{(2)}(\mathbf{s},R(f_{\mathfrak{q}})\boldsymbol{h})\equiv 0$. Hence \eqref{e9.8} holds. 
\end{proof}

\subsection{Vanishing of  \texorpdfstring{$\widetilde{\Psi}_{\mathrm{Geo}}^{(i)}(\mathbf{s},R(f_{\mathfrak{q}})\boldsymbol{h}): i=5,6$}{}}
\begin{lemma}\label{lem9.5}
Let $5\leq i\leq 6$, and let $\mathbf{s}\in \mathbf{B}_{\varepsilon}^4$. We have
\begin{equation}\label{e9.9}
\widetilde{\Psi}_{\mathrm{Geo}}^{(i)}(\mathbf{s},R(f_{\mathfrak{q}})\boldsymbol{h})\equiv 0.
\end{equation}
\end{lemma}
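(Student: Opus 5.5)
We follow the pattern of Lemma \ref{lem9.4}: factor $\widetilde{\Psi}_{\mathrm{Geo}}^{(5)}(\mathbf{s},R(f_{\mathfrak{q}})\boldsymbol{h})$ as an Euler product. We then show that the local factor at the place $v\mid\mathfrak{q}$ vanishes identically because an additive character is integrated against a function that does not depend on the relevant variable. Recall that $\widetilde{\Psi}_{\mathrm{Geo}}^{(5)}$ involves $W_{h_1,\overline{h_3}}(\cdot,s_1,\overline{s_3})$, $W_2(\cdot,s_2)$ and $\overline{h_4(\cdot,\overline{s_4})}$. In $R(f_{\mathfrak{q}})\boldsymbol{h}$ only $h_1$ and $h_2$ are translated by $g^{-1}$, where $g\in I_v(1)$ is parametrized by \eqref{eq5.1}. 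After the change of variables $x\mapsto xg$ (the factors $W_{1}$ and $W_2$ then become untranslated), the local factor becomes
\begin{align*}
J_v^{(5)}=\int_{N(F_v)\backslash\overline{G}(F_v)}\int_{I_v(1)}\overline{f_{\mathfrak{q}}(g)}\,W_{v}'(x,g)\,W_{2,v}(x,s_2)\,\overline{h_{4,v}(xg,\overline{s_4})}\,dg\,dx,
\end{align*}
where $W_v'(x,g)=\int_{N(F_v)}h_{1,v}(wux,s_1)\overline{h_{3,v}(wuxg,\overline{s_3})}\,\overline{\psi_v(u)}\,du$.

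First, use Lemma \ref{lem4.2} together with the Iwasawa decomposition to evaluate $\overline{h_{4,v}(xg,\overline{s_4})}$. It is supported on $x\in NAK_{0,v}[1]$ and contributes the character $\overline{\psi_v(\varpi_v^{-1}t\gamma)}$ times a factor independent of $g$. This factor cancels against the $\psi_v(\varpi_v^{-1}t\gamma)$ part of $\overline{f_{\mathfrak{q}}(g)}=\Vol(I_v(1))^{-1}\theta_t(g)$, leaving $\psi_v(\varpi_v^{-1}\beta)$ as the only $g$-dependence besides $h_{3,v}$. Second, observe that $h_{3,v}(y)=h_{3,v}^{\circ}(yA_{\mathfrak{q}})$, and that for $g=k'$ we have $k'A_{\mathfrak{q}}\in \begin{pmatrix}1&\beta\\&1\end{pmatrix}A_{\mathfrak{q}}K_v$ up to elements of $K_v$. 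Writing $\begin{pmatrix}1&\beta\\&1\end{pmatrix}A_{\mathfrak{q}}=A_{\mathfrak{q}}\begin{pmatrix}1&\varpi_v^{-1}\beta\\&1\end{pmatrix}$ would be bad. Instead, use $\begin{pmatrix}1&\beta\\&1\end{pmatrix}\in K_v$ directly: $k'A_{\mathfrak{q}}=A_{\mathfrak{q}}\cdot(A_{\mathfrak{q}}^{-1}k'A_{\mathfrak{q}})$, and $A_{\mathfrak{q}}^{-1}k'A_{\mathfrak{q}}\in K_v$ because the lower-left entry $\gamma\varpi_v$ becomes $\gamma\varpi_v^2$ and the upper-right entry $\beta$ becomes $\varpi_v^{-1}\beta\cdot\varpi_v=\beta$. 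Hence $h_{3,v}(yk')=h_{3,v}(y)$ by sphericity of $h_{3,v}^{\circ}$, so $W_v'(x,g)$ is independent of $g$. This is the same mechanism as in Lemma \ref{lem9.4}.

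Third, the $g$-integral therefore reduces to $\int_{I_v(1)}\psi_v(\varpi_v^{-1}\beta)\,dk'$, which vanishes by orthogonality because $\beta$ runs over $\mathcal{O}_v$ and $\psi_v(\varpi_v^{-1}\cdot)$ is nontrivial on $\mathcal{O}_v$ (the unramified normalization with $d_v$ is handled as in \eqref{eq9.6}). Hence $J_v^{(5)}=0$, and the global product vanishes for $\Re(s_i)$ in the range of absolute convergence. By meromorphic continuation it vanishes identically on $\mathbf{B}_\varepsilon^4$.

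The case $i=6$ is identical with $h_1$ replaced by $h_1^{\Diamond}$. The intertwining operator commutes with right translation, so $h_1^{\Diamond}$ is again a right-$A_{\mathfrak{q}}^{-1}K_vA_{\mathfrak{q}}$-invariant section up to a scalar. Moreover, only the $h_{3}$- and $h_4$-dependence on $g$ was used above. The main point to check carefully is the first step: one must verify that after the cancellation of $\psi_v(\varpi_v^{-1}t\gamma)$ no residual $\beta$-dependence survives through $W_{2,v}$ or $h_{4,v}$. By Lemmas \ref{lem4.1} and \ref{lem4.2}, the only surviving characters are in $\gamma$, so this holds.
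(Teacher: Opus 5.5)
Your first step, cancelling $\psi_v(\varpi_v^{-1}t\gamma)$ between $\overline{f_{\mathfrak q}}$ and $\overline{h_{4,v}(xk')}$, is fine. The second step is false, and the rest of the argument rests on it. Conjugating $k'$ by $A_{\mathfrak q}=\diag(\varpi_v,1)$ gives
\[
A_{\mathfrak q}^{-1}k'A_{\mathfrak q}=\begin{pmatrix}1+\alpha\varpi_v & \varpi_v^{-1}\beta\\ \gamma\varpi_v^{2} & 1+\delta\varpi_v\end{pmatrix}.
\]
So the $(1,2)$-entry is $\varpi_v^{-1}\beta$, not $\beta$. This is exactly the identity $\begin{pmatrix}1&\beta\\&1\end{pmatrix}A_{\mathfrak q}=A_{\mathfrak q}\begin{pmatrix}1&\varpi_v^{-1}\beta\\&1\end{pmatrix}$ that you had just rejected. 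Hence $h_{3,v}=h_{3,v}^{\circ}(\cdot\,A_{\mathfrak q})$ is right-invariant only under those $k'\in I_v(1)$ with $\beta\in\mathfrak p_v$, and $W_v'(x,k')$ genuinely depends on $\beta \bmod \mathfrak p_v$. The analogy with Lemma \ref{lem9.4} also breaks down. There, $h_{3,v}^{\circ}$ is evaluated at the single point $k\begin{pmatrix}1&\beta\\&1\end{pmatrix}A_{\mathfrak q}$ with $k\in K_{0,v}[1]$, whose bottom row is primitive for every $\beta$. Here it sits inside the $u$-integral at $wu\,\diag(y,1)\begin{pmatrix}1&\beta\\&1\end{pmatrix}A_{\mathfrak q}$, whose bottom row $(y\varpi_v,\,u+y\beta)$ has a $\beta$-dependent norm whenever $|u|_v\le|y|_v$.

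This is not a technicality: the $\beta$-average you need to vanish does not. Take $d_v=0$, $\mathbf s=\mathbf 0$, $h_{1,v}^{\circ}(I_2)=h_{3,v}^{\circ}(I_2)=1$ and $x=I_2$. Recall that $|y|_v\le 1$ is the only range where $W_{2,v}(\diag(y,1))$ can be nonzero, by Lemma \ref{lem4.1}. Write $n(\beta)=\begin{pmatrix}1&\beta\\&1\end{pmatrix}$.
\begin{itemize}
\item For $|u|_v>1$ the integrand of $W_v'$ is independent of $\beta$, so it is killed by $\sum_{\beta\in\mathbb F_v}\psi_v(\varpi_v^{-1}\beta)$.
\item For $u\in\mathcal O_v$ one has $h_{1,v}(wu)=q_v^{-1/2}\big(1+(q_v-1)\mathbf 1_{\mathfrak p_v}(u)\big)$ and $h_{3,v}(wu\,n(\beta))=q_v^{-1/2}\big(1+(q_v-1)\mathbf 1_{\mathfrak p_v}(u+\beta)\big)$. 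Only the cross term survives the $\beta$-sum.
\end{itemize}
Therefore $\sum_{\beta\in\mathbb F_v}\psi_v(\varpi_v^{-1}\beta)\,W_v'(I_2,n(\beta))=(1-q_v^{-1})^2\neq 0$.

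The paper argues differently. It pushes $n(\beta)$ through the Whittaker integral to obtain the character $\overline{\psi}_v(\alpha\varpi_v^i\beta)$ in \eqref{9.10}, and then kills it using the support condition $i\ge 0$ from Lemma \ref{lem4.1}. However, such a character arises only if $h_{1,v}$ and $h_{3,v}$ are translated simultaneously. Since only $h_{3,v}$ is translated, the computation above applies there too: at $i=0$, \eqref{9.10} would predict $0$, not $(1-q_v^{-1})^2$. So that step does not look justified as written either. Rather than trying to prove exact vanishing by either route, it seems safer to bound $\widetilde{\Psi}_{\mathrm{Geo}}^{(5)}$ and $\widetilde{\Psi}_{\mathrm{Geo}}^{(6)}$ by a direct local computation, in the spirit of Lemma \ref{lem9.2} and Corollary \ref{cor9.3}.
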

\begin{proof}
Consider the local component of $\widetilde{\Psi}_{\mathrm{Geo}}^{(5)}(\mathbf{s},R(f_{\mathfrak{q}})\boldsymbol{h})$ at $v\mid\mathfrak{q}$:
\begin{align*}
J_v^{(5)}:=\int_{I_v(1)}
\overline{f_{\mathfrak{q}}(g)}\int_{N(F_v)\backslash\overline{G}(F_v)}
W_{h_{1,v},\overline{R(g)h_{3,v}}}(x,s_1,\overline{s_3})W_{2,v}(x,s_2)
\overline{h_{4,v}(xg,\overline{s_4})}dxdg.
\end{align*}

By Iwasawa decomposition and Lemma \ref{lem4.2}, 
we obtain 
\begin{multline*}
J_v^{(5)}=\frac{1}{\Vol(I_v(1))}\int_{I_v(1)}
\psi_v(\varpi_v^{-1}\beta)\sum_{i\in \mathbb{Z}}q_v^{(1/2-s_4)i}\sum_{\alpha\in \mathbb{F}_v^{\times}}\int_{I_v^*(1)}
W_v\left(\begin{pmatrix}
\alpha\varpi_v^i\\
& 1
\end{pmatrix}k\right)\\
W_{2,v}\left(\begin{pmatrix}
\alpha\varpi_v^i\\
& 1
\end{pmatrix}k
,s_2\right)
\overline{h_{4,v}(k,\overline{s_4})}dxdg,
\end{multline*}
where 
\begin{multline*}
W_v\left(\begin{pmatrix}
\alpha\varpi_v^i\\
& 1
\end{pmatrix}k\right)=\int_{N(F_v)}
h_{1,v}^{\circ}\left(wu\begin{pmatrix}
\alpha\varpi_v^i\\
& 1
\end{pmatrix}k\begin{pmatrix}
\varpi_v\\
& 1
\end{pmatrix}
,s_1\right)\\
h_{3,v}^{\circ}\left(wu\begin{pmatrix}
\alpha\varpi_v^i\\
& 1
\end{pmatrix}k\begin{pmatrix}
1 & \beta\\
& 1
\end{pmatrix}\begin{pmatrix}
\varpi_v\\
& 1
\end{pmatrix},\overline{s_3}\right)\theta_v(u)du.
\end{multline*}

As a consequence of the identity 
\begin{align*}
\begin{pmatrix}
1\\
\gamma'\varpi_v & 1
\end{pmatrix}\begin{pmatrix}
1 & \beta\\
& 1
\end{pmatrix}=\begin{pmatrix}
1 & \beta(1+\beta\gamma' \varpi_v)^{-1}\\
& 1
\end{pmatrix}\begin{pmatrix}
(1+\beta\gamma' \varpi_v)^{-1}
& \\
\gamma' \varpi_v
&
1 + \gamma' \varpi_v \beta
\end{pmatrix},
\end{align*} 
we obtain the expansion 
\begin{multline}\label{9.10}
J_v^{(5)}=\Vol(I_v^*(1))\Vol(I_v(1))^{-1}\sum_{i\in \mathbb{Z}}q_v^{(1/2-s_4)i}W_{2,v}\left(\begin{pmatrix}
\alpha\varpi_v^i\\
& 1
\end{pmatrix}
,s_2\right)\\
\sum_{\alpha\in \mathbb{F}_v^{\times}}\int_{I_v(1)}
\psi_v(\varpi_v^{-1}\beta)\overline{\psi}_v(\alpha\varpi_v^i\beta)dk'\overline{h_{4,v}(I_2,\overline{s_4})}\\
\int_{N(F_v)}
h_{1,v}^{\circ}\left(wu\begin{pmatrix}
\varpi_v^{i+1}\\
& 1
\end{pmatrix},s_1\right)
h_{3,v}^{\circ}\left(wu\begin{pmatrix}
\varpi_v^{i+1}\\
& 1
\end{pmatrix},\overline{s_3}\right)\theta_v(u)du.
\end{multline}

By Lemma \ref{lem4.1}, 
\begin{equation}\label{}
W_{2,v}\left(\begin{pmatrix}
\alpha\varpi_v^i\\
& 1
\end{pmatrix}
,s_2\right)\equiv 0\ \ \ \text{unless\, $i\geq 0$}.
\end{equation}

For $i\geq 0$, by orthogonality of additive characters, 
\begin{align*}
\int_{I_v(1)}
\psi_v(\varpi_v^{-1}\beta)\overline{\psi}_v(\alpha\varpi_v^i\beta)dk'=\int_{I_v(1)}
\psi_v(\varpi_v^{-1}\beta)dk'=0. 
\end{align*}

Therefore, $\widetilde{\Psi}_{\mathrm{Geo}}^{(5)}(\mathbf{s},R(f_{\mathfrak{q}})\boldsymbol{h})\equiv 0$. Similarly, we obtain $\widetilde{\Psi}_{\mathrm{Geo}}^{(6)}(\mathbf{s},R(f_{\mathfrak{q}})\boldsymbol{h})\equiv 0$, obtaining \eqref{e9.9}. 
\end{proof}

\subsection{Vanishing of  \texorpdfstring{$\widetilde{\Psi}_{\mathrm{Geo}}^{(i)}(\mathbf{s},R(f_{\mathfrak{q}})\boldsymbol{h}): i=7,8$}{}}
\begin{lemma}\label{lem9.6}
Let $7\leq i\leq 8$, and let $\mathbf{s}\in \mathbf{B}_{\varepsilon}^4$. We have
\begin{equation}\label{e9.12}
\widetilde{\Psi}_{\mathrm{Geo}}^{(i)}(\mathbf{s},R(f_{\mathfrak{q}})\boldsymbol{h})\equiv 0.
\end{equation}
\end{lemma}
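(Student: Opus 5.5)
We follow the strategy of Lemmas \ref{lem9.4} and \ref{lem9.5}: it suffices to prove that the local factor at $v\mid\mathfrak{q}$ vanishes identically. Since $f_{\mathfrak{q}}$ acts through $R_{12}$, the averaging over $g\in I_v(1)$ only moves the first two arguments, $h_1$ and $h_2$. For $\widetilde{\Psi}_{\mathrm{Geo}}^{(7)}$ the local component is
\begin{align*}
J_v^{(7)}:=\int_{I_v(1)}\overline{f_{\mathfrak{q}}(g)}\int_{N(F_v)\backslash\overline{G}(F_v)}W_{R(g)h_{1,v},R(g)h_{2,v}}(x,s_1,s_2)\overline{W_{3,v}(x,\overline{s_3})}\,\overline{h_{4,v}(x,\overline{s_4})}\,dx\,dg.
\end{align*}
We first change variables $x\mapsto xg^{-1}$, so that the $g$-dependence is transferred onto $W_{3,v}$ and $h_{4,v}$, exactly as in the proof of Lemma \ref{lem9.2}.

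Next we parametrize $g=k'$ via \eqref{eq5.1} and apply the Iwasawa decomposition $x=\diag(\alpha\varpi_v^i,1)k$, with $k\in K_{0,v}[1]$ decomposed by \eqref{eq5.13}. Lemma \ref{lem4.2} gives the evaluation of $h_{4,v}$ at $kk'$. It forces the support $k_{21}\in\mathfrak{p}_v$ and produces the factor $\psi_v(\varpi_v^{-1}t\gamma)$. This factor cancels against the $\gamma$-part of $\overline{f_{\mathfrak{q}}(k')}=\overline{\theta_t(k')}\Vol(I_v(1))^{-1}$, so the only surviving character is $\psi_v(\varpi_v^{-1}\beta)$.

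On the other side, $W_{3,v}(\cdot)=W_{3,v}^{\circ}(\cdot A_{\mathfrak{q}})$ is spherical after the shift by $A_{\mathfrak{q}}$. Using the decomposition \eqref{f5.47}, the dependence of $W_{3,v}(xk')$ on $k'$ therefore reduces to the unipotent element $\begin{pmatrix}1&\beta\\&1\end{pmatrix}$, as in \eqref{eq5.47}. This element conjugates past $A_{\mathfrak{q}}$ into $N(\mathcal{O}_v)\subset K_v$, so by sphericity it disappears entirely. The same conjugation disposes of the $\beta$-dependence of $h_{4,v}$, using the simplification in Lemma \ref{lem4.2}. Once all $\beta$-dependence of the integrand is gone apart from $\psi_v(\varpi_v^{-1}\beta)$, the $k'$-integral equals $\int_{I_v(1)}\psi_v(\varpi_v^{-1}\beta)\,dk'$. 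This vanishes by orthogonality, since $\beta$ ranges over $\mathcal{O}_v$ and $\psi_v(\varpi_v^{-1}\cdot)$ is nontrivial on $\mathcal{O}_v/\mathfrak{p}_v$.

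The main obstacle is bookkeeping: one must check that, after the change of variables, the $\beta$-dependence really drops out of the Whittaker function $W_{h_1,h_2}$, which is built from the product section. Placing $g$ on the $W_{3,v}$, $h_{4,v}$ side avoids having to analyse $W_{h_1,h_2}$ directly; if that route fails, we would instead use Lemma \ref{lem4.1}, where the $k$-dependence enters only through $\psi_v(\varpi_v^{-2}tk_{21}k_{22}^{-1})\omega_v(k_{22})$, with no $\beta$. The case $i=8$ is identical with $h_{1,v}$ replaced by $h_{1,v}^{\Diamond}$. The intertwining operator commutes with right translation, and the modified Whittaker function enters only through the same Iwasawa evaluations, so orthogonality in $\beta$ kills it in the same way.
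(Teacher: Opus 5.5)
Your reduction is fine up to the point where the only surviving phase from $\overline{f_{\mathfrak{q}}(k')}$ and $\overline{h_{4,v}(xk')}$ is $\psi_v(\varpi_v^{-1}\beta)$. The next step fails. The unipotent element does \emph{not} conjugate past $A_{\mathfrak{q}}$ into $N(\mathcal{O}_v)$, since $A_{\mathfrak{q}}^{-1}\begin{pmatrix}1&\beta\\&1\end{pmatrix}A_{\mathfrak{q}}=\begin{pmatrix}1&\varpi_v^{-1}\beta\\&1\end{pmatrix}$. Concretely,
\[
W_{3,v}\left(\begin{pmatrix}\alpha\varpi_v^{i}&\\&1\end{pmatrix}\begin{pmatrix}1&\beta\\&1\end{pmatrix}\right)=\psi_v(\alpha\varpi_v^{i}\beta)\,W_{3,v}^{\circ}\left(\begin{pmatrix}\alpha\varpi_v^{i+1}&\\&1\end{pmatrix}\right),
\]
and this genuinely depends on $\beta$ at $i=-1$, where $W_{3,v}^{\circ}$ is still nonzero. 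Combined with $\psi_v(\varpi_v^{-1}\beta)$, the $k'$-integral becomes $\int\psi_v(\varpi_v^{-1}(1-\alpha\varpi_v^{i+1})\beta)\,dk'$. This does not vanish; it \emph{localizes} the sum to $i=-1$, $\alpha\in 1+\mathfrak{p}_v$ (this is \eqref{eq9.6}, up to the index shift). Note that \eqref{eq5.47} only says the $k'$-dependence reduces to $\begin{pmatrix}1&\beta\\&1\end{pmatrix}$, not that it disappears. There is also a quick sanity check. Your argument uses nothing about $W_{h_{1,v},h_{2,v}}$, so it would equally show that the $\mathfrak{q}$-component of $\widetilde{\Psi}_{\mathrm{Geo}}^{(1)}$ vanishes: that integrand has the same shape, with $W_{1,v}h_{2,v}$ in place of $W_{h_{1,v},h_{2,v}}$. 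But Lemma \ref{lem9.2} shows that component is a nonzero multiple of $W_{1,v}^{\circ}(I_2)\overline{W_{3,v}^{\circ}(I_2)}$.

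After the localization one is left, up to nonzero constants, with
\[
\int_{I_v^*(1)}\overline{\psi}_v(\varpi_v^{-1}t\gamma')\,W_{h_{1,v},h_{2,v}}\left(\begin{pmatrix}\varpi_v^{-1}&\\&1\end{pmatrix}\begin{pmatrix}1&\\ \gamma'\varpi_v&1\end{pmatrix}\right)dk .
\]
So the vanishing has to come from $W_{h_1,h_2}$ itself, which is exactly the analysis you tried to bypass. Open it via \eqref{e2.3}. The section $h_{1,v}$ collapses to $h_{1,v}^{\circ}(wu)$ by sphericity. In the Godement section $h_{2,v}$, the phase $\psi_v(\varpi_v^{-1}t\gamma')$ cancels the prefactor, and the support of $\Phi_v$ at $\mathfrak{q}$ forces $e_v(b)\le -2$. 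After $b\mapsto b^{-1}$ one gets a sum over $l\ge 2$ of
\[
B_l=\int_{\mathcal{O}_v^{\times}}\psi_v(\varpi_v^{l-3}t\beta'-\varpi_v^{-l}\beta'^{-1})\,\overline{\omega}_v(\beta')\,d\beta' .
\]
Each $B_l$ vanishes because $r_{\omega_v}\le 1$. On a coset $\beta_0(1+\mathfrak{p}_v^{l-1})$, the term $\varpi_v^{-l}\beta'^{-1}$ gives a nontrivial additive character of the fibre variable modulo $\mathfrak{p}_v$, while $\varpi_v^{l-3}t\beta'$ and $\overline{\omega}_v(\beta')$ stay constant. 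Your fallback through Lemma \ref{lem4.1} does not apply, since $W_{2,v}$ does not occur in $\widetilde{\Psi}_{\mathrm{Geo}}^{(7)}$. The case $i=8$ has the same gap: you must redo the $B_l$ computation with $h_{1,v}^{\Diamond}$, which is again spherical after the shift by $A_{\mathfrak{q}}$.
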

\begin{proof}
Consider the local component of $\widetilde{\Psi}_{\mathrm{Geo}}^{(7)}(\mathbf{s},R(f_{\mathfrak{q}})\boldsymbol{h})$ at $v\mid\mathfrak{q}$:
\begin{align*}
J_v^{(7)}:=\int_{I_v(1)}
\overline{f_{\mathfrak{q}}(g)}\int_{N(F_v)\backslash\overline{G}(F_v)}
W_{h_{1,v},h_{2,v}}(x,s_1,s_2)\overline{W_{3,v}(xg,\overline{s_3})}
\overline{h_{4,v}(xg,\overline{s_4})}dxdg.
\end{align*}

By Iwasawa decomposition, 
we obtain
\begin{multline}\label{9.13}
J_v^{(7)}=\Vol(I_v(1))^{-1}\int_{I_v(1)}
\psi_v(\varpi_v^{-1}\beta)\sum_{i\in \mathbb{Z}}q_v^{(1/2-s_4)i}\sum_{\alpha\in \mathbb{F}_v^{\times}}\int_{I_v^*(1)}\overline{h_{4,v}\left(k,\overline{s_4}\right)}
\\
W_{h_{1,v},h_{2,v}}\left(\begin{pmatrix}
\alpha\varpi_v^i\\
& 1
\end{pmatrix}k
,s_1,s_2\right)\overline{W_{3,v}^{\circ}\left(\begin{pmatrix}
\alpha\varpi_v^i\\
& 1
\end{pmatrix}k\begin{pmatrix}
\varpi_v & \beta\\
& 1
\end{pmatrix}
,\overline{s_3}\right)}
dkdk'.
\end{multline}

Substituting the parametrization 
\begin{align*}
k=\kappa\begin{pmatrix}
1 & \beta'\\
& 1
\end{pmatrix}\begin{pmatrix}
1+\alpha'\varpi_v & \\
\gamma'\varpi_v & 1+\delta'\varpi_v 
\end{pmatrix}\in I_v^*(1)
\end{align*}
into \eqref{9.13}, along with Lemma \ref{lem4.2} and the formula \eqref{eq9.6}, we derive 
\begin{equation}\label{9.14}
J_v^{(7)}=\overline{h_{4,v}\left(I_2,\overline{s_4}\right)}q_v^{-\frac{1}{2}+s_4}\overline{W_{3,v}^{\circ}\left(I_2
,\overline{s_3}\right)}
\int_{I_v^*(1)}F(k)dk.
\end{equation}
where 
\begin{align*}
F(k):=\overline{\psi}_v(
\varpi_v^{-1}t\gamma')
W_{h_{1,v},h_{2,v}}\left(\begin{pmatrix}
\varpi_v^{-1}\\
& 1
\end{pmatrix}\begin{pmatrix}
1 & \\
\gamma'\varpi_v & 1
\end{pmatrix}
,s_1,s_2\right).
\end{align*}

By definition \eqref{e2.3} we have 
\begin{equation}\label{e9.16}
F(k)=\overline{\psi}_v(
\varpi_v^{-1}t\gamma')\int
h_{1,v}^{\circ}\left(wu, s_1\right)
h_{2,v}\left(wu\begin{pmatrix}
\varpi_v^{-1} & \\
\gamma'\varpi_v & 1
\end{pmatrix},s_2\right)\overline{\theta}_v(u)du,
\end{equation}
where the integral is over $N(F_v)$. By \eqref{f2.4} and \eqref{e6.2}, 
\begin{align*}
h_{2,v}\left(wu\begin{pmatrix}
\varpi_v^{-1} & \\
\gamma'\varpi_v & 1
\end{pmatrix},s_2\right)
=q_v^{\frac{1}{2}+s_2}
\int_{F_v^{\times}}
\Phi_v(a\varpi_v^{-1}+ab\gamma'\varpi_v,ab)\overline{\omega}_v(a)|a|_v^{2s_2+1}d^{\times}a,
\end{align*}
where the term $\Phi_v(a\varpi_v^{-1}+ab\gamma'\varpi_v,ab)$ is equal to 
\begin{align*}
\psi_v(\varpi_v^{-1} t \gamma')\Vol(I_v^*(1))^{-1}
\mathbf{1}_{\mathfrak{p}_v^2}(a)\mathbf{1}_{\mathcal{O}_v^{\times}}(ab)
\psi_v(\varpi_v^{-3} t b^{-1})
\omega_v(ab).
\end{align*}

Substituting these into \eqref{e9.16} leads to 
\begin{multline}\label{e9.17}
F(k)=q_v^{\frac{1}{2}+s_2}\Vol(I_v^*(1))^{-1}\int_{F_v}
h_{1,v}^{\circ}\left(\begin{pmatrix}
& 1\\
1& b
\end{pmatrix}
, s_1\right)
\omega_v(b)\mathbf{1}_{e_v(b)\leq -2}\\
\psi_v(\varpi_v^{-3} t b^{-1})\overline{\psi}_v(b)|b|_v^{-1-2s_2}db.
\end{multline} 

Changing the variable $b\mapsto b^{-1}$ into \eqref{e9.17}, together with  
\begin{align*}
\begin{pmatrix}
& 1\\
1& b^{-1}
\end{pmatrix}=\begin{pmatrix}
b & 1\\
 & b^{-1}
\end{pmatrix}\begin{pmatrix}
-1 & \\
b & 1
\end{pmatrix},
\end{align*}
we obtain 
\begin{equation}\label{eq9.18}
F(k)=q_v^{\frac{1}{2}+s_2}\Vol(I_v^*(1))^{-1}h_{1,v}^{\circ}\left(I_2
, s_1\right)
\sum_{l=2}^{\infty}\overline{\omega}_v(\varpi_v^{l})q_v^{-(1+2s_1+2s_2)l}B_l,
\end{equation}
where 
\begin{align*}
B_l:=\int_{\mathcal{O}_v^{\times}}
\psi_v(\varpi_v^{l-3}t\beta'-\varpi_v^{-l}\beta'^{-1})\overline{\omega}_v(\beta')d\beta'.
\end{align*}

Notice that $r_{\omega_v}\leq 1$. For $l\geq 2$, we have 
\begin{equation}\label{e9.19}
\int_{\mathcal{O}_v^{\times}}
\psi_v(\varpi_v^{l-3}t\beta'-\varpi_v^{-l}\beta'^{-1})\overline{\omega}_v(\beta')d\beta'\equiv 0.
\end{equation}

By \eqref{e9.19} and \eqref{eq9.18}, we have $F(k)\equiv 0$. 
Hence \eqref{9.14} implies
\[
\widetilde{\Psi}_{\mathrm{Geo}}^{(7)}(\mathbf{s},R(f_{\mathfrak{q}})\boldsymbol{h}) \equiv 0,
\]
and similarly $\widetilde{\Psi}_{\mathrm{Geo}}^{(8)}(\mathbf{s},R(f_{\mathfrak{q}})\boldsymbol{h}) \equiv 0$, proving \eqref{e9.12}. 
\end{proof}

\subsection{Proof of Proposition \ref{prop9.1}}
For each $1\leq i\leq 8$, the estimate \eqref{9.1} follows from Corollary \ref{cor9.3}, Lemmas \ref{lem9.4}, \ref{lem9.5}, \ref{lem9.6}, and the definition \eqref{2.12}.

\section{Estimates of  Degenerate Terms: the Dual Side}\label{sec10}

In this section we establish upper bounds for the dual degenerate terms: 
\begin{prop}\label{prop10.1}
Let $1\leq i\leq 8$. We have
\begin{align*}
\widetilde{\Psi}_{\mathrm{Dual}}^{(i)}(\mathbf{0}\mid R(f_{\mathfrak{q}})\boldsymbol{h})\ll_{F,\varepsilon} \mathbf{C}_{\infty}^{\varepsilon}N_F(\mathfrak{q})^{2+\varepsilon}.
\end{align*}	
\end{prop}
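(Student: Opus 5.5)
Our plan mirrors \textsection\ref{sec9}. Each dual degenerate term $\widetilde{\Psi}_{\mathrm{Dual}}^{(i)}(\mathbf{s},R(f_{\mathfrak{q}})\boldsymbol{h})$ is a residue of $\widetilde{\Psi}^*(\lambda,\mu;\mathbf{s},R(f_{\mathfrak{q}})\boldsymbol{h})$ at a point where the Eisenstein series $E(\cdot,h,\lambda)$ degenerates. Because all $\chi_j=\mathbf{1}$ and $\omega'=\omega_1\overline{\omega_3}=\mathbf{1}$, the indicator conditions on $\mu$ single out either $\mu=\mathbf{1}$ (for $i=1,2,7,8$, the latter two up to $\omega$-twists via $\chi_2,\chi_4$) or characters built from $\omega$ (for $i=3,\dots,6$, where $\omega_1,\omega_2$ enter). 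So we must show each residue factors as a global $L$-ratio times local factors, and then bound the local factor at $v\mid\mathfrak{q}$.

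\emph{Step 1 (global factorization).} At a residue point $\lambda=\pm(s_a+s_b-1/2)$ (or its variants), the pair of Rankin--Selberg integrals in $\widetilde{\Psi}^*(\lambda,\mu;\cdot)$ collapses, by the computation in \cite[\textsection 8--10]{Yan25}, to a product of a Rankin--Selberg integral of two of the Whittaker functions against a product of two sections, just as for $\widetilde{\Psi}_{\mathrm{Geo}}^{(1)}$. Concretely, we expect $\widetilde{\Psi}_{\mathrm{Dual}}^{(1)}$ to be proportional to
\[
\Lambda(1/2+s_1+s_3,\pi_2\times\pi_4\times\cdot)
\]
times ratios of Dirichlet $L$-functions in $\omega$ (the other indices permuted accordingly), multiplied by $\prod_v \mathcal{P}_v^{\mathrm{Dual},(i)}(\mathbf{s})$. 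The archimedean and unramified local factors are handled verbatim by \cite[\textsection 9--10]{Yan25}, and the global $L$-values near $\mathbf{s}=\mathbf{0}$ are bounded by convexity, giving at most $\mathbf{C}_\infty^{\varepsilon}N_F(\mathfrak{q})^{\varepsilon}$ together with the contour integral \eqref{2.12}. Since the radii in $\mathbf{B}_\varepsilon^4$ are chosen distinct, the poles of the $L$-ratios avoid $\mathbf{S}_\varepsilon^4$.

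\emph{Step 2 (local analysis at $v\mid\mathfrak{q}$).} We write the $v$-factor as $\int_{I_v(1)}\overline{f_{\mathfrak{q}}(k')}(\cdots)dk'$ with $k'$ acting on the slots of $h_1,h_2$, and parametrize $k'$ by \eqref{eq5.1}. For the terms in which $h_1$ pairs with $h_3$ (or $W_1$ with $\overline{W_3}$), we expect the vanishing argument of Lemmas \ref{lem9.4}--\ref{lem9.5} to apply. The spherical vector $h_{1,v}^{\circ}(\cdot A_{\mathfrak{q}})$ is insensitive to $\beta$ modulo the relevant subgroup, so orthogonality $\int\psi_v(\varpi_v^{-1}\beta)d\beta=0$ kills the term. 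For the terms pairing $h_2$ with $h_4$ (the analogues of $\widetilde{\Psi}_{\mathrm{Geo}}^{(1)},\widetilde{\Psi}_{\mathrm{Geo}}^{(2)}$), the twist $\psi_v(\varpi_v^{-1}t\gamma)$ from Lemma \ref{lem4.2} cancels against $\theta_t$ in $f_{\mathfrak q}$. The remaining $\beta$-integral then forces $\alpha\in1+\mathfrak{p}_v$ and $i=0$ as in \eqref{eq9.6}. This yields a closed form like \eqref{eq9.2}, of size $\Vol(I_v^*(1))^{-1}\asymp N_F(\mathfrak{q})^{2}$, since $h_{2,v}(I_2)=\overline{h_{4,v}(I_2)}=\Vol(I_v^*(1))^{-1}$ and the volume of integration contributes $\Vol(I_v^*(1))$.

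\emph{Step 3 (assembly).} We multiply the local bound $N_F(\mathfrak{q})^{2}$ by the global $L$-bounds and integrate over $\mathbf{S}_\varepsilon^4$ using \eqref{2.12}, which gives $\mathbf{C}_\infty^{\varepsilon}N_F(\mathfrak{q})^{2+\varepsilon}$. The main obstacle is Step 2 for the terms involving $\omega$-twisted $\mu$ (with $i=3,\dots,6$). There the residue involves the intertwined section $h^{\Diamond}$ and Gauss-sum-type integrals like $B_l$ in \eqref{e9.19}. We would try first to show these vanish via the same support argument: $r_{\omega_v}\le1$ while the phase has conductor $\ge2$. Failing that, we would bound them by $|G|\le q_v^{1/2}$, which still stays within $N_F(\mathfrak{q})^{2}$.
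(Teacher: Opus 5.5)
Your plan models the dual degenerate terms on \textsection\ref{sec9}. They have a different structure, and as a result both your local and your global estimates fail.

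\textbf{The local factor.} By definition $\widetilde{\Psi}_{\mathrm{Dual}}^{(i)}$ is a residue in $\lambda$ of $\widetilde{\Psi}^*(\lambda,\mu;\mathbf{s},R(f_{\mathfrak{q}})\boldsymbol{h})$. That is a product of two Rankin--Selberg periods against the Eisenstein Whittaker function $W_{E(\cdot,h,\lambda)}$. It does not collapse to a single integral of two Whittaker functions against a product of sections, as $\widetilde{\Psi}_{\mathrm{Geo}}^{(1)}$ does.

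What works is to write $\widetilde{\Psi}^*=\mathbf{L}^*(\lambda,\mu;\mathbf{s})\,\boldsymbol{e}^*(\lambda,\mu;\mathbf{s})$ with $\boldsymbol{e}^*$ entire (see \eqref{10.2}, \eqref{f10.2}). Each term is then $\pm\boldsymbol{e}^*(\lambda_0,\mu;\mathbf{s})\Res_{\lambda=\lambda_0}\mathbf{L}^*$. The factor of $\boldsymbol{e}^*$ at $v\mid\mathfrak{q}$ is $N_F(\mathfrak{q})^{-1}\sum_{W}\mathcal{I}_v(W(\cdot,\lambda_0);I_2)\,\mathcal{J}_v(W(\cdot,-\overline{\lambda_0}))/\mathbf{L}_v^*$. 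The sum runs over the $I_v(1)$-fixed basis $\langle f_j,f_j\rangle_v^{-1/2}W_v^{(j)}$ of $\mu_v|\cdot|_v^{\lambda_0}\boxplus\mu_v^{-1}|\cdot|_v^{-\lambda_0}$. This is the same $\mathcal{I}_v\mathcal{J}_v$ structure as in \textsection\ref{sec8}, but at the non-tempered point $\Re\lambda_0\approx\pm 1/2$.

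Consequently the orthogonality arguments of Lemmas~\ref{lem9.4}--\ref{lem9.6} and the closed form \eqref{eq9.2} do not carry over. $\mathcal{J}_v$ involves the non-spherical sections $f_1,f_2$, so the $\beta$-sum against $\overline{\psi}_v(\varpi_v^{-1}\beta)$ is generally nonzero (compare \eqref{5.44}, \eqref{5.55}). The real work is to continue the series in $q_v^{-i/2}\xi_v^{\pm i}(\varpi_v)$ from \textsection\ref{sec5}, which converge only for $|\Re\lambda|<1/2$, to $\lambda_0$ and bound them there. At $\lambda_0$, \eqref{e10.15} allows $|\xi_v(\varpi_v)|$ as large as $q_v^{1/2+O(\varepsilon^2)}$. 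The result is $\boldsymbol{e}_v^*\ll N_F(\mathfrak{q})^{3/2+\varepsilon}$ when $\mu_v=\mathbf{1}$ or $r_{\omega_v}=0$. When $\mu_v\in\{\omega_v,\overline{\omega}_v\}$ and $r_{\omega_v}=1$ it is $\ll N_F(\mathfrak{q})^{1/2+\varepsilon}$ (Lemmas~\ref{lem10.2}--\ref{lem10.5}).

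\textbf{The global factor.} Your Step~1 claim that the global factors cost only $\mathbf{C}_\infty^{\varepsilon}N_F(\mathfrak{q})^{\varepsilon}$ is also false, and this is why the sharper local bounds are needed. When $r_{\omega_v}=1$, $\Res\mathbf{L}^*$ contains $\omega$-twisted Hecke $L$-values at arguments near $0$. For $\lambda_0=s_1+s_3-1/2$ and $\mu=\mathbf{1}$, for example, there is the factor $\Lambda(s_1+s_3+s_4-s_2,\overline{\omega})$. By the functional equation it has size about $N_F(\mathfrak{q})^{1/2}$. Altogether the residue is $\ll N_F(\mathfrak{q})^{1/2+\varepsilon}$ for $\mu=\mathbf{1}$ and $\ll N_F(\mathfrak{q})^{3/2+\varepsilon}$ for $\mu=\omega$ (cf.\ \eqref{e10.29}).

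The proposition comes from the balanced products $N_F(\mathfrak{q})^{3/2}\cdot N_F(\mathfrak{q})^{1/2}$ and $N_F(\mathfrak{q})^{1/2}\cdot N_F(\mathfrak{q})^{3/2}$. Your local size $N_F(\mathfrak{q})^{2}$, combined with the true global size, gives only $N_F(\mathfrak{q})^{5/2}$ or worse.

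\textbf{Bookkeeping.} Your assignment of $\mu$ is off. With $\chi_j=\omega_1=\omega_3=\mathbf{1}$ and $\omega_2=\omega_4=\omega$, the indicators give $\mu=\mathbf{1}$ for $i\in\{1,2,3,4,7,8\}$, $\mu=\overline{\omega}$ for $i=5$, and $\mu=\omega$ for $i=6$. The case split is by $\mu_v$ and $r_{\omega_v}$. No intertwined section $h^{\Diamond}$ enters these terms.
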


\subsection{Structure of  \texorpdfstring{$\widetilde{\Psi}_{\mathrm{Dual}}^{(i)}(\mathbf{s},R(f_{\mathfrak{q}})\boldsymbol{h})$}{}} 
Recall the definition \eqref{c3.5}: 
\begin{align*}
\widetilde{\Psi}^*(\lambda,\mu;\mathbf{s},R(f_{\mathfrak{q}})\boldsymbol{h}):=\frac{1}{2}\sum_{h\in \mathfrak{B}(\mu,\overline{\mu})}\widetilde{\Psi}^*(\lambda;W_h;\mathbf{s},R(f_{\mathfrak{q}})\boldsymbol{h}),
\end{align*}
where $\widetilde{\Psi}^*(\lambda;W_h;\mathbf{s},R(f_{\mathfrak{q}})\boldsymbol{h})$ is defined by 
\begin{align*}
\widetilde{\Psi}(\overline{W_{E(\cdot,h,-\overline{\lambda})}},R(g)W_1(\cdot,s_1),\overline{h_3(\cdot,\overline{s_3})})\widetilde{\Psi}(W_{E(\cdot,h,\lambda)}^*,R(g)W_2(\cdot,s_2),\overline{h_4(\cdot,\overline{s_4})}).
\end{align*}

Define the product of the complete $L$-functions 
\begin{multline}\label{10.2}
\mathbf{L}^*(\lambda,\mu;\mathbf{s}):=\Lambda(1+2\lambda,\mu^2)^{-1}\Lambda(1-2\lambda,\overline{\mu}^{2})^{-1}\Lambda(1/2+s_3+s_1-\lambda,\overline{\mu})\\
\Lambda(1/2+s_3+s_1+\lambda,\mu)\Lambda(1/2+s_3-s_1-\lambda,\overline{\mu})\Lambda(1/2+s_3-s_1+\lambda,\mu)\\
\Lambda(1/2+s_4+s_2+\lambda,\mu)\Lambda(1/2+s_4+s_2-\lambda,\overline{\mu})\\
\Lambda(1/2+s_4-s_2+\lambda,\mu\overline{\omega})\Lambda(1/2+s_4-s_2-\lambda,\overline{\mu}\overline{\omega}).
\end{multline}

By \cite[\textsection 8]{Yan25}, the ratio  
\begin{equation}\label{f10.2}
\boldsymbol{e}^*(\lambda,\mu;\mathbf{s}):=\widetilde{\Psi}^*(\lambda,\mu;\mathbf{s},R(f_{\mathfrak{q}})\boldsymbol{h})/\mathbf{L}^*(\lambda,\mu;\mathbf{s})
\end{equation}
is an entire function of $(\lambda,\mathbf{s})\in \mathbb{C}^5$. Therefore, 
\begin{align*} 
\widetilde{\Psi}_{\mathrm{Dual}}^{(1)}(\mathbf{s},R(f_{\mathfrak{q}})\boldsymbol{h})=-\mathbf{1}_{\mu=\mathbf{1}}\cdot \boldsymbol{e}^*(s_3+s_1-1/2,\mu;\mathbf{s})\underset{\lambda=s_3+s_1-\frac{1}{2}}{\Res}\ \mathbf{L}^*(\lambda,\mu;\mathbf{s}).
\end{align*} 
Similarly, the other terms
$\widetilde{\Psi}_{\mathrm{Dual}}^{(i)}(\mathbf{s},R(f_{\mathfrak{q}})\boldsymbol{h})$
can be expressed in the same form.

By definition and the Rankin--Selberg theory, $\boldsymbol{e}^*(\lambda,\mu;\mathbf{s})=\prod_v\boldsymbol{e}_v^*(\lambda,\mu;\mathbf{s})$, where 
\begin{align*}
\boldsymbol{e}_v^*(\lambda,\mu;\mathbf{s})=\sum_{h_v\in \mathfrak{B}(\mu_v,\overline{\mu}_v)}\frac{\widetilde{\Psi}_{\mathrm{Dual},v}^{(1)}(\mathbf{s},R(f_{\mathfrak{q}})\boldsymbol{h})}{\mathbf{L}_v^*(\lambda,\mu_v;\mathbf{s})}.
\end{align*}

The local factors $\boldsymbol{e}_v^*(\lambda,\mu;\mathbf{s})$ were analyzed in loc.\ cit.\ for all $v\nmid\mathfrak{q}$. In particular, for all but finitely many places $v$, one has
\begin{align*}
\boldsymbol{e}_v^*(\lambda,\mu;\mathbf{s})\equiv W_{1,v}(I_2,s_1)\overline{W_{3,v}(I_2,\overline{s}_3)}. 
\end{align*}

Hence, in order to bound each
$\widetilde{\Psi}_{\mathrm{Dual}}^{(i)}(\mathbf{s},R(f_{\mathfrak{q}})\boldsymbol{h})$,
it remains to analyze the local factor
$\boldsymbol{e}_v^*(\lambda,\mu;\mathbf{s})$
at the places $v\mid\mathfrak{q}$.  

Let $\mathbf{S}_{\varepsilon}^2:=\big\{(0,0,s_3,s_4)\in \mathbb{C}^4:\ |s_3|=5\varepsilon^2,\ |s_4|=10\varepsilon^2\big\}$. It follows from \eqref{2.12} and the entireness of $\boldsymbol{e}^*(\lambda,\mu;\mathbf{s})$ that 
\begin{equation}\label{e10.3}
\widetilde{\Psi}_{*}^{(i)}(\mathbf{0}\mid R(f_{\mathfrak{q}})\boldsymbol{h})=\frac{1}{4\pi^2}\iint_{\mathbf{S}_{\varepsilon}^2}\frac{\widetilde{\Psi}_{*}^{(i)}(\mathbf{s},R(f_{\mathfrak{q}})\boldsymbol{h})}{s_3s_4}ds_3ds_4. 
\end{equation}

Suppose $\lambda\in i\mathbb{R}$ and $\mu_v\in \{\mathbf{1}, \omega_v, \overline{\omega}_v\}$. Let $\sigma_v=\xi_v\boxplus\xi_v^{-1}$ with $\xi_v=\mu_v|\cdot|_v^{\lambda}$. We obtain from Lemmas \ref{lem4.1} and \ref{lem4.2} that 
\begin{align*}
\widetilde{\Psi}^*(\lambda;W_h;\mathbf{s},R(f_{\mathfrak{q}})\boldsymbol{h})\equiv 0\ \ \ \text{unless\ \ $\sigma_v^{I_v(1)}\neq 0$}.
\end{align*}

Let $f_1$ and $f_2$ be sections in $\sigma_v$, defined as in \textsection\ref{sec5}. Since $\lambda\in i\mathbb{R}$, $\sigma_v$ is unitary. So 
\begin{align*}
&\langle f_1, f_1\rangle_v:=\int_{K_v}|f_1(k)|^2dk=\Vol(K_v[1])=(1+q_v)^{-1},\\
&\langle f_2, f_2\rangle_v:=\int_{K_v}|f_2(k)|^2dk=1-\Vol(K_v[1])=q_v(1+q_v)^{-1}. 
\end{align*} 

For $1 \leq j \leq 2$, let $W_v^{(j)}$ denote the Whittaker function associated with $f_j$; see \eqref{f4.2}. Define the normalized Whittaker function
\begin{align*}
W_v^{(f_j,\sharp)}
:=
\langle f_j, f_j \rangle_v^{-1/2} \, W_v^{(j)}.
\end{align*}
Then $\{ W_v^{(f_1,\sharp)},\, W_v^{(f_2,\sharp)} \}$ forms an orthonormal basis of $\sigma_v^{I_v(1)}$.

Each $W_v \in \{ W_v^{(f_1,\sharp)}, W_v^{(f_2,\sharp)} \}$ depends on the spectral parameter $\lambda \in i\mathbb{R}$; we therefore write it as $W_v(\cdot,\lambda)$. Moreover, $W_v(\cdot,\lambda)$ admits a meromorphic continuation to $\lambda \in \mathbb{C}$.

In parallel to \eqref{e8.5} we obtain 
\begin{align*}
\boldsymbol{e}_v^*(\lambda,\mu;\mathbf{s})=N_F(\mathfrak{q})^{-1}
\sum_{W_v(\cdot,\lambda)\in \{W_v^{(f_1,\sharp)}, W_v^{(f_2,\sharp)}\}}
\frac{\mathcal{I}_{v}(W_v(\cdot,\lambda);I_2)\mathcal{J}_{v}(W_v(\cdot,-\overline{\lambda}))}{\mathbf{L}_v^*(\lambda,\mu_v;\mathbf{s})},
\end{align*}
where 
\begin{align*}
\mathcal{I}_{v}(W_v(\cdot,\lambda);I_2):=\int_{N(F_v)\backslash G(F_v)}W_v(x,\lambda)\overline{W_{2,v}(x)}\Phi_{2,v}(\mathbf{e}_2x)|\det x|_v^{\frac{1}{2}+s_4}dx,
\end{align*}
and $\mathcal{J}_{v}(W_v(\cdot,-\overline{\lambda}))$ is defined by 
\begin{align*}
c\cdot \sum_{\beta\in \mathbb{F}_v}\overline{\psi}_v(\varpi_v^{-1}\beta)\,\int_{N(F_v)\backslash \overline{G}(F_v)}W_{1,v}^{\circ}\left(xA_{\mathfrak{q}}\right)\overline{W_{3,v}^{\circ}\left(xA_{\mathfrak{q}},\overline{s_3}\right)f(x,-\overline{\lambda})}dx.
\end{align*}
Here $f(\cdot,-\overline{\lambda})$ is the section corresponding to $W_v(\cdot,-\overline{\lambda})$, and 
\begin{align*}
|c|=\sqrt{\zeta_v(1)\cdot |\gamma(1/2,\pi_{1,v}\times\widetilde{\pi}_{1,v}\times\xi_v)|}
\end{align*}

Notice that when $s_4=0$, the integral $\mathcal{I}_{v}(\cdot;I_2)$ coincides with the one defined in \eqref{e4.6} of \textsection\ref{sec5.2}, and when $s_3=0$, $\mathcal{J}_{v}(\cdot)$ coincides with that defined in \eqref{e5.43} of \textsection\ref{sec5.3}. 

To emphasize the close connection with the previous calculations in \textsection\ref{sec5}, we retain the notation $\mathcal{I}_{v}(\cdot;I_2)$ and $\mathcal{J}_{v}(\cdot)$ throughout. We hope this slight abuse of notation will not cause any confusion.

\subsection{Upper Bound for \texorpdfstring{$\boldsymbol{e}_v^*(\lambda,\mu;\mathbf{s})$}{}}
Let $\mathbf{s}\in \mathbf{S}_{\varepsilon}^2$. Let 
\begin{multline*}
\mathfrak{S}:=\Big\{s_3+s_1-1/2, 1/2-s_3-s_1, s_3-s_1-1/2, 1/2-s_3+s_1,\\
s_4-s_2-1/2, 1/2-s_4+s_2, s_4+s_2-1/2, 1/2-s_4-s_2\Big\}.
\end{multline*}

Let $\lambda\in \mathfrak{S}$. Then $\mathbf{L}_v^*(\lambda,\mu_v;\mathbf{s})\gg N_F(\mathfrak{q})^{-100\varepsilon}$. Consequently, we obtain 
\begin{align*}
\boldsymbol{e}_v^*(\lambda,\mu;\mathbf{s})\ll N_F(\mathfrak{q})^{-1+100\varepsilon}
\sum_{W_v(\cdot,\lambda)\in \{W_v^{(f_1,\sharp)}, W_v^{(f_2,\sharp)}\}}
\big|\mathcal{I}_{v}(W_v(\cdot,\lambda);I_2)\mathcal{J}_{v}(W_v(\cdot,-\overline{\lambda}))\big|.	
\end{align*}

\begin{lemma}\label{lem10.2}
Let $\mathbf{s}\in \mathbf{S}_{\varepsilon}^2$ and $\lambda\in \mathfrak{S}$. Suppose $v\mid\mathfrak{q}$, the exponent $r_{\omega_v}=0$ and $\mu_v\in \{\mathbf{1}, \omega_v, \overline{\omega}_v\}$. Then  
\begin{equation}\label{f10.4}
\boldsymbol{e}_v^*(\lambda,\mu;\mathbf{s})\ll N_F(\mathfrak{q})^{\frac{3}{2}+200\varepsilon}.
\end{equation}
\end{lemma}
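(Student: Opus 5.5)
The bound \eqref{f10.4} will follow from the preceding display: it suffices to show that for each $W_v(\cdot,\lambda)\in\{W_v^{(f_1,\sharp)},W_v^{(f_2,\sharp)}\}$ we have
\begin{align*}
\big|\mathcal{I}_{v}(W_v(\cdot,\lambda);I_2)\,\mathcal{J}_{v}(W_v(\cdot,-\overline{\lambda}))\big|\ll N_F(\mathfrak{q})^{\frac52+100\varepsilon}.
\end{align*}
Since $r_{\omega_v}=0$ and $\mu_v\in\{\mathbf{1},\omega_v,\overline{\omega}_v\}$, the character $\xi_v=\mu_v|\cdot|_v^{\lambda}$ is unramified. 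Also $\omega_v\xi_v$ and $\overline{\omega}_v\xi_v$ are unramified. Hence only the unramified cases of \textsection\ref{sec5} occur: \eqref{5.25} for $\mathcal{I}_v(W_v^{(1)};I_2)$, \eqref{c5.29} for $\mathcal{I}_v(W_v^{(2)};I_2)$, and \eqref{5.44}, \eqref{5.55} for $\mathcal{J}_v^*$, combined with \eqref{f5.45}. All of these formulas are rational expressions in $\xi_v(\varpi_v)=\mu_v(\varpi_v)q_v^{-\lambda}$, $q_v^{\pm s_3}$, $q_v^{\pm s_4}$, and they continue meromorphically in $\lambda$. I will redo the computations with $|\det x|_v^{1/2+s_4}$ in $\mathcal{I}_v$ and $\overline{s_3}$ in $W_{3,v}^\circ$. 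This only shifts the geometric series by $q_v^{-is_4}$, $q_v^{-is_3}$, so the shapes of \eqref{5.25}, \eqref{c5.29}, \eqref{5.44}, \eqref{5.55} persist with $\xi_v(\varpi_v)$ replaced by twisted values.

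The key point is the size of $\xi_v(\varpi_v)$ for $\lambda\in\mathfrak{S}$. Here $\Re\lambda=\pm\tfrac12+O(\varepsilon^2)$, so $|\xi_v(\varpi_v)^{\pm1}|\le q_v^{1/2+O(\varepsilon^2)}$. This is far outside the tempered range, so convergence of the sums over $i$ in \eqref{5.25} and \eqref{c5.29} has to be checked separately. I would avoid this by summing the geometric series in closed form, writing the result as products of local $L$-factors such as $L(1/2,\xi_v^{\pm1})$ and $L(1,\xi_v^{\pm2})$ divided by the corresponding factors of $\mathbf{L}_v^*$. The argument then has two parts. First, poles of the closed forms are cancelled by the matching factors in $\mathbf{L}_v^*(\lambda,\mu_v;\mathbf{s})$; this is already encoded in the entireness of $\boldsymbol{e}^*$ and the lower bound $\mathbf{L}_v^*\gg N_F(\mathfrak{q})^{-100\varepsilon}$. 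Second, the remaining polynomial prefactors are bounded termwise.

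For the normalization, $\langle f_1,f_1\rangle_v^{-1/2}\asymp q_v^{1/2}$ and $\langle f_2,f_2\rangle_v^{-1/2}\asymp1$. I will then count powers of $q_v$, taking the worst case $|\xi_v(\varpi_v)^{\pm1}|=q_v^{1/2}$.
\begin{itemize}
\item In \eqref{5.25}, the bracket containing $q_v\zeta_v(1)^{-1}$ contributes $q_v$. The $\xi_v^{2i+2}q_v^{-1}$ term contributes at most $q_v^{O(\varepsilon)}$ once the series is resummed. So $\mathcal{I}_v(W_v^{(1)})\ll q_v^{1+O(\varepsilon)}$.
\item By \eqref{5.44}, $\mathcal{J}_v^*(W_v^{(1)})\ll q_v\cdot q_v^{-1/2}|\xi_v(\varpi_v)|\ll q_v^{1+O(\varepsilon)}$, using $\Vol(I_v^*(1))\asymp q_v^{-2}$ together with the factor $\Vol(I_v^*(1))^{-1}$ in $\mathcal{I}_v$.
\item Similarly \eqref{c5.29} and \eqref{5.55} give $\ll q_v^{1+O(\varepsilon)}$ and $\ll q_v^{3/2+O(\varepsilon)}$ respectively.
\item The constant satisfies $|c|\ll q_v^{O(1)\cdot\Re\lambda}$, controlled via the $\gamma$-factor formula \eqref{f5.46}. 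Its contribution is only $q_v^{O(\varepsilon)}$ after division by $\mathbf{L}_v^*$.
\end{itemize}
Collecting these, the total is at most $q_v^{5/2+O(\varepsilon)}$, and multiplying by $N_F(\mathfrak{q})^{-1+100\varepsilon}$ gives \eqref{f10.4}.

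The main obstacle is the non-tempered parameter $\Re\lambda=\pm1/2$. Near these points the naive termwise bounds on the series in \eqref{5.25} and \eqref{5.55} diverge or lose powers of $q_v$, and the $q_v^{1/2}$ loss in the normalization of $f_1$ has to be offset. I would handle this by first establishing the closed-form identities for general $\lambda$ in $\Re\lambda$ small, then continuing analytically. After that I would verify factor by factor that each denominator $L$-factor has a partner in $\mathbf{L}_v^*$. The crucial check is that the exponent $5/2$ is not exceeded in the cross term $-2\Re(\overline{\xi}_v(\varpi_v)A\,L_v)$ of \eqref{5.55}. Since $\xi_v$ is no longer unitary, "$\Re$" must there be reinterpreted as the holomorphic sum of $X$ and $\widetilde X$, where $\widetilde X$ is the corresponding term with $\lambda\mapsto-\overline{\lambda}$.
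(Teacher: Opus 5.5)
Your route is the paper's: restrict to the unramified formulas, resum into local $L$-factors to continue to $\lambda\in\mathfrak S$, use $|\xi_v(\varpi_v)^{\pm1}|\le q_v^{1/2+O(\varepsilon^2)}$, bound $|\mathcal I_v\mathcal J_v|$ by $q_v^{5/2+O(\varepsilon)}$, and multiply by $N_F(\mathfrak q)^{-1+100\varepsilon}$. However, the power count you give does not deliver $q_v^{5/2}$.

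\textbf{The $f_2$ term.} You claim \eqref{5.55} is $\ll q_v^{3/2+O(\varepsilon)}$ in the worst case, but that is its tempered size. The first term of \eqref{5.55} is $(q_v-1)q_v^{1/2}\overline\xi_v(\varpi_v)L(1,\overline\xi_v^2)^{-1}L(1/2,\pi_{1,v}\times\widetilde\pi_{1,v}\times\overline\xi_v)$, which has size $q_v^{2}$ when $|\overline\xi_v(\varpi_v)|=q_v^{1/2}$. In your own bookkeeping, $c$ and the $L$-ratios are $q_v^{O(\varepsilon)}$ after division by $\mathbf L_v^*$, so nothing offsets this. In particular $L(1,\overline\xi_v^2)^{-1}$ is exactly the factor $L_v(1-2\lambda,\overline\mu_v^2)^{-1}$ of $\mathbf L_v^*$, so it is divided out and gives no saving.

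The bad regimes of $\mathcal I_v$ and $\mathcal J_v$ also coincide. In $\mathcal J_v(W_v(\cdot,-\overline\lambda))$, the character written $\overline\xi_v$ in \textsection\ref{sec5.3} is $\overline\mu_v|\cdot|_v^{-\lambda}=\xi_v^{-1}$. Hence $|\overline\xi_v(\varpi_v)|=q_v^{1/2}$ exactly when $\Re\lambda\approx\frac12$, e.g.\ $\lambda=\frac12-s_3-s_1\in\mathfrak S$. At such $\lambda$ we have $|q_v^{-1/2}\xi_v^{-1}(\varpi_v)|\approx1$. So the $q_v\zeta_v(1)^{-1}$-term of \eqref{c5.29} makes $\mathcal I_v(W_v^{(2)};I_2)$ of size $q_v\Vol(I_v^*(1))^{-1}$, once the factors $L_v(\frac12+s_4-\lambda,\overline\mu_v)L_v(\frac12+s_4-\lambda,\overline\mu_v\overline\omega_v)$ of $\mathbf L_v^*$ are removed. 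The $f_2$-product is therefore of size $q_v^{3}$, and your method yields only $\boldsymbol{e}_v^*\ll N_F(\mathfrak q)^{2+O(\varepsilon)}$.

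\textbf{The $f_1$ term.} This has the same problem. Both $\mathcal I_v$ and $\mathcal J_v$ are evaluated on the normalized vector $W_v^{(f_1,\sharp)}$, so the product carries $\langle f_1,f_1\rangle_v^{-1}\asymp q_v$, not $q_v^{1/2}$. Your two bounds $\ll q_v$ come from the factor $1+\xi_v^2(\varpi_v)$ in $S_1$ (see \eqref{e10.6}) and from $\overline\xi_v^{-1}(\varpi_v)=\xi_v(\varpi_v)$ in \eqref{5.44}. Both are attained simultaneously at $\Re\lambda\approx-\frac12$, giving $q_v^3$ again.

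The paper's proof does not supply the missing $q_v^{1/2}$ either. It inserts the normalization only into $\mathcal I_v$ (compare \eqref{e10.4} with \eqref{10.13}). For $f_2$ it merely asserts \eqref{10.15} ``by a modification of \eqref{c5.29} and \eqref{5.55}''. So you have reproduced the paper's argument faithfully. Reaching the exponent $\frac32$ in \eqref{f10.4} would require an additional cancellation in $\sum_{W_v}\mathcal I_v\mathcal J_v$ at $\lambda\in\mathfrak S$, and neither argument exhibits one.
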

\begin{proof}
Suppose $W_v(\cdot,\lambda)=W_v^{(f_1,\sharp)}$. For $|\Re(\lambda)|<1/2$, we have by  \eqref{5.25} that 
\begin{equation}\label{e10.4}
\mathcal{I}_{v}(W_v(\cdot,\lambda);I_2)=\frac{\xi_v(-1)\omega_v(\varpi_v)}{\Vol(I_v^*(1))\zeta_v(1)\langle f_1, f_1 \rangle_v^{1/2}q_v^{d_v}}\cdot S(\lambda,s_4),
\end{equation}
where 
\begin{multline*}
S(\lambda,s_4):=\sum_{i\geq 0}q_v^{-(1/2+s_4)i}\xi_v^{-i}(\varpi_v)\bigg[\zeta_v(1)^{-1}\mathbf{1}_{i\geq 1}\sum_{m=1}^{i}\xi_v^{2m}(\varpi_v)-\xi_v^{2i+2}(\varpi_v)q_v^{-1}\bigg]\\
\Big[q_v\zeta_v(1)^{-1}\mathbf{1}_{i\geq 2}\sum_{l=0}^{i-2}\overline{\omega}_v^{l+1}(\varpi_v)-(\overline{\xi}_v(t)\overline{\omega}_v^{i}(\varpi_v) 
-1)\mathbf{1}_{i\geq 1}+q_v^{-1}\zeta_v(1)\overline{\xi}_v(t)\Big].
\end{multline*}

Expand the brackets we obtain 
\begin{equation}\label{e5.31}
S(\lambda,s_4)=S_1(\lambda,s_4)+S_2(\lambda,s_4)+S_3(\lambda,s_4)+S_4(\lambda,s_4)+S_5(\lambda,s_4)+S_6(\lambda,s_4),	
\end{equation}
where 
\begin{align*}
&S_1(\lambda,s_4):=q_v\zeta_v(1)^{-2}\sum_{i\geq 2}q_v^{-(1/2+s_4)i}\xi_v^{-i}(\varpi_v)\sum_{m=1}^{i}\xi_v^{2m}(\varpi_v)
\sum_{l=0}^{i-2}\overline{\omega}_v^{l+1}(\varpi_v),\\
&S_2(\lambda,s_4):=-\zeta_v(1)^{-1}\sum_{i\geq 1}q_v^{-(1/2+s_4)i}\xi_v^{-i}(\varpi_v)\sum_{m=1}^{i}\xi_v^{2m}(\varpi_v)
(\overline{\xi}_v(t)\overline{\omega}_v^{i}(\varpi_v) 
-1),\\
&S_3(\lambda,s_4):=q_v^{-1}\overline{\xi}_v(t)\sum_{i\geq 1}q_v^{-(1/2+s_4)i}\xi_v^{-i}(\varpi_v)\sum_{m=1}^{i}\xi_v^{2m}(\varpi_v),\\
&S_4(\lambda,s_4):=-\zeta_v(1)^{-1}\sum_{i\geq 2}q_v^{-(1/2+s_4)i}\xi_v^{i+2}(\varpi_v)\sum_{l=0}^{i-2}\overline{\omega}_v^{l+1}(\varpi_v),\\
&S_5(\lambda,s_4):=q_v^{-1}\xi_v^{2}(\varpi_v)\sum_{i\geq 1}q_v^{-(1/2+s_4)i}\xi_v^{i}(\varpi_v)(\overline{\xi}_v(t)\overline{\omega}_v^{i}(\varpi_v) 
-1),\\
&S_6(\lambda,s_4):=-q_v^{-2}\xi_v^{2}(\varpi_v)\zeta_v(1)\overline{\xi}_v(t)\sum_{i\geq 0}q_v^{-(1/2+s_4)i}\xi_v^{i}(\varpi_v).
\end{align*}

Upon making the change of variables $i \mapsto i+2$, a direct computation shows that
\begin{multline}\label{e10.6}
S_1(\lambda,s_4)=\overline{\omega}_v(\varpi_v)\big[1+\xi_v^{2}(\varpi_v)-q_v^{-1/2-s_4}\xi_v(\varpi_v)(1+\overline{\omega}_v(\varpi_v))\big]\\
q_v^{-2s_4}\zeta_v(1)^{-2}L(1/2+s_4,\sigma_v)L(1/2+s_4,\sigma_v\times\overline{\omega}_v). 
\end{multline}

Making the change of variable $i\mapsto i+1$ yields 
\begin{equation}\label{e10.8}
S_3(\lambda,s_4)=q_v^{-\frac{3}{2}-s_4}\overline{\xi}_v(t)\xi_v(\varpi_v)L(1/2+s_4,\sigma_v),
\end{equation}
and 
\begin{multline}\label{e10.9}
S_4(\lambda,s_4)=-q_v^{-1-2s_4}\xi_v^{4}(\varpi_v)\overline{\omega}_v(\varpi_v)\zeta_v(1)^{-1}(1-q_v^{-1/2-s_4}\xi_v^{-1}(\varpi_v))\\
(1-q_v^{-1/2-s_4}\xi_v^{-1}(\varpi_v)\overline{\omega}_v(\varpi_v))L(1/2+s_4,\sigma_v)L(1/2+s_4,\sigma_v\times\overline{\omega}_v).
\end{multline}

Likewise, we obtain the explicit formula 
\begin{multline}\label{e10.7}
S_2(\lambda,s_4)=-\zeta_v(1)^{-1}\xi_v(\varpi_v)
\Big[
q_v^{-1/2-s_4}\overline{\omega}_v\xi_v^{-1}(\varpi_v)(1-\overline{\xi}_v(t))(1+\xi_v^2(\varpi_v))\\
+\overline{\xi}_v(t)\overline{\omega}_v(\varpi_v)-1+
q_v^{-(1+2s_4)}\overline{\omega}_v(\varpi_v)(\overline{\xi}_v(t)-\overline{\omega}_v(\varpi_v))\Big]q_v^{-1/2-s_4}L(\cdots),
\end{multline}
where $L(\cdots):=L(1/2+s_4,\sigma_v)L(1/2+s_4,\sigma_v\times\overline{\omega}_v)$, 
and 
\begin{multline}\label{e10.10}
S_5(\lambda,s_4)=\xi_v^{3}(\varpi_v)\big[\overline{\xi}_v(t)\overline{\omega}_v(\varpi_v)-1
+q_v^{-1/2-s_4}\overline{\omega}_v\xi_v(\varpi_v)(1-\overline{\xi}_v(t))\big]\\
(1-q_v^{-1/2-s_4}\xi_v^{-1}(\varpi_v))
(1-q_v^{-1/2-s_4}\xi_v^{-1}(\varpi_v)\overline{\omega}_v(\varpi_v))q_v^{-\frac{3}{2}-s_4}L(\cdots).
\end{multline}

Finally, we have 
\begin{equation}\label{e10.11}
S_6(\lambda,s_4)=-q_v^{-2}\xi_v^{2}(\varpi_v)\zeta_v(1)\overline{\xi}_v(t)(1-q_v^{-1/2-s_4}\xi_v^{-1}(\varpi_v))L(1/2+s_4,\sigma_v).
\end{equation}

As a consequence, substituting \eqref{e5.31}, \eqref{e10.6}, \eqref{e10.8}, \eqref{e10.9}, \eqref{e10.7}, \eqref{e10.10} and \eqref{e10.11} into \eqref{e10.4} yields a meromorphic continuation of $\mathcal{I}_{v}(W_v(\cdot,\lambda);I_2)$ into $(\lambda,s_4)\in \mathbb{C}^2$:
\begin{equation}\label{10.12}
\mathcal{I}_{v}(W_v(\cdot,\lambda);I_2)\ll q_v^{\frac{1}{2}}\Vol(I_v^*(1))^{-1}\sum_{i=1}^6\big|S_i(\lambda,s_4)\big|.	
\end{equation} 

Moreover, by \eqref{e5.39} we obtain 
\begin{multline*}
\mathcal{J}_{v}(W_v(\cdot,-\overline{\lambda}))=c\cdot \Vol(I_v^*(1))\sum_{i\in \mathbb{Z}}q_v^{(1/2-\lambda)i}\overline{\xi}_v^i(\varpi_v)\\
W_{1,v}^{\circ}\left(\begin{pmatrix}
\varpi_v^{i+1}\\
& 1
\end{pmatrix}
\right)\overline{W_{3,v}^{\circ}\left(\begin{pmatrix}
\varpi_v^{i+1}\\
& 1
\end{pmatrix}
\right)}\cdot S(i),
\end{multline*}
where $S(i)$ is given by \eqref{5.40}. Therefore, 
\begin{equation}\label{10.13}
\mathcal{J}_{v}(W_v(\cdot,-\overline{\lambda}))\lll \Vol(I_v^*(1))q_v^{\frac{1}{2}+100\varepsilon}|\overline{\xi}_v^{-1}(\varpi_v)||W_{1,v}^{\circ}(I_2)|^2. 
\end{equation}
Here, $\pi_{1,v}=\mathbf{1}\boxplus \mathbf{1}$ and $\overline{\pi}_{3,v}=|\cdot|_v^{s_3}\boxplus|\cdot|_v^{-s_3}$. 

Notice that for $\mathbf{s}\in \mathbf{S}_{\varepsilon}^2$ and $\lambda\in \mathfrak{S}$, 
\begin{equation}\label{e10.15}
|\xi_v(\varpi_v)|\leq N_F(\mathfrak{q})^{1/2+20\varepsilon^2}.
\end{equation}
By \eqref{10.12},  \eqref{10.13} and \eqref{e10.15} we obtain 
\begin{equation}\label{f10.5}
\big|\mathcal{I}_{v}(W_v(\cdot,\lambda);I_2)\mathcal{J}_{v}(W_v(\cdot,-\overline{\lambda}))\big|\ll   N_F(\mathfrak{q})^{\frac{5}{2}+100\varepsilon}.
\end{equation}

Suppose $W_v(\cdot,\lambda)=W_v^{(f_2,\sharp)}$. A modification of 
\eqref{c5.29} and \eqref{5.55} as above yields 
\begin{equation}\label{10.15}
\big|\mathcal{I}_{v}(W_v(\cdot,\lambda);I_2)\mathcal{J}_{v}(W_v(\cdot,-\overline{\lambda}))\big|\ll N_F(\mathfrak{q})^{\frac{5}{2}+100\varepsilon}.
\end{equation}

Therefore, \eqref{f10.4} follows from \eqref{f10.5} and \eqref{10.15}. 
\end{proof}

\begin{lemma}\label{lem10.3}
Let $\mathbf{s}\in \mathbf{S}_{\varepsilon}^2$ and $\lambda\in \mathfrak{S}$. Suppose $v\mid\mathfrak{q}$,  $r_{\omega_v}=1$, and $\mu_v=\mathbf{1}$.  Then 
\begin{equation}\label{f10.18}
\boldsymbol{e}_v^*(\lambda,\mu;\mathbf{s})\ll N_F(\mathfrak{q})^{\frac{3}{2}+200\varepsilon}.
\end{equation}
\end{lemma}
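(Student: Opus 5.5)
We follow the proof of Lemma \ref{lem10.2}, now with $r_{\omega_v}=1$ and $\mu_v=\mathbf{1}$, so that $\xi_v=|\cdot|_v^{\lambda}$ is unramified and $\overline{\omega}_v\xi_v$ has conductor exponent $1$. By the reduction already made, it suffices to show that for each $W_v(\cdot,\lambda)\in\{W_v^{(f_1,\sharp)},W_v^{(f_2,\sharp)}\}$,
\[
\big|\mathcal{I}_{v}(W_v(\cdot,\lambda);I_2)\,\mathcal{J}_{v}(W_v(\cdot,-\overline{\lambda}))\big|\ll N_F(\mathfrak{q})^{\frac52+100\varepsilon}.
\]
The bound \eqref{f10.18} then follows from the factor $N_F(\mathfrak{q})^{-1+100\varepsilon}$.

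For $W_v^{(f_1,\sharp)}$ we start from \eqref{5.21}, the case $r_{\xi_v}=0$, $r_{\omega_v}=1$. We first redo its derivation with $|\det x|_v^{1/2+s_4}$ in place of $|\det x|_v^{1/2}$; this just replaces $q_v^{-i/2}$ by $q_v^{-(1/2+s_4)i}$ in \eqref{e5.22}. The geometric sums then collapse as in \eqref{f5.23}. The result is a closed form consisting of the Gauss sum $G(\overline{\omega}_v\xi_v,\psi_v)$, of size $q_v^{1/2}$, times a bracket. The bracket is a combination of $L(1/2+s_4,\sigma_v)$, $\zeta_v(1)^{-1}q_v^{-1}\xi_v^2(\varpi_v)L(1/2+s_4,\xi_v)$, and $q_v^{-3/2}\overline{\xi}_v(t)\xi_v(\varpi_v)$. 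This gives the meromorphic continuation of $\mathcal{I}_v$ in $(\lambda,s_4)$. We bound it using \eqref{e10.15}: $|\xi_v(\varpi_v)|\le q_v^{1/2+20\varepsilon^2}$ for $\lambda\in\mathfrak{S}$. Each term in the bracket carries enough powers of $q_v^{-1}$ to offset the powers of $\xi_v(\varpi_v)$, up to a single possible factor $q_v^{1/2}$, and the local $L$-factors stay bounded away from their poles on $\mathbf{S}_\varepsilon^2$. Including the normalization $\langle f_1,f_1\rangle_v^{-1/2}\asymp q_v^{1/2}$ and $\Vol(I_v^*(1))^{-1}\asymp q_v^2$, we aim for
\[
\mathcal{I}_v(W_v^{(f_1,\sharp)}(\cdot,\lambda);I_2)\ll \Vol(I_v^*(1))^{-1}q_v^{\frac32+O(\varepsilon)}.
\]

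The factor $\mathcal{J}_v$ does not involve $\omega_v$. We therefore reuse \eqref{10.13} verbatim, from \eqref{e5.39} and \eqref{5.40}:
\[
\mathcal{J}_v\ll \Vol(I_v^*(1))\,q_v^{\frac12+100\varepsilon}\,|\xi_v(\varpi_v)|^{-1}.
\]
The factor $|\xi_v(\varpi_v)|^{-1}$ cancels one power of $\xi_v$ from $\mathcal{I}_v$. Multiplying the two bounds, the volumes cancel and we get $q_v^{5/2+O(\varepsilon)}$, as required.

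For $W_v^{(f_2,\sharp)}$, where $\langle f_2,f_2\rangle_v\asymp 1$, we use \eqref{c5.28}, the case $r_{\xi_v}=0$, $r_{\omega_v\xi_v}=1$, with the same $s_4$-modification. It equals a Gauss sum of size $q_v^{1/2}$ times $q_v^{-1}\big[\zeta_v(1)^{-1}q_v^{1/2}\xi_v^{-1}(\varpi_v)L(1/2+s_4,\xi_v^{-1})-\overline{\xi}_v(t)\big]$, which is $\ll q_v^{O(\varepsilon)}$ times $\Vol(I_v^*(1))^{-1}$. For $\mathcal{J}_v$ we use the analogue of \eqref{5.55} with $\lambda$ continued off the unitary axis. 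Its terms are at most $q_v^{3/2}|\xi_v(\varpi_v)|^{\pm1}$ times bounded $L$-values, and the $L(1,\overline{\xi}_v^2)^{-1}$ factors are harmless. This should again give a product $\ll q_v^{5/2+O(\varepsilon)}$.

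The main obstacle is the bookkeeping of powers of $\xi_v(\varpi_v)$, which can be as large as $q_v^{1/2}$ at $\lambda=\pm(1/2-s_3-s_1)$ and similar points. One must check that no term of \eqref{5.21} or \eqref{c5.28}, after the $s_4$-shift, produces $\xi_v^{\pm2}(\varpi_v)$ without a compensating $q_v^{-1}$. If the crude bound fails for some term, the fallback is to pair that term with the matching term of $\mathcal{J}_v$, using $\xi_v(\varpi_v)\overline{\xi}_v^{-1}(\varpi_v)$-type cancellation. Alternatively, one can exploit that only $\lambda\in\mathfrak{S}$ with $|\Re\lambda|$ near $1/2$ is relevant, so that $|\xi_v(\varpi_v)|$ and $|\xi_v(\varpi_v)|^{-1}$ cannot both be large.
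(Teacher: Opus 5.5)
Your proposal follows the paper's proof: you split over $W_v^{(f_1,\sharp)}$ and $W_v^{(f_2,\sharp)}$ and modify exactly \eqref{5.21}, \eqref{5.44} (via \eqref{10.13}) and \eqref{c5.28}, \eqref{5.55}; the paper records $N_F(\mathfrak q)^{1+100\varepsilon}$ and $N_F(\mathfrak q)^{5/2+100\varepsilon}$ for the two products, so your weaker $q_v^{5/2}$ target in the $f_1$ case is still enough. One bookkeeping slip, in the $f_2$ case: when $\lambda\in\{1/2-s_3,\,1/2-s_4\}$ one has $|\xi_v(\varpi_v)|\approx q_v^{-1/2}$, so the term $\zeta_v(1)^{-1}q_v^{1/2}\xi_v^{-1}(\varpi_v)L(1/2+s_4,\xi_v^{-1})$ has size $q_v^{1+O(\varepsilon)}$ and $\mathcal I_v(W_v^{(f_2,\sharp)}(\cdot,\lambda);I_2)$ is only $\ll \Vol(I_v^*(1))^{-1}q_v^{1/2+O(\varepsilon)}$, not $\ll \Vol(I_v^*(1))^{-1}q_v^{O(\varepsilon)}$; your conclusion survives because your bound $\mathcal J_v\ll \Vol(I_v^*(1))\,q_v^{3/2}|\xi_v(\varpi_v)|^{\pm1}\le \Vol(I_v^*(1))\,q_v^{2+O(\varepsilon)}$ then gives exactly $q_v^{5/2+O(\varepsilon)}$, which is the exponent in \eqref{10.20}.
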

\begin{proof}
Suppose $W_v(\cdot,\lambda)=W_v^{(f_1,\sharp)}$. A modification of \eqref{5.21} and \eqref{5.44} yields 
\begin{equation}\label{10.19}
\big|\mathcal{I}_{v}(W_v(\cdot,\lambda);I_2)\mathcal{J}_{v}(W_v(\cdot,-\overline{\lambda}))\big|\ll N_F(\mathfrak{q})^{1+100\varepsilon}.
\end{equation}

Suppose $W_v(\cdot,\lambda)=W_v^{(f_2,\sharp)}$. A modification of \eqref{c5.28} and \eqref{5.55} leads to  
\begin{equation}\label{10.20}
\big|\mathcal{I}_{v}(W_v(\cdot,\lambda);I_2)\mathcal{J}_{v}(W_v(\cdot,-\overline{\lambda}))\big|\ll N_F(\mathfrak{q})^{\frac{5}{2}+100\varepsilon}.
\end{equation}

Therefore, \eqref{f10.18} follows from \eqref{10.19} and \eqref{10.20}. 
\end{proof}

\begin{lemma}\label{lem10.4}
Let $\mathbf{s}\in \mathbf{S}_{\varepsilon}^2$ and $\lambda\in \mathfrak{S}$. Suppose $v\mid\mathfrak{q}$,  $r_{\omega_v}=1$, and $\mu_v=\omega_v$. Then 
\begin{equation}\label{f10.21}
\boldsymbol{e}_v^*(\lambda,\mu;\mathbf{s})\ll N_F(\mathfrak{q})^{\frac{1} {2}+200\varepsilon}.
\end{equation}
\end{lemma}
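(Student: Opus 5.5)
Following Lemmas \ref{lem10.2} and \ref{lem10.3}, it suffices to bound $|\mathcal{I}_{v}(W_v(\cdot,\lambda);I_2)\mathcal{J}_{v}(W_v(\cdot,-\overline{\lambda}))|$ for each basis vector $W_v(\cdot,\lambda)\in\{W_v^{(f_1,\sharp)},W_v^{(f_2,\sharp)}\}$. We then use the preliminary estimate
\[
\boldsymbol{e}_v^*(\lambda,\mu;\mathbf{s})\ll N_F(\mathfrak{q})^{-1+100\varepsilon}\sum|\mathcal{I}_{v}\mathcal{J}_{v}|,
\]
which was derived from $\mathbf{L}_v^*\gg N_F(\mathfrak{q})^{-100\varepsilon}$. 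The target is therefore $|\mathcal{I}_v\mathcal{J}_v|\ll N_F(\mathfrak{q})^{3/2+100\varepsilon}$ for both vectors.

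In the present case $\sigma_v=\xi_v\boxplus\xi_v^{-1}$ with $\xi_v=\omega_v|\cdot|_v^{\lambda}$. Hence $r_{\xi_v}=1$, and $\overline{\omega}_v\xi_v=|\cdot|_v^{\lambda}$ is unramified, so $r_{\overline{\omega}_v\xi_v}=0$. Also $\omega_v\xi_v=\omega_v^2|\cdot|^\lambda$. Its conductor is $1$ unless $\omega_v$ is quadratic, in which case it is $0$.

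For $W_v^{(f_1,\sharp)}$, I would start from \eqref{5.19}, with the $s_4$-twist inserted as in the proof of Lemma \ref{lem10.2}. Its right-hand side is an explicit finite combination: the Gauss sum $G(\xi_v,\overline{\psi}_v)$ of size $q_v^{1/2}$, times $q_v^{-1/2}$, times bracketed terms involving $L(1/2+s_4,\overline{\omega}_v\xi_v^{\pm1})$ and $G(\overline{\xi}_v^2,\overline{\psi}_v)$. This continues meromorphically in $(\lambda,s_4)$. Including the normalization $\langle f_1,f_1\rangle_v^{-1/2}\asymp q_v^{1/2}$, I expect $\mathcal{I}_v\ll q_v^{1/2+O(\varepsilon)}\Vol(I_v^*(1))^{-1}$. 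This uses $|\xi_v(\varpi_v)|\le q_v^{1/2+20\varepsilon^2}$ from \eqref{e10.15}, and I would track the powers of $\xi_v(\varpi_v)$ carefully.

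On the other side, \eqref{f5.37} gives $\mathcal{J}_v^*(W_v^{(1)})\ll\Vol(I_v^*(1))q_v^{1/2}|\xi_v(\varpi_v)|\,|G(\xi_v,\overline{\psi}_v)|^2$. Since $r_{\xi_v}=1$, the sum $S(i)$ in \eqref{5.39} is supported on $i\in\{-1,-2\}$. So no infinite series appears, and the $s_3$-shift only introduces factors $q_v^{O(\varepsilon^2)}$. The constant $c$ is $\ll q_v^{\varepsilon}$ by \eqref{f5.46}.

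For $W_v^{(f_2,\sharp)}$, I would use \eqref{c5.26} or \eqref{c5.27} according to $r_{\omega_v\xi_v}$. In either case this gives $\mathcal{I}_v\ll q_v^{O(\varepsilon)}\Vol(I_v^*(1))^{-1}$, since the two Gauss sums are divided by $q_v$. I would pair this with \eqref{5.54}, which gives $|\mathcal{J}_v|\ll q_v^{-1/2}\cdot q_v\cdot q_v^{1/2}\Vol(I_v^*(1))=q_v\Vol(I_v^*(1))$ via the Weil bound $|J(\overline{\xi}_v,\overline{\xi}_v)|\le q_v^{1/2}$. The product is then $\ll q_v^{1+O(\varepsilon)}$.

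The main obstacle is the $W_v^{(f_1,\sharp)}$ term. There the growth factor $|\xi_v(\varpi_v)|^{\pm1}$ at $\lambda\in\mathfrak{S}$, which can be as large as $q_v^{1/2}$, multiplies the normalization $q_v^{1/2}$ and the Gauss sum factors. A crude count gives $q_v^{1/2}\cdot q_v^{1/2}\cdot q_v^{3/2}$, which overshoots. I would first check whether the explicit powers $\xi_v^{2}(\varpi_v)$ in \eqref{5.19} cancel against $\overline{\xi}_v^{-1}(\varpi_v)$ in \eqref{f5.37}, which requires using $-\overline{\lambda}$ on the $\mathcal{J}$ side. Failing that, I would exploit the fact that only the residue terms at $\lambda\in\mathfrak{S}$ matter. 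There $|\xi_v(\varpi_v)|$ is near $q_v^{\pm1/2}$, but the accompanying local $L$-factor $L(1/2+s_4,\overline{\omega}_v\xi_v^{\pm1})$ takes values near $\zeta_v(1\pm\cdot)$ and stays bounded. This should give the net saving of $q_v^{1}$ relative to Lemma \ref{lem10.2}, producing $\boldsymbol{e}_v^*\ll N_F(\mathfrak{q})^{1/2+200\varepsilon}$.
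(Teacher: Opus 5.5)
You follow the paper's route exactly: the reduction via $\mathbf{L}_v^*\gg N_F(\mathfrak q)^{-100\varepsilon}$, the observation $r_{\xi_v}=1$, $r_{\overline{\omega}_v\xi_v}=0$, and the pairings \eqref{5.19} with \eqref{f5.37} and \eqref{c5.26}/\eqref{c5.27} with \eqref{5.54}. The gap is that the case you yourself call the main obstacle, $W_v^{(f_1,\sharp)}$, is never bounded. Your count gives $q_v^{5/2}$ against the required $q_v^{3/2}$, and your last paragraph only lists things to try. The second option cannot work: local $L$-factors that merely stay bounded give no power saving. Your first idea points the right way, but the mechanism is not a cancellation between \eqref{5.19} and \eqref{f5.37}. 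Rather, both factors are small at the relevant point.

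The missing step is to evaluate the powers of $\xi_v(\varpi_v)$, instead of bounding $\xi_v(\varpi_v)$ and $\xi_v(\varpi_v)^{-1}$ simultaneously by $q_v^{1/2}$ via \eqref{e10.15}. For $\mu=\omega$ the lemma enters only through $\widetilde{\Psi}_{\mathrm{Dual}}^{(6)}$, i.e.\ at $\lambda=\frac12+s_2-s_4$, where $|\xi_v(\varpi_v)|=q_v^{-\Re\lambda}\approx q_v^{-1/2}$.
\begin{itemize}
\item In \eqref{5.19}, the prefactor $\omega_v\xi_v^2(\varpi_v)q_v^{-1/2}G(\xi_v,\overline{\psi}_v)$ has size $\approx q_v^{-1}$. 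Inside the bracket, $\overline{\omega}_v\xi_v(\varpi_v)=q_v^{-\lambda}$ is small and $L(\frac12+s_4,\overline{\omega}_v\xi_v)\asymp 1$. Only the term with $\overline{\omega}_v\xi_v^{-1}(\varpi_v)$, present only when $\omega_v^2$ is unramified, reaches $q_v^{1/2+O(\varepsilon)}$. Hence $\mathcal{I}_v(W_v^{(1)};I_2)\ll q_v^{-1/2+O(\varepsilon)}\Vol(I_v^*(1))^{-1}$.
\item In \eqref{f5.37}, which carries $q_v^{-1/2}$ and not $q_v^{1/2}$ as you wrote, the factor $\overline{\xi}_v^{-1}(\varpi_v)$ belongs to the conjugated section with parameter $-\overline{\lambda}$. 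It equals $\mu_v(\varpi_v)q_v^{-\lambda}$, again of size $q_v^{-1/2}$. So $\mathcal{J}_v(W_v^{(1)})\ll q_v^{O(\varepsilon)}\Vol(I_v^*(1))$.
\item Including $\langle f_1,f_1\rangle_v^{-1}\asymp q_v$, the $f_1$-contribution is $\ll q_v^{1/2+O(\varepsilon)}$. This is within the paper's \eqref{f10.22}.
\end{itemize}

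Conversely, your $f_2$ estimate is wrong when $r_{\omega_v\xi_v}=0$, i.e.\ when $\omega_v^2$ is unramified. Formula \eqref{c5.27} has one Gauss sum against $q_v^{-1/2}$, not two against $q_v^{-1}$. It also carries the factor $\overline{\xi}_v(\varpi_v)$, whose holomorphic continuation is $\xi_v^{-1}(\varpi_v)$, of size $q_v^{\Re\lambda}\approx q_v^{1/2}$. Meanwhile $(1-\omega_v\xi_v(\varpi_v)q_v^{-1/2})L(\frac12,\overline{\omega}_v\xi_v^{-1})$ contributes only $q_v^{O(\varepsilon)}$. So $\mathcal{I}_v(W_v^{(2)};I_2)$ can be as large as $q_v^{1/2+O(\varepsilon)}\Vol(I_v^*(1))^{-1}$. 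Combined with \eqref{5.54}, the product is $q_v^{3/2+O(\varepsilon)}$, which is the paper's \eqref{f10.23}. This still meets the target. However, it is this case, not $f_1$, that saturates $q_v^{3/2}$ and fixes the exponent $\frac12$ in \eqref{f10.21}, so your account of where the difficulty lies is reversed.
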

\begin{proof}
Suppose $W_v(\cdot,\lambda)=W_v^{(f_1,\sharp)}$. A modification of \eqref{5.19} and \eqref{f5.37} yields 
\begin{equation}\label{f10.22}
\big|\mathcal{I}_{v}(W_v(\cdot,\lambda);I_2)\mathcal{J}_{v}(W_v(\cdot,-\overline{\lambda}))\big|\ll N_F(\mathfrak{q})^{1+100\varepsilon}.
\end{equation}

Suppose $W_v(\cdot,\lambda)=W_v^{(f_2,\sharp)}$. 
\begin{itemize}
\item Suppose $r_{\omega_v^2}=0$. A modification of \eqref{c5.27} and \eqref{5.54} leads to  
\begin{equation}\label{f10.23}
\big|\mathcal{I}_{v}(W_v(\cdot,\lambda);I_2)\mathcal{J}_{v}(W_v(\cdot,-\overline{\lambda}))\big|\ll N_F(\mathfrak{q})^{\frac{3}{2}+100\varepsilon}. 
\end{equation}
\item Suppose $r_{\omega_v^2}=1$. A modification of \eqref{c5.26} and \eqref{5.54} leads to  
\begin{equation}\label{f10.24}
\big|\mathcal{I}_{v}(W_v(\cdot,\lambda);I_2)\mathcal{J}_{v}(W_v(\cdot,-\overline{\lambda}))\big|\ll N_F(\mathfrak{q})^{1+100\varepsilon}. 
\end{equation}
\end{itemize}

Therefore, \eqref{f10.21} follows from \eqref{f10.22}, \eqref{f10.23} and \eqref{f10.24}. 
\end{proof}

\begin{lemma}\label{lem10.5}
Let $\mathbf{s}\in \mathbf{S}_{\varepsilon}^2$ and $\lambda\in \mathfrak{S}$. Suppose $v\mid\mathfrak{q}$,  $r_{\omega_v}=1$, and $\mu_v=\overline{\omega}_v$. Then 
\begin{equation}\label{f10.25}
\boldsymbol{e}_v^*(\lambda,\mu;\mathbf{s})\ll N_F(\mathfrak{q})^{\frac{1}{2}+200\varepsilon}.
\end{equation}
\end{lemma}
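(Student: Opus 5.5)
We follow the template of Lemmas \ref{lem10.3} and \ref{lem10.4}. Since $\mathbf{s}\in\mathbf{S}_{\varepsilon}^2$ and $\lambda\in\mathfrak{S}$, we already have $\mathbf{L}_v^*(\lambda,\mu_v;\mathbf{s})\gg N_F(\mathfrak{q})^{-100\varepsilon}$. Hence
\begin{align*}
\boldsymbol{e}_v^*(\lambda,\mu;\mathbf{s})\ll N_F(\mathfrak{q})^{-1+100\varepsilon}\sum_{W_v(\cdot,\lambda)\in\{W_v^{(f_1,\sharp)},W_v^{(f_2,\sharp)}\}}\big|\mathcal{I}_{v}(W_v(\cdot,\lambda);I_2)\mathcal{J}_{v}(W_v(\cdot,-\overline{\lambda}))\big|,
\end{align*}
and it suffices to show that each product is $\ll N_F(\mathfrak{q})^{3/2+100\varepsilon}$.

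Here $\xi_v=\overline{\omega}_v|\cdot|_v^{\lambda}$, so $r_{\xi_v}=1$ and $r_{\overline{\omega}_v\xi_v}=r_{\overline{\omega}_v^2}$, while $r_{\omega_v\xi_v}=0$.

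For $W_v^{(f_1,\sharp)}$ we use Lemma \ref{lem5.3}. If $r_{\omega_v^2}=1$, this is the case \eqref{5.13}; if $r_{\omega_v^2}=0$, it is the case \eqref{5.19}. In either case we insert the shifted exponent $|\det x|_v^{1/2+s_4}$, which only replaces $q_v^{-i/2}$ by $q_v^{-(1/2+s_4)i}$ in the geometric sums. Each Gauss sum contributes $q_v^{1/2}$. The normalization $\langle f_1,f_1\rangle_v^{-1/2}\asymp q_v^{1/2}$ and the factor $\Vol(I_v^*(1))^{-1}\asymp q_v^{3}$ must be tracked carefully against $\mathcal{J}_v$, which by \eqref{f5.37} is $\Vol(I_v^*(1))q_v^{1/2}|G(\xi_v,\overline{\psi}_v)|^2 c$ with $|c|\ll q_v^{\varepsilon}$. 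The $\Vol$ factors cancel. The bound \eqref{e10.15} on $|\xi_v(\varpi_v)|$ controls the powers of $\xi_v(\varpi_v)$ and the $L$-factors, and the points of $\mathfrak{S}$ stay a distance $\gg\varepsilon^2$ from their poles. This gives a product of size $q_v^{1+100\varepsilon}$ or $q_v^{3/2+100\varepsilon}$, depending on whether the extra Gauss sum $G(\overline{\xi}_v^2,\cdot)$ appears.

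For $W_v^{(f_2,\sharp)}$ the relevant case is \eqref{c5.27}, since $r_{\omega_v\xi_v}=0$. There $\mathcal{I}_v$ carries a single Gauss sum $G(\xi_v,\overline{\psi}_v)$ times $L(1/2+s_4,\overline{\omega}_v\xi_v^{-1})$, which stays bounded at $\lambda\in\mathfrak{S}$. The companion factor $\mathcal{J}_v$ comes from \eqref{5.54} and contains $G\,G\,J$, of size $q_v^{3/2}$. The normalization $\langle f_2,f_2\rangle_v^{-1/2}\asymp1$ adds nothing. Collecting the powers of $q_v$ gives $\ll q_v^{3/2+100\varepsilon}$.

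The \textbf{main obstacle} is bookkeeping rather than any new idea. Most of the exponent comes from the factors $|\xi_v(\varpi_v)|^{\pm k}$ produced by \eqref{e10.15} at non-unitary $\lambda\in\mathfrak{S}$, and these must be balanced between $\mathcal{I}_v$ and $\mathcal{J}_v$: the evaluation at $\lambda$ against that at $-\overline{\lambda}$ pairs $\xi_v$ with $\overline{\xi}_v^{-1}$. I would check that these factors cancel pairwise, as happened in \eqref{5.13} and \eqref{f5.37} with $\xi_v^2(\varpi_v)$ against $\overline{\xi}_v^{-1}(\varpi_v)$. If they do not, I would fall back on the crude bound \eqref{e10.15} and accept a possible loss. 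Summing the two basis contributions and multiplying by $N_F(\mathfrak{q})^{-1+100\varepsilon}$ then yields \eqref{f10.25}.
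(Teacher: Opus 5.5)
Your outline is the paper's proof. It uses the same reduction via $\mathbf{L}_v^*(\lambda,\mu_v;\mathbf{s})\gg N_F(\mathfrak{q})^{-100\varepsilon}$ and the same split over $\{W_v^{(f_1,\sharp)},W_v^{(f_2,\sharp)}\}$. The subcases $r_{\omega_v^2}=1$ and $r_{\omega_v^2}=0$ go through \eqref{5.13} and \eqref{5.19} respectively, each paired with \eqref{f5.37}, and the $f_2$ term uses \eqref{c5.27} paired with \eqref{5.54}. The paper then simply asserts the resulting bounds ``by a modification'' of these formulas. The gap is in the one step you make explicit and call the main obstacle. For $W_v^{(f_1,\sharp)}$ the powers of $\xi_v(\varpi_v)$ do not cancel, and your fallback loses exactly the power the lemma cannot afford.

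For $\mu_v=\overline{\omega}_v$ the lemma is only used for $\widetilde{\Psi}^{(5)}_{\mathrm{Dual}}$, i.e.\ at $\lambda=s_4-s_2-\frac12$, where $|\xi_v(\varpi_v)|=q_v^{-\Re\lambda}=q_v^{1/2+O(\varepsilon^2)}$.

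\begin{itemize}
\item The factor $\overline{\xi}_v^{-1}(\varpi_v)$ in \eqref{f5.37} is computed for the section $f(\cdot,-\overline{\lambda})$, whose character is $\mu_v|\cdot|_v^{-\overline{\lambda}}$. So it equals $\mu_v(\varpi_v)q_v^{-\lambda}=\xi_v(\varpi_v)$, not $\xi_v(\varpi_v)^{-1}$.
\item Hence in the subcase \eqref{5.13} the product carries $\xi_v^2(\varpi_v)\cdot\xi_v(\varpi_v)=\xi_v(\varpi_v)^3$, on top of its unitary-axis size $q_v$. The subcase \eqref{5.19} fails in the same way.
\item With your assumption $|c|\ll q_v^{\varepsilon}$, the $W_v^{(f_1,\sharp)}$-term is therefore of size $q_v^{5/2}$, not $q_v^{3/2}$.
\item That yields only $\boldsymbol{e}_v^*\ll N_F(\mathfrak{q})^{3/2+\varepsilon}$. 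It would give $\widetilde{\Psi}^{(5)}_{\mathrm{Dual}}\ll N_F(\mathfrak{q})^{3+\varepsilon}$, which breaks Proposition~\ref{prop10.1}.
\end{itemize}

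For $W_v^{(f_2,\sharp)}$ the cancellation you expect does occur. The factor $\overline{\xi}_v(\varpi_v)=\xi_v^{-1}(\varpi_v)$ in \eqref{c5.27} meets the $\xi_v(\varpi_v)$ from the $i=-1$ term of \eqref{5.47}. Formula \eqref{5.54} omits that factor only because it is unimodular on the unitary axis.

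The missing saving has to come from $c$, which \eqref{f5.46} controls only for unitary $\xi_v$. For ramified $\mu_v$, compare the two unfoldings of the triple period behind $\mathcal{I}_v^{\dag}$; they differ by the functional equations of the $L$-functions of the characters $\xi|\cdot|^{\pm s_1+s_3}$. This indicates that, up to a factor of size $O(1)$, $c=\prod_{\pm}\varepsilon_v(\frac12,\xi_v|\cdot|_v^{\pm s_1+s_3},\psi_v)^{-1}$. Then $|c|\asymp q_v^{2\Re(\lambda+s_3)}=q_v^{-1+O(\varepsilon^2)}$ at $\lambda=s_4-\frac12$, which is exactly the missing $q_v^{-1}$.

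Alternatively, you can bypass $c$ and compute $\mathcal{J}_v$ directly from $\overline{W_v(\cdot,-\overline{\lambda})}$ as a Laurent polynomial in $q_v^{-\lambda}$. Either way this $\lambda$-dependence must be tracked explicitly. Neither pairwise cancellation nor the crude bound \eqref{e10.15} supplies it.
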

\begin{proof}
Suppose $W_v(\cdot,\lambda)=W_v^{(f_1,\sharp)}$. 
\begin{itemize}
\item Suppose $r_{\omega_v^2}=0$. A modification of \eqref{5.19} and \eqref{f5.37} yields 
\begin{equation}\label{10.26}
\big|\mathcal{I}_{v}(W_v(\cdot,\lambda);I_2)\mathcal{J}_{v}(W_v(\cdot,-\overline{\lambda}))\big|\ll N_F(\mathfrak{q})^{1+100\varepsilon}.
\end{equation}

\item Suppose $r_{\omega_v^2}=1$. A modification of \eqref{5.13} and \eqref{f5.37} yields 
\begin{equation}\label{10.27}
\big|\mathcal{I}_{v}(W_v(\cdot,\lambda);I_2)\mathcal{J}_{v}(W_v(\cdot,-\overline{\lambda}))\big|\ll N_F(\mathfrak{q})^{1+100\varepsilon}.
\end{equation}
\end{itemize}

Suppose $W_v(\cdot,\lambda)=W_v^{(f_2,\sharp)}$. A modification of \eqref{c5.27} and \eqref{5.54} leads to  
\begin{equation}\label{10.23}
\big|\mathcal{I}_{v}(W_v(\cdot,\lambda);I_2)\mathcal{J}_{v}(W_v(\cdot,-\overline{\lambda}))\big|\ll N_F(\mathfrak{q})^{\frac{3}{2}+100\varepsilon}.
\end{equation}

Therefore, \eqref{f10.25} follows from \eqref{10.26}, \eqref{10.27} and \eqref{10.23}.  
\end{proof}

\subsection{Proof of Proposition \ref{prop10.1}}
By \eqref{f10.2}, the Lemmas \ref{lem10.2} and \ref{lem10.3}, and \cite[\textsection 8]{Yan25}, we obtain, for $i\in \{1, 2, 3, 4, 7, 8\}$, that  
\begin{equation}\label{10.28}
\widetilde{\Psi}_{\mathrm{Dual}}^{(i)}(\mathbf{s},R(f_{\mathfrak{q}})\boldsymbol{h})\ll \mathbf{C}_{\infty}^{\varepsilon}N_F(\mathfrak{q})^{\frac{3}{2}+\varepsilon}N_F(\mathfrak{q})^{\frac{1}{2}+\varepsilon} \ll \mathbf{C}_{\infty}^{\varepsilon}N_F(\mathfrak{q})^{2+2\varepsilon}.
\end{equation}

Let $\mathbf{L}(\lambda,\mu;\mathbf{s})$ be the finite part of $\mathbf{L}^*(\lambda,\mu;\mathbf{s})$. Suppose $\mu=\omega$. Then 
\begin{equation}\label{e10.29}
\underset{\lambda=1/2+s_2-s_4}{\Res}\ \mathbf{L}^*(\lambda,\mu;\mathbf{s})\ll N_F(\mathfrak{q})^{\frac{3}{2}+\varepsilon}.
\end{equation}

By Lemma \ref{lem10.4}, along with \eqref{e10.29} and  the local analysis in loc. cit., we obtain 
\begin{equation}\label{10.29}
\widetilde{\Psi}_{\mathrm{Dual}}^{(6)}(\mathbf{s},R(f_{\mathfrak{q}})\boldsymbol{h})\ll \mathbf{C}_{\infty}^{\varepsilon}N_F(\mathfrak{q})^{\frac{1}{2}+\varepsilon}N_F(\mathfrak{q})^{\frac{3}{2}+\varepsilon} \ll \mathbf{C}_{\infty}^{\varepsilon}N_F(\mathfrak{q})^{2+2\varepsilon}.
\end{equation}

By Lemma \ref{lem10.5}, along with the local analysis in loc. cit., we obtain 
\begin{equation}\label{10.30}
\widetilde{\Psi}_{\mathrm{Dual}}^{(5)}(\mathbf{s},R(f_{\mathfrak{q}})\boldsymbol{h})\ll \mathbf{C}_{\infty}^{\varepsilon}N_F(\mathfrak{q})^{\frac{1}{2}+\varepsilon}N_F(\mathfrak{q})^{\frac{3}{2}+\varepsilon} \ll \mathbf{C}_{\infty}^{\varepsilon}N_F(\mathfrak{q})^{2+2\varepsilon}.
\end{equation}

Therefore, Proposition \ref{prop10.1} follows from \eqref{e10.3}, \eqref{10.28}, \eqref{10.29} and \eqref{10.30}.

\section{Proof of Main Theorem \ref{thmA}}\label{sec11}
Combining Propositions \ref{prop7.1}, \ref{prop8.1}, \ref{prop9.1} and \ref{prop10.1}, we obtain  
\begin{align*}
\sum_{\substack{\pi\in \mathcal{F}_{t}^{\zeta}(\mathfrak{q}^3;\omega)\bigcup \mathcal{F}_{t}^{-\zeta}(\mathfrak{q}^3;\omega)\\
C_v(\pi)\ll C_v,\ v\mid\infty}}|L(1/2,\pi)|^4\ll_{F,\varepsilon} \mathbf{C}_{\infty}^{1+\varepsilon}N_F(\mathfrak{q})^{2+\varepsilon}.
\end{align*}
This establishes Theorem \ref{thmA}.

\bibliographystyle{alpha}

\bibliography{LY}

\end{document}